\documentclass[12pt]{amsart}
\usepackage{amsmath}
\usepackage{amsthm}
\usepackage{amscd}
\usepackage{amsfonts}
\usepackage{amssymb}
\usepackage{latexsym}
\usepackage{vmargin} 
\usepackage[latin1]{inputenc}
\usepackage[T1]{fontenc}

\newtheorem{thm}{Theorem}[section]
\newtheorem{lem}[thm]{Lemma}

\newtheorem{cor}[thm]{Corollary}
\newtheorem{prop}[thm]{Proposition}

\theoremstyle{definition}
\newtheorem{example}[thm]{Example}
\newtheorem{defn}[thm]{Definition}
\newtheorem{remark}[thm]{Remark}

\newcommand{\mycx}{N}

\newcommand{\mbw}{\mathbf{w}}

\newcommand{\mbx}{\mathbf{x}}
\newcommand{\mby}{\mathbf{y}}
\newcommand{\mbu}{\mathbf{u}}
\newcommand{\mbv}{\mathbf{v}}
\newcommand{\mbr}{\mathbf{r}}
\newcommand{\mbt}{\mathbf{t}}
\newcommand{\mba}{\mathbf{a}}
\newcommand{\mbb}{\mathbf{b}}
\newcommand{\mbc}{\mathbf{c}}
\newcommand{\mbk}{\mathbf{k}}

\newcommand{\kr}{{K}}

\newcommand{\Na}{\mathcal{N}}

\newcommand{\one}{{\mathbf{1}}}
\newcommand{\link}{{\mathrm{lk}}}
\newcommand{\cost}{{\mathrm{cost}}}

\newcommand{\mm}{\mathfrak{m}}

\newcommand{\Z}{{\mathbb{Z}}}

\newcommand{\N}{{\mathbb{N}}}

\newcommand{\op}{{\operatorname{op}}}

\newcommand{\Hom}{\operatorname{Hom}}
\newcommand{\Ext}{\operatorname{Ext}}
\newcommand{\HOM}{\underline{\operatorname{Hom}}}
\newcommand{\EXT}{\underline{\operatorname{Ext}}}

\newcommand{\colim}[1]{\mathop{\underset{#1}
            {{\text{\rm colim}}}}}

\newcommand{\mto}[1]{\overset{#1}{\longrightarrow}}
\newcommand{\te}{\otimes}

\begin{document}

\title [The Auslander-Reiten translate on monomial quotient rings]
{The Auslander-Reiten translate on monomial rings}
\author { Morten Brun \and Gunnar Fl{\o}ystad}
\address{ Matematisk Institutt\\
          Johs. Brunsgt. 12 \\
          5008 Bergen \\
          Norway}   
        
\email{ Morten.Brun@mi.uib.no \and gunnar@mi.uib.no}

\begin{abstract}
For $\mbt$ in $\N^n$, E.Miller has defined a category
of
positively $\mbt$-determined modules over the polynomial ring $S$ in $n$ variables. 
We consider
the Auslander-Reiten translate, $\Na_\mbt$, on the (derived) category
of such modules. A monomial ideal $I$ is $\mbt$-determined if 
each generator $\mbx^\mba$ has $\mba \leq \mbt$. We compute the multigraded
cohomology and betti spaces of $\Na_\mbt^k(S/I)$ for every iterate
$k$ and also the $S$-module structure of these cohomology modules.
This comprehensively generalizes results of Hochster and
Gr\"abe on local cohomology of Stanley-Reisner rings.
\end{abstract}

\pagestyle{myheadings}

\maketitle
\vskip -1cm
\tableofcontents

\section{Introduction}

Let $S = K[x_1, \ldots, x_n]$ be the polynomial ring over a field $K$. 
On the category
of graded projective $S$-modules there is the standard duality
$D = \Hom_S(-,S)$. In the case of $\N^n$-graded $S$-modules, on the 
subcategory of square free modules there is also a more combinatorially defined
Alexander duality introduced by E.Miller, \cite{Miller2000}, 
and T.R\"omer, \cite{Romer2001}.
More generally Miller for any $\mbt$ in $\N^n$ defines a subcategory
of $\N^n$-graded modules, the category of  positively 
$\mbt$-determined modules.
They have the property that they are determined (in a specified way) by their 
$S$-module structure for degrees in the interval $[\mathbf 0, \mbt]$. On this
category Miller also defines an Alexander duality $A_\mbt$. Considering 
standard duality in this setting it will be convenient to consider a 
twisted version. Replace $S$ with an injective resolution $I$, and
consider the functor $D_\mbt = \Hom_S(-, I(-\mbt))$, which becomes a duality 
(up to quasi-isomorphism) on the category of chain complexes of 
 positively $\mbt$-determined $S$-modules.

\medskip
    The main object of this paper is to study the composition $A_{\mbt} \circ
D_{\mbt}$ of the two functors and its iterates. This composition appears in
various guises in the literature. Firstly, if $M$ is any  positively $\mbt$-determined 
module,
the cohomology of $A_{\mbt} \circ D_{\mbt}(M)$ is simply the local cohomology 
modules of $M$, slightly rearranged. Secondly, the category of  
positively $\mbt$-determined
modules is equivalent to the category of modules over the incidence algebra
of the poset $[\mathbf 0, \mbt]$, which is an artin algebra. With this
identification $A_{\mbt} \circ D_{\mbt}$ is the Auslander-Reiten translate
on the derived category of modules over this incidence algebra, 
see \cite[p.37]{Ha}. 
In this derived setting $A_{\mbt} \circ D_{\mbt}$ may also be identified as 
the Nakayama
functor, \cite[p.126]{ARS}, and this is the terminology we
shall use for this composition. Thirdly, in the case when $\mbt$ is
equal to $\one = (1,1, \ldots, 1)$, the case of square free modules, the 
second author, in \cite {Floystad2004}, 
considers $A_{\mbt} \circ D_{\mbt} (K[\Delta])$
where $K[\Delta]$ is the Stanley-Reisner ring of a simplicial complex $\Delta$,
and studies the cohomology groups of this complex, termed the {\it enriched
cohomology modules} of the simplicial complex. Fourthly, also in the square
free case, K.Yanagawa, \cite{Yanagawa2004}, shows that the third iteration
$(A_\mbt \circ D_\mbt)^3$ is isomorphic to the translation functor $T^{-2n}$
on the derived category of square free modules.

\medskip

In this paper we study the iterates of the Nakayama functor $\Na_\mbt = A_\mbt
\circ D_\mbt$ applied to a monomial quotient ring $S/I$ where $I$ is a 
positively $\mbt$-determined ideal, meaning that every minimal generator $x^\mba$ of 
$I$ has $\mba \leq \mbt$. We compute the multigraded pieces of the cohomology
groups of every iterate $\Na^k_{\mbt} (S/I)$, Theorem \ref{thecalc1}. They turn
out to be given by the reduced cohomology groups of various simplicial complexes
derived from the monomial ideal $I$. In the square free case $\mbt = \one$ and
$k = 1$ this specializes to Hochster's classical computation, 
\cite{Hochster1977}, of the local 
cohomology modules of Stanley-Reisner rings, and in the case $k=1$ and 
general $\mbt$ we recover the more recent computations of Takayama, 
\cite{Takayama2004}, of 
the local cohomology modules of a monomial quotient ring.

We consider the multigraded Betti spaces
of the complex $\Na^k_{\mbt}(S/I)$, and compute all these spaces for every $k$, 
Theorem \ref{bettithm}. Again they are given by the reduced cohomology 
of various simplicial complexes derived from the monomial ideal $I$. In the
case $k=0$ we recover the well-known computations of the multigraded Betti 
spaces of $S/I$, see \cite[Cor.5.12]{MiSt}. 
A striking feature is how similar in form the 
statements of Theorems \ref{thecalc1} and \ref{bettithm} are, which compute
respectively the multigraded cohomology and Betti spaces.

Not only are we able to compute the multigraded parts of the cohomology modules
of $\Na^k_{\mbt}(S/I)$ for every $k$. We also compute their $S$-module structure,
Theorem \ref{nakcohS}, 
extending results of Gr\"abe, \cite{Graebe1984}. To give their $S$-module 
structure
it is sufficient to show how multiplication with $x_i$ acts on the various
multigraded parts of the cohomology modules, and we show how these actions 
correspond
to natural maps on reduced cohomology groups of simplicial complexes.

Actually all these computations are done in a more general setting. 
Since the interval $[\mathbf 0, \mbt]$ is a product $\Pi_{i=1}^n [0, t_i]$,
it turns out that for every $\mbk$ in $\N^n$ we may define the "multi-iterated"
Nakayama functor $\Na^\mbk_\mbt$. All our computations above are done, 
with virtually
no extra effort, for the complexes $\Na^\mbk_\mbt(S/I)$ instead of for the 
simple iterates $\Na^k_{\mbt}(S/I)$.
Letting $\mbt + {\mathbf 2}$ be $(t_1 +2, t_2 + 2, \ldots, t_n+2)$
we also show that the multi-iteration $\Na^{\mbt + {\mathbf 2}}_\mbt$ is 
quasi-isomorphic
to the translation functor $T^{-2n}$ on complexes of  positively $\mbt$-determined
$S$-modules, thereby generalizing Yanagawa's result \cite{Yanagawa2004}.

\medskip 
The organization of the paper is as follows. In Section 2 we recall 
the notion of Miller of  positively $\mbt$-determined modules. We define the 
Nakayama functor $\Na_\mbt$ on chain complexes of  positively $\mbt$-determined modules, 
not as $A_{\mbt} \circ D_{\mbt}$ as explained in 
the introduction, but rather
in a more direct way adapting slightly a defining property of local cohomology. 
More generally
we define the multi-iterated Nakayama functor $\Na_\mbt^\mbk$ for any 
$\mbk$ in $\N^n$. In Section 3 we show that the Nakayama functor $\Na_\mbt$
is the composition $A_\mbt \circ D_\mbt$ of Alexander duality and (twisted)
standard duality. We also show how the category of  positively $\mbt$-determined 
modules is equivalent to the category of modules over the incidence algebra
of the poset $[\mathbf 0, \mbt]$, and that the composition $A_\mbt \circ D_\mbt$
corresponds to the Auslander-Reiten translate on the derived category 
of this module category. Section 4 gives our main results. This is the 
computation of the multigraded cohomology spaces and Betti spaces of 
$\Na^{\mbk}_{\mbt}(S/I)$ for a  positively $\mbt$-determined ideal, and also 
the $S$-module structure of its cohomology groups.

In Section 5 we give sufficient conditions for vanishing of various cohomology
groups of $\Na_{\mbt}^{\mbk}(S/I)$. In particular we consider in detail the case 
when $S$ is a polynomial ring in two variables, and investigate when 
$\Na_{\mbt}^{\mbk}(S/I)$ has only one non-vanishing cohomology module.
In Section 6 we consider the Nakayama functor applied to quotient
rings of $S$ by ideals generated by variable powers.
The essential thing is doing the case of one variable $n=1$, and here
we develop lemmata which will be pivotal in the final proofs. 
Section 7 considers posets. Two different
constructions one may derive from a poset $P$ are, algebraically, 
the $KP$-modules over the poset, and, topologically, the order complexes of 
various subposets of $P$. 
We give some results relating various Ext-groups of $KP$-modules, 
to the reduced cohomology groups of various associated
order complexes. 

A monomial ideal $I$ will be positively $\mbt$-determined for many $\mbt$.  
In Section 8 we show how
the complexes $\Na_\mbt^\mbk(S/I)$ for various $\mbt$ (and $\mbk$ depending on
$\mbt$) are equivalent. Therefore various conditions like vanishing of 
cohomology
modules and linearity conditions turn out to be independent of $\mbt$ and thus
intrinsic to the monomial ideal.
Finally Section 9 contains the proofs of the main theorems given in Section 4. 

{\it Acknowledgements.} We thank {\O}.Solberg for pointing out to us 
that the functors considered in this paper correspond to the Nakayama
functor in the theory of Artin algebras, and that this functor in the 
setting of derived categories corresponds to the Auslander-Reiten translate.
 
 Also we thank the referee for thorough reading of the manuscript, pointing 
out mistakes, and for suggestions leading to a clearer presentation.

\section{The Nakayama Functor and local cohomology}
\label{sec:nakfuny}

In this section we introduce the Nakayama functor and relate it
to local cohomology. 
For a given $\mbt$ in $\N^n$, Miller \cite{Miller2000} has introduced
the category of positively $\mbt$-determined $S$-modules, a subcategory
of $\N^n$-graded modules. First we recall this module category. The 
Nakayama functor is then a functor applied to chain complexes of positively
$\mbt$-determined modules.


\subsection{Positively $\mbt$-determined modules}
\label{sec:posdet}
In this paper $K$ denotes a fixed arbitrary field and $S$ denotes the
polynomial algebra $S = K[x_1,\dots,x_n]$ for $n \ge 1$. If $n=1$ we
write $S = K[x]$. Given $\mba = (a_1,\dots,a_n)$ in the monoid $\N^n$, 
we shall use the notation
\begin{displaymath}
  x^{\mba} = x_1^{a_1}\cdots x_n^{a_n} \qquad \text{and} \qquad \mm^\mba
  = (x_i^{a_i} \, \colon a_i \ge 1).
\end{displaymath}

Let $\mbt \in \N^n$. We shall begin this section by recalling Miller's
concept of positively $\mbt$-determined ideals in $S$ and how this
concept extends to $\N^n$-graded $S$-modules. We use the partial order
on $\N^n$ where $\mba \leq \mbb$ if each coordinate $a_i \leq b_i$. Similarly
we have a partial order on $\Z^n$.

First of all a monomial ideal $I$ in $S$ is positively $\mbt$-determined
if 
every minimal generator $x^{\mba}$ of $I$
satisfies $\mba \le \mbt$. 
In particular, with the notation $\one = (1,\dots,1)$ the ideal
$I$ is $\one$-determined if and only if it is 
monomial and square free. 
Miller has extended this concept to $\N^n$-graded $S$-modules.
Let $\varepsilon_i \in \N^n$ be the $i$'th unit vector with $1$ in
position $i$ and zero 
in other positions. An
$\N^n$-graded module $M$ is {\em  positively $\mbt$-determined} if the
multiplication  
\[ M_{\mba} \xrightarrow {\cdot x_i} M_{{\mba} + \varepsilon_i} \]
is an isomorphism whenever $a_i \ge t_i$.
Note that
  a monomial ideal $I$ in $S$ is positively $\mbt$-determined if and only if $I$
  considered as a
  $\Z^n$-graded $S$-module is  positively $\mbt$-determined.
This again holds if and only if $S/I$ is positively $\mbt$-determined.

Let us agree that an $\N^n$-graded module is also $\Z^n$-graded by letting
it be zero in all degrees in $\Z^n \backslash \N^n$. Conversely a $\Z^n$-graded
module is called $\N^n$-graded if it is zero in all degrees in $\Z^n 
\backslash \N^n$.

Given an order preserving map $q \colon \Z^n \to \Z^n$
and a $\Z^n$-graded $S$-module $M$. We then get a $\Z^n$-graded
$S$-module $q^*M$ with $(q^*M)_\mba = M_{q(\mba)}$, and with 
multiplication $x^{\mbb} \colon (q^* M)_{\mba} \to (q^*M)_{\mba+ \mbb}$ defined to be
multiplication with $x^{q(\mba+\mbb)-q(\mba)}$ from $M_{q(\mba)}$ to $M_{q(\mba+\mbb)}$.
\begin{example}
  Given a
  $\Z^n$-graded $S$-module $M$ and $\mba \in \N^n$ there is a
  $\Z^n$-graded $S$-module 
  $M(\mba)$ with
  $M(\mba)_\mbr = M_{\mba+\mbr}$. In the above terminology $M(\mba) =
  \tau_\mba^*M$ for 
  the order preserving translation map $\tau_\mba \colon \Z^n_* \to
  \Z^n_*$ with $\tau_\mba(\mbr) = 
  \mba+\mbr$.
\end{example}
A variation of this, actually this is the only situation we shall be concerned with,
is when we have an order preserving map $q \colon \N^n \to \Z^n$. Then 
$q^*M$ is $\N^n$ graded (and hence by our conventions $\Z^n$-graded by considering it
to be zero in all degrees in $\Z^n \backslash \N^n$).

To a $\Z^n$-graded $S$-module and a multidegree $\mbt$ in $\N^n$ we can associate a 
 positively $\mbt$-determined module as follows.

\begin{defn}
Let $p_{\mbt} \colon \N^n \to \Z^n$ be given as $p_{\mbt}(\mba) = (p_{t_1}(a_1),\dots,p_{t_n}(a_n))$
where the coordinates are defined by
\begin{displaymath}
  p_{t_i}(a_i) =
  \begin{cases}
    a_i & \text{if $0 \le a_i \le t_i$}, \\
    t_i & \text{if $a_i \ge t_i$}.
  \end{cases}
\end{displaymath}
The {\it $\mbt$-truncation} of $M$ is then $p_{\mbt}^*M$. 
\end{defn}

Note that the $\mbt$-truncation of $M$ only depends on the module
structure of $M$ for degrees in the interval $[{\mathbf 0}, \mbt]$.

\begin{example}
  Let $n = 1$. Given $r \ge 1$ and $t \ge 0$ we have:
  \begin{displaymath}
    p_t^*(S/x^rS) =
    \begin{cases}
      S & \text{if $t \le r-1$} \\
      S/x^rS & \text{if $r \le t$}.
    \end{cases}
  \end{displaymath}
  On the other hand
  \begin{displaymath}
    p_t^*(x^rS) =
    \begin{cases}
      0 & \text{if $t \le r-1$} \\
      x^rS & \text{if $r \le t$}.
    \end{cases}
  \end{displaymath}
\end{example}
 
The above example show that homologically the functor $M \mapsto
p_{\mbt}^*M$ is somewhat ill-behaved because it takes some modules of rank
zero to modules of rank one and some modules of rank one to 
zero. However in this 
paper we take advantage of this, much like we usually take
advantage of non-exactness to obtain derived functors.

\medskip 

Note that if $f \colon M \to N$ is a homomorphism of $\Z^n$-graded $S$-modules,
and $q$ is an order preserving map from $\N^n$ or $\Z^n$, to $\Z^n$,
then the homomorphisms $f_{q(\mba)} \colon M_{q(\mba)}
\to N_{q(\mba)}$ induce a homomorphism $q^*f \colon
q^*M \to q^*N$ of $\Z^n$-graded $S$-modules. Thus $M
\mapsto q^*M$ is an endofunctor on the category of $\Z^n$-graded
$S$-modules. This endofunctor is exact in the following sense.
\begin{lem}
\label{truncateexact}
  Let $L \xrightarrow g M \xrightarrow f N$ be an exact sequence of $\Z^n$-graded
  $S$-modules and $q$ an order preserving map from $\N^n$ or $\Z^n$, to $\Z^n$.
  The induced sequence
  $q^*L \xrightarrow {q^*g}  q^*M \xrightarrow
  {q^*f}  q^*N$ is exact. 
\end{lem}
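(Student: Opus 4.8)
The plan is to reduce to checking exactness one multidegree at a time. The key observation is that the functor $M \mapsto q^*M$ acts on the underlying $\Z^n$-graded abelian groups by $(q^*M)_\mba = M_{q(\mba)}$, i.e. it simply relabels and duplicates the existing graded pieces according to the map $q$. Exactness of a sequence of $\Z^n$-graded $S$-modules is equivalent to exactness of the sequence of abelian groups in every fixed degree, since kernels and images of graded maps are computed degreewise. So first I would fix an arbitrary $\mba$ in $\N^n$ (or $\Z^n$) and write $\mbc = q(\mba)$; then the sequence in degree $\mba$ of the truncated complex is exactly
\[
L_{\mbc} \xrightarrow{\,g_{\mbc}\,} M_{\mbc} \xrightarrow{\,f_{\mbc}\,} N_{\mbc},
\]
which is the degree-$\mbc$ part of the original exact sequence $L \to M \to N$, hence exact by hypothesis.

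It remains only to confirm that the maps in the truncated sequence really are the ones just written, i.e. that $(q^*g)_\mba = g_{q(\mba)}$ and $(q^*f)_\mba = f_{q(\mba)}$. This is immediate from the definition of $q^*$ on morphisms given just above the lemma: for an $S$-module homomorphism $f\colon M \to N$, the map $q^*f$ is defined degreewise by $(q^*f)_\mba = f_{q(\mba)}$. Thus the degree-$\mba$ component of $q^*L \to q^*M \to q^*N$ literally coincides with the degree-$q(\mba)$ component of $L \to M \to N$, and exactness at $q^*M$ in degree $\mba$ follows. Since $\mba$ was arbitrary, the sequence $q^*L \to q^*M \to q^*N$ is exact as a sequence of $\Z^n$-graded abelian groups, and since all the maps are $S$-module homomorphisms (also noted above), it is exact as a sequence of $S$-modules.

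I do not expect any real obstacle here: the only thing to be a little careful about is the bookkeeping distinction between "exact as $S$-modules" and "exact as graded abelian groups," but for $\Z^n$-graded objects these coincide because the category of $\Z^n$-graded $S$-modules is abelian with kernels, cokernels and images all formed degreewise. One could alternatively phrase the argument by noting that $M \mapsto q^*M$ factors as restriction along the "diagonal-type" functor induced by $q$ on grading groups followed by nothing else, and any such degreewise relabeling is visibly exact; but the direct degreewise check above is the cleanest and is what I would write.
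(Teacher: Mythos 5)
Your proof is correct: the paper itself offers no proof of Lemma \ref{truncateexact}, treating it as immediate, and your degreewise check --- that $(q^*f)_\mba = f_{q(\mba)}$ so that exactness of $q^*L \to q^*M \to q^*N$ in degree $\mba$ is literally exactness of $L \to M \to N$ in degree $q(\mba)$ --- is exactly the argument the authors are implicitly relying on. Your remark that kernels and images of $\Z^n$-graded maps are formed degreewise is the one point worth making explicit, and you make it.
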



For $\mbt \in \N^n$ every  positively $\mbt$-determined
$S$-module has a filtration of modules
which are isomorphic to
 positively $\mbt$-determined $S$-modules of the following form.
\begin{defn}
\label{intervelmodule}
  Given $\mbt$, $\mba$ and $\mbb$ in $\N^n$ with $\mbb \le \mbt$  and
$\mba \leq \mbb$. The  
  positively $\mbt$-determined {\em interval} $S$-module
  $K_{\mbt}\{\mba,\mbb\}$ is defined as:
  $$K_{\mbt}\{\mba,\mbb\}
  = p_{\mbt}^*((S/\mm^{\mbb-\mba+\one})
(-\mba))$$
\end{defn}
Note that for all $\mbr$ in the interval $[0,\mbt]$ 
the graded piece $K_{\mbt}\{\mba,\mbb\}_\mbr$ 
is $K$ if $\mbr \in [\mba,\mbb]$ and is zero 
if $\mbr \notin [\mba,\mbb]$. 
It will also be convenient to have the convention that 
$K_{\mbt}\{\mba, \mbb\}$ is zero if $\mba$ is not less or equal to $\mbb$.

\begin{example}
\label{canex}
  Let us consider the case $n=1$.
Given $t \in \N$ and $0 \le a \le b \le t$ we have
\begin{displaymath}
  K_{t}\{a,b\} = 
  \begin{cases}
    (S/x^{b-a+1}S)(-a) & \text{if $b < t$} \\
    S(-a) & \text{if $b=t$}.
  \end{cases}
\end{displaymath}
Note that if $a \le a' \le b$, there is an
inclusion $i \colon K_{t}\{a',b\} \to K_{t}\{a,b\}$, and that if $a
\le b \le b'$ there is a surjection $p \colon K_{t}\{a,b'\} \to
K_{t}\{a,b\}$. We shall call $i$ the canonical inclusion and $p$ the
canonical projection.
\end{example}
Taking tensor products over $K$ we have the isomorphism
\begin{displaymath}
  K_{\mbt}\{\mba,\mbb\} \cong \bigotimes_{j=1}^n
  K_{t_j}\{a_j,b_j\}    
\end{displaymath}
of modules over $S = \bigotimes_{j=1}^n K[x_j]$.

\subsection{Local cohomology}
\label{sec:loccohny}

We let $\EXT$ denote the $\Z^n$-graded version of the $\Ext$-functor.
The local cohomology modules of a $\Z^n$-graded $S$-module $M$ are the
$\Z^n$-graded $S$-modules
\begin{equation*}
H^i_\mm(M) = \colim{k} \EXT^i_S(S/(\mm^{k}), M), \label{LokLigLoKo}
\end{equation*}
where $\mm = (x_1,\dots,x_n)$ is the unique monomial maximal ideal in $S$.
Instead of the modules $\mm^k$ any sequence of modules containing these as
a cofinal sub-sequence
may be used. In particular 
this local cohomology module is the colimit of 
$
\EXT^i_S(S/\mm^{\mbt}
, M) \label{LokLigExtl}
$
as $\mbt$ tends to infinity. 
Given $t \in \N$
we let
$P_{t\varepsilon_j}$ 
denote the projective resolution of 
$S/x_j^{t}S$
given by the inclusion  $x_j^tS \to S$.
We will work with chain complexes
throughout, that is, differentials decrease the homological degree. If
$C_*$ is a chain complex we shall write $H^iC$ for $H_{-i}C$.
The $\Ext$-group $\EXT^i_S(S/\mm^{\mbt}, M)$
is the
$i$'th cohomology module of the chain complex
$
  \HOM_S(P_{\mbt},M)
$
where $P_{\mbt}$, given by the tensor product $P_{\mbt} =
\bigotimes_{j=1}^n P_{t_j \varepsilon_j}$ over $S$, is the 
Koszul complex for $S/\mm^{\mbt}$ and $\HOM_S$ denotes the
$\Z^n$-graded version of $\Hom_S$.
\begin{prop}
\label{extstab}
  Let $\mbt \in \N^n$ and let 
  $M$ be a  positively $\mbt$-determined $S$-module. If $\mbt'
  \ge \mbt$ then the homomorphism  
  $$p_\mbt^* \EXT^i_S(S/\mm^{\mbt+\one}, M(-\one)) \to p_\mbt^* \EXT^i_S(S/\mm^{\mbt'+\one},
  M(-\one))$$
  is an isomorphism for all $i$.
\end{prop}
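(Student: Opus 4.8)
The plan is to reduce the statement to a one-variable computation via the tensor-product decompositions that have already been set up, and then to verify that one-variable case by an explicit chain-level argument. First I would observe that since $M$ is positively $\mbt$-determined, it admits a finite filtration whose subquotients are interval modules $K_\mbt\{\mba,\mbb\}$ (as recalled just before Definition \ref{intervelmodule}), and each of the three functors involved — $p_\mbt^*(-)$, $\EXT^i_S(S/\mm^{\mbt+\one},-)$, and the comparison map induced by $S/\mm^{\mbt'+\one}\to S/\mm^{\mbt+\one}$ — behaves well with respect to short exact sequences: $p_\mbt^*$ is exact by Lemma \ref{truncateexact}, and the $\EXT$-groups fit into long exact sequences. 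So by the five lemma (applied degreewise, and inductively up the filtration) it suffices to prove the proposition when $M = K_\mbt\{\mba,\mbb\}(-\one)$, or after absorbing the shift, for $M$ an interval module appropriately twisted. Here I would need to be slightly careful that the twist by $(-\one)$ and the filtration interact correctly, but this is bookkeeping.

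Next I would use that $K_\mbt\{\mba,\mbb\} \cong \bigotimes_{j=1}^n K_{t_j}\{a_j,b_j\}$ over $S = \bigotimes_j K[x_j]$, that $P_{\mbt+\one} = \bigotimes_j P_{(t_j+1)\varepsilon_j}$, and that $\HOM_S$ of a tensor product of complexes over a tensor product of rings is the tensor product of the $\HOM$'s. Thus $\EXT^i_S(S/\mm^{\mbt+\one}, M(-\one))$, as a chain complex before taking cohomology, is a tensor product over $K$ of the one-variable complexes $\HOM_{K[x_j]}(P_{(t_j+1)\varepsilon_j}, K_{t_j}\{a_j,b_j\}(-1))$, and similarly for $\mbt'$. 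Since $K$ is a field there is a Künneth isomorphism with no Tor terms, and the comparison map is the tensor product of the one-variable comparison maps. Hence it is enough to show: for $n=1$, $M$ a positively $t$-determined $K[x]$-module (reduced to an interval module $K_t\{a,b\}$), and $t'\ge t$, the map $p_t^*\EXT^i(S/x^{t+1}S, M(-1)) \to p_t^*\EXT^i(S/x^{t'+1}S, M(-1))$ is an isomorphism.

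For the one-variable case I would compute directly using the two-term projective resolution $P_{(t+1)\varepsilon}$, namely $0\to x^{t+1}S \to S \to 0$. Then $\HOM_S(P_{(t+1)\varepsilon}, M(-1))$ is the two-term complex $M(-1) \xrightarrow{\cdot x^{t+1}} M(-1-t-1+1)$-type map (with the appropriate grading shift so that the target records $\Hom(x^{t+1}S, M(-1))\cong M(t+1)(-1)$), i.e. cohomologically $H^0 = \ker$ and $H^1 = \operatorname{coker}$ of multiplication by $x^{t+1}$. Using the explicit description of $K_t\{a,b\}$ from Example \ref{canex} (either $(S/x^{b-a+1}S)(-a)$ or $S(-a)$), one reads off these kernels and cokernels in each multidegree, applies $p_t^*$, and checks that in the range of degrees $[0,t]$ — which is all $p_t^*$ sees — nothing changes when $t$ is increased to $t'$, because the only difference between the $x^{t+1}$-map and the $x^{t'+1}$-map occurs in degrees $> t$. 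The key point is that replacing $x^{t+1}$ by $x^{t'+1}$ only alters behaviour strictly above degree $t$, and $p_t^*$ truncates exactly there.

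The main obstacle I anticipate is not any single hard step but getting the grading shifts and the direction of the comparison maps exactly right: one must check that the map $S/\mm^{\mbt'+\one}\to S/\mm^{\mbt+\one}$ induces, after $\HOM_S(-,M(-\one))$ and $p_\mbt^*$, precisely the map in the statement, and that the identification $H^i(\HOM_S(P_{\mbt},-)) = \EXT^i$ is compatible with these structure maps. Once the chain-level model is pinned down, the isomorphism is visibly a consequence of $p_\mbt^*$ ignoring all degrees outside $[\mathbf 0,\mbt]$ together with positive $\mbt$-determinedness of $M$, which forces the relevant multiplication maps to be isomorphisms in the degrees where the two resolutions disagree. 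A secondary subtlety is ensuring the filtration-plus-five-lemma reduction is legitimate: one should note that $p_\mbt^*$ applied to a short exact sequence of arbitrary $\Z^n$-graded modules stays exact (Lemma \ref{truncateexact}) so that the long exact $\EXT$-sequences survive truncation, and that the comparison maps give a map of long exact sequences.
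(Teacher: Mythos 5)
Your argument is correct in substance, and its one-variable core --- that after truncating by $p_t^*$ the comparison map only involves multiplications $M_{t+a}\to M_{t'+a}$ with $t+a\ge t$, which are isomorphisms precisely because $M$ is positively $t$-determined --- is exactly the observation on which the paper's proof rests. You reach that core by a longer route than necessary, though. The paper never passes to cohomology and never filters $M$: it shows directly that the chain map $p_{\mbt}^*\HOM_S(P_{\mbt+\one},M(-\one)) \to p_{\mbt}^*\HOM_S(P_{\mbt'+\one},M(-\one))$ is an isomorphism of complexes, since in each homological degree and each multidegree it is either an identity or a multiplication $M_{t_j+a}\to M_{t_j'+a}$ with $a\ge 0$ (the reduction to $n=1$ being by the tensor-product form of the Koszul complex and adjunction, as in Proposition \ref{extlok}). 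This makes both your filtration-plus-five-lemma step and your K\"unneth step superfluous. The filtration step is also the one place where your argument carries a genuine (if repairable) restriction: a finite filtration with interval subquotients exists only for finitely generated $M$, whereas the proposition is stated for arbitrary positively $\mbt$-determined modules, so in general you would need a supplementary colimit argument to conclude. The remaining ingredients --- K\"unneth over the field $K$, compatibility of the comparison maps with the long exact $\EXT$-sequences, the explicit kernel/cokernel computation on $K_t\{a,b\}$ --- are all sound, but they are work you can avoid entirely by observing that the truncated comparison is already an isomorphism at the chain level.
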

\begin{proof}
  The homomorphism $P_{\mbt'+\one} \to P_{\mbt+\one}$ induces an isomorphism
$$
  p_{\mbt}^*\HOM_S(P_{\mbt+\one},M(-\one)) \to
  p_{\mbt}^*\HOM_S(P_{\mbt'+\one},M(-\one)).
$$
In order to see this it suffices to consider the case $n=1$
 and to show that the
homomorphism
$$
  M(t) \cong \HOM_S(x^{t+1}S,M(-1)) \to
  \HOM_S(x^{t'+1}S,M(-1)) \cong M(t')
$$
induces an isomorphism $p_{t}^* M(t) \to p_{t}^* M(t')$. This is the
case because since $M$ 
is $t$-determined the multiplication by $x^{t'-t}$ induces an isomorphism
\begin{displaymath}
  M(t)_a = M_{t+a} \xrightarrow {x^{t'-t}} M_{t'+a} = M(t')_a
\end{displaymath}
for all $a \ge 0$. 
\end{proof}
The above proposition shows that if $M$ is
positively $\mbt$-determined then the $\mbt$-truncation of $H^i_\mm(M(-\one))$ is
isomorphic to the $\mbt$-truncation of $\EXT^i_S(S/\mm^{\mbt+\one},
M(-\one)).$ The following proposition shows that in this case the
$\Z^n$-graded $S$-module $H^i_\mm(M(-\one))$ is completely
determined by its $\mbt$-truncation.
\begin{prop}
\label{extlok}
  Let $\mbt \in \N^n$ and let 
  $M$ be a  positively $\mbt$-determined $S$-module.   
\begin{enumerate}
\item If $a_j \geq t_j+1$ for some $j$ then $H^i_{\mm}(M(-\one))_{\mba}$ is zero.
\item If $a_j \leq -1$ for some $j$ then the multiplication map
\[ H^i_{\mm}(M(-\one))_{\mba} \xrightarrow {x_j} H^i_{\mm}(M(-\one))_{\mba + \varepsilon_j} \]
is an isomorphism.  
\end{enumerate}
\end{prop}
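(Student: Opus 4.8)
The plan is to compute $H^i_\mm(M(-\one))$ with the \v{C}ech (stable Koszul) complex $\check{C}^\bullet_\mm(-)$ --- which computes the same local cohomology as the colimit of Koszul complexes used above --- and to isolate the $j$-th variable. Write $\mm' = (x_i \colon i \neq j)$. Since the one-variable stable Koszul complex is $[S \to S_{x_j}]$, the \v{C}ech complex factors as $\check{C}^\bullet_\mm(N) = \check{C}^\bullet_{\mm'}(N) \otimes_S [S \to S_{x_j}]$, so $H^i_\mm(N)$ is the cohomology of the (suitably shifted) mapping cone of the localization map $\check{C}^\bullet_{\mm'}(N) \to \check{C}^\bullet_{\mm'}(N)_{x_j}$. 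I would first record that, since $M$ is positively $\mbt$-determined and $\N^n$-graded, the twist $N := M(-\one)$ is positively $(\mbt+\one)$-determined and supported in degrees $\mbr \ge \one$; in particular $N_\mba = 0$ as soon as some $a_j \le 0$.

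The heart of the proof is an observation about $\check{C}^\bullet_{\mm'}(N)$: because $\mm'$ involves only the variables $x_i$ with $i \neq j$, this complex is obtained by inverting only those variables, which neither changes the $x_j$-grading nor interferes with multiplication by $x_j$, and the same then holds for $H^i_{\mm'}(N)$. Two consequences: (a) $H^i_{\mm'}(N)_\mba = 0$ whenever $a_j \le -1$, inherited from the vanishing of $N$, hence of $\check{C}^\bullet_{\mm'}(N)$, in such degrees; and (b) for $a_j \ge t_j + 1$, multiplication by $x_j$ is an isomorphism $H^i_{\mm'}(N)_\mba \to H^i_{\mm'}(N)_{\mba+\varepsilon_j}$, since each term $(N_{x_J})_\mba$ of $\check{C}^\bullet_{\mm'}(N)$ (with $j \notin J$) is a filtered colimit of groups $N_\mbr$ with $r_j = a_j \ge t_j + 1$, on each of which multiplication by $x_j$ is an isomorphism by positive $(\mbt+\one)$-determinedness, and the \v{C}ech differentials preserve the $\Z^n$-grading.

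With (a) and (b) the two assertions drop out by inspecting the mapping cone in a fixed multidegree $\mba$. If $a_j \ge t_j + 1$, fact (b) makes every transition map in the filtered colimit defining $(\check{C}^\bullet_{\mm'}(N)_{x_j})_\mba$ an isomorphism, so the localization map $\check{C}^\bullet_{\mm'}(N)_\mba \to (\check{C}^\bullet_{\mm'}(N)_{x_j})_\mba$ is an isomorphism of complexes; its mapping cone is acyclic and hence $H^i_\mm(N)_\mba = 0$ for every $i$, which is part (1). If $a_j \le -1$, fact (a) makes $\check{C}^\bullet_{\mm'}(N)_\mba = 0$, the cone degenerates, and $H^i_\mm(N)_\mba \cong (H^{i-1}_{\mm'}(N)_{x_j})_\mba$ (the index shift coming from the single step $[S \to S_{x_j}]$). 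This isomorphism is natural in the $S$-module $N$, hence compatible with multiplication by $x_j$; since $x_j$ acts invertibly on any module localized at $x_j$, the map $x_j \colon H^i_\mm(N)_\mba \to H^i_\mm(N)_{\mba + \varepsilon_j}$ is an isomorphism, which is part (2).

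I expect the main obstacle to be the bookkeeping in the middle step: making precise that forming $\check{C}^\bullet_{\mm'}$ and then $H^i_{\mm'}$ leaves the $x_j$-grading untouched, that positive $(\mbt+\one)$-determinedness of $N$ translates precisely into (a) and (b), and that $\check{C}^\bullet_\mm$ is genuinely the shifted mapping cone of $\check{C}^\bullet_{\mm'} \to (\check{C}^\bullet_{\mm'})_{x_j}$ with the cohomological shift tracked correctly. Checking that the isomorphism in part (2) intertwines the two multiplication-by-$x_j$ maps is routine from functoriality of the cone, but it is the one spot where one should not be cavalier; the rest follows from exactness of localization and its commutation with cohomology.
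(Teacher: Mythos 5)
Your proof is correct and is essentially the paper's argument in different packaging: the paper reduces to $n=1$ by the $\otimes$--$\HOM$ adjunction and computes $\EXT^i_S(S/\mm^r,M(-1))_a$ at each finite stage (for $a\ge t+1$ the map $M_{a-1}\to M_{a-1+r}$ is already an isomorphism; for $a\le -1$ the zero cochains vanish and the one-cochain map $x\colon M_{r+a-1}\to M_{r+a}$ is an isomorphism for large $r$), then passes to the colimit, whereas you perform the identical two observations after first taking the colimit, i.e.\ on the \v{C}ech complex split as $\check{C}_{\mm'}\otimes[S\to S_{x_j}]$. Both isolate the $j$-th variable and exploit the same two facts about positively determined modules, so the arguments match step for step.
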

\begin{proof}
  Since the statements only concern the $j$'th graded direction we can
  use the adjunction between $\otimes$ and $\HOM$ to reduce to
  the case $n=1$. For (1) we note that if $M$ is $t$-determined,
  then the homomorphism
  \begin{displaymath}
    M_{a-1} \cong \HOM_S(S,M(-1))_a \to \HOM_S(x^rS,M(-1))_a \cong M_{a-1+r},
  \end{displaymath}
  induced by multiplication by $x^r$, is an isomorphism whenever $a \ge
  t+1$. Thus the group $\Ext^i_S(S/\mm^r,M(-1))_a$ is zero for all $r$
  and $i$. This implies
  that the degree $a$ part of local cohomology of $M$ is zero.

  For (2) we consider the resolution $P_r = (x^rS \to S)$ of
  $S/x^{r}S$.
  We shall show that for $a \le -1$ the multiplication 
  \begin{displaymath}
    x \colon \HOM_S(P_r,M(-1))_{a} \to \HOM_S(P_r,M(-1))_{a+1}
  \end{displaymath}
  is an isomorphism for all sufficiently large $r$. 
  Since $M_b = 0$ for $b<0$ the zero cochains of both $\HOM_S(P_r,M(-1))_{a}$ and
  $\HOM_S(P_r,M(-1))_{a+1}$ are zero.
  Thus we only need to
  consider one cochains, that is, the multiplication
  \begin{displaymath}
    x_j \colon M_{r+a-1} \to M_{r+a}.
  \end{displaymath}
  Since $M$ is  $t$-determined this is an isomorphism
  provided that $r+a \ge t+1$. 
\end{proof}

\begin{remark} Using the terminology of \cite{Miller2000}, Section 2, this may be phrased
as saying that the local cohomology of $M(-\one)$ is negatively 
$\mbt$-determined. 
This may be demonstrated by providing $\mbt + \one$-determined 
injective resolutions of $M(- \one)$.
We thank E. Miller for pointing this out. 
\end{remark}

\subsection{The Nakayama Functor}
\label{sec:nakcomes}

We now shift attention from the local cohomology of a
$\Z^n$-graded $S$-module $M$ to the groups
$\EXT^i_S(S/\mm^{\mbt+\one},M(-\one))$ and the chain complexes that
compute them.
We construct an endofunctor $M \mapsto \Na^{\one}_{\mbt}(M)$, called the Nakayama functor,
on the category of chain complexes of  positively $\mbt$-determined $S$-modules.
The Nakayama functor is designed so that
  for every $\mbt \ge 0$ there is an isomorphism
  \begin{displaymath}
    H^i \Na^{\one}_{\mbt}(M) \cong p_{\mbt}^*\EXT^i_S(S/\mm^{\mbt+\one},M(-\one))
  \end{displaymath}
  of $\Z^n$-graded $S$-modules.
In view of Propositions \ref{extstab} and \ref{extlok} 
the local cohomology of a 
positively $\mbt$-determined $S$-module $M$ is completely determined by the
cohomology of $\Na^{\one}_{\mbt}(M)$, and vice versa. In fact
$H^i \Na^{\one}_{\mbt}(M) $ will be $p_{\mbt}^*$ of the local cohomology
module $H^i_{\mm} M(-\one)$.

Recall that for $t \ge 0$ we denote the resolution $x_j^{t}S \to S$ of
$S/x_j^{t}S$ by
$P_{t\varepsilon_j}$. 
Given $\mbk \in \{0,1\}^n$ we let
$P_{\mbt}^{\mbk}$ denote the tensor product $P_{\mbt}^{\mbk} =
\bigotimes_{j=1}^n P_{t_j \varepsilon_j}^{\otimes k_j}$ with the
convention that undecorated tensor products are over $S$ and that the
zeroth tensor power of an $S$-module is $S$ 
itself. 

\begin{defn}
\label{DefineNak}
  Given $\mbt \in \N^n$ and $\mbk \in \{0,1\}^n$ the $\mbk$-iterate of
  the $\mbt$-Nakayama functor is the endofunctor on the 
  category of chain complexes of  positively $\mbt$-determined $S$-modules given by 
the formula
  \begin{displaymath}
    \Na^{\mbk}_{\mbt}(C) = p_{\mbt}
    ^*\HOM_S(P^{\mbk}_{\mbt+\one},C(-\mbk)).
  \end{displaymath}
\end{defn}



We shall shortly generalize this so that we for any $\mbk \in \N^n$ define a 
functor $C \mapsto \Na^{\mbk}_{\mbt}(C)$. Our goal is to compute the cohomology of 
$\Na^{\mbk}_{\mbt}(S/I)$ for a positively $\mbt$-determined monomial ideal $I$. These 
computations will eventually reduce to the case when $n=1$ and the module is an 
interval module, as in the following example.

\begin{remark} As said above, when $\mbk = \one$, the cohomology of 
$\Na^{\one}_{\mbt}(M)$ for a module $M$
is essentially the local cohomology of $M(- \one)$. Theorem 6.2 of 
\cite{Miller2000} or rather its proof computes the local cohomology module
in essentially the same way as in the above definition.
\end{remark}

\begin{example}
\label{firstnakone}
 Let $n=1$ and $0 \le a \le
  b \le t$. There are isomorphisms
  \begin{displaymath}
    \HOM_S(x^{t+1}S,K_t\{a, b \}(-1)) \cong K_t\{a, b \}(t) 
  \end{displaymath}
  and
  \begin{displaymath}
    \HOM_S(S,K_t\{a, b \}(-1)) \cong K_t\{a, b \}(-1) 
  \end{displaymath}
Taking the $\mbt$-truncation, it follows that if $b<t$ then $\Na^1_t(K_t\{a, b \})$ 
is isomorphic to the chain complex
  \begin{displaymath}
    K_t\{a+1,b+1\} \to 0,
  \end{displaymath}
  and if $b=t$ then $\Na^1_t(K_t\{a, b \})$ is isomorphic to the chain complex
  \begin{displaymath}
    K_t\{a+1,t\} \to K_t\{0,t\}.
  \end{displaymath}
  On cohomology we obtain
  \begin{displaymath}
    H^* \Na^1_t K_t\{a,b\} \cong
    \begin{cases}
      K_t\{a+1,b+1\} e^0 & \text{if $b <t$} \\
      K_t\{0,a\} e^{1} & \text{if $b=t$}.
    \end{cases}
  \end{displaymath}
    Here $e^0$ is a generator in cohomological degree zero and
    $e^{1}$ is a generator in cohomological degree one.
\end{example}

Note
that if $\mbk,\mbk' \in \{0,1\}^n$ are such that $\mbk+\mbk' \in
\{0,1\}^n$, then the adjunction between $\HOM$ and the tensor product
over $S$ induces an isomorphism 
\begin{displaymath}
  \Na_{\mbt}^{\mbk'}(\Na^{\mbk}_{\mbt}(C)) \cong \Na^{\mbk' + \mbk}_{\mbt}(C).
\end{displaymath}
We now define $\Na^{\mbk}_{\mbt}$ in general. 

\begin{defn} \label{DefineNak2}
Given multidegrees $\mbk = (k_1,\dots,k_n)$ and 
$\mbt = (t_1,\dots,t_n)$ in $\N^n$, 
we define 
the $\mbk$-iteration of the  $\mbt$-Nakayama functor on $C$, 
denoted $\Na^{\mbk}_{\mbt}(C)$, to be the chain complex 
\begin{displaymath}
  \Na^{\mbk}_{\mbt}(C) = (\Na^{\varepsilon_1}_{t_1
    \varepsilon_1})^{\circ k_1} \circ \dots \circ (\Na^{
  \varepsilon_n}_{t_n \varepsilon_n})^{\circ k_n} (C).
\end{displaymath}
This gives an endofunctor $\Na^{\mbk}_{\mbt}$ on the
category of chain complexes of  positively $\mbt$-determined
$S$-modules.
\end{defn}

\begin{remark}
In this paper we work in the category of chain complexes, and not
in the derived category. The reason is that we want to work with
explicit morphisms between chain complexes.
The only place we refer to the derived category is in Section 3
where we relate the Nakayama functor to the Auslander-Reiten translate.
\end{remark}

\begin{remark} \label{RemDefineNak2}
Two facts we will use on some occasions are the following. 
Since $P^{\mbk}_{\mbt+\one}$
in Definition \ref{DefineNak} is a bounded 
complex of finitely generated  projectives,
the Nakayama functor commutes with colimits.
Also, since the functor $\HOM_S(P^{\mbk}_{\mbt+\one},-)$ is a right adjoint
it commutes with limits, and so also the Nakayama functor.
\end{remark}

One might expect that the effect of the truncation $p_{\mbt}$ is
minor. However this is not the case.
Given a chain complex $C$ we let $TC$ denote the homologically shifted
chain complex
with $(TC)_i = C_{i-1}$ and with $d_{TC} = -d_C$. 
In Section \ref{sec:intmod} we show that the truncation $p_{\mbt}$ gives the 
Nakayama functor $\Na^{\mbk}_{\mbt}$ the
following remarkable property, generalizing a result of Yanagawa 
\cite{Yanagawa2004}
in the case $\mbt = \one$.  
\begin{prop}
\label{trans}
  Let $\mbt \in \N^n$ and let  $C$ be a
  chain complex of  positively $\mbt$-determined $S$-modules. For every 
  $j$ the the evaluation 
  $\Na^{(t_j +2)\varepsilon_j}_{\mbt} C$ of the Nakayama functor
  at $C$ is naturally quasi-isomorphic to $T^{-2}C$.
  In particular, for every $\mbk$ in $\N^n$ the Nakayama functor
  $\Na^{\mbk}_{\mbt}$ induces a self-equivalence on the derived category of
  the category of chain complexes of  positively $\mbt$-determined $S$-modules.
\end{prop}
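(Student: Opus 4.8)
The plan is to reduce everything to the one-variable case $n=1$ and a single iterate $\Na^{(t+2)}_t$, then compute directly on interval modules, which suffice by the filtration remark preceding Definition \ref{intervelmodule}. First I would observe that by Definition \ref{DefineNak2} the functor $\Na^{(t_j+2)\varepsilon_j}_{\mbt}$ is the $(t_j+2)$-fold composite of $\Na^{\varepsilon_j}_{t_j\varepsilon_j}$ in the $j$-th variable only, leaving the other variables untouched; since positively $\mbt$-determined modules tensor-decompose and $\HOM_S(P^{\mbk}_{\mbt+\one},-)$ respects this decomposition (Remark \ref{RemDefineNak2}), it is enough to prove that for $n=1$ and any chain complex $C$ of positively $t$-determined $S=K[x]$-modules, $\Na^{t+2}_t(C)$ is naturally quasi-isomorphic to $T^{-2}C$. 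Naturality in $C$ together with the five lemma for mapping cones, plus the fact (Lemma \ref{truncateexact}) that $p_t^*$ and hence $\Na_t$ is exact on short exact sequences of modules, will let me reduce from an arbitrary bounded complex to a single module placed in degree zero, and then from a single positively $t$-determined module to the interval modules $K_t\{a,b\}$ appearing in its filtration.

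Next I would carry out the computation on $K_t\{a,b\}$, $0\le a\le b\le t$, by iterating Example \ref{firstnakone}. That example gives $H^*\Na^1_t K_t\{a,b\}$ equal to $K_t\{a+1,b+1\}$ in degree $0$ when $b<t$, and $K_t\{0,a\}$ in cohomological degree $1$ when $b=t$; moreover one should record that $\Na^1_t K_t\{a,b\}$ is, as a complex, quasi-isomorphic to its cohomology here since in the $b<t$ case it is concentrated in one degree, and in the $b=t$ case the complex $K_t\{a+1,t\}\to K_t\{0,t\}$ has the injective (canonical inclusion) map $K_t\{a+1,t\}\to K_t\{0,t\}$ as differential, with cokernel $K_t\{0,a\}$, so it is quasi-isomorphic to $K_t\{0,a\}[-1]$. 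Starting from $K_t\{a,b\}$: while the right endpoint is $<t$ it increases by one under each application of $\Na^1_t$, so after $t-b$ steps we reach $K_t\{a+t-b,t\}$ (shifting $a$ up correspondingly, and assuming $a+t-b\le t$, i.e. always, since $a\le b$), which is now of the second type; one more application turns it into $K_t\{0,a+t-b\}$ in cohomological degree $1$. From $K_t\{0,c\}$ with $c<t$ one applies $\Na^1_t$ a further $t-c$ times to reach $K_t\{t-c,t\}$ in degree $1$, then once more to land on $K_t\{0,t-c\}$ in cohomological degree $2$. Bookkeeping the total number of steps: from $K_t\{a,b\}$ it takes $(t-b)+1$ steps to reach $K_t\{0,a+t-b\}[-1]$, then $(t-(a+t-b))+1 = (b-a)+1$ steps to reach $K_t\{0,t-(a+t-b)\}[-2] = K_t\{b-a, \, \}$... — here I must be careful: $t-(a+t-b)=b-a$, so we get $K_t\{0,b-a\}[-2]$ — wait, that is not $K_t\{a,b\}$. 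Let me recount: the total is $(t-b)+1+(b-a)+1 = t-a+2$ steps, not $t+2$, so the claim as I have set it up would need the count to come out to $t+2$ independently of $(a,b)$. The resolution is that I have mishandled a degree shift: each passage through the "$b=t$" case also carries a grading shift $K_t\{a,t\}\mapsto K_t\{0,a\}$, i.e. the module $(S/x^{?})(-a)$ is replaced by one generated in degree $0$, and tracking this internal shift together with the homological shift is exactly what makes the final answer $T^{-2}K_t\{a,b\}$ rather than $T^{-2}K_t\{0,b-a\}$; I would redo the count keeping the internal degree shifts, expecting the two "wrap-around" passages to contribute precisely the missing degrees so that the composite is $K_t\{a,b\}$ shifted by $T^{-2}$ after exactly $t+2$ applications.

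The main obstacle I anticipate is precisely this bookkeeping of the interplay between the internal $\Z$-grading shifts and the homological shift across the $b=t$ "wrap-around" steps, and checking that the quasi-isomorphisms assembled step by step are natural in $C$ — i.e. that the identification $\Na^{t+2}_t(C)\simeq T^{-2}C$ is induced by an honest natural transformation of functors (or a natural zig-zag) rather than just an abstract isomorphism on each object. I would handle naturality by exhibiting, for each single application $\Na^1_t$, a natural map relating the complex to the appropriate shift of a sub/quotient complex (the canonical inclusions and projections of Example \ref{canex} are natural), composing these, and invoking Lemma \ref{truncateexact} and the five lemma to promote object-wise quasi-isomorphisms to a natural quasi-isomorphism on all bounded complexes. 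Once the $n=1$ statement is natural, the tensor-decomposition argument upgrades it to arbitrary $n$ and arbitrary $\mbk$, since $\Na^{\mbk}_{\mbt}$ is built by composing the coordinate functors, each of which is now known to be invertible up to shift on the derived category; hence $\Na^{\mbk}_{\mbt}$ is a composite of self-equivalences, proving the final clause.
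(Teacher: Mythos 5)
Your computation on interval modules is essentially right once you finish the count, and the discrepancy you flagged is not a mishandled degree shift: Example \ref{firstnakone} already records the internal grading correctly in the wrap-around step $K_t\{a',t\}\mapsto T^{-1}K_t\{0,a'\}$. The point is simply that after $(t-b)+1+(b-a)+1=t-a+2$ applications you have reached $T^{-2}K_t\{0,b-a\}$ but you still have $a$ applications left; these $a$ further steps of the first type carry $K_t\{0,b-a\}$ up to $K_t\{a,b\}$, giving $T^{-2}K_t\{a,b\}$ after exactly $t+2$ steps. So the object-level statement on intervals is fine.

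The genuine gap is the reduction from a general module $M$ to interval modules via the filtration preceding Definition \ref{intervelmodule}. A filtration by intervals is not functorial in $M$, and even objectwise the induction does not close: from $0\to M'\to M\to K_t\{a,b\}\to 0$, exactness of $\Na^{t+2}_t$ and the long exact sequence only show that $H^{-2}\Na^{t+2}_t(M)$ is \emph{some} extension of $K_t\{a,b\}$ by $M'$; without an already-constructed comparison map you cannot identify that extension with $M$, and the five lemma has no maps to act on. The paper avoids this by resolving $M$ by the modules $K_{\mbt}\{0,\mbr\}$, which are the indecomposable \emph{injectives} of the category of positively $\mbt$-determined modules, and by invoking Lemma \ref{nathelper1}, whose content is precisely that the quasi-isomorphisms $\Na^k_t K_t\{0,t-y\}\to \mycx^k_t(t-y)$ are compatible with the canonical projections $K_t\{0,t-y\}\to K_t\{0,t-y'\}$ --- and these projections are the only maps occurring in such a resolution. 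That compatibility is what upgrades the objectwise computation to a natural zig-zag of quasi-isomorphisms; naturality with respect to the inclusions and projections inside one fixed filtration is not naturality in $C$. Two smaller points: a general positively $\mbt$-determined module does \emph{not} tensor-decompose (only the intervals and the injectives do), so the reduction to $n=1$ should go through the fact that $\Na^{(t_j+2)\varepsilon_j}_{\mbt}$ only involves the $j$-th variable (adjunction), or through the decomposable injectives as the paper does; and the proposition concerns arbitrary, not bounded, complexes, which the paper handles by writing $C$ as a colimit of truncations and using that the Nakayama functor is exact and commutes with colimits.
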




\section{Duality Functors}
\label{sec:dufu}

In this section we describe the Nakayama functor as the
composition of two duality functors. 
The Nakayama functor in the theory of Artin algebras is defined
on the category of chain
complexes of left modules
over an Artin algebra.
It is the
composition of two duality functors and it represents the
Auslander--Reiten translate on the bounded derived category of chain complexes
of fintiely generated positively $\mbt$-determined modules. 
Let us denote the incidence algebra of the partially ordered set $P$
consisting of the multidegrees $\mba \in \Z^n$ satisfying $0 \le \mba
\le \mbt$ by $\Lambda_\mbt$.
We shall show that under an
equivalence of
categories between the category of 
 $\mbt$-deter\-mined $S$-modules and the category of 
left $\Lambda_\mbt$-modules
the functor $\Na^{\one}_{\mbt}$ corresponds to the
Nakayama functor from the theory of Artin algebras.

This section is not a prerequisite for the rest of the paper. 
The reader who wishes may safely go directly to Section \ref{sec:mainresults}
where the main results are presented.

\noindent{\it Note.} Our definition of the Nakayama functor, Definition 
\ref{DefineNak2}, works well wether $M$ is finitely generated or not.
However since we here compare it to the composition of two duality functor,
{\it we assume in this section that all modules have graded parts which are
finite-dimensional vector spaces}, in particular all positively $\mbt$-determined
modules are finitely generated.



\subsection{Local duality}
\label{sec:locdua}

There are two highly different duals of a $\Z^n$-graded $S$-module
$M$. One is the
$\Z^n$-graded Matlis dual $A(M) = \HOM_S(M,E)$, where $E$ is the
$\Z^n$-graded injective envelope $E = K[x_1^{-1},\dots,x_n^{-1}]$ of $S/\mm$. The other is the
chain complex $D(M) = \HOM_S(M,Q)$ where $Q$ is an
injective resolution of
$T^nS(-\one)$.
Here
$T^n$ shifts homological degrees up by $n$. 
The complex $Q$ is a dualizing complex for finitely generated $\Z^n$-graded $S$-modules
in the sense that the natural 
homomorphism $M \to (D \circ D)(M)$ is a quasi-isomorphism for every
finitely generated $\Z^n$-graded $S$-module $M$.
Grothendieck's local duality theorem  provides an
isomorphism of $\Z^n$-graded $S$-modules between
 $H^{i}(A \circ D(M))$ 
and the local cohomology $H^i_{\mm}(M)$ \cite[Example 3.6.10 and Theorem 3.6.19]{BH}.
Below we study 
twisted versions $M \mapsto p_{\mbt}^* A(M(\mbt))$ and $M
\mapsto D(M(\mbt-\one))$ of the duality
functors $A$ and $D$.

\subsection{Alexander duality}
\label{sec:alexdu}

In \cite{Miller2000} Miller introduced the Alexander duality functor on the category
of  positively $\mbt$-determined $S$-modules. The value on a 
positively $\mbt$-determined $S$-module $M$ is given by the formula
\begin{displaymath}
  A_{\mbt}(M) = p_{\mbt}^* A(M(\mbt)) = p_{\mbt}^* \HOM_S(M(\mbt),E).
\end{displaymath}
The functor $A_{\mbt}$ generalizes Alexander duality for
simplicial complexes. 

In order to make the definition of $A_{\mbt}(M)$ more explicit we note that 
if $\mba \in [0,\mbt]$, then there is an
isomorphism
\begin{displaymath}
  A_{\mbt}(M)_{\mba} \cong \HOM_K(M({\mbt}),K)_{\mba} = 
\Hom_K(M_{\mbt - \mba},K).
\end{displaymath}
If $\mba +
\varepsilon_i \le \mbt$, then the multiplication map  
$$x_i \colon
A_{\mbt}(M)_{\mba} \to A_{\mbt}(M)_{\mba+\varepsilon_i}
$$
is dual to the multiplication
\begin{displaymath}
  x_i \colon M_{\mbt-\mba-\varepsilon_i} \to M_{\mbt-\mba}.
\end{displaymath}
If $M$ is
 positively $\mbt$-determined, then $A_{\mbt} \circ A_{\mbt}(M)$ is
isomorphic to $M$.

\begin{example}
  Let $I$ be a monomial ideal
  in $S$ and suppose that the minimal generators in degrees less than
  or equal to $\mbt$ are given by the monomials
  $x^{\mba^1},\dots,x^{\mba^r}$.
  Following Miller and   
  Sturmfels \cite{MiSt} we let $I^{[\mbt]}$ denote the Alexander dual positively $\mbt$-determined
  ideal 
  \begin{displaymath}
    I^{[\mbt]} = \mm^{\mbb^1} \cap \dots \cap \mm^{\mbb^r},
  \end{displaymath}
  where 
  \begin{displaymath}
    b^i_j =
    \begin{cases}
      t_j + 1 - a^i_j & \text{if $a^i_j \ge 1$} \\
      0 & \text{if $a^i_j = 0$}.
    \end{cases}
  \end{displaymath}
  The $\Z^n$-graded $S$-modules $A_{\mbt}(I)$ and  $S/I^{[\mbt]}$ are isomorphic.
\end{example}
\begin{example}
  Let $n=1$ and let $M = K_t\{a,b\}$ for $0 \le a \le b \le t$. There
  is an isomorphism $A_t(M) \cong K_t\{t-b,t-a\}$. In particular 
  \begin{displaymath}
    A_t(K_t\{a,t\}) = A_t(S(-a)) =
    \begin{cases}
      S & \text{if $a=0$} \\
      S/x^{t-a+1}S & \text{if $a>0$.} 
    \end{cases}
  \end{displaymath}
\end{example}

\subsection{A twisted duality}
\label{sec:matdua}

We shall now study the twisted duality
\begin{displaymath}
  M \mapsto D(M(\mbt-\one)) = \HOM_S(M,Q(\one -\mbt)).
\end{displaymath}
It has the property that $p_{\mbt}^*\circ A \circ D(M(-\one))$ is equal
to $A_{\mbt}\circ D(M({\mbt- \one})))$. 
Since we are interested in the Nakayama functor
$\Na_{\mbt}^{\one}(M)$ on a  positively $\mbt$-determined $S$-module
$M$, it will be convenient to work with a version $M \mapsto
D_{\mbt}(M)$ of this twisted duality functor 
which restricts to an endofunctor on the category of 
positively $\mbt$-determined
$S$-modules.

For $\mbt \in \N^n$ recall the comples $P_\mbt = P_\mbt^\one$ of Subsection
\ref{sec:nakcomes}.  We let $I_{\mbt}$ denote the injective resolution
\begin{displaymath}
  I_{\mbt} =
  \HOM_S(\HOM_S(P_{\mbt},S),K[x_1^{-1},\dots,x_n^{-1}]) \cong
  \HOM_K(\HOM_S(P_{\mbt},S),K)
\end{displaymath}
of $T^nS/\mm^{\mbt}(-\one)$. We define $D_{\mbt}$ to be
the endofunctor on the category of chain complexes of $\Z^n$-graded
$S$-modules given by the formula
\begin{displaymath}
  D_{\mbt}(C) = p_\mbt^* \HOM_S(C,I_{\mbt+\one}(\one-\mbt)).
\end{displaymath}
We shall show in Proposition  \ref{DualProEkvdu} 
that this is essentially the same as the functor $D(M(\mbt- \one))$.

\begin{example}
\label{Dexmaplen=1}
  Let $n=1$. Then
  $I_{t+1}$ is isomorphic to the chain complex $K[x^{-1}](-t-1) \to
  K[x^{-1}]$ with $K[x^{-1}](-t-1)$ in degree one. Let $0 \leq a \leq b \leq t$.
  There
  are isomorphisms
  \begin{displaymath}
    \HOM_S(K_t\{a, b\},K[x^{-1}](-t-1))(1-t) \cong \HOM_K(K_t\{a, b\},K)(-2t)
  \end{displaymath}
  and
  \begin{displaymath}
    \HOM_S(K_t\{a, b\},K[x^{-1}])(1-t) \cong \HOM_K(K_t\{a, b\},K)(1-t).
  \end{displaymath}
  If $b < t$ then $D_t(K_t\{a, b\})$ is isomorphic to the chain complex
  \begin{displaymath}
    0 \to K_t\{t-b-1,t-a-1\} 
  \end{displaymath}
  and if $b=t$, it is isomorphic to the chain complex
  \begin{displaymath}
    K_t\{0,t\} \to K_t\{0,t-a-1\}.
  \end{displaymath}
  On homology we obtain an isomorphism
  \begin{displaymath}
    H_* D_t(K_t\{a,b\}) \cong
    \begin{cases}
      K_t\{t-b-1,t-a-1\} e_{0} & \text{if $b<t$} \\
      K_t\{t-a,t\} e_1 &\text{if $b=t$}.
    \end{cases}
  \end{displaymath}
    Here $e_0$ is a generator in homological degree zero and
    $e_{1}$ is a generator in homological degree one.
\end{example}

\begin{lem}
\label{convertnaka}
  Let $M$ be a finitely generated $\Z^n$-graded $S$-module. For every
  $\mbr \in \N^n$ there is a natural isomorphism
  \begin{eqnarray*}
    \HOM_S(P_{\mbr},M) &\cong&
    \HOM_K(\HOM_S(M,I_{\mbr}),K) 
  \end{eqnarray*}
\end{lem}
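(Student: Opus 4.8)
The plan is to reduce the statement to a direct computation by unwinding all the definitions, since both sides are very explicit. Recall that $I_\mbr = \HOM_K(\HOM_S(P_\mbr,S),K)$ by definition. So the right-hand side is
\[
  \HOM_K\bigl(\HOM_S(M,\HOM_K(\HOM_S(P_\mbr,S),K)),K\bigr).
\]
First I would use the tensor-hom adjunction $\HOM_S(M,\HOM_K(N,K)) \cong \HOM_K(M\otimes_S N, K)$, valid here because $K$ is a field and $M$, $N = \HOM_S(P_\mbr,S)$ are finitely generated, to rewrite this inner part. This turns the right-hand side into $\HOM_K(\HOM_K(M\otimes_S \HOM_S(P_\mbr,S),\,K),\,K)$, which is the $K$-double dual of $M\otimes_S \HOM_S(P_\mbr,S)$. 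Since $P_\mbr$ is a bounded complex of finitely generated free $S$-modules, $M\otimes_S \HOM_S(P_\mbr,S)$ is degreewise a finitely generated, hence finite-dimensional in each $\Z^n$-degree (using that $M$ is finitely generated and we work $\Z^n$-graded), so the canonical biduality map is an isomorphism. This identifies the right-hand side with $M\otimes_S \HOM_S(P_\mbr,S)$.

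Next I would show $M\otimes_S \HOM_S(P_\mbr,S) \cong \HOM_S(P_\mbr,M)$. Again because $P_\mbr$ is a bounded complex of finitely generated free $S$-modules, for each free $S$-module $F$ in the complex one has the standard natural isomorphism $M\otimes_S \HOM_S(F,S) \cong \HOM_S(F,M)$ (it is the evident iso $M\otimes_S S^{\oplus m}\cong M^{\oplus m}$, compatible with the differentials, which are given by matrices over $S$), and these assemble to an isomorphism of chain complexes. Composing the two displayed isomorphisms gives the natural isomorphism $\HOM_S(P_\mbr,M)\cong \HOM_K(\HOM_S(M,I_\mbr),K)$ asserted in the lemma. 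Naturality in $M$ is clear since every map used is a natural transformation of functors in $M$.

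The main point to be careful about — the "hard part," such as it is — is keeping track of the $\Z^n$-grading and the homological grading through the adjunctions and double-dualizations, and verifying that the finiteness hypotheses make the biduality map and the $\otimes/\HOM$ swap into genuine isomorphisms rather than just natural maps. Concretely: $\HOM_S(P_\mbr,S)$ must be checked to be a bounded complex of finitely generated free $S$-modules (it is, being the $S$-dual of the Koszul-type complex $P_\mbr = \bigotimes_j(x_j^{r_j}S\to S)$), and then $\HOM_S(M,-)$ applied to it, or $M\otimes_S -$ applied to it, yields complexes whose graded pieces are finite-dimensional over $K$ whenever $M$ is finitely generated. With those observations in place the chain of isomorphisms is routine, and one simply records that each step is natural in $M$. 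Since the excerpt ends at the statement, I will not write out the bookkeeping of signs in the differentials; the reader can supply it, noting that all differentials in $P_\mbr$, its $S$-dual, and the resulting complexes are $S$-linear maps between frees, so tensoring and Hom-ing commute with them on the nose.
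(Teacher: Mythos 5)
Your proposal is correct and follows essentially the same route as the paper: the natural isomorphism $M\otimes_S\HOM_S(P_{\mbr},S)\cong\HOM_S(P_{\mbr},M)$ for the bounded complex of finitely generated frees $P_{\mbr}$, the tensor--Hom adjunction $\HOM_S(M,\HOM_K(N,K))\cong\HOM_K(M\otimes_S N,K)$, and $K$-biduality for $\Z^n$-graded modules with finite-dimensional graded pieces. The only cosmetic difference is that you apply biduality to $M\otimes_S\HOM_S(P_{\mbr},S)$ whereas the paper applies it to the isomorphic complex $\HOM_S(P_{\mbr},M)$.
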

\begin{proof}
  First note that $M \otimes_S
  \HOM_S(P_{\mbr},S)$ is naturally isomorphic to $\HOM_S(P_{\mbr},M)$. 
  Together 
with the adjunction isomorphism
  \begin{displaymath}
    \HOM_S(M,\HOM_K(\HOM_S(P_{\mbr},S),K)) \cong
    \HOM_K(M \otimes_S \HOM_S(P_{\mbr},S),K) 
  \end{displaymath}
  this gives an isomorphism $\HOM_S(M,I_{\mbr}) \cong
  \HOM_K(\HOM_S(P_{\mbr},M),K)$.
  The result now follows from the isomorphism $N \cong
  \HOM_K(\HOM_K(N,K),K)$ with $N = \HOM_S(P_{\mbr},M)$.

\end{proof}

The following is the main observation we make in this subsection.
\begin{cor} \label{AusCorAD}
  For every $\mbt \in \N^n$ there is an isomorphism
  \begin{displaymath}
    \Na^{\one}_{\mbt} \cong A_{\mbt} \circ D_{\mbt}
  \end{displaymath}
  of endofunctors on the category of  chain complexes of 
finitely generated positively $\mbt$-determined $S$-modules.
\end{cor}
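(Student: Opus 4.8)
The plan is to unwind both sides of the claimed isomorphism to the explicit chain-level formulas already recorded in the text, and then to produce a natural isomorphism by a sequence of adjunction identities, with the truncation functor $p_{\mbt}^*$ carried along throughout. Concretely, by Definition \ref{DefineNak} we have $\Na^{\one}_{\mbt}(C) = p_{\mbt}^*\HOM_S(P^{\one}_{\mbt+\one},C(-\one))$, while $A_{\mbt}$ is $p_{\mbt}^* A(-(\mbt))$ and $D_{\mbt}(C) = p_\mbt^* \HOM_S(C,I_{\mbt+\one}(\one-\mbt))$. So the composite $A_{\mbt}\circ D_{\mbt}(C)$ is, before simplification, $p_{\mbt}^* A\bigl((p_{\mbt}^*\HOM_S(C,I_{\mbt+\one}(\one-\mbt)))(\mbt)\bigr)$. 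The first thing I would do is observe that $p_{\mbt}^*$ may be stripped from the inner occurrence: applying $A(-(\mbt))$ and then $p_{\mbt}^*$ only reads the inner module in degrees $[\mathbf 0,\mbt]$, where an extra $p_{\mbt}^*$ has no effect — this is exactly the content of the remark that $A_{\mbt}\circ D_{\mbt}(M) = p_{\mbt}^*\circ A\circ D(M(\mbt-\one))$ recorded in Subsection \ref{sec:matdua}. I would cite that identity rather than reprove it.

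Next I would reduce to a purely chain-level statement about the complexes $P_{\mbr}$ and $I_{\mbr}$ with $\mbr = \mbt+\one$. After the reduction above, the right-hand side is $p_{\mbt}^*\HOM_S\bigl(\HOM_S(C,I_{\mbt+\one}(\one-\mbt)),E(\mbt)\bigr)$ where $E = K[x_1^{-1},\dots,x_n^{-1}]$, and since $E(\mbt)\cong\HOM_K(S(-\mbt),K)$ in the relevant range, the inner/outer $\HOM_S(-,E)$ is Matlis-type $K$-duality up to a twist. The key step is then Lemma \ref{convertnaka}, which gives $\HOM_S(P_{\mbr},M)\cong\HOM_K(\HOM_S(M,I_{\mbr}),K)$ naturally in $M$. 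Applying this with $\mbr = \mbt+\one$ and $M = C(-\one)$ — extended termwise to the chain complex $C$ — identifies $\Na^{\one}_{\mbt}(C) = p_{\mbt}^*\HOM_S(P_{\mbt+\one},C(-\one))$ with $p_{\mbt}^*\HOM_K(\HOM_S(C(-\one),I_{\mbt+\one}),K)$. The remaining work is bookkeeping with twists: matching $\HOM_S(C(-\one),I_{\mbt+\one})$ against $\HOM_S(C,I_{\mbt+\one}(\one-\mbt))(\mbt)$ and matching $\HOM_K(-,K)$ against $p_{\mbt}^*A(-(\mbt)) = p_{\mbt}^*\HOM_S(-(\mbt),E)$, using that $\HOM_S(N(\mbt),E)\cong\HOM_K(N,K)$ in degrees $[\mathbf 0,\mbt]$ for $N$ positively $\mbt$-determined. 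All these isomorphisms are natural in $C$, so they assemble into a natural isomorphism of functors; naturality is automatic since every map used is an instance of a Hom–tensor adjunction or the canonical biduality $N\cong\HOM_K(\HOM_K(N,K),K)$ on finite-dimensional spaces, which is why the finiteness hypothesis of this section is invoked.

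I expect the main obstacle to be purely clerical rather than conceptual: keeping the homological and internal $\Z^n$-degree shifts consistent across the four functors. In particular one must check that the homological shift $T^n$ hidden in the dualizing complex $Q$ (and hence in $I_{\mbt+\one}$, which resolves $T^nS/\mm^{\mbt+\one}(-\one)$) is cancelled correctly against the cohomological indexing convention $H^iC = H_{-i}C$, so that $\Na^{\one}_{\mbt}$ — which involves no homological shift in Definition \ref{DefineNak} — really does come out on the nose rather than up to a shift $T^{\pm n}$. The cleanest way to control this is to verify the isomorphism one variable at a time (it suffices by the tensor-product decompositions $P_{\mbt} = \bigotimes_j P_{t_j\varepsilon_j}$, $I_{\mbt} = \bigotimes_j I_{t_j\varepsilon_j}$, and compatibility of $p_{\mbt}^*$ with $\otimes_K$) and against interval modules, using Examples \ref{firstnakone} and \ref{Dexmaplen=1} together with the example computing $A_t(K_t\{a,b\})\cong K_t\{t-b,t-a\}$: composing the $D_t$ and $A_t$ computations there reproduces exactly $H^*\Na^1_t K_t\{a,b\}$, which both pins down all the twists and serves as a sanity check on the general argument.
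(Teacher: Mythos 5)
Your proposal is correct and follows essentially the same route as the paper: the proof there is exactly the combination of Lemma \ref{convertnaka} applied with $\mbr=\mbt+\one$ and $M=C(-\one)$, the twist bookkeeping identifying $p_{\mbt}^*\HOM_K(\HOM_S(C(-\one),I_{\mbt+\one}),K)$ with $A_{\mbt}(\HOM_S(C,I_{\mbt+\one}(\one-\mbt)))$, and the observation that the inner $p_{\mbt}^*$ may be inserted harmlessly since $A_{\mbt}$ only reads degrees in $[\mathbf 0,\mbt]$. Your worry about a stray shift $T^{\pm n}$ is moot because $I_{\mbt+\one}$ is defined as the double dual of $P_{\mbt+\one}$, so Lemma \ref{convertnaka} holds on the nose; the one-variable interval-module check is a fine sanity test but not needed.
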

\begin{proof}
  Applying the
  truncation $p_{\mbt}$ 
  and Lemma \ref{convertnaka} 
  we obtain
  isomorphisms
  \begin{eqnarray*}
    p_{\mbt}^* \HOM_S(P_{\mbt + \one},C(-\one)) &\cong& 
    A_{\mbt}(\HOM_S(C,I_{\mbt + \one}(\one-\mbt))) \\
&\cong& A_{\mbt}(p_{\mbt}^*\HOM_S(C,I_{\mbt + \one}(\one-\mbt))) =  
A_{\mbt} \circ D_{\mbt}(C).
  \end{eqnarray*}

\end{proof}

 The following shows that our duality $D_{\mbt}$
is a version for positively $\mbt$-determined modules of the standard 
duality $D$. Also since $A_{\mbt}$ is exact,
the above corollary together with the following proposition implies
the local duality theorem for 
$\Z^n$-graded $S$-modules.
\begin{prop} \label{DualProEkvdu}
  Let $M$ be a  positively $\mbt$-determined $S$-module. There is a
  natural quasi-isomorphism between the chain
  complexes $D_{\mbt}(M)$ and $D(M(\mbt-\one))$. In particular there is a
  natural isomorphism
  \begin{displaymath}
    H_i D_{\mbt}(M) \cong \EXT^{n-i}_S(M,S(-\mbt)).
  \end{displaymath}
\end{prop}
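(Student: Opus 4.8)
The plan is to unwind the definition $D_{\mbt}(M) = p_{\mbt}^* \HOM_S(M, I_{\mbt+\one}(\one-\mbt))$ and compare it step by step with $D(M(\mbt-\one)) = \HOM_S(M, Q(\one - \mbt))$, where $Q$ is an injective resolution of $T^n S(-\one)$. First I would record that $I_{\mbt+\one}$, being $\HOM_K(\HOM_S(P_{\mbt+\one},S),K)$, is an injective resolution of $T^n S/\mm^{\mbt+\one}(-\one)$ (as already noted in the text just before Example \ref{Dexmaplen=1}). The key point is that there is a natural map of complexes $I_{\mbt+\one} \to Q'$, for a suitable shift $Q'$ of $Q$, extending the quotient map $S \to S/\mm^{\mbt+\one}$ up to the appropriate twist; more precisely one wants a quasi-isomorphism after twisting so that $\HOM_S(M, I_{\mbt+\one}(\one-\mbt)) \to \HOM_S(M, Q(\one-\mbt))$ is a quasi-isomorphism for every $M$ that is positively $\mbt$-determined.

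The mechanism making this work is exactly Proposition \ref{extstab} together with Proposition \ref{extlok}: for a positively $\mbt$-determined module $M$, the complex $\HOM_S(M, I_{\mbt+\one}(\one-\mbt))$ computes $\EXT^{n-i}_S(M, S/\mm^{\mbt+\one}(-\mbt))$ (shifting homological degrees), and passing from the Koszul/Čech-type resolution $I_{\mbt+\one}$ to the genuine dualizing complex $Q$ changes $S/\mm^{\mbt+\one}$ to $S$ precisely in the range of degrees where $M$ is already stabilized. Concretely I would reduce to $n=1$ via the adjunction between $\otimes$ and $\HOM$ (both $D_{\mbt}$ and $D(-(\mbt-\one))$ factor through tensor products over the variables), as was done in the proofs of Propositions \ref{extstab} and \ref{extlok}. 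In the case $n=1$ one checks directly, using the explicit form $I_{t+1} = (K[x^{-1}](-t-1) \to K[x^{-1}])$ from Example \ref{Dexmaplen=1} and the fact that $M_b = 0$ for $b < 0$, that $\HOM_S(M, I_{t+1}(-t))$ has the same homology in each degree as $\HOM_S(M, Q(-t))$; the only degrees where the two resolutions $S/\mm^{t+1}$ and $S$ differ are degrees $\geq t+1$ in the relevant direction, and there the $t$-determinedness of $M$ forces the comparison map on $\HOM$ to be an isomorphism, exactly as in the proof of Proposition \ref{extstab}.

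Once the quasi-isomorphism $D_{\mbt}(M) \simeq D(M(\mbt-\one))$ is established, the final assertion is immediate: $D(N) = \HOM_S(N, Q)$ has $H_i D(N) \cong \EXT^{n-i}_S(N, S)$ by the definition of $Q$ as an injective resolution of $T^n S(-\one)$ (the homological shift $T^n$ accounts for $n-i$ rather than $-i$, and the twist by $-\one$ is absorbed), so with $N = M(\mbt-\one)$ and $\HOM_S(M(\mbt-\one), S) = \HOM_S(M, S)(\one-\mbt)$ one gets $H_i D_{\mbt}(M) \cong \EXT^{n-i}_S(M, S(-\mbt))$ after adjusting the twist; I would double-check the bookkeeping of the two twists ($\mbt-\one$ on the source versus $\one-\mbt$ inside $Q$) so that they combine to $S(-\mbt)$. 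The main obstacle I anticipate is precisely this degree-by-degree comparison of the truncated resolution $I_{\mbt+\one}$ with the dualizing complex $Q$ — making the natural comparison map explicit and verifying it is a quasi-isomorphism on positively $\mbt$-determined modules — rather than the final cohomology computation, which is then formal. The reduction to $n=1$ should make the comparison manageable, since there $Q$ is represented by the two-term Čech complex $(S \to K[x^{-1}])$ and $I_{t+1}$ by $(K[x^{-1}](-t-1) \to K[x^{-1}])$, and the map between them is visibly an isomorphism in all degrees that matter after applying $\HOM_S(M,-)$ with $M$ being $t$-determined.
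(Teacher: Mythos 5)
There is a genuine gap: your comparison never invokes the truncation $p_{\mbt}^*$ that is part of the definition of $D_{\mbt}$, and without it the claimed degree-by-degree agreement is false. Since $I_{\mbt+\one}$ resolves $T^nS/\mm^{\mbt+\one}(-\one)$, the complex $\HOM_S(M,I_{\mbt+\one}(\one-\mbt))$ computes $\EXT^{n-i}_S(M,S/\mm^{\mbt+\one}(-\mbt))$, whereas $\HOM_S(M,Q(\one-\mbt))$ computes $\EXT^{n-i}_S(M,S(-\mbt))$. Already for $M=S$ these are $S/\mm^{\mbt+\one}(-\mbt)$ and $S(-\mbt)$, which differ for instance in degree $2\mbt+\one$; no amount of positive $\mbt$-determinedness of $M$ makes the comparison map a quasi-isomorphism before truncation. (Note also that the natural map goes $Q'\to I_{\mbt+\one}$, being induced on injective resolutions by the quotient $S\to S/\mm^{\mbt+\one}$, not in the direction you wrote.) What repairs the discrepancy is precisely $p_{\mbt}^*$: the two Ext modules agree on $[0,\mbt]$, and truncation then forces agreement everywhere because $\EXT^{n-i}_S(M,S(-\mbt))$ is itself positively $\mbt$-determined. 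Your phrase ``the range of degrees where $M$ is already stabilized'' conflates this with the mechanism of Proposition \ref{extstab}, which concerns varying the argument $S/\mm^{\mbt'+\one}$ in the first variable, not the present comparison of the targets $S/\mm^{\mbt+\one}$ versus $S$ in the second variable.

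The paper's proof sidesteps the resolution comparison entirely: it reduces to a projective positively $\mbt$-determined module $P$ (so that $\HOM_S(P,-)$ preserves quasi-isomorphisms and one may replace $I_{\mbt+\one}$ by its homology $T^nS/\mm^{\mbt+\one}(-\one)$), and then verifies on the indecomposables $P=S(-\mba)$ with $0\le\mba\le\mbt$ the isomorphism $p_{\mbt}^*\bigl[(S/\mm^{\mbt+\one})(\mba-\mbt)\bigr]\cong S(\mba-\mbt)\cong\HOM_S(S(-\mba),S(-\mbt))$, which is exactly where the truncation converts $S/\mm^{\mbt+\one}$ into $S$. The hypothesis that $M$ is positively $\mbt$-determined enters through the restriction $\mba\le\mbt$ on the twists occurring in a projective resolution of $M$ (for $\mba\not\le\mbt$ the displayed isomorphism fails), a point your argument also does not use. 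To salvage your approach you would have to apply $p_{\mbt}^*$ to both sides before comparing, and then separately show that $p_{\mbt}^*$ does not change the homology of $D(M(\mbt-\one))$; your final reduction of $H_iD(M(\mbt-\one))$ to $\EXT^{n-i}_S(M,S(-\mbt))$ is, as you say, formal and correct.
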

\begin{proof}
  It suffices to show that if $P$ is a projective 
  positively $\mbt$-determined 
  $S$-module, then there is a quasi-isomorphism $T^n\HOM_S(P,S(-\mbt))
  \xrightarrow \simeq D_{\mbt}(P)$ and that this quasi-isomorphism is natural
  in $P$. However since $I_{\mbt+\one}$ is a resolution of
  $T^nS/\mm^{\mbt+\one}(-\one)$ 
  and $P$ is projective there is a quasi-isomorphism
  $$T^n p_{\mbt^*} [\HOM_S(P,S/\mm^{\mbt+\one}(-\one))(\one-\mbt)] \to
  D_{\mbt}(P).$$ 
  On the other hand every projective
  positively $\mbt$-determined $S$-module is a direct sum of $S$-modules of the
  form $S(-\mba)$ for $0 \le \mba \le \mbt$ and we have isomorphisms
  \begin{eqnarray*}
    p_{\mbt}^* [\HOM_S(S(-\mba),S/\mm^{\mbt + \one}S(-\one))(\one-\mbt)] &\cong&
    p_{\mbt}^* [(S/\mm^{\mbt + \one}S)(\mba-\mbt)]
    \\ & \cong&
    S(\mba -\mbt) \\
    &\cong& \HOM_S(S(-\mba),S(-\mbt)).
  \end{eqnarray*}

\end{proof}

\subsection{Squarefree modules}
\label{sec:sqfm}

In the case $\mbt = \one$  positively $\mbt$-determined
$S$-modules are termed squarefree $S$-modules by Yanagawa \cite{Yanagawa2000}. 
Alexander duality for square free modules, defined by T.R\"omer 
\cite{Romer2001} and
E.Miller \cite{Miller2000}, has been well studied. For instance the 
Alexander dual of a Stanley-Reisner ring $K[\Delta]$ is the 
Stanley-Reisner ideal $I_{\Delta^*}$ of the Alexander dual simplicial complex
$\Delta^*$.
In \cite{Floystad2004}, the cohomology modules of $\Na_{\one}^{\one}(S/I)$ were termed the enriched
cohomology modules of $\Delta$ by the second author, 
because the $S$-module rank of $H^i\Na_{\one}^{\one}(S/I)$ equals the
$K$-vector space dimension of the reduced cohomology $\widetilde
H^i(\Delta)$. 
 Yanagawa, \cite{Yanagawa2004},
observed that in the derived category of chain complexes of
positively $\one$-determined $S$-modules, which he calls the category
of squarefree
$S$-modules, the three times iterated Nakayama functor $\Na_{\one}^{\one} \circ\Na_{\one}^{\one}
\circ\Na_{\one}^{\one} (M)$ is isomorphic to a homological shift 
of $M$ \cite{Yanagawa2004}. This is an instance of Proposition \ref{trans}.

\subsection{Duality over incidence algebras}
\label{sec:posdu}

In order to give our next description of the Nakayama functor we note
that up to canonical isomorphism a  positively $\mbt$-determined
$S$-module $M$ is determined by the $K$-vector spaces $M_{\mba}$ for $0 \le
\mba \le \mbt$ and the multiplication homomorphisms $x_j \colon
M_{\mba-\varepsilon_j} \to M_{\mba}$ for $\varepsilon_j \le \mba \le \mbt$. This is
exactly the structure encoded by a (left) module over the incidence
algebra $\Lambda_{\mbt}$ of the poset $[0,\mbt] = \{\mba
\colon 0 \le \mba \le \mbt \}$.

Let us recall that to every finite
poset $P$ there is an associated incidence algebra, denoted
$I(P)$. It is the $\kr$-algebra which as a $\kr$-vector space has the
pairs $(q,p)$ (where $p \leq q$) as basis.
The product is given on basis
elements by $(r,q) \cdot (q,p) = (r,p)$, and $(r,q^\prime) \cdot (q,p)$
is zero when $q \not = q^\prime$. The identity element in the incidence
algebra is $\sum_{p \in P} (p,p)$.  A (left) module $M$ over the incidence
algebra may then be written as $M = \sum_{p \in P} (p,p) \cdot M$ or,
letting $M_p = (p,p) \cdot M$, we get $M = \oplus_{p \in P} M_p$.
The category of (left) modules over $I(P)$ is isomorphic to the category
of functors from $P$, considered as a category with a morphism $p \to p'$ if and
only if $p \le p'$, to the category of $K$-vector spaces. Such
functors are often called $KP$-modules. 

Given two (left) $I(P)$-modules $M$ and $N$ we denote
by $\Hom_{I(P)}(M,N)$ the $K$-vector space of $I(P)$-linear homomorphisms
from $M$ to $N$. Since $I(P)$ is an
$I(P)$-bimodule, $\Hom_{I(P)}(M,I(P))$ becomes a right $I(P)$-module
which we may consider as a left $I(P^\op)$-module.
Explicitly
\begin{displaymath}
  \Hom_{I(P)}(M,I(P))_x = \Hom_{I(P)}(M,KP(x,-)) 
\end{displaymath}
where the latter is the $K$-vector space of $I(P)$-linear homomorphisms from 
$M$ to the
$I(P)$-module $KP(x,-)$ with $KP(x,-)_y = KP(x,y) = K$ if $x \le y$ and with 
$KP(x,-)_y = 0$ otherwise.

For $P = [0,\mbt]$ we shall denote the incidence algebra $I(P)$ by
$\Lambda_{\mbt}$.
There is an order preserving bijection
$\tau 
\colon [0,\mbt] \to [0,\mbt]^{\op}$ with $\tau(\mba) = \mbt-\mba$
and an induced homomorphism $\tau \colon \Lambda_{\mbt} \to \Lambda_{\mbt}^{\op}$. Given a
projective $\Lambda_{\mbt}$-module $M$ we shall be interested in the $\Lambda_{\mbt}$-module
$\tau^*\Hom_{\Lambda_{\mbt}}(M,\Lambda_{\mbt})$. 
\begin{prop}
\label{posetD}
  Let $\mbt \in \N^n$. If $M$ is a projective 
   positively $\mbt$-determined $S$-module, then considering $M$ and
  $D_{\mbt}(M)$ as $\Lambda_{\mbt}$-modules there is a quasi-isomorphism
  $D_{\mbt}(M) \to T^n\tau^*\Hom_{\Lambda_{\mbt}}(M,\Lambda_{\mbt})$. Moreover this
  quasi-isomorphism is natural in $M$.
\end{prop}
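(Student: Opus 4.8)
The plan is to reduce to the case of a single generator $S(-\mba)$ with $0 \le \mba \le \mbt$, since a projective positively $\mbt$-determined module is a finite direct sum of such modules and both functors $D_\mbt(-)$ and $T^n\tau^*\Hom_{\Lambda_\mbt}(-,\Lambda_\mbt)$ commute with finite direct sums. Naturality in $M$ will then follow because the morphisms between projectives $S(-\mba) \to S(-\mbb)$ correspond, on one side, to multiplication maps of graded modules, and on the other side, to morphisms $\Lambda_\mbt$-module projectives, and the comparison map we construct will be built functorially out of the defining adjunction isomorphisms (which are natural). So the heart of the argument is to identify, for $P = S(-\mba)$, the two sides explicitly and produce a natural quasi-isomorphism between them.

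First I would compute the right-hand side. As a $\Lambda_\mbt$-module, $S(-\mba)$ corresponds to the representable functor $KP(\mba,-)$ on the poset $P = [0,\mbt]$ (its graded piece in degree $\mbr$ is $K$ if $\mba \le \mbr \le \mbt$ and $0$ otherwise), which is precisely the indecomposable projective $\Lambda_\mbt$-module at the vertex $\mba$. Hence $\Hom_{\Lambda_\mbt}(S(-\mba),\Lambda_\mbt)$ is, using the description $\Hom_{\Lambda_\mbt}(KP(\mba,-),KP(x,-)) = KP(x,-)_{\mba} = K$ for $x \le \mba$ recorded just before the proposition, the indecomposable projective right $\Lambda_\mbt$-module at $\mba$, i.e. $KP(-,\mba)$; applying $\tau^*$ with $\tau(\mbc) = \mbt-\mbc$ turns this into the indecomposable projective left $\Lambda_\mbt$-module $KP(\mbt-\mba,-)$, which corresponds to the $S$-module $S(\mba-\mbt)$. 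Thus the right-hand side is $T^n S(\mba-\mbt)$, which by the last display in the proof of Proposition~\ref{DualProEkvdu} is naturally isomorphic to $T^n\HOM_S(S(-\mba),S(-\mbt))$.

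Next I would compute the left-hand side $D_\mbt(S(-\mba)) = p_\mbt^*\HOM_S(S(-\mba),I_{\mbt+\one}(\one-\mbt))$. Because $S(-\mba)$ is projective and $I_{\mbt+\one}$ is an injective resolution of $T^nS/\mm^{\mbt+\one}(-\one)$, this complex is quasi-isomorphic to $T^n p_\mbt^*\HOM_S(S(-\mba), S/\mm^{\mbt+\one}(-\one))(\one-\mbt) \cong T^n p_\mbt^*[(S/\mm^{\mbt+\one})(\mba-\mbt)]$, and the chain of isomorphisms in the proof of Proposition~\ref{DualProEkvdu} identifies this with $T^n S(\mba-\mbt) \cong T^n\HOM_S(S(-\mba),S(-\mbt))$. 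This gives the desired quasi-isomorphism $D_\mbt(S(-\mba)) \to T^n\tau^*\Hom_{\Lambda_\mbt}(S(-\mba),\Lambda_\mbt)$; it is built out of the (natural) quasi-isomorphism $I_{\mbt+\one}(\one-\mbt) \simeq T^nS/\mm^{\mbt+\one}(-\one)(\one-\mbt)$ and the natural adjunction/identification isomorphisms above, so it is natural in the projective $P$.

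The main obstacle I anticipate is bookkeeping: making the translation-by-$\tau$ on the incidence-algebra side match the twist by $\one-\mbt$ and the homological shift $T^n$ on the $D_\mbt$ side, and verifying that the identification of $S(-\mba)$ with the indecomposable projective $\Lambda_\mbt$-module is compatible (under $\Hom$ into $\Lambda_\mbt$ and $\tau^*$) with the identification of $D_\mbt$ of a projective. Concretely, one must check that a multiplication morphism $x^{\mbb-\mba}\colon S(-\mbb) \to S(-\mba)$ (for $\mba \le \mbb \le \mbt$) is sent by $\Hom_{\Lambda_\mbt}(-,\Lambda_\mbt)$ and then $\tau^*$ to the multiplication morphism $x^{\mbb-\mba}\colon S(\mba-\mbt) \to S(\mbb-\mbt)$, matching what $D_\mbt$ does on the same morphism; this is a direct but slightly delicate diagram chase using the explicit description of $\Hom$ into $KP(x,-)$. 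Once this compatibility on morphisms is verified on the building blocks $S(-\mba)$, the statement for arbitrary projectives follows by additivity, and naturality is immediate.
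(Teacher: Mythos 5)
Your proposal is correct and follows essentially the same route as the paper: reduce to the indecomposable projectives $S(-\mba)$, identify $\tau^*\Hom_{\Lambda_{\mbt}}(S(-\mba),\Lambda_{\mbt})$ with the module corresponding to $S(\mba-\mbt)$, and compute $D_{\mbt}(S(-\mba))$ explicitly to see it has homology $S(\mba-\mbt)$ in degree $n$. The only cosmetic difference is that the paper further reduces to the case $n=1$ and quotes Example \ref{Dexmaplen=1}, whereas you reuse the computation already present in the proof of Proposition \ref{DualProEkvdu}; the underlying calculation is the same.
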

\begin{proof}
  Since every indecomposable projective  positively $\mbt$-determined
  $S$-module is isomorphic to 
  $x^{\mba}S$ for some $0 \le \mba \le \mbt$ it suffices to consider the
  case $n=1$. If $M$ is the  positively $\mbt$-determined $S$-module
  $M = x^aS = S(-a)$ corresponding to the $\Lambda_{\mbt}$-module
  $KP(a,-)$, then $\Hom_{\Lambda_{\mbt}}(M,\Lambda_{\mbt}) 
  \cong KP(-,a)$, and thus $\tau^*\Hom_{\Lambda_{\mbt}}(M,\Lambda_{\mbt})$ corresponds to the
  $S$-module $S(a-t)$. On the other hand, as we saw in Example
  \ref{Dexmaplen=1}, $D^1_t(M)$ is isomorphic to
  the chain complex
  \begin{displaymath}
    K_t\{0,t\} \to K_t\{0,t-a-1\},
  \end{displaymath}
  whose homology is $S(a-t)$ concentrated in homological degree one.
\end{proof}
\begin{lem}
\label{posetA}
  Let $\mbt \in \N^n$. If $M$ is a  
  positively $\mbt$-determined $S$-module, then considering $M$ and
  $A_{\mbt}(M)$ as $\Lambda_{\mbt}$-modules there is an isomorphism of the form
  $\tau^*\Hom_{K}(M,K) \cong A_{\mbt}(M)$. Moreover this
  isomorphism is natural in $M$.  
\end{lem}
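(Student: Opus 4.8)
The plan is to reduce immediately to an explicit computation of both sides in each graded degree, using that a positively $\mbt$-determined module and its Alexander dual are determined by their graded pieces in $[0,\mbt]$ together with the multiplication maps by the $x_j$. Recall from Subsection \ref{sec:alexdu} that for $\mba \in [0,\mbt]$ one has the natural identification $A_\mbt(M)_\mba \cong \Hom_K(M_{\mbt-\mba},K)$, and that for $\mba + \varepsilon_i \le \mbt$ the multiplication $x_i \colon A_\mbt(M)_\mba \to A_\mbt(M)_{\mba+\varepsilon_i}$ is the $K$-dual of $x_i \colon M_{\mbt-\mba-\varepsilon_i} \to M_{\mbt-\mba}$. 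On the other side, viewing $M$ as a $KP$-module on $P = [0,\mbt]$, the functor $\tau^*$ precomposes with the order-reversing bijection $\tau(\mba) = \mbt - \mba$, so $(\tau^*\Hom_K(M,K))_\mba = \Hom_K(M,K)_{\tau(\mba)} = \Hom_K(M_{\mbt-\mba},K)$, matching $A_\mbt(M)_\mba$ degree by degree.

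First I would set up the isomorphism of underlying graded $K$-vector spaces as the obvious one coming from the two descriptions above: in degree $\mba$, both spaces are literally $\Hom_K(M_{\mbt-\mba},K)$. Second, I would check that this collection of degreewise identity maps is compatible with the $S$-module (equivalently $\Lambda_\mbt$-module) structures, i.e. commutes with multiplication by each $x_i$. This is where one must be slightly careful about variance: in the $KP$-module $\Hom_K(M,K)$ the structure map associated to a relation $\mbb \le \mbb'$ in $P$ is the $K$-dual of the structure map of $M$ for $\mbb' \le \mbb$ in... — more precisely, $\Hom_K(M,K)$ is naturally a right $KP$-module, hence a left $KP^{\op}$-module, and $\tau^*$ converts $[0,\mbt]^{\op}$ back to $[0,\mbt]$; tracing through, the map $x_i$ on $(\tau^*\Hom_K(M,K))$ in degrees $\mba \to \mba+\varepsilon_i$ is the dual of the $M$-structure map $M_{\mbt-\mba-\varepsilon_i} \to M_{\mbt-\mba}$, which is exactly $x_i$ on $M$. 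Thus the two left $\Lambda_\mbt$-module structures agree under the degreewise identification.

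Third, I would verify naturality in $M$: given $f \colon M \to N$ of positively $\mbt$-determined modules, $A_\mbt(f)$ in degree $\mba$ is $\Hom_K(f_{\mbt-\mba},K)$ by the explicit description of $A_\mbt$, and likewise $\tau^*\Hom_K(f,K)$ in degree $\mba$ is $\Hom_K(f_{\mbt-\mba},K)$; so the square commutes on the nose. The main obstacle — really the only subtle point — is the bookkeeping of left/right modules and the $\tau$-twist in the middle step: one must confirm that $\Hom_K(M,K)$ acquires its $KP$-module structure as the $K$-linear dual in the contravariant way, that this makes it a left $KP^{\op}$-module, and that applying $\tau^*$ precisely cancels the $\op$ so as to reproduce the covariant multiplication maps of $A_\mbt(M)$ rather than their "reverses." Once the variances are pinned down, the argument is a routine degreewise check. (One may streamline by reducing to $n=1$ first, as in Proposition \ref{posetD}, and even to the interval modules $K_t\{a,b\}$, where both sides are computed directly — $A_t(K_t\{a,b\}) \cong K_t\{t-b,t-a\}$ was recorded in the example, and the $KP$-module computation of $\tau^*\Hom_K(K_t\{a,b\},K)$ gives the same — but the general degreewise argument is clean enough that this reduction is optional.)
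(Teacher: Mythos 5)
Your proposal is correct and follows essentially the same route as the paper: the paper's proof simply records the degreewise identification $(\tau^*\Hom_K(M,K))_{\mba} = \Hom_K(M_{\mbt-\mba},K) = A_{\mbt}(M)_{\mba}$ and leaves the compatibility with the $\Lambda_{\mbt}$-module structure to the reader. You have merely carried out the variance bookkeeping and naturality check that the paper omits, which is a welcome expansion rather than a deviation.
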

\begin{proof}
  Since $\Hom_K(M,K)_{\mba} = \Hom_K(M_{\mba},K)$ we have 
  \begin{displaymath}
   (\tau^*\Hom_K(M,K))_{\mba} = \Hom_K(M_{\mbt-\mba},K) = A_{\mbt}(M)_{\mba} 
  \end{displaymath}
  We leave it to the reader to check that this is an isomorphism of $\Lambda_{\mbt}$-modules.
\end{proof}
Combining the above two results we obtain the following description of
the Nakayama functor.
\begin{prop}
\label{convertnak}
  If $F \to M$ is a projective resolution of a  positively $\mbt$-determined $S$-module
  $M$, then considering $F$, $\Na^{\one}_{\mbt}(F)$ and $\Na^{\one}_{\mbt}(M)$ 
  as $\Lambda_{\mbt}$-modules there are quasi-isomorphisms of the form
  \begin{displaymath}
    \tau^* \Hom_K(\tau^*\Hom_{P}(F,\Lambda_{\mbt}),K) \xrightarrow \simeq
    T^n\Na^{\one}_{\mbt}(F) \xrightarrow \simeq  T^n\Na^{\one}_{\mbt}(M).
  \end{displaymath}
\end{prop}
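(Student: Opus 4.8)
The plan is to assemble the statement from the three preceding results: Corollary \ref{AusCorAD}, Proposition \ref{posetD}, and Lemma \ref{posetA}. All of these are natural, so the composite of natural quasi-isomorphisms and isomorphisms will again be natural, which takes care of the naturality claim at the end. The second quasi-isomorphism $T^n\Na^{\one}_{\mbt}(F) \xrightarrow{\simeq} T^n\Na^{\one}_{\mbt}(M)$ is the most transparent: since $F \to M$ is a projective resolution, and $\Na^{\one}_{\mbt}$ is defined by applying $p_{\mbt}^*\HOM_S(P^{\one}_{\mbt+\one}, -(-\one))$, and $P^{\one}_{\mbt+\one}$ is a bounded complex of finitely generated projectives while $p_{\mbt}^*$ is exact (Lemma \ref{truncateexact}), the functor $\Na^{\one}_{\mbt}$ preserves quasi-isomorphisms between complexes, so $\Na^{\one}_{\mbt}(F) \to \Na^{\one}_{\mbt}(M)$ is a quasi-isomorphism; applying $T^n$ preserves this.

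For the first quasi-isomorphism I would argue on $F$, which (being projective and positively $\mbt$-determined) is a complex of modules each of which is a direct sum of $S(-\mba)$ with $0 \le \mba \le \mbt$. By Corollary \ref{AusCorAD}, $\Na^{\one}_{\mbt}(F) \cong A_{\mbt}\circ D_{\mbt}(F)$. Now apply Proposition \ref{posetD} to $F$ (valid since $F$ is projective): it gives a natural quasi-isomorphism $D_{\mbt}(F) \xrightarrow{\simeq} T^n\tau^*\Hom_{\Lambda_{\mbt}}(F,\Lambda_{\mbt})$, where I read $\tau^*\Hom_{\Lambda_{\mbt}}(F,\Lambda_{\mbt})$ termwise (each term of $F$ being projective over $\Lambda_{\mbt}$). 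Since $A_{\mbt}$ is exact (it is $p_{\mbt}^*$ composed with $K$-linear dualization shifted by $\mbt$, hence exact), applying $A_{\mbt}$ to this quasi-isomorphism yields a quasi-isomorphism $A_{\mbt}\circ D_{\mbt}(F) \xrightarrow{\simeq} T^n A_{\mbt}(\tau^*\Hom_{\Lambda_{\mbt}}(F,\Lambda_{\mbt}))$. Finally Lemma \ref{posetA}, applied termwise to the positively $\mbt$-determined module $\tau^*\Hom_{\Lambda_{\mbt}}(F,\Lambda_{\mbt})$ (here one must check this module is again positively $\mbt$-determined, which it is since $\Hom_{\Lambda_{\mbt}}(-,\Lambda_{\mbt})$ sends projective $\Lambda_{\mbt}$-modules to projective $\Lambda_{\mbt}^{\op}$-modules and $\tau^*$ converts back), gives a natural isomorphism $\tau^*\Hom_K(\tau^*\Hom_{\Lambda_{\mbt}}(F,\Lambda_{\mbt}),K) \cong A_{\mbt}(\tau^*\Hom_{\Lambda_{\mbt}}(F,\Lambda_{\mbt}))$. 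Stringing these together and shifting by $T^n$ produces exactly the claimed first quasi-isomorphism $\tau^*\Hom_K(\tau^*\Hom_{\Lambda_{\mbt}}(F,\Lambda_{\mbt}),K) \xrightarrow{\simeq} T^n\Na^{\one}_{\mbt}(F)$.

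The main obstacle I anticipate is bookkeeping rather than conceptual: making sure that all the identifications between ``$S$-module'' and ``$\Lambda_{\mbt}$-module'' pictures are compatible, that the shift $T^n$ appears in exactly the right place after passing through $A_{\mbt}$ (which does not shift) and $D_{\mbt}$ (whose homology is concentrated as in Proposition \ref{posetD}), and that ``applying a functor termwise to a complex'' legitimately upgrades a termwise natural quasi-isomorphism to a quasi-isomorphism of complexes — this last point is standard for exact functors ($A_{\mbt}$, $p_{\mbt}^*$) and for $\Hom_{\Lambda_{\mbt}}(-,\Lambda_{\mbt})$ on projectives, and needs only a line. One should also be slightly careful that Proposition \ref{posetD} is stated for a single projective module, so one invokes it degreewise in $F$ and uses that the resulting maps assemble into a chain map because the statement is natural in $M$.
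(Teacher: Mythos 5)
Your proposal is correct and follows essentially the same route as the paper's own proof, which likewise obtains the first quasi-isomorphism by combining Corollary \ref{AusCorAD}, Proposition \ref{posetD} (applied termwise to the projective complex $F$) and Lemma \ref{posetA}, and the second from exactness of the truncation functors (Lemma \ref{truncateexact}) together with $P_{\mbt+\one}$ being projective over $S$. The extra bookkeeping you flag (termwise application, naturality, placement of $T^n$) is exactly what the paper leaves implicit.
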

\begin{proof}
  The first quasi-isomorphism follows directly from Corollary \ref{AusCorAD}, Proposition
  \ref{posetD}, and Lemma 
  \ref{posetA}. The second quasi-isomorphism is a consequence of
  Lemma \ref{truncateexact} on exactness of the functors $M \mapsto
  p_{t_j}^*M$ and 
  the fact that the chain complexes $P_{\mbt+\one}$ are
  projective over $S$.
\end{proof}
\subsection{The Auslander-Reiten translate}
\label{sec:artinalgs}

Given a self-injective Artin algebra $\Lambda$ over $K$,
the endofunctor
\begin{displaymath}
  M \mapsto \Hom_K(\Hom_\Lambda(M,\Lambda),K) 
\end{displaymath}
of the category of left $\Lambda$ modules, is termed the Nakayama functor by
Auslander, Reiten and Smal{\o} \cite[p. 126]{ARS}, and they denote it by 
$\Na$. More generally, if $\Lambda$ is not self-injective we choose
an injective bi-module resolution $\Lambda \to J$ of $\Lambda$ and term the
endofunctor
\begin{displaymath}
  M \mapsto \Hom_K(\Hom_\Lambda(M,J),K) 
\end{displaymath}
the Nakayama functor.

In the situation $\Lambda = \Lambda_{\mbt}$ we have
an isomorphism
\begin{displaymath}
  \Hom_K(\Hom_{\Lambda_{\mbt}}(M,J),K) \cong
  \tau^* \Hom_K(\tau^*\Hom_{\Lambda_{\mbt}}(M,J),K).
\end{displaymath}
Thus Proposition \ref{convertnak} implies that the Nakayama functor in the
context of  positively $\mbt$-determined $S$-modules corresponds to the
Nakayama functor for the incidence algebra $\Lambda_{\mbt}$.
On the bounded 
derived categoriy of $\Lambda_{\mbt}$-modules the Nakayama functor 
represents the Auslander--Reiten translate \cite[p.37]{Ha}, since
$\Lambda_{\mbt}$ has finite global dimension.


\section{Nakayama cohomology of monomial ideals}
\label{sec:mainresults}
In this section we present the main results of this paper. Firstly, we
calculate the multigraded cohomology and Betti spaces of
iterations of the Nakayama functor applied to monomial
quotient rings. The results of these calculations are expressed in terms of
cohomology of certain simplicial complexes. Secondly, we describe the
$S$-module structure of the cohomology modules of the Nakayama functor
applied to monomial quotient rings. In order to specify the simplicial
complexes occurring in our calculations we need some notation. 

\subsection{Simplicial complexes from monomial ideals}
\label{sec:cxfromid}

Let $I$ be a positively $\mbt$-determined monomial ideal in $S$, and let 
$\mbb \le \mbt$ and 
$\mba \le \mbb + \one$ in $\N^n$. The simplicial complex
$\Delta_{\mba}^{\mbb}(S/I;\mbt)$ on the vertex set $\{1,\dots,n\}$
is defined to consist of the subsets $F$ of the
set $\{1,\dots,n\}$ with the property that the degree 
$$\mbx = \sum_{i \notin F} a_i
\varepsilon_i + \sum _{i \in F} (b_{i} +1) \varepsilon_i$$
satisfies
both $\mbx \le \mbt$ and $(S/I)_{\mbx} = K$. 

\begin{example}
\label{linkex}
  Let $I$ be a square free monomial ideal in $S$ and let $\mba$ be the
  indicator vector of a subset $A$ of $\{1,\dots,n\}$. The simplicial
  complex $\Delta = \Delta_{0}^{0}(S/I;\one)$ is the simplicial
  complex with Stanley-Reisner ring equal to $S/I$. The simplicial
  complex $\Delta^{\mba}_{\mba}(S/I;\one)$ is the 
  link $\link_{\Delta}(A)$ of $A$ in $\Delta$, that is,
  the simplicial complex consisting 
  of the faces $F$ in $\Delta$ satisfying that $F \cap A =
  \emptyset$ and $F \cup A \in \Delta$. Finally $\Delta_0^{\one -
    \mba}(S/I;\one)$ is the restriction $\Delta|_A$ consisting of the faces $F$
  in $\Delta$ contained in $A$.
\end{example}
If $\mbt$ and the positively $\mbt$-determined ideal $I$ are given explicitly  we shall 
also denote the
simplicial complex $\Delta_{\mba}^{\mbb}(S/I;\mbt)$ simply as
$\Delta_{\mba}^{\mbb}$. The following lemma records a combinatorial
relation between the simplicial complexes $\Delta_{\mba}^{\mbb}$ which
is crucial for our calculations.
\begin{lem}
\label{pbsituation}
Let $I$ be a positively $\mbt$-determined monomial ideal in $S$, and 
$\mbb \leq \mbt$ and $\mba \le \mbb + \one$  in $\N^n$.
Given $\alpha, \beta \in \N$ 
  with $a_j + \alpha \le b_j + 1$. Then
  \begin{displaymath}
    \Delta^{\mbb}_{\mba} =
    \Delta^{\mbb+\beta\varepsilon_j}_{\mba} \cup
    \Delta_{\mba+\alpha
      \varepsilon_j}^{\mbb}
  \end{displaymath}
  and
  \begin{displaymath}
    \Delta_{\mba+\alpha \varepsilon_j}^{\mbb+\beta\varepsilon_j} =
    \Delta_{\mba}^{\mbb+\beta\varepsilon_j} \cap
    \Delta_{\mba+\alpha
      \varepsilon_j}^{\mbb}. 
  \end{displaymath}  
\end{lem}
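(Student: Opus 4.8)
The plan is to prove both identities by a direct combinatorial analysis of the defining condition for membership in $\Delta_{\mba}^{\mbb} = \Delta_{\mba}^{\mbb}(S/I;\mbt)$. Recall that $F \subseteq \{1,\dots,n\}$ lies in $\Delta_{\mba}^{\mbb}$ precisely when the degree
\begin{displaymath}
  \mbx(F;\mba,\mbb) = \sum_{i \notin F} a_i \varepsilon_i + \sum_{i \in F} (b_i+1)\varepsilon_i
\end{displaymath}
satisfies $\mbx(F;\mba,\mbb) \le \mbt$ and $(S/I)_{\mbx(F;\mba,\mbb)} = K$. Since only the $j$-th coordinate changes when we pass from $(\mba,\mbb)$ to $(\mba+\alpha\varepsilon_j,\mbb)$ or to $(\mba,\mbb+\beta\varepsilon_j)$, everything comes down to how the $j$-th coordinate of $\mbx$ depends on whether $j \in F$.

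First I would fix $F$ and compare the four relevant degrees. Writing $x_j(F)$ for the $j$-th coordinate of the degree associated to the pair $(\mba,\mbb)$: if $j \notin F$ the $j$-th coordinate is $a_j$ for the pairs with lower corner $\mba$ and $a_j+\alpha$ for the pairs with lower corner $\mba + \alpha\varepsilon_j$; if $j \in F$ it is $b_j+1$ for the pairs with upper corner $\mbb$ and $b_j+1+\beta$ for the pairs with upper corner $\mbb+\beta\varepsilon_j$. All other coordinates are unaffected. So for the first identity, take $F \in \Delta_{\mba}^{\mbb}$. If $j \in F$, then the degree for $(\mba,\mbb)$ and the degree for $(\mba+\alpha\varepsilon_j,\mbb)$ coincide (changing the lower corner in coordinate $j$ is invisible when $j \in F$), so $F \in \Delta_{\mba+\alpha\varepsilon_j}^{\mbb}$. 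If $j \notin F$, then the degree for $(\mba,\mbb)$ and the degree for $(\mba,\mbb+\beta\varepsilon_j)$ coincide, so $F \in \Delta_{\mba}^{\mbb+\beta\varepsilon_j}$. This gives the inclusion $\subseteq$; the reverse inclusions $\Delta_{\mba+\alpha\varepsilon_j}^{\mbb} \subseteq \Delta_{\mba}^{\mbb}$ and $\Delta_{\mba}^{\mbb+\beta\varepsilon_j} \subseteq \Delta_{\mba}^{\mbb}$ follow from the same coincidence-of-degrees observation, read in the opposite direction. For the second identity the argument is cleaner: $F \in \Delta_{\mba+\alpha\varepsilon_j}^{\mbb+\beta\varepsilon_j}$ iff its associated degree satisfies the two conditions; but that degree agrees with the degree for $(\mba,\mbb+\beta\varepsilon_j)$ when $j \in F$ and with the degree for $(\mba+\alpha\varepsilon_j,\mbb)$ when $j \notin F$ — and in either case it also agrees with the \emph{other} one of those two (since in the case $j \in F$ the lower corner is irrelevant, and in the case $j \notin F$ the upper corner is irrelevant), so membership in $\Delta_{\mba+\alpha\varepsilon_j}^{\mbb+\beta\varepsilon_j}$ is equivalent to membership in both $\Delta_{\mba}^{\mbb+\beta\varepsilon_j}$ and $\Delta_{\mba+\alpha\varepsilon_j}^{\mbb}$.

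The one point that genuinely requires the hypothesis $a_j + \alpha \le b_j + 1$ is well-definedness: we need each of the pairs appearing — $(\mba+\alpha\varepsilon_j,\mbb)$, $(\mba,\mbb+\beta\varepsilon_j)$, $(\mba+\alpha\varepsilon_j,\mbb+\beta\varepsilon_j)$ — to satisfy the standing constraint $\mba' \le \mbb' + \one$ so that the simplicial complexes $\Delta_{\mba'}^{\mbb'}$ are defined; for the first two this is immediate and for the third it is exactly $a_j+\alpha \le b_j+1+\beta$, which follows from $a_j+\alpha \le b_j+1$. I should also note that $\mbb+\beta\varepsilon_j \le \mbt$ need not hold, but membership conditions still make sense because the degree $\mbx$ is tested directly against $\le \mbt$; if $b_j+1+\beta > t_j$ then no $F$ containing $j$ can lie in $\Delta_{\mba}^{\mbb+\beta\varepsilon_j}$, and one checks the identities remain valid in that degenerate case. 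I expect the main obstacle to be purely bookkeeping: carefully tracking which of the four pairs governs the degree of $F$ according to whether $j \in F$, and making sure the degenerate boundary cases (where a coordinate exceeds $t_j$, or where $\mba \not\le \mbb$ forces some complex to be empty by the convention following Definition \ref{intervelmodule}) are handled by the direct degree test rather than by an appeal to a convention — there is no deep content, only the need to be systematic.
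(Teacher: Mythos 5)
Your overall strategy --- compare the $j$-th coordinate of the test degree according to whether $j\in F$ --- is exactly the paper's (its proof is the one-line instruction to split into these two cases), and the coincidence-of-degrees observations you make are correct as far as they go. But two of your justifications are literally false, and they hide the one substantive ingredient of the lemma. For the reverse inclusion of the union identity you claim that $\Delta_{\mba+\alpha\varepsilon_j}^{\mbb}\subseteq\Delta_{\mba}^{\mbb}$ and $\Delta_{\mba}^{\mbb+\beta\varepsilon_j}\subseteq\Delta_{\mba}^{\mbb}$ "follow from the same coincidence-of-degrees observation, read in the opposite direction": but for $F$ with $j\notin F$ the test degrees for $(\mba+\alpha\varepsilon_j,\mbb)$ and $(\mba,\mbb)$ do \emph{not} coincide ($a_j+\alpha$ versus $a_j$ in coordinate $j$), and symmetrically for $j\in F$ in the other inclusion. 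Likewise, in your argument for the intersection identity, when $j\in F$ the degree for $(\mba+\alpha\varepsilon_j,\mbb+\beta\varepsilon_j)$ does \emph{not} agree with the degree for $(\mba+\alpha\varepsilon_j,\mbb)$ (coordinate $b_j+1+\beta$ versus $b_j+1$); your parenthetical "the lower corner is irrelevant" justifies the agreement you had already stated, not the claimed agreement with "the other one".

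What rescues these cases is not coincidence but monotonicity: the set $J=\{\mbx : \mbx\le\mbt,\ (S/I)_{\mbx}=K\}$ is downward closed, because $I$ is a monomial \emph{ideal} (if $x^{\mbx}\in I$ and $\mbx\le\mbx'$ then $x^{\mbx'}\in I$) and the condition $\mbx\le\mbt$ is also preserved when $\mbx$ decreases. In each problematic case the relevant test degree is obtained from a degree already known to lie in $J$ by decreasing its $j$-th coordinate, so it again lies in $J$. This is not optional bookkeeping: if one replaces "$(S/I)_{\mbx}=K$" by an arbitrary predicate, the union identity already fails for $n=1$ (take the predicate false at $a$ but true at $a+\alpha$, and test $F=\emptyset$). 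Once you insert the downward-closedness observation, your case analysis goes through and coincides with the paper's intended proof.
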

\begin{proof}
  This is seen by splitting into the cases whether a subset $F$ of $\{1,2, \ldots, n \}$
contains $j$ or not.
\end{proof}
Given a simplicial complex $\Delta$ we
denote by $\widetilde H^*(\Delta)$ its reduced cohomology with coefficients in $K$.
The above lemma implies that there is a Mayer--Vietoris exact sequence
of reduced cohomology groups. 
\begin{cor}
\label{mayervirtoriscor}
  Let $I$ be a positively $\mbt$-determined monomial ideal in $S$, and let 
$\mbb \leq \mbt$ and $\mba \le \mbb+\one$ 
in $\N^n$.
  Given $\alpha, \beta \in \N$ 
  with $a_j + \alpha \le b_j+1$. Then there is a Mayer--Vietoris exact sequence of the form
  \begin{eqnarray*}
   \dots  
\to  
   \widetilde H^i 
   (\Delta^{\mbb}_{\mba}) \to 
   \widetilde
    H^i(\Delta_{\mba}^{\mbb+\beta\varepsilon_j}) \oplus
    \widetilde H^i(\Delta_{\mba+\alpha
      \varepsilon_j}^{\mbb}) \to
 \widetilde H^i (\Delta_{\mba+\alpha
      \varepsilon_j}^{\mbb+\beta\varepsilon_j})
   \xrightarrow {\delta^i} 
   \widetilde H^{i+1} (\Delta
   ^{\mbb}_{\mba}) \to 
\dots 
  \end{eqnarray*}
\end{cor}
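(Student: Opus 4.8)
The plan is to obtain the sequence as the long exact cohomology sequence attached to a short exact sequence of reduced simplicial cochain complexes, the underlying decomposition of spaces being exactly Lemma \ref{pbsituation}. Abbreviate $\Delta = \Delta^{\mbb}_{\mba}$, $\Delta_1 = \Delta^{\mbb+\beta\varepsilon_j}_{\mba}$, $\Delta_2 = \Delta_{\mba+\alpha\varepsilon_j}^{\mbb}$ and $\Delta_{12} = \Delta_{\mba+\alpha\varepsilon_j}^{\mbb+\beta\varepsilon_j}$. First I would record that each of these four collections of subsets of $\{1,\dots,n\}$ is genuinely a simplicial complex: shrinking a face $F$ only decreases the associated multidegree $\mbx$ coordinatewise (here one uses $\mba \le \mbb + \one$), so both conditions $\mbx \le \mbt$ and $(S/I)_{\mbx} = K$ are preserved. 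By Lemma \ref{pbsituation}, $\Delta_1$ and $\Delta_2$ are then subcomplexes of $\Delta$ with $\Delta = \Delta_1 \cup \Delta_2$ and $\Delta_1 \cap \Delta_2 = \Delta_{12}$.

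Next I would write down, for reduced simplicial cohomology with coefficients in $K$, the short exact sequence of augmented cochain complexes
\[
0 \longrightarrow \widetilde C^\bullet(\Delta) \longrightarrow \widetilde C^\bullet(\Delta_1) \oplus \widetilde C^\bullet(\Delta_2) \longrightarrow \widetilde C^\bullet(\Delta_{12}) \longrightarrow 0,
\]
in which the first map sends a cochain to the pair of its restrictions to $\Delta_1$ and to $\Delta_2$, and the second sends a pair to the difference of the two restrictions to $\Delta_{12}$. Checked degreewise on the $K$-vector spaces spanned by the faces (with the empty face contributing in degree $-1$), injectivity of the first map and exactness in the middle hold because every face of $\Delta$ lies in $\Delta_1$ or in $\Delta_2$, while surjectivity of the second map holds because $\Delta_{12} \subseteq \Delta_1$ already makes the restriction $\widetilde C^\bullet(\Delta_1) \to \widetilde C^\bullet(\Delta_{12})$ surjective. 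The asserted Mayer--Vietoris sequence is then the associated long exact cohomology sequence, with $\delta^i$ the connecting homomorphism.

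The one point that requires a small check is the rôle of the empty face in degenerate situations, where a complex may be \emph{void} (no faces at all, so its reduced cochain complex is the zero complex) rather than merely equal to the irrelevant complex $\{\emptyset\}$; the only configuration that would break the short exact sequence is that $\emptyset$ lies in both $\Delta_1$ and $\Delta_2$ while $\Delta_{12}$ is void. Here I would invoke that $I$ is a monomial ideal: since $a_j + \alpha \le b_j + 1$ and $\mba \le \mbb + \one$, the multidegree attached to any face of $\Delta_2$ or of $\Delta_{12}$ dominates $\mba + \alpha\varepsilon_j$ coordinatewise (and equals it for $F = \emptyset$), so $\Delta_{12}$ is void precisely when $\mba + \alpha\varepsilon_j \not\le \mbt$ or $x^{\mba + \alpha\varepsilon_j} \in I$, and in either case the same estimate forces $\Delta_2$ to be void, whence $\Delta = \Delta_1$ and the displayed sequence degenerates to $0 \to \widetilde C^\bullet(\Delta_1) \xrightarrow{=} \widetilde C^\bullet(\Delta_1) \to 0 \to 0$; a symmetric remark handles the case $x^{\mba} \in I$, in which all four complexes are void. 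So there is no real obstacle: the geometric content is entirely contained in Lemma \ref{pbsituation}, and the remaining work is the standard simplicial Mayer--Vietoris construction together with this routine verification that the reduced-cohomology conventions behave well in the boundary cases.
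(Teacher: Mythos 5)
Your proof is correct and is exactly the argument the paper intends: the corollary is stated there as an immediate consequence of Lemma \ref{pbsituation}, via the standard short exact sequence of reduced simplicial cochain complexes for a union of two subcomplexes, precisely as you set it up. Your careful treatment of the void-complex boundary case is sound but can be shortened: Lemma \ref{pbsituation} is an equality of sets of subsets of $\{1,\dots,n\}$ that applies to $F=\emptyset$ as well, so $\emptyset$ lies in $\Delta_{\mba+\alpha\varepsilon_j}^{\mbb+\beta\varepsilon_j}$ whenever it lies in both $\Delta_{\mba}^{\mbb+\beta\varepsilon_j}$ and $\Delta_{\mba+\alpha\varepsilon_j}^{\mbb}$, and the problematic configuration never occurs.
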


\subsection{Nakayama cohomology}
\label{sec:nakcho}

We are now going to compute the cohomology of 
$\Na^{\mbk}_{\mbt}(S/I)$ for every  positively $\mbt$-determined
ideal $I$ in  
$S$. 
In Example \ref{firstnakone} we have treated the case $n=1$. 
Note from this that for every 
interval $[a,b]$ contained in $[0,t]$ there are easily computable
$\gamma$, $x$ and $y$ such that $\Na^k_t(K_t\{a, b\})$ is quasi-isomorphic to
$T^{-\gamma}K_t\{x,y\}$. For $n > 1$ we have the isomorphism
  \begin{displaymath}
    \Na^{\mbk}_{\mbt} (K_{\mbt}\{ \mba,\mbb\}) \cong
    \bigotimes_{j=1}^n \Na^{k_j}_{t_j}(K_{t_j}\{a_j,b_j\})
  \end{displaymath}
where the outer tensor product is over $K$. 
Therefore we can easily compute $\gamma$, $\mbx$ and $\mby$ such that
$\Na_{\mbt}^{\mbk} (K_{\mbt}\{\mba,\mbb\})$ is quasi-isomorphic to
$T^{-\gamma}(K_{\mbt}\{\mbx, \mby\})$.

\begin{thm}
\label{thecalc1}
\label{thecalc3}
  Let $\mbt,\mbk \in \N^n$ and let $\mbr \in [0,\mbt]$. 
  Moreover let $\mbu$, $\mbv$ in $\N^n$ and
  $\gamma \in \Z$ be such that the only nonzero cohomology group
  \begin{displaymath}
    H^{\gamma + n} \Na_{\mbt}^{\mbk+ \one} K_{\mbt}\{\mbt-\mbr, \mbt\} \cong 
    K_{\mbt}\{\mbt -\mbv, \mbt -
  \mbu\}.
  \end{displaymath}
There is an isomorphism
$$
    H^i \Na^{\mbk}_{\mbt} (S/I)_{\mbr} \cong \widetilde
    H^{i-\gamma-1}(\Delta^{\mbv}_{\mbu}(S/I;\mbt))
$$
for every  positively $\mbt$-determined ideal $I$ in  $S$.
\end{thm}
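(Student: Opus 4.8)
The plan is to reduce the statement to a computation on interval modules, using three ingredients: the filtration of $S/I$ by interval modules, the exactness of the truncation functors (Lemma \ref{truncateexact}), and the Mayer--Vietoris sequence of Corollary \ref{mayervirtoriscor}. The first observation is that a positively $\mbt$-determined monomial quotient ring $S/I$ has, in each multidegree $\mbr \in [0,\mbt]$, graded piece equal to $K$ or $0$; the nonzero degrees form an order ideal in $[0,\mbt]$. One can filter $S/I$ by positively $\mbt$-determined submodules whose successive quotients are interval modules $K_\mbt\{\mbc,\mbc\}$ (one-dimensional modules), indexed by the degrees $\mbc$ with $(S/I)_\mbc = K$. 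Applying $\Na^{\mbk}_{\mbt}$, which is built from $\HOM_S(P^{\mbk}_{\mbt+\one},-)$ followed by the exact functor $p_{\mbt}^*$, produces a filtration whose associated graded complexes are the $\Na^{\mbk}_{\mbt}(K_\mbt\{\mbc,\mbc\})$, each of which by the tensor formula and Example \ref{firstnakone} is quasi-isomorphic to a shifted interval module $T^{-\gamma_\mbc}K_\mbt\{\mbx_\mbc,\mby_\mbc\}$.

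The heart of the argument is an induction, carried out one variable at a time, on the number of coordinates $j$ for which $k_j > 0$ (equivalently, iterating the single-variable Nakayama functors $\Na^{\varepsilon_j}_{t_j\varepsilon_j}$ as in Definition \ref{DefineNak2}). For each application of a single-variable step, I would use the short exact sequences of interval modules from Example \ref{canex} — the canonical inclusion $i\colon K_t\{a',b\}\to K_t\{a,b\}$ and projection $p\colon K_t\{a,b'\}\to K_t\{a,b\}$ — together with the behaviour of $\Na^1_t$ on the three-term pieces, to build a long exact sequence in cohomology of $\Na^{\mbk}_{\mbt}(S/I)$ in a fixed degree $\mbr$. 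The combinatorial bookkeeping of which degrees $\mbx$ survive is exactly the content of Lemma \ref{pbsituation}: the splitting $\Delta^{\mbb}_{\mba} = \Delta^{\mbb+\beta\varepsilon_j}_{\mba}\cup\Delta^{\mbb}_{\mba+\alpha\varepsilon_j}$ with intersection $\Delta^{\mbb+\beta\varepsilon_j}_{\mba+\alpha\varepsilon_j}$ matches the degrees appearing when one resolves an interval by shorter intervals in the $j$-th direction. Thus the long exact sequence in Nakayama cohomology in degree $\mbr$ becomes, after the change of variables $\mbr \leftrightarrow \mbt-\mbr$ dictated by the Alexander-duality twist in $\Na_{\mbt}=A_{\mbt}\circ D_{\mbt}$ (Corollary \ref{AusCorAD}) and by the explicit formula $H^{\gamma+n}\Na^{\mbk+\one}_{\mbt}K_\mbt\{\mbt-\mbr,\mbt\}\cong K_\mbt\{\mbt-\mbv,\mbt-\mbu\}$, precisely the Mayer--Vietoris sequence of Corollary \ref{mayervirtoriscor}. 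Since the claimed isomorphism $H^i\Na^{\mbk}_{\mbt}(S/I)_{\mbr}\cong\widetilde H^{i-\gamma-1}(\Delta^{\mbv}_{\mbu})$ holds at the base case (either $\mbk=0$, where it recovers $(S/I)_{\mbr}$ versus $\widetilde H^{-1}$ of the relevant complex, or a single-variable base case handled directly via Example \ref{firstnakone}), the five lemma propagates the isomorphism through the induction — provided the connecting maps on both sides are identified, which is where naturality of all the constructions (the naturality clauses in Corollary \ref{AusCorAD}, Proposition \ref{DualProEkvdu}, and Lemma \ref{pbsituation}) is used.

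I expect the main obstacle to be the careful matching of connecting homomorphisms: establishing not merely that the two long exact sequences have isomorphic terms, but that there is a commuting ladder between ``the long exact sequence in Nakayama cohomology of $S/I$ in degree $\mbr$ arising from an interval short exact sequence'' and ``the Mayer--Vietoris sequence for the triple $(\Delta^{\mbb}_{\mba}, \Delta^{\mbb+\beta\varepsilon_j}_{\mba}, \Delta^{\mbb}_{\mba+\alpha\varepsilon_j})$''. Tracking the shift $\gamma$ and the $T^{-2}$-periodicity from Proposition \ref{trans} through repeated single-variable steps, and checking that the inductive hypothesis is applied with the correct $(\mbu,\mbv,\gamma)$ at each stage (the values $\mbv$ and $\mbu$ for the composite $\mbk$ are determined by the one-variable computation of $\Na^{k_j+1}_{t_j}K_{t_j}\{t_j-r_j,t_j\}$ in each coordinate), is where the bulk of the genuine work lies; the algebraic reductions to $n=1$ and to interval modules are routine given Lemma \ref{truncateexact} and the tensor decomposition. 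I would organise this so that the single-variable case is proved first as a standalone lemma — deferring it, as the paper signals, to the material of Sections 6 and 7 on variable-power quotients and on $KP$-modules — and then the general case follows by the filtration-and-induction scheme sketched above.
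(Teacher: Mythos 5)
Your plan diverges substantially from the paper's proof, and as it stands it has a genuine gap in the inductive step. The paper does not filter $S/I$ by composition factors and induct on $\mbk$; instead it writes $S/I$ as the limit $\Hom_{KJ^{\op}}(K\{J^{\op}\},R)$ over the poset $J^{\op}$ of support degrees, with $R(\mbw)=K_\mbt\{0,\mbw\}$, replaces $K\{J^{\op}\}$ by a projective resolution $E$ (Lemma \ref{flasquelem}), commutes $\Na^{\mbk}_{\mbt}$ past $\Hom_{KJ^{\op}}(E,-)$, and then uses the ``swap'' Lemma \ref{nathelper2} (the quasi-isomorphism $\mycx^k_t(t-y)_u\simeq\mycx^k_t(t-u)_y$, natural via Corollary \ref{natswapuy}) to identify the degree-$\mbr$ slice of $\Na^{\mbk}_{\mbt}R(\mbw)$, \emph{as a functor of} $\mbw$, with $T^{-\gamma}K\{(J\cap[\mbu,\mbv])^{\op}\}$; Proposition \ref{modscx} and the order-complex homotopy equivalence of Lemma \ref{heqlemma} then convert $H^{*}\Hom_{KJ^{\op}}(E,K\{(J\cap[\mbu,\mbv])^{\op}\})$ into $\widetilde H^{*-1}(\Delta^{\mbv}_{\mbu})$. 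Your proposal contains no substitute for this mechanism, which is precisely what makes the enormous amount of data in the support of $S/I$ collapse to an $n$-vertex complex depending only on the interval $[\mbu,\mbv]$. A filtration of $S/I$ by the one-dimensional modules $K_\mbt\{\mbc,\mbc\}$ produces a spectral sequence whose $E_1$-page has one summand for \emph{every} $\mbc$ in the support, and you give no argument for why it degenerates to $\widetilde H^{*}(\Delta^{\mbv}_{\mbu})$.

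The second, more concrete, obstruction is in your five-lemma step. The long exact sequence obtained by applying $\Na^{\varepsilon_j}_{t_j\varepsilon_j}$ to $C=\Na^{\mbk}_{\mbt}(S/I)$ in degree $\mbr$ is the cone sequence on the map $H^iC_{\mbr-\varepsilon_j}\to H^iC_{\mbr+t_j\varepsilon_j}$, which is multiplication by $x_j^{t_j+1}$ (equivalently, $x_j^{t_j+1-r_j}$ composed with $\mbt$-determinedness isomorphisms). To compare this sequence with the Mayer--Vietoris sequence of Corollary \ref{mayervirtoriscor} and invoke the five lemma you must produce a \emph{commuting ladder}, i.e.\ you must identify multiplication by $x_j$ on $H^*\Na^{\mbk}_{\mbt}(S/I)$ with restriction maps and connecting homomorphisms of the complexes $\Delta^{\mbv}_{\mbu}$. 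That identification is exactly Theorem \ref{nakcohS}, which the paper \emph{deduces from} Theorem \ref{thecalc1} by tracing its chain of quasi-isomorphisms; so your induction would have to carry the full $S$-module structure as a strengthened hypothesis, and the proposal offers no way to establish it at the base of, or propagate it through, the induction. Your base case ($\mbk=\mathbf 0$) and the bookkeeping via Definition \ref{gammauv} are fine, but without these two missing ingredients the argument does not close.
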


\begin{remark} \label{RemMainPro1}
Note that $\Na_{\mbt}^{\mbk+ \one} K_{\mbt}\{\mbt-\mbr, \mbt\}$
is quasi-isomorphic to $T^{-n}\Na_{\mbt}^{\mbk} K_{\mbt}\{\mathbf 0, \mbt-\mbr \}$
and to determine $\mbu, \mbv$ and $\gamma$ we shall use the latter. We have
stated the theorem as above to emphasize the analogy with Theorem \ref{bettithm}
below.
\end{remark}

\medskip
Let $I$ be a positively $\mbt$-determined ideal in $S$. 
The following theorem
describes the Betti-spaces $B^p (\Na^{\mbk}_{\mbt} (S/I))$, which may be 
computed as $H^p(F\otimes_S \Na^{\mbk}_{\mbt} (S/I))$ for $F$ a projective $\N^n$-graded
resolution of $K$ over $S$.


\begin{thm}
\label{bettithm}
  Let $\mbt,\mbk \in \N^n$ and let $\mbr \in [0,\mbt]$. 
  Moreover let $\mba$, $\mbb$ in $\N^n$ and
  $\gamma \in \Z$ be such that the only nonzero cohomology group
\begin{displaymath}
  H^{\gamma +n} \Na^{\mbk+\one}_{\mbt}
  \kr_{\mbt}\{\mbt-\mbr,\mbt-\mbr\} \cong
  \kr_{\mbt} \{\mbt-\mbb, \mbt - \mba\}.
\end{displaymath}
There is an isomorphism
\[ B^i (\Na^{\mbk}_{\mbt} (S/I))_{\mbr} \cong \widetilde
H^{i-\gamma-1}(\Delta_{\mba}^{\mbb}(S/I;\mbt)) \]
  for every  positively $\mbt$-determined ideal
  $I$ in
  $S$.
\end{thm}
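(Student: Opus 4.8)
The plan is to compute the Betti space $B^{i}(\Na^{\mbk}_{\mbt}(S/I))_{\mbr}$ as the degree-$\mbr$ part of the $i$-th cohomology of $F\otimes_{S}\Na^{\mbk}_{\mbt}(S/I)$, where $F$ is the Koszul complex on $x_{1},\dots,x_{n}$ resolving $K$, and to run the computation by induction on $|\mbk|=k_{1}+\cdots+k_{n}$, parallel to the (analogous) proof of Theorem~\ref{thecalc1}. First I would record a reduction: since $\mbr\le\mbt$ and $S/I$ is positively $\mbt$-determined, each partial iterate arising inside $\Na^{\mbk}_{\mbt}(S/I)$ is again positively $\mbt$-determined, so all of its graded pieces are canonically isomorphic, through the structure isomorphisms of $\mbt$-determinedness, to pieces in degrees $\le\mbt$; and because $F$ is built from nonnegative twists of $S$ while the truncations $p^{*}$ in Definition~\ref{DefineNak2} only cap coordinates at $\mbt$, the object $(F\otimes_{S}\Na^{\mbk}_{\mbt}(S/I))_{\mbr}$ is then a finite complex of finite-dimensional vector spaces built, naturally in $I$, from the data $(S/I)_{\mbx}$ ($\mbx\le\mbt$) together with the multiplication maps. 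This is the complex whose cohomology must be matched with $\widetilde H^{\,\bullet-\gamma-1}(\Delta^{\mbb}_{\mba}(S/I;\mbt))$.

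For the base case $\mbk=\mathbf 0$ one has $\Na^{\mathbf 0}_{\mbt}(S/I)=S/I$, and $(F\otimes_{S}S/I)_{\mbr}$ is, up to a shift, the reduced upper Koszul cochain complex of $I$; hence $B^{i}(S/I)_{\mbr}$ is the classical description of the multigraded Betti numbers of $S/I$ by reduced cohomology of a simplicial complex \cite[Cor.~5.12]{MiSt}. I would then use Example~\ref{firstnakone}, together with the tensor decomposition $\Na^{\mbk}_{\mbt}(\kr_{\mbt}\{\mba,\mbb\})\cong\bigotimes_{i}\Na^{k_{i}}_{t_{i}}(\kr_{t_{i}}\{a_{i},b_{i}\})$ and the K\"unneth formula, to compute $H^{\gamma+n}\Na^{\one}_{\mbt}\kr_{\mbt}\{\mbt-\mbr,\mbt-\mbr\}$ explicitly, read off $\gamma$, $\mba$ and $\mbb$, and check that the simplicial complex in the classical formula is exactly $\Delta^{\mbb}_{\mba}(S/I;\mbt)$.

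For the inductive step, assuming $k_{j}\ge1$, I would set $C=\Na^{\mbk-\varepsilon_{j}}_{\mbt}(S/I)$ and write $\Na^{\mbk}_{\mbt}(S/I)=\Na^{\varepsilon_{j}}_{t_{j}\varepsilon_{j}}(C)$. Since $\Na^{\varepsilon_{j}}_{t_{j}\varepsilon_{j}}(C)=p^{*}_{t_{j}\varepsilon_{j}}\HOM_{S}(x_{j}^{t_{j}+1}S\to S,\,C(-\varepsilon_{j}))$ is the truncation of the two-term total complex $C(-\varepsilon_{j})\to C(t_{j}\varepsilon_{j})$, there is a short exact sequence of complexes, natural in $C$,
\[
0\longrightarrow T^{-1}p^{*}_{t_{j}\varepsilon_{j}}\big(C(t_{j}\varepsilon_{j})\big)\longrightarrow \Na^{\varepsilon_{j}}_{t_{j}\varepsilon_{j}}(C)\longrightarrow p^{*}_{t_{j}\varepsilon_{j}}\big(C(-\varepsilon_{j})\big)\longrightarrow 0 ,
\]
the truncations being exact by Lemma~\ref{truncateexact}. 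Applying $F\otimes_{S}(-)$, which preserves such sequences, and passing to multidegree $\mbr$ gives a long exact sequence relating $B^{\bullet}(\Na^{\mbk}_{\mbt}(S/I))_{\mbr}$ to Betti spaces of $C=\Na^{\mbk-\varepsilon_{j}}_{\mbt}(S/I)$ in degrees obtained from $\mbr$ by moves in the $j$-th coordinate. By the inductive hypothesis these are reduced cohomology groups of simplicial complexes $\Delta^{\mbb'}_{\mba'}(S/I;\mbt)$ whose index vectors agree with $\mba,\mbb$ off coordinate $j$; a computation with Example~\ref{firstnakone} and the tensor decomposition identifies the complexes that occur as precisely the four complexes $\Delta^{\mbb}_{\mba}$, $\Delta^{\mbb+\beta\varepsilon_{j}}_{\mba}$, $\Delta^{\mbb}_{\mba+\alpha\varepsilon_{j}}$, $\Delta^{\mbb+\beta\varepsilon_{j}}_{\mba+\alpha\varepsilon_{j}}$ of Lemma~\ref{pbsituation}, for suitable $\alpha,\beta$ (with $a_{j}+\alpha\le b_{j}+1$); comparing the long exact sequence with (a rewriting of) the Mayer--Vietoris sequence of Corollary~\ref{mayervirtoriscor}, which involves exactly those complexes, then yields the isomorphism for $\mbk$ and closes the induction.

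The hard part will be this last comparison: one must show that, under the identifications of the degree-$\mbr$ Betti spaces of $C(-\varepsilon_{j})$, $C(t_{j}\varepsilon_{j})$ and $\Na^{\varepsilon_{j}}_{t_{j}\varepsilon_{j}}(C)$ with reduced cochain groups of the relevant $\Delta$'s, the connecting homomorphism of the short exact sequence above goes to the Mayer--Vietoris connecting map $\delta^{i}$ of Corollary~\ref{mayervirtoriscor}. This is where the combinatorial splitting of Lemma~\ref{pbsituation} --- a face $F$ belongs to $\Delta^{\mbb}_{\mba}$ with $j\in F$ or $j\notin F$ according to whether its defining degree $\mbx_{F}$ has $b_{j}+1$ or $a_{j}$ in the $j$-th slot --- must be matched with the $j$-th Koszul differential and with the differential of $\Na^{\varepsilon_{j}}_{t_{j}\varepsilon_{j}}$, namely multiplication by $x_{j}^{t_{j}+1}$. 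To make this go through cleanly one should carry out the entire induction with isomorphisms that are natural in $I$, so that naturality forces the squares relating the two long exact sequences to commute, and then conclude with the five lemma. I expect this bookkeeping, rather than any individual computation, to be the genuinely delicate point, just as in the companion Theorem~\ref{thecalc1}.
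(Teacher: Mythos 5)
Your induction on $|\mbk|$ stalls at the subcomplex term of your short exact sequence, and this is exactly where the content of the theorem sits. Write $C=\Na^{\mbk-\varepsilon_j}_{\mbt}(S/I)$. The quotient term is unproblematic: one checks that $(F\otimes_S p^{*}_{t_j\varepsilon_j}(C(-\varepsilon_j)))_{\mbr}=(F\otimes_S C)_{\mbr-\varepsilon_j}$, so for $r_j\ge 1$ the inductive hypothesis applies (and for $r_j=0$ it vanishes). But the subcomplex term $T^{-1}p^{*}_{t_j\varepsilon_j}(C(t_j\varepsilon_j))$ is \emph{not} a Betti space of $C$ in any multidegree in $[0,\mbt]$: the truncation does not commute with $F\otimes_S(-)$ here (the summands $p^{*}(C(t_j\varepsilon_j))_{\mbr-\varepsilon_G}$ vanish when $j\in G$ and $r_j=0$, while $C_{\mbr-\varepsilon_G+t_j\varepsilon_j}$ does not), and in fact $p^{*}_{t_j\varepsilon_j}(C(t_j\varepsilon_j))$ is ``$0$-determined in the $j$-th direction'', so its degree-$\mbr$ Betti spaces vanish unless $r_j=0$ and in that case equal $H^{*}$ of the Koszul complex on the variables other than $x_j$ tensored with the top $x_j$-slice of $C$. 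So your long exact sequence degenerates in both cases, and precisely in the case $r_j=0$ it leaves you with a term to which no inductive hypothesis applies. Identifying that term is where the paper invests its real work: the ``swap'' quasi-isomorphism $(F\otimes_S\Na^{k}_{t}K_t\{0,t-y\})_u\simeq(T\Na^{k+1}_tK_t\{t-u,t-u\})_y$ of Corollary \ref{firststepbetti} (built from Lemma \ref{nathelper2} and Corollary \ref{natswapuy}), which converts the Betti computation in degree $\mbr$ into a cohomology computation of the \emph{higher} iterate $\Na^{\mbk+\one}_{\mbt}$ on the point module $K_{\mbt}\{\mbt-\mbr,\mbt-\mbr\}$ — exactly the object appearing in the statement of the theorem. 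Your proposal contains no substitute for this step.

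Two further points. First, the Mayer--Vietoris comparison you flag as ``the hard part'' is not actually what this theorem needs: in each multidegree the answer is a single reduced cohomology group, the paper never invokes Corollary \ref{mayervirtoriscor} in the proof of Theorem \ref{bettithm} (that comparison belongs to Theorem \ref{nakcohS}), and in your setup the long exact sequence never has two interacting nonzero outer terms. Second, ``naturality in $I$ plus the five lemma'' does not establish that a connecting homomorphism of your short exact sequence agrees with a Mayer--Vietoris connecting map: naturality of the horizontal identifications in $I$ says nothing about squares involving two different boundary maps, and such comparisons require the explicit sign bookkeeping of Subsection \ref{sec:boundhom}. Finally, note that the paper's actual route is global rather than inductive: it writes $S/I$ as $\Hom_{KJ^{\op}}(K\{J^{\op}\},R)$ for the poset $J$ of standard monomials, replaces $K\{J^{\op}\}$ by a projective resolution $E$ (Lemma \ref{flasquelem}), commutes the exact functor $F\otimes_S\Na^{\mbk}_{\mbt}$ past $\Hom_{KJ^{\op}}(E,-)$, applies the swap to each $R(\mbw)$, and then reads off the reduced cohomology of $\Delta^{\mbb}_{\mba}$ from Proposition \ref{modscx}; your proposal would need to reinvent the last ingredient as well, since a direct identification of $(F\otimes_S\Na^{\mbk}_{\mbt}(S/I))_{\mbr}$ with a cochain complex of $\Delta^{\mbb}_{\mba}$ is not supplied.
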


It is easy to 
give an explicit description of $\mbu$, $\mbv$ and
$\gamma$ in Theorem \ref{thecalc1}. In order to do this we first note that
in view of 
Proposition \ref{trans} it suffices to consider the case where $\mbk
\le \mbt + \one$.
\begin{defn}
  \label{gammauv}
Given $r,k,t \in \N$ with $k \le t+1$ we define
$\gamma^k_t(r)$, $u^k_t(r)$ and $v^k_t(r)$ by the formulas
\begin{displaymath}
  \gamma^k_t(r) =
  \begin{cases}
    0 & \text{if $k \le r$} \\
    1 & \text{if $r + 1 \le k$},
  \end{cases} 
\end{displaymath}
\begin{displaymath}
  u^k_t(r) =
  \begin{cases}
    r - k  & \text{if $k \le r$} \\
    t - k+1 & \text{if $r + 1 \le k$}
  \end{cases}
\end{displaymath}
and
\begin{displaymath}
  v^k_t(r) =
  \begin{cases}
    t - k  & \text{if $k \le r$} \\
    t-k+r + 1 & \text{if $r + 1 \le k$}.
  \end{cases}
\end{displaymath}
\end{defn}
 
Let $\gamma_\mbt^\mbk(\mbr) = \sum_{j=1}^n
\gamma_{t_j}^{k_j}(r_j)$, let $\mbu_\mbt^\mbk(\mbr) 
= (u_{t_1}^{k_1}(r_1),\dots,u_{t_n}^{k_n}(r_n))$ ,
and let $\mbv_\mbt^\mbk(\mbr) =
(v_{t_1}^{k_1}(r_1),\dots,v_{t_n}^{k_n}(r_n))$. In Section \ref{sec:genint} we
prove the following.
\begin{prop}
\label{nakcoh}
The elements $\mbu = \mbu_\mbt^\mbk(\mbr)$ and $\mbv =
\mbv_\mbt^\mbk(\mbr)$ in $\N^n$ and the
integer $\gamma = \gamma_\mbt^\mbk(\mbr)$ satisfy the first equation in 
Theorem \ref{thecalc1}.   
\end{prop}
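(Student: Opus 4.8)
The plan is to reduce to the one-variable case and then iterate the explicit computation of Example~\ref{firstnakone}. By Remark~\ref{RemMainPro1} the complex $\Na_{\mbt}^{\mbk+\one}K_{\mbt}\{\mbt-\mbr,\mbt\}$ is quasi-isomorphic to $T^{-n}\Na_{\mbt}^{\mbk}K_{\mbt}\{\mathbf{0},\mbt-\mbr\}$, so the first equation of Theorem~\ref{thecalc1} is equivalent to the assertion that the cohomology of $\Na_{\mbt}^{\mbk}K_{\mbt}\{\mathbf{0},\mbt-\mbr\}$ is concentrated in cohomological degree $\gamma_{\mbt}^{\mbk}(\mbr)$, where it equals $K_{\mbt}\{\mbt-\mbv_{\mbt}^{\mbk}(\mbr),\mbt-\mbu_{\mbt}^{\mbk}(\mbr)\}$. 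Since $\Na_{\mbt}^{\mbk}(K_{\mbt}\{\mba,\mbb\})\cong\bigotimes_{j=1}^{n}\Na_{t_j}^{k_j}(K_{t_j}\{a_j,b_j\})$ with outer tensor product over the field $K$, since $K_{\mbt}\{\mbu,\mbv\}\cong\bigotimes_{j}K_{t_j}\{u_j,v_j\}$, and since $\gamma_{\mbt}^{\mbk}(\mbr)$, $\mbu_{\mbt}^{\mbk}(\mbr)$, $\mbv_{\mbt}^{\mbk}(\mbr)$ are defined coordinatewise, the Künneth formula over $K$ reduces the claim to the case $n=1$. (As noted just before Definition~\ref{gammauv}, Proposition~\ref{trans} lets us assume $\mbk\le\mbt+\one$, which is exactly the range where Definition~\ref{gammauv} applies.)

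For $n=1$ the key structural fact is that $\Na_{t}^{1}(C)=p_{t}^{*}\HOM_S(P_{t+1},C(-1))$ preserves quasi-isomorphisms: $P_{t+1}$ is a bounded complex of free $S$-modules, so $\HOM_S(P_{t+1},-)$ preserves quasi-isomorphisms, and $p_{t}^{*}$ is exact by Lemma~\ref{truncateexact}; moreover $\Na_{t}^{1}$ commutes with the shift $T$. Consequently, starting from the module $K_t\{0,t-r\}$, every iterate $\Na_{t}^{k}K_t\{0,t-r\}=(\Na_{t}^{1})^{\circ k}K_t\{0,t-r\}$ is, up to quasi-isomorphism, a single interval module placed in one cohomological degree; I record its state by a triple $(a,b,c)$ meaning ``quasi-isomorphic to $T^{-c}K_t\{a,b\}$''. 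By Example~\ref{firstnakone} one step of $\Na_{t}^{1}$ sends $(a,b,c)$ to $(a+1,b+1,c)$ when $b<t$ and to $(0,a,c+1)$ when $b=t$. Running this recursion from the initial triple $(0,t-r,0)$: the first $r$ steps pass through $(j,t-r+j,0)$ for $0\le j\le r$, step $r+1$ sends $(r,t,0)$ to $(0,r,1)$, and the remaining steps pass through $(j,r+j,1)$; a direct check then shows that after $k$ steps with $0\le k\le t+1$ the triple is $(k,t-r+k,0)$ if $k\le r$ and $(k-r-1,k-1,1)$ if $r+1\le k\le t+1$, and comparison with Definition~\ref{gammauv} gives $c=\gamma_{t}^{k}(r)$, $b=t-u_{t}^{k}(r)$, $a=t-v_{t}^{k}(r)$. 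The extreme cases $r=0$ and $t=0$ only make one of the two phases empty and are covered by the same recursion; one also checks along the way that every interval produced lies in $[0,t]$, so Example~\ref{firstnakone} always applies.

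I expect the only genuine work to be the bookkeeping in this recursion: keeping track of the cumulative shift $c$, verifying that the intervals stay inside $[0,t]$, and handling the boundary values of $k$ (notably $k=r$ and $k=r+1$) so that the ``only nonzero cohomology group'' part of the statement is maintained at every stage. The point that makes this a clean induction rather than a spectral-sequence argument is precisely that $\Na_{t}^{1}$ preserves quasi-isomorphisms, so that no extension problems arise when iterating; the reduction to $n=1$ and the Künneth step are then routine, using only that $K$ is a field.
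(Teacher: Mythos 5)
Your proposal is correct and follows essentially the same route as the paper: reduce to $n=1$ by the tensor-product decomposition, then iterate the one-variable computation of Example~\ref{firstnakone} using exactness/quasi-isomorphism-invariance of $\Na^1_t$ (the paper packages this iteration as the inductive construction of the model complexes $\mycx^k_t(t-y)$ in Lemma~\ref{nathelper1}, from which Corollary~\ref{intervaluv} and hence Proposition~\ref{nakcoh} are read off). The only difference is that the paper carries along explicit natural quasi-isomorphisms because it needs them later, whereas for this proposition your bookkeeping of the cohomology alone suffices, and your recursion $(k,t-r+k,0)$ for $k\le r$ and $(k-r-1,k-1,1)$ for $k\ge r+1$ matches Definition~\ref{gammauv} exactly.
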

%

In the particular case $\mbk = \one$ we recover Takayama's calculation of local
cohomology \cite{Takayama2004}.
\begin{cor}[Takayama]
  Let $\mbt \in \N^n$ and $\mbr \in [0,\mbt]$.
  With the notation $\mbu = \mbu_\mbt^\one(\mbr)$, $\mbv =
  \mbv_\mbt^\one(\mbr)$ and $\gamma = \gamma_\mbt^\one(\mbr)$ there are
  isomorphisms of the form 
  \begin{displaymath}
    H^i_\mm(S/I)_{\mbr-\one} \cong 
    H^i \Na^{\one}_{\mbt} (S/I)_{\mbr} \cong \widetilde
    H^{i-\gamma-1}(\Delta^{\mbv}_{\mbu}(S/I;\mbt)). 
  \end{displaymath}
for every positively $\mbt$-determined ideal $I$ in $S$.
\end{cor}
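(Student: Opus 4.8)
The plan is to obtain the corollary as an essentially immediate consequence of the main result Theorem \ref{thecalc1} in the special case $\mbk = \one$, together with the local-cohomology identification established in Section \ref{sec:nakfuny} and the explicit formulas of Definition \ref{gammauv}. First I would recall from the discussion following Proposition \ref{extlok} (see also Proposition \ref{extstab}) that for a positively $\mbt$-determined $S$-module $M$ there is an isomorphism of $\Z^n$-graded $S$-modules
\begin{displaymath}
  H^i \Na^{\one}_{\mbt}(M) \cong p_{\mbt}^* H^i_\mm(M(-\one)),
\end{displaymath}
and that by Proposition \ref{extlok} the module $H^i_\mm(M(-\one))$ is negatively $\mbt$-determined, hence completely determined by its $\mbt$-truncation. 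Applying this with $M = S/I$, which is positively $\mbt$-determined precisely because $I$ is, and evaluating in a degree $\mbr \in [0,\mbt]$ gives
\begin{displaymath}
  H^i \Na^{\one}_{\mbt}(S/I)_{\mbr} \cong H^i_\mm(S/I)(-\one)_{\mbr} = H^i_\mm(S/I)_{\mbr-\one},
\end{displaymath}
which is the left-hand isomorphism in the statement.

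Next I would invoke Theorem \ref{thecalc1} directly with $\mbk = \one$ and this same $\mbr \in [0,\mbt]$. That theorem produces $\mbu$, $\mbv$ in $\N^n$ and $\gamma \in \Z$, characterized by the property that $H^{\gamma+n}\Na^{\mbk+\one}_{\mbt} K_{\mbt}\{\mbt-\mbr,\mbt\}$ is the unique nonzero cohomology group and equals $K_{\mbt}\{\mbt-\mbv,\mbt-\mbu\}$, and it asserts
\begin{displaymath}
  H^i \Na^{\one}_{\mbt}(S/I)_{\mbr} \cong \widetilde H^{i-\gamma-1}(\Delta^{\mbv}_{\mbu}(S/I;\mbt)).
\end{displaymath}
It then remains only to identify these $\mbu$, $\mbv$, $\gamma$ with the explicit quantities $\mbu_\mbt^\one(\mbr)$, $\mbv_\mbt^\one(\mbr)$, $\gamma_\mbt^\one(\mbr)$ of Definition \ref{gammauv}. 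This is exactly what Proposition \ref{nakcoh} guarantees: the elements produced by the formulas of Definition \ref{gammauv} satisfy the defining equation of Theorem \ref{thecalc1}, and since that equation determines $\mbu$, $\mbv$ (and $\gamma$) uniquely — the module $K_{\mbt}\{-,-\}$ determines its interval, and the cohomological degree in which it sits is forced — substituting $\mbk = \one$ into Proposition \ref{nakcoh} yields $\mbu = \mbu_\mbt^\one(\mbr)$, $\mbv = \mbv_\mbt^\one(\mbr)$, $\gamma = \gamma_\mbt^\one(\mbr)$. Concatenating the three isomorphisms gives the displayed chain in the corollary.

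The only point that needs a little care — and what I would flag as the main obstacle, though it is minor — is checking that $\mbk = \one \le \mbt + \one$ so that Definition \ref{gammauv} and Proposition \ref{nakcoh} apply without first having to reduce modulo the periodicity of Proposition \ref{trans}; this holds trivially since each $k_j = 1 \le t_j + 1$. One should also confirm that the hypothesis $\mbr \in [0,\mbt]$ of the corollary matches the hypothesis of Theorem \ref{thecalc1} and is exactly the range in which the $\mbt$-truncation recovers the full local cohomology module, which it is by Proposition \ref{extlok}(1)–(2). With these bookkeeping checks in place the corollary follows with no further computation, and it recovers Takayama's formula \cite{Takayama2004} since the simplicial complexes $\Delta^{\mbv}_{\mbu}(S/I;\mbt)$ with $\mbu = \mbu_\mbt^\one(\mbr)$, $\mbv = \mbv_\mbt^\one(\mbr)$ are precisely the "degree complexes" appearing there.
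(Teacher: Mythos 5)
Your proposal is correct and follows essentially the same route as the paper: the first isomorphism comes from the stabilization statement (Proposition \ref{extstab}, as amplified by Proposition \ref{extlok}) identifying $H^i\Na^{\one}_{\mbt}(S/I)$ with the $\mbt$-truncation of $H^i_{\mm}(S/I)(-\one)$, and the second is the case $\mbk=\one$ of Theorem \ref{thecalc1} combined with Proposition \ref{nakcoh}. The bookkeeping checks you flag ($\one\le\mbt+\one$, evaluation at $\mbr\in[0,\mbt]$) are exactly the right ones and present no difficulty.
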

\begin{proof}
  The first isomorphism is a direct consequence of Proposition
  \ref{extstab}. The second isomorphism is just a special case 
  of Proposition \ref{nakcoh} and Theorem \ref{thecalc1}.
\end{proof}
The above corollary on the other hand is an extension of Hochster's
calculation  \cite{Hochster1977} of local cohomology of square-free
monomial ideals, or in 
other words, of $\one$-determined ideals in $S$.

We now proceed to describe the $S$-module structure of the Nakayama
cohomology groups $H^i \Na^{\mbk}_{\mbt} (S/I)$ of a
positively $\mbt$-determined ideal $I$ in $S$. Since this $S$-module is  positively $\mbt$-determined it
suffices to describe the multiplication maps 
$$x_j \colon H^i \Na^{\mbk}_{\mbt}
(S/I)_{\mbr - \varepsilon_j} \to H^i \Na^{\mbk}_{\mbt}
(S/I)_{\mbr}$$
whenever $\varepsilon_j \le \mbr \le \mbt$.

\begin{thm}
\label{nakcohS}
  Let $I$ be a  positively $\mbt$-determined ideal in $S$ and let $\mbr
  \in \N^n$ with $\varepsilon_j \le \mbr \le \mbt$. Using
  $\Delta^{\mbb}_{\mba}$ as short hand notation for
  $\Delta^{\mbb}_{\mba}(S/I;\mbt)$ the 
  multiplication map
  $$x_j \colon H^i \Na^{\mbk}_{\mbt} (S/I)_{\mbr - \varepsilon_j} \to H^i \Na^{\mbk}_{\mbt}
  (S/I)_{\mbr}$$ 
  can be described as follows: 
  \begin{enumerate}
  \item If $k_j \ne r_j$, then
    $\Delta^{\mbv_\mbt^\mbk(\mbr)}_{\mbu_\mbt^\mbk(\mbr)}$
    is a subcomplex of 
    $\Delta^{\mbv_\mbt^\mbk(\mbr-\varepsilon_j)}_{\mbu_\mbt^\mbk(\mbr-\varepsilon_j)}$ and 
    multiplication by $x_j$ corresponds to
    the map of cohomology groups induced by this inclusion.
  \item If $k_j = r_j$, then multiplication by $x_j$ corresponds to
    $(-1)^{\sum_{i=1}^{j-1}\gamma_i(r_i)}$ times the
    Mayer-Vietoris connecting homomorphism $\delta^{i-\gamma-2}$ of Corollary
    \ref{mayervirtoriscor} with $\mba =
    \mbu_\mbt^\mbk(\mbr)$, $\mbb = \mbv_\mbt^\mbk(\mbr)$, 
    $\gamma = \gamma_\mbt^\mbk(\mbr)$, $\alpha =
    t_j - r_j +1$ and $\beta = r_j$.
  \end{enumerate}
\end{thm}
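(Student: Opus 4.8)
The plan is to reduce the computation of the multiplication map $x_j$ on Nakayama cohomology to a combinatorial statement about the simplicial complexes $\Delta^{\mbb}_{\mba}$, by tracking carefully how $x_j$ acts through the chain-level description of $\Na^{\mbk}_{\mbt}(S/I)$. The starting point is Theorem \ref{thecalc1}: one knows that $H^i\Na^{\mbk}_{\mbt}(S/I)_{\mbr} \cong \widetilde H^{i-\gamma-1}(\Delta^{\mbv}_{\mbu})$ where $\mbu = \mbu^{\mbk}_{\mbt}(\mbr)$, $\mbv = \mbv^{\mbk}_{\mbt}(\mbr)$, $\gamma = \gamma^{\mbk}_{\mbt}(\mbr)$. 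Since the module $H^i\Na^{\mbk}_{\mbt}(S/I)$ is positively $\mbt$-determined it is enough to understand the maps $x_j \colon (\,\cdot\,)_{\mbr-\varepsilon_j} \to (\,\cdot\,)_{\mbr}$ for $\varepsilon_j \le \mbr \le \mbt$. The essential observation is that passing from $\mbr - \varepsilon_j$ to $\mbr$ changes only the $j$-th coordinates of $\mbu$, $\mbv$ (and possibly $\gamma$), according to the one-variable formulas in Definition \ref{gammauv}, so the whole question lives in a single coordinate direction.

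First I would set up the chain-level model. Using the multiplicativity isomorphism $\Na^{\mbk}_{\mbt}(K_{\mbt}\{\mba,\mbb\}) \cong \bigotimes_j \Na^{k_j}_{t_j}(K_{t_j}\{a_j,b_j\})$ together with a filtration of $S/I$ by interval modules, or more directly via the description of $\Na^{\mbk}_{\mbt}$ as an iterated $\HOM$ against the Koszul-type complexes $P^{\mbk}_{\mbt+\one}$, I would identify $\Na^{\mbk}_{\mbt}(S/I)_{\mbr}$ at the chain level with a complex whose pieces are the vector spaces $(S/I)_{\mbx}$ for the degrees $\mbx$ of the form appearing in the definition of $\Delta^{\mbb}_{\mba}$, i.e.\ $\mbx = \sum_{i \notin F} a_i\varepsilon_i + \sum_{i \in F}(b_i+1)\varepsilon_i$. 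Concretely, after truncation and the shift $p_{\mbt}^*\HOM_S(P^{\mbk}_{\mbt+\one}, (S/I)(-\mbk))$, the cochain complex computing $H^i\Na^{\mbk}_{\mbt}(S/I)_{\mbr}$ becomes (up to the shift $\gamma$) the reduced simplicial cochain complex of $\Delta^{\mbv}_{\mbu}$ on the vertex set $\{1,\dots,n\}$. The key point is that the $j$-th tensor factor contributes, in degree $r_j$ versus $r_j - 1$, exactly the one-variable complex from Example \ref{firstnakone}, and multiplication by $x_j$ on the cohomology is induced by the one-variable multiplication $x \colon M(t)_a \to M(t)_{a+1}$ or the zeroth/first cochain maps of $\HOM_S(P_r, M(-1))$.

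Then I would split into the two cases exactly as in the statement. When $k_j \ne r_j$, the one-variable analysis shows that the $j$-th factor of $\Na^{k_j}_{t_j}$ has cohomology concentrated in a single degree, $\gamma^{k_j}_{t_j}$ is the same for $r_j$ and $r_j - 1$, and the multiplication $x_j$ simply restricts a cochain on the bigger complex $\Delta^{\mbv^{\mbk}_{\mbt}(\mbr-\varepsilon_j)}_{\mbu^{\mbk}_{\mbt}(\mbr-\varepsilon_j)}$ to the subcomplex $\Delta^{\mbv^{\mbk}_{\mbt}(\mbr)}_{\mbu^{\mbk}_{\mbt}(\mbr)}$; here I use Lemma \ref{pbsituation} to see that the relevant index change is exactly $\mba \mapsto \mba + \alpha\varepsilon_j$ or $\mbb \mapsto \mbb + \beta\varepsilon_j$, which produces a subcomplex inclusion. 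When $k_j = r_j$, the one-variable complex degenerates in the way recorded in Example \ref{firstnakone} (the $b=t$ branch), the cohomological degree $\gamma^{k_j}_{t_j}$ jumps by one between $r_j - 1$ and $r_j$, and $x_j$ is forced to be a connecting map; I would match it against the Mayer--Vietoris coboundary $\delta^{i-\gamma-2}$ of Corollary \ref{mayervirtoriscor} with $\alpha = t_j - r_j + 1$, $\beta = r_j$, noting that these are precisely the parameters linking the three complexes with $j$-th coordinate data $(u_j, v_j) = (t_j - k_j, \dots)$ shifted appropriately.

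The main obstacle I anticipate is the \emph{sign bookkeeping}: the iterated functor $\Na^{\mbk}_{\mbt} = (\Na^{\varepsilon_1}_{t_1\varepsilon_1})^{\circ k_1}\circ\cdots$ is built by composing single-variable functors, and each composition introduces Koszul signs, as do the shift functors $T$ and the identification of the total complex with the simplicial cochain complex. Pinning down that the sign in case (2) is exactly $(-1)^{\sum_{i=1}^{j-1}\gamma_i(r_i)}$ requires ordering the tensor factors consistently and checking how the connecting homomorphism of the $j$-th factor commutes past the cohomology classes coming from factors $1,\dots,j-1$ (which sit in cohomological degrees $\gamma_i(r_i)$) — this is where the sign $\gamma_i(r_i)$ enters. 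I would handle this by fixing once and for all the convention that $\Na^{\mbk}_{\mbt}$ is applied factor by factor in increasing index order, reducing to a two-factor computation (factors $\le j-1$ bundled together, the $j$-th factor, factors $> j$ bundled together), and invoking the standard sign rule for the differential on a tensor product of complexes together with the naturality of the Mayer--Vietoris sequence in Corollary \ref{mayervirtoriscor}. Everything else — the identification of cochain complexes, the subcomplex inclusions, the choices of $\alpha$ and $\beta$ — follows mechanically from Lemma \ref{pbsituation}, Example \ref{firstnakone}, Definition \ref{gammauv} and Proposition \ref{nakcoh}.
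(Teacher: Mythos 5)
The skeleton of your argument---the case split on $k_j$ versus $r_j$, the use of Lemma \ref{pbsituation} to produce the subcomplex inclusions, the identification of the $k_j = r_j$ case with a Mayer--Vietoris connecting map for $\alpha = t_j - r_j + 1$, $\beta = r_j$, and the Koszul-sign origin of $(-1)^{\sum_{i<j}\gamma_i(r_i)}$---matches the paper. But there is a genuine gap at the foundation: you assert that, at the chain level, $\Na^{\mbk}_{\mbt}(S/I)_{\mbr}$ ``becomes (up to the shift $\gamma$) the reduced simplicial cochain complex of $\Delta^{\mbv}_{\mbu}$''. For general $\mbk$ this is false as stated: $\HOM_S(P^{\mbk}_{\mbt+\one}, (S/I)(-\mbk))_{\mbr}$ is an iterated tensor product of two-term complexes whose terms are copies of various $(S/I)_{\mbx}$ with multiplicities, and it only computes $\widetilde H^{*}(\Delta^{\mbv}_{\mbu})$ after passing to cohomology---that identification is exactly Theorem \ref{thecalc1}, not something one can read off termwise. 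To describe the $S$-module structure you need a chain-level model that is simultaneously functorial in the internal degree $\mbr$ (so that $x_j$ becomes a morphism of models) and combinatorially computable. The alternative you mention, a filtration of $S/I$ by interval modules, would force you through a spectral sequence whose compatibility with the $x_j$-action you do not address.

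The paper's device for this, absent from your proposal, is the presentation $S/I \cong \Hom_{KJ^{\op}}(K\{J^{\op}\}, R)$ with $R(\mbw) = K_{\mbt}\{0,\mbw\}$, replaced by $\Hom_{KJ^{\op}}(E,R)$ for a projective resolution $E$ (Lemma \ref{flasquelem}), combined with the ``swap'' quasi-isomorphism $\mycx^{\mbk}_{\mbt}(\mbw)_{\mbr} \simeq \mycx^{\mbk}_{\mbt}(\mbt-\mbr)_{\mbt-\mbw}$ of Lemma \ref{nathelper2} and Corollary \ref{natswapuy}. This converts multiplication by $x_j$ in the internal degree into an explicit morphism of $KJ^{\op}$-modules $K\{(J\cap[\mbu',\mbv'])^{\op}\} \to K\{(J\cap[\mbu,\mbv])^{\op}\}$ (or, when $k_j = r_j$, the connecting map of a short exact sequence of such, whose sign is computed in Subsection \ref{sec:boundhom}), and Proposition \ref{modscx} then translates these into the maps on $\widetilde H^*(\Delta^{\mbb}_{\mba})$. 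Without this mechanism---or an equivalent naturality device---your plan cannot be completed: the combinatorial endgame you describe is correct, but the bridge from the Nakayama complex to the simplicial cochain complexes, natural in $\mbr$, is the actual content of the proof and is missing from your outline.
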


The case $\mbk=\mbt=\one$ of the above theorem is due to Gr\"abe
\cite{Graebe1984} as we shall now explain. In this
case $I$ is $\one$-determined, and thus $S/I$ is the Stanley-Reisner ring
of the simplicial complex 
$\Delta = \Delta(S/I)$. 
Note that in this case $\varepsilon_j \le \mbr \le \mbt = \one$  implies $r_j
= 1 = k_j$, 
and thus the $S$-module structure of $H^i \Na^\one_\one (S/I)$
is given by Mayer-Vietoris boundary maps. Further $\mbv_\one^\one(\mbr) = 
\mbu_\one^\one(\mbr)  = 
\one - \mbr$ (check!) for every $\mbr \in
[0,\one]$. If we let  $\overline R$ denote the subset $\overline R = \{j
\colon r_j = 0\}$  of $\{1,\dots,n\}$ then we saw in Example
\ref{linkex} that 
$\Delta^{\mbv_\one^\one(\mbr)}_{\mbu_\one^\one(\mbr)}(S/I;\one) $,
which is $\Delta^{\one-\mbr}_{\one
  -\mbr}(S/I;\one)$, 
is the link 
$\link_{\Delta}(\overline R)$ of $\overline R$ in $\Delta$.
Gr\"abe considers the
contrastar $\cost_{\Delta} (\overline R)$ of $\overline R$ in $\Delta$ given as
\begin{displaymath}
  \cost_{\Delta}(\overline R) := \{ G \in \Delta \colon \overline R \not \subseteq G \},
\end{displaymath}
and he notes that there is an isomorphism
\begin{eqnarray*}
  T^{|\overline R|}\widetilde C_*(\link_{\Delta} (\overline R)) &\cong& \widetilde
  C_*(\Delta)/\widetilde C_*(\cost_{\Delta}(\overline R)), \\
  G & \mapsto&
  (-1)^{a(\overline R,G)} (\overline R \cup G) + \widetilde
  C_*(\cost_{\Delta}(\overline R)),
\end{eqnarray*}
where $a(\overline R,G)$ is the cardinality of  
$\{(f,g) \in \overline R \times G \colon f < g \}$. 
On the
level of cohomology groups we obtain a commutative diagram of the form
\begin{displaymath}
  \begin{CD}
    \widetilde H^{i-|\overline R|-1}(\link_{\Delta}(\overline R \cup \{j\}))
    @>{\cong}>> \widetilde H^i(\Delta,\cost_{\Delta}(\overline R \cup \{j\})) \\
     @V{(-1)^{a(\overline R,\{j\})} \delta}VV @VVV \\
    \widetilde H^{i-|\overline R|}(\link_{\Delta}(\overline R))
    @>{\cong}>> \widetilde H^i(\Delta,\cost_{\Delta}(\overline R)).
  \end{CD}
\end{displaymath}
Here $\delta$ is the Mayer-Vietoris connecting homomorphism of
Corollary \ref{mayervirtoriscor} 
\[\widetilde H^{i-|\overline R|-1}(\Delta^{\one - \mbr +\varepsilon_j}_{\one - \mbr +\varepsilon_j}
(S/I;\one)) \rightarrow 
\widetilde H^{i-|\overline R|}(\Delta^{\one -\mbr}_{\one - \mbr}(S/I;\one) ) \]
(in the situation $\mba  = \mbb = \one-\mbr$ and 
$\alpha = \beta = 1$).
The right hand vertical
homomorphism is induced from the inclusion $\cost_{\Delta}(\overline R)
\subseteq 
\cost_{\Delta}(\overline R \cup \{j\})$.
Since $H^i \Na_\one^\one (S/I)$ is isomorphic to $p_\one^* (H^i_\mm(S/I)(-\one))$
we recover Gr\"abe's result \cite{Graebe1984} in the
following formulation.
\begin{thm}[Gr\"abe, 1984]
  Let $I \subseteq S$ be a square-free monomial ideal and let $\Delta =
  \Delta(S/I)$. For every $\mbr \in
  \{0,1\}^n$ there is, with the notation $\overline R = \{j \colon r_j =
  0\}$, an isomorphism
  \begin{displaymath}
    H^i_\mm(S/I)_{\mbr-\one} \cong
    \widetilde H^{i-1}(\Delta,\cost_{\Delta}(\overline R)).
  \end{displaymath}
  If $\varepsilon_j \le \mbr \le \one$, then under the above
  isomorphism the multiplication
  \begin{displaymath}
    x_j \colon H^i_\mm(S/I)_{\mbr-\varepsilon_j-\one} \to H^i_\mm(S/I)_{\mbr-\one}
  \end{displaymath}
  corresponds to the homomorphism of cohomology groups induced by the
  inclusion $\cost_{\Delta} (\overline R) \subseteq \cost_{\Delta}
  (\overline R \cup \{j\})$.
\end{thm}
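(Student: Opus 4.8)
The plan is to deduce the statement from Theorem~\ref{thecalc1} and Theorem~\ref{nakcohS}, specialized to $\mbt = \mbk = \one$, together with the chain-level identifications assembled in the paragraphs preceding the statement. The starting observation is that since $H^i\Na^{\one}_{\one}(S/I)$ is isomorphic to $p_{\one}^*(H^i_{\mm}(S/I)(-\one))$, evaluating in a degree $\mbr \in [0,\one]$ gives $H^i_{\mm}(S/I)_{\mbr - \one} \cong H^i\Na^{\one}_{\one}(S/I)_{\mbr}$; so it suffices to identify the graded pieces of $H^i\Na^{\one}_{\one}(S/I)$ in degrees $\mbr \in \{0,1\}^n$ together with the multiplication maps $x_j$ between them.

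For the isomorphism I would invoke Theorem~\ref{thecalc1} with $\mbk = \mbt = \one$. Reading off Definition~\ref{gammauv} one has $\mbu^{\one}_{\one}(\mbr) = \mbv^{\one}_{\one}(\mbr) = \one - \mbr$ and $\gamma^{\one}_{\one}(\mbr) = |\overline R|$, so $H^i\Na^{\one}_{\one}(S/I)_{\mbr} \cong \widetilde H^{i - |\overline R| - 1}(\Delta^{\one-\mbr}_{\one-\mbr}(S/I;\one))$. By Example~\ref{linkex} the complex $\Delta^{\one-\mbr}_{\one-\mbr}(S/I;\one)$ is the link $\link_{\Delta}(\overline R)$, and composing with the isomorphism $\widetilde H^{i-1-|\overline R|}(\link_{\Delta}(\overline R)) \cong \widetilde H^{i-1}(\Delta,\cost_{\Delta}(\overline R))$ induced by the chain-level identification $T^{|\overline R|}\widetilde C_*(\link_{\Delta}(\overline R)) \cong \widetilde C_*(\Delta)/\widetilde C_*(\cost_{\Delta}(\overline R))$ recalled above yields the first assertion.

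For the multiplication maps, observe that with $\mbt = \one$ the hypothesis $\varepsilon_j \le \mbr$ forces $r_j = 1 = k_j$, so case~(2) of Theorem~\ref{nakcohS} applies: under the identification of the previous paragraph, $x_j \colon H^i\Na^{\one}_{\one}(S/I)_{\mbr - \varepsilon_j} \to H^i\Na^{\one}_{\one}(S/I)_{\mbr}$ becomes $(-1)^{\sum_{i<j}\gamma^1_1(r_i)}$ times the Mayer--Vietoris connecting homomorphism $\delta^{i - |\overline R| - 2}$ of Corollary~\ref{mayervirtoriscor} taken with $\mba = \mbb = \one - \mbr$, $\alpha = \beta = 1$ and the index $j$; here the source complex $\Delta^{\one-\mbr+\varepsilon_j}_{\one-\mbr+\varepsilon_j}$ is $\link_{\Delta}(\overline R \cup \{j\})$, again by Example~\ref{linkex}. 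The exponent $\sum_{i<j}\gamma^1_1(r_i)$ counts the $i < j$ with $r_i = 0$, hence equals $a(\overline R,\{j\})$. Substituting into the commutative square recalled just before the statement, the sign-corrected $\delta$ becomes the map $\widetilde H^{i-1}(\Delta,\cost_{\Delta}(\overline R \cup \{j\})) \to \widetilde H^{i-1}(\Delta,\cost_{\Delta}(\overline R))$ induced by the inclusion $\cost_{\Delta}(\overline R) \subseteq \cost_{\Delta}(\overline R \cup \{j\})$; the two factors $(-1)^{a(\overline R,\{j\})}$ cancel, which is why no sign survives. Transporting back along $H^i_{\mm}(S/I)_{\mbr - \varepsilon_j - \one} = H^i\Na^{\one}_{\one}(S/I)_{\mbr - \varepsilon_j}$ gives the second assertion.

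The one place that genuinely needs care --- and the main obstacle --- is the bookkeeping: one must make sure all the homological and cohomological degree shifts align (the $i$ versus $i-1$ versus $i - |\overline R| - 1$ conventions, plus the extra $-1$ caused by passing from the target index $\mbr$ to the source index $\mbr - \varepsilon_j$, which enlarges $\overline R$ by one element), and that the three signs appearing --- the $(-1)^{a(\overline R,G)}$ in the chain isomorphism attributed to Gr\"abe, the $(-1)^{a(\overline R,\{j\})}$ in the commutative square, and the $(-1)^{\sum_{i<j}\gamma^1_1(r_i)}$ furnished by Theorem~\ref{nakcohS}(2) --- are mutually compatible. Once these are pinned down the theorem follows formally.
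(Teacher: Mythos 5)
Your proposal is correct and follows essentially the same route as the paper, which derives the statement from Theorems \ref{thecalc1} and \ref{nakcohS} with $\mbk=\mbt=\one$, the identification $\Delta^{\one-\mbr}_{\one-\mbr}=\link_\Delta(\overline R)$ from Example \ref{linkex}, Gr\"abe's chain isomorphism $T^{|\overline R|}\widetilde C_*(\link_\Delta(\overline R))\cong \widetilde C_*(\Delta)/\widetilde C_*(\cost_\Delta(\overline R))$, and the commutative square in which the sign $(-1)^{a(\overline R,\{j\})}$ cancels against $(-1)^{\sum_{i<j}\gamma^1_1(r_i)}$. Your degree bookkeeping ($\gamma=|\overline R|$, $\mbu=\mbv=\one-\mbr$, $\alpha=\beta=1$, and the shift from $i-|\overline R|-1$ to $i-1$) matches the paper's.
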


\section{Vanishing of Nakayama cohomology}
\label{sec:van}

In this section we give conditions for the vanishing or non-vanishing
of the cohomology groups of the Nakayama functor $\Na^{\mbk}_{\mbt}(S/I)$.
Towards the end we discuss the simple case when the
polynomial ring $S$ is in two variables, and give the conditions for 
when the Nakayama functor has exactly one non-vanishing cohomology group.
The ideal $I$ is assumed to be positively $\mbt$-determined.

\subsection{General results}

\begin{lem}
\label{boundsonnacoh}
  The Nakayama cohomology
groups $H^i \Na^{\mbk}_{\mbt} (S/I)$ vanish for $i \ge 2n$ and for $i<0$.
\end{lem}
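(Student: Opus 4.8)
The plan is to reduce the statement to the one-variable case via the tensor-product structure of the Nakayama functor, and there to read off the bounds directly from Example~\ref{firstnakone}. First I would recall from Definition~\ref{DefineNak2} that $\Na^{\mbk}_{\mbt}$ is built as a composite of the single-variable functors $\Na^{\varepsilon_j}_{t_j \varepsilon_j}$, and that by Proposition~\ref{trans} we may assume $\mbk \le \mbt + \mathbf{2}$, so in particular each $k_j$ is finite; moreover, since $T^{-2}$ merely shifts cohomological degree, applying Proposition~\ref{trans} to pass to such a representative changes the relevant range of nonvanishing degrees only by an even shift, which I would track but which does not affect the claimed bounds once combined over all coordinates (the cleanest route is in fact to invoke the iterate-composition isomorphism $\Na^{\mbk'}_{\mbt}\circ\Na^{\mbk}_{\mbt}\cong\Na^{\mbk+\mbk'}_{\mbt}$ for $\mbk,\mbk'\in\{0,1\}^n$ together with Proposition~\ref{trans} to reduce to $\mbk\in\{0,1\}^n$).

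Next I would use that $S/I$, being a positively $\mbt$-determined module, admits a finite filtration whose subquotients are interval modules $K_{\mbt}\{\mba,\mbb\}$ (as recalled just before Definition~\ref{intervelmodule}). Since the Nakayama functor is exact in the relevant sense — $p_{\mbt}^*$ is exact by Lemma~\ref{truncateexact} and $\HOM_S(P^{\mbk}_{\mbt+\one},-)$ applied to a bounded complex of finitely generated projectives preserves short exact sequences — a filtration of $S/I$ with interval-module subquotients yields, via the associated long exact sequences in cohomology, the containment
\[
  \{\, i : H^i\Na^{\mbk}_{\mbt}(S/I) \ne 0 \,\} \subseteq \bigcup_{\mba,\mbb} \{\, i : H^i\Na^{\mbk}_{\mbt}(K_{\mbt}\{\mba,\mbb\}) \ne 0 \,\},
\]
the union being over the subquotients appearing. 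So it suffices to bound the cohomological degrees of $\Na^{\mbk}_{\mbt}(K_{\mbt}\{\mba,\mbb\})$.

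For an interval module the outer tensor decomposition $\Na^{\mbk}_{\mbt}(K_{\mbt}\{\mba,\mbb\}) \cong \bigotimes_{j=1}^n \Na^{k_j}_{t_j}(K_{t_j}\{a_j,b_j\})$ over $K$, together with the Künneth isomorphism (these are complexes of $K$-vector spaces), reduces the question to $n=1$. In the one-variable case Example~\ref{firstnakone} shows that $\Na^1_t(K_t\{a,b\})$ is a complex concentrated in cohomological degrees $0$ and $1$, hence its cohomology sits in $\{0,1\}$; iterating $k$ times (using $\Na^{k}_t = (\Na^1_t)^{\circ k}$ and the fact that $\Na^1_t$ sends a complex concentrated in degrees $[p,q]$ to one concentrated in $[p,q+1]$) gives cohomology concentrated in $[0,k]\subseteq[0,t+1]$. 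Taking the tensor product over the $n$ coordinates, the cohomology of $\Na^{\mbk}_{\mbt}(K_{\mbt}\{\mba,\mbb\})$ is concentrated in degrees between $0$ and $\sum_j k_j \le \sum_j (t_j+1)$; when $\mbk\in\{0,1\}^n$ this is at most $n$, and after undoing the Proposition~\ref{trans} reduction (each application of a $T^{-2}$-type shift being accounted for) the total spread stays within $[0,2n)$. Hence $H^i\Na^{\mbk}_{\mbt}(S/I) = 0$ for $i < 0$ and for $i \ge 2n$.

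I expect the main obstacle to be purely bookkeeping: making the reduction via Proposition~\ref{trans} to the case $\mbk\in\{0,1\}^n$ (or $\mbk\le\mbt+\mathbf 2$) clean, and keeping track of how the homological shifts $T^{-2}$ in each coordinate combine so that the crude per-coordinate bound of $t_j+1$ collapses to the uniform bound $2n$. Once one commits to $\mbk\in\{0,1\}^n$, every cohomological-degree count is immediate from Example~\ref{firstnakone} and the Künneth formula, and there is no genuine homological difficulty.
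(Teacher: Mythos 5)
There is a genuine gap, and it sits exactly where you dismissed the difficulty as ``bookkeeping''. The paper's proof is a two-line consequence of Theorem \ref{thecalc1}: $H^i\Na^{\mbk}_{\mbt}(S/I)_{\mbr}\cong\widetilde H^{i-\gamma-1}(\Delta^{\mbv}_{\mbu})$ with $\gamma=\gamma^{\mbk}_{\mbt}(\mbr)$ satisfying $0\le\gamma\le n$ (one contribution of at most $1$ per coordinate, by Definition \ref{gammauv}), and the reduced cohomology of a simplicial complex on $n$ vertices is concentrated in degrees $[-1,n-2]$; together these give $0\le i\le 2n-1$. Your route --- filter $S/I$ by interval modules, compute each $\Na^{\mbk}_{\mbt}(K_{\mbt}\{\mba,\mbb\})$ by K\"unneth from the $n=1$ case, and take the union of supports --- cannot reach that bound. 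For $k\le t+1$ the cohomology $H^*\Na^k_t(K_t\{a,b\})$ is concentrated in a single degree $\gamma\in\{0,1,2\}$, and the value $2$ does occur: each passage of the interval through the top $t$ triggers the degree-raising case of Example \ref{firstnakone}, and two such passages fit inside $k\le t+1$ iterations as soon as $a\ge1$ (e.g.\ $H^*\Na^2_t(K_t\{t,t\})\cong K_t\{0,0\}$ in degree $2$). Hence a filtration subquotient such as $K_{\mbt}\{\mbt,\mbt\}$ contributes cohomology in degree $2n$, and your union bound only yields $0\le i\le 2n$. The missing degree must come from cancellation in the long exact sequences of the filtration (it does cancel: $H^2\Na^2_1(K[x])=0$ although $H^2\Na^2_1(xK[x])\ne0$), but you give no argument for this; the paper gets it for free because the $\gamma$ of Theorem \ref{thecalc1} is bounded by $n$, not $2n$, and $\widetilde H^{n-1}$ of an $n$-vertex complex vanishes.

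Two further problems with the reduction of $\mbk$. Proposition \ref{trans} reduces $\mbk$ modulo $\mbt+\mathbf 2$ to the range $\mbk\le\mbt+\one$, not to $\{0,1\}^n$ (which is impossible in general), and the accompanying shifts $T^{-2}$ do move cohomology out of $[0,2n-1]$: indeed $\Na^{\mbt+\mathbf 2}_{\mbt}(S/I)\simeq T^{-2n}(S/I)$ has cohomology in degree $2n$, so the lemma is only correct under the standing hypothesis $\mbk\le\mbt+\one$ used implicitly by the paper via Proposition \ref{nakcoh}; your claim that the shifts ``do not affect the claimed bounds'' is the opposite of the truth. Finally, the observation that the chain complex $\Na^{\mbk}_{\mbt}(M)$ lives in cohomological degrees $[0,\sum_jk_j]$ only bounds the cohomology by $\sum_j(t_j+1)$, which is far larger than $2n$; the collapse to a $\mbt$-independent bound requires the concentration of the one-variable cohomology in a single degree at most $2$ (Corollary \ref{intervaluv}), and even that, as explained above, is off by one from the statement.
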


\begin{proof}
Theorem \ref{thecalc1} gives the isomorphism
$$
    H^i \Na^{\mbk}_{\mbt} (S/I)_{\mbr} \cong \widetilde
    H^{i-\gamma-1}(\Delta^{\mbv}_{\mbu}(S/I;\mbt)).
$$
where $\gamma = \gamma_{\mbt}^{\mbk}(\mbr)$, $\mbu =
\mbu_{\mbt}^{\mbk}(\mbr)$  and $\mbv = \mbv_{\mbt}^{\mbk}(\mbr)$ are
specified in Proposition \ref{nakcoh}. In particular note that  $0 \leq \gamma \leq n$.
Since  
$\Delta^{\mbv}_{\mbu}$ is a simplicial complex on $n$ vertices, 
the reduced homology group $\widetilde
H^j(\Delta^{\mbv}_{\mbu})$ can be nonvanishing only in the range $-1 \leq j \leq n-2$.
Hence the right side above can only be nonvanishing in the range
$-1 \leq i-\gamma - 1 \leq n-2$. 
But since $0 \leq \gamma \leq n$ this gives $0 \leq i \leq 2n-1$.
\end{proof}

\begin{defn}
\label{emptybox}
Let $I$ be a positively $\mbt$-determined ideal and $\mby \in [{\bf 0}, \mbt]$.
\begin{itemize}
\item $S/I$ has a {\it peak} at $\mby$ if $(S/I)_{\mby} \neq 0$ and
$(S/I)_{\mbx} = 0$ for all $ \mby < \mbx \leq \mbt$. 
\item $S/I$ has an 
{\it indent} at $\mby$ if $(S/I)_{\mby} = 0$ and
$(S/I)_{\mbx} \neq 0$ for all ${\bf 0} \leq \mbx < \mby$.
\item A peak $\mby$ is said to be of relative dimension $m$ below $\mbx$ if 
$\mby \leq \mbx$
and the number of non-zero coordinates of $\mbx - \mby$ is $m$.
An indent $\mby$ is said to be of relative dimension $m$ above $\mbx$
if $\mby \geq \mbx$ and the number of nonzero coordinates of $\mby - \mbx$ 
is $m$.
\end{itemize}
\end{defn}

The following gives sufficient criteria for the nonvanishing of
cohomology groups. They are however by no means necessary. The vanishing of 
cohomology groups is of course more interesting and the following lemma is therefore
a step towards Proposition \ref{sharpzero}.

\begin{lem}
\label{nonvanish}
Let $I$ be a positively $\mbt$-determined ideal in $S$ and $\one \leq \mbk \leq \mbt + \one$.

a. If $S/I$ has a peak of relative dimension $m$ below $\mbt + \one - \mbk$,
then $H^{n-m}\Na_{\mbt}^{\mbk}(S/I)$ is non-zero.

b. If $S/I$ has an indent of relative dimension $m$ above $\mbt + \one - \mbk$,
then $H^{n-1+m}\Na_{\mbt}^{\mbk}(S/I)$ is non-zero.
\end{lem}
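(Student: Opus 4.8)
The plan is to reduce both statements to an explicit computation using Theorem \ref{thecalc1} together with the formulas of Definition \ref{gammauv}, exhibiting a specific multidegree $\mbr$ at which the relevant cohomology group of $\Na_{\mbt}^{\mbk}(S/I)$ is forced to be nonzero. First I would treat part (a). Suppose $S/I$ has a peak $\mby$ of relative dimension $m$ below $\mbt + \one - \mbk$. Set $\mbr = \mby$, and let $\mbu = \mbu_\mbt^\mbk(\mbr)$, $\mbv = \mbv_\mbt^\mbk(\mbr)$, $\gamma = \gamma_\mbt^\mbk(\mbr)$. Since $\mby \le \mbt + \one - \mbk$ we have $r_j + 1 \le k_j$ whenever $r_j < (\mbt+\one-\mbk)_j$, and $r_j \ge k_j$ whenever $r_j = (\mbt+\one-\mbk)_j$; so in the notation of Definition \ref{gammauv} the coordinate $j$ is a ``$k_j \le r_j$ coordinate'' precisely when $r_j = t_j + 1 - k_j$. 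The number of such coordinates is $n - m$ by the definition of relative dimension, hence $\gamma = \gamma_\mbt^\mbk(\mbr) = m$. I would then identify the simplicial complex $\Delta_\mbu^\mbv = \Delta_\mbu^\mbv(S/I;\mbt)$: a subset $F$ of $\{1,\dots,n\}$ is a face iff the degree $\mbx = \sum_{i\notin F} u_i \varepsilon_i + \sum_{i\in F}(v_i+1)\varepsilon_i$ satisfies $\mbx \le \mbt$ and $(S/I)_\mbx = K$. Using the explicit values of $u^k_t(r)$ and $v^k_t(r)$, one checks that in the coordinates with $k_j \le r_j$ (equivalently $r_j = t_j+1-k_j$) the entry $u_j = r_j - k_j$ while $v_j + 1 = t_j - k_j + 1 = r_j$, so $u_j = r_j - k_j \le r_j$ and these are exactly the coordinates where $\mbx$ can exceed $\mby$; the point is that the empty face corresponds to $\mbx = \mbu$ with $\mbu \le \mby$, while any nonempty face raises some coordinate above $\mby$, where $S/I$ vanishes since $\mby$ is a peak. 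Thus $\Delta_\mbu^\mbv$ is either empty or the complex $\{\emptyset\}$; since $(S/I)_{\mbu}$ need not equal $K$ a priori, I would verify that in fact $\mbu \le \mby$ forces $(S/I)_{\mbu} = K$ because $S/I$ is a monomial quotient and is nonzero in all degrees $\le \mby$ below a peak — so $\Delta_\mbu^\mbv = \{\emptyset\}$, whose reduced cohomology is $K$ concentrated in degree $-1$. Then $H^i \Na_\mbt^\mbk(S/I)_\mbr \cong \widetilde H^{i-\gamma-1}(\Delta_\mbu^\mbv)$ is nonzero exactly when $i - \gamma - 1 = -1$, i.e. $i = \gamma = m$; wait — I want $H^{n-m}$, so I should recheck the count: the coordinates with $k_j \le r_j$ contribute $\gamma_{t_j}^{k_j}(r_j) = 0$, the others contribute $1$, and there are $m$ coordinates of the latter type (those where $\mbx - \mby$, for a maximal face, is nonzero), giving $\gamma = m$. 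Hmm, that yields $H^m$ not $H^{n-m}$, so I would instead reconsider which coordinates are ``free'': the peak being of relative dimension $m$ below $\mbt+\one-\mbk$ means $\mby \le \mbt+\one-\mbk$ and $\mbt+\one-\mbk-\mby$ has $m$ nonzero coordinates, so $n-m$ coordinates satisfy $r_j = (\mbt+\one-\mbk)_j$, i.e. $k_j \le r_j$; those $n-m$ coordinates give $\gamma$-contribution $0$ and the remaining $m$ give $1$, so again $\gamma = m$ — but the complex $\Delta_\mbu^\mbv$ now is supported on the $n-m$ coordinates where $v_j = t_j - k_j$ and raising gives $\mbx_j = t_j - k_j + 1$, which is $> r_j$ only if... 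I will need to carefully match; the correct statement should come out as $H^{n-m}$ once the roles of ``peak coordinates'' and the shift by $n$ in $\Na^{\mbk+\one}$ versus $\Na^\mbk$ (Remark \ref{RemMainPro1}) are tracked, and the reduced cohomology $\widetilde H^{n-m-2}(\{\emptyset\})$... Actually the cleanest route: use Theorem \ref{thecalc1} directly with the $\Na^{\mbk}$ (not $\mbk+\one$) description via Remark \ref{RemMainPro1}, so that $\widetilde H^{i-\gamma-1}(\Delta_\mbu^\mbv)$ with $\Delta_\mbu^\mbv$ a simplicial complex on the $n$ vertices whose nonempty faces all leave the peak region, forcing it to be a point or a simplex-boundary-like object; a point gives nonvanishing in $i = \gamma+1-1 = \gamma$... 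I would then just carefully recompute and arrive at the claimed index. The arithmetic bookkeeping of $\gamma$, $\mbu$, $\mbv$ against the peak/indent data is the main obstacle; once the right multidegree $\mbr$ is chosen the nonvanishing is automatic.

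For part (b), the plan is dual: if $S/I$ has an indent $\mby$ of relative dimension $m$ above $\mbt+\one-\mbk$, set $\mbr$ to be the appropriate multidegree (essentially $\mby$ shifted so that the indent sits at a corner of the relevant complex), compute $\gamma_\mbt^\mbk(\mbr)$, and identify $\Delta_{\mbu}^{\mbv}(S/I;\mbt)$. Now the condition $(S/I)_\mbx \ne 0$ for all $\mathbf 0 \le \mbx < \mby$ means that the full simplex on the $m$ relevant vertices minus its top face lies in the complex, i.e. $\Delta_\mbu^\mbv$ is the boundary of an $(m-1)$-simplex (the vertex $\mbx$ corresponding to the top face equals $\mby$ or exceeds it, where $S/I$ vanishes). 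The reduced cohomology of $\partial \Delta^{m-1}$ is $K$ in degree $m-2$, so $H^i \Na_\mbt^\mbk(S/I)_\mbr$ is nonzero for $i - \gamma - 1 = m - 2$, i.e. $i = \gamma + m - 1$, and one checks $\gamma$ works out to $n$ in this configuration (since all coordinates end up in the ``$k_j \le r_j$'' or ``$r_j + 1 \le k_j$'' branch appropriately), giving $i = n - 1 + m$ as claimed. I would present both computations side by side, invoking only Theorem \ref{thecalc1}, Remark \ref{RemMainPro1}, Proposition \ref{nakcoh}, and the combinatorial description of $\Delta_\mba^\mbb$ in Subsection \ref{sec:cxfromid}.

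The one subtlety I want to flag is verifying that in part (a) the degree $\mbu = \mbu_\mbt^\mbk(\mbr)$ actually satisfies $(S/I)_{\mbu} = K$, so that $\Delta_\mbu^\mbv$ contains the empty face and is genuinely $\{\emptyset\}$ rather than the void complex — for the void complex $\widetilde H^*$ vanishes in all degrees including $-1$ and the argument collapses. This follows because $\mbu \le \mby$ (to be checked coordinatewise from Definition \ref{gammauv}: in the $k_j \le r_j = (\mbt+\one-\mbk)_j$ coordinates $u_j = r_j - k_j = t_j + 1 - 2k_j$... hmm this can be negative, so one must also confirm $\mbu \in \N^n$, which is part of the hypothesis packaging of Proposition \ref{nakcoh}) and below a peak $S/I$ is nonzero hence equal to $K$ in every degree $\le \mby$ (a monomial quotient ring has one-dimensional graded pieces where nonzero, or one reduces to that case). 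Similarly in part (b) one must confirm that the faces of $\partial\Delta^{m-1}$ really do land in degrees $\mbx < \mby$ where $(S/I)_\mbx \ne 0$, while the top face lands at $\mbx \ge \mby$ where it vanishes; this is exactly the definition of an indent. Modulo these checks and the index bookkeeping, the proof is a direct application of the main theorem.
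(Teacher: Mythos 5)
Your overall strategy --- exhibit one multidegree $\mbr$ at which Theorem \ref{thecalc1} and Proposition \ref{nakcoh} force the relevant cohomology group to be nonzero --- is exactly the paper's strategy. But the entire content of that strategy is the correct choice of $\mbr$, and that is where your argument breaks down.

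In part (a) you set $\mbr = \mby$, the peak itself, and your analysis of which branch of Definition \ref{gammauv} each coordinate falls into is wrong: the claim that $r_j + 1 \le k_j$ whenever $r_j < t_j+1-k_j$ fails (take $t_j=5$, $k_j=2$, $y_j=3$: then $y_j < 4 = t_j+1-k_j$ but $y_j \ge k_j$). Consequently $\gamma_\mbt^\mbk(\mby)$ is the number of coordinates with $y_j < k_j$, which has nothing to do with $m$, and $\mbu_\mbt^\mbk(\mby)$ is in general incomparable to $\mby$, so you cannot conclude $(S/I)_{\mbu}=K$ and the complex may fail to be $\{\emptyset\}$. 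You notice the resulting index mismatch ($H^m$ versus the claimed $H^{n-m}$) but never resolve it; ``I would then just carefully recompute'' is precisely the missing step. The fix is to choose $\mbr$ so that $\mbu_\mbt^\mbk(\mbr)$ equals the peak $\mby$ rather than taking $\mbr=\mby$: set $r_j = y_j + k_j$ when $y_j < t_j+1-k_j$ (then $k_j\le r_j$, $u_j=y_j$, $v_j=t_j-k_j$, $\gamma_j=0$) and $r_j = k_j-1$ when $y_j = t_j+1-k_j$ (then $r_j+1\le k_j$, $u_j=t_j-k_j+1=y_j$, $v_j=t_j$, $\gamma_j=1$). This gives $\gamma=n-m$, the empty face of $\Delta^{\mbv}_{\mbu}$ sits at the peak where $S/I$ is nonzero, every nonempty face either exceeds $\mbt$ or lies strictly above the peak, so $\Delta^{\mbv}_{\mbu}=\{\emptyset\}$ and $H^{n-m}\Na_\mbt^\mbk(S/I)_\mbr\cong\widetilde H^{-1}(\{\emptyset\})=K$.

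In part (b) you never specify $\mbr$ at all, and both assertions that carry the proof ($\gamma=n$ and $\Delta^{\mbv}_{\mbu}=\partial\Delta^{m-1}$) are left unverified. As it happens a choice of $\mbr$ realizing your picture does exist (take $r_j=k_j-1$ outside the support of $\mby-(\mbt+\one-\mbk)$, which pushes $v_j+1$ past $t_j$ and deletes those vertices, and $r_j=y_j-t_j+k_j-2$ on the support), so your route is salvageable; note that the paper instead chooses $\mbr$ with $\gamma=m$, obtaining the boundary $\partial\Delta^{n-1}$ of all proper subsets of $\{1,\dots,n\}$ and reading off $\widetilde H^{n-2}$. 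Either normalization works, but without an explicit $\mbr$ and the coordinatewise verification there is no proof.
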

\begin{proof}
a. Let $\mbu$ be the peak. Define $\mbv \in [0,\mbt]$ by letting 
\begin{displaymath}
  v_j =
  \begin{cases}
    t_j - k_j & \text{if $t_j - k_j + 1 > u_j$} \\
    t_j & \text{if $t_j - k_j + 1 = u_j$} .
  \end{cases}
\end{displaymath}
Moreover define $\mbr \in [0,\mbt]$ by letting
\begin{displaymath}
  r_j =
  \begin{cases}
    u_j + k_j & \text{if $t_j - k_j + 1 > u_j$} \\
    k_j -1 & \text{if $t_j - k_j + 1 = u_j$}.
  \end{cases}
\end{displaymath}
Then
$\gamma_{\mbt}^{\mbk}(\mbr) = n-m$, $\mbu =
\mbu_{\mbt}^{\mbk}(\mbr)$  and $\mbv = \mbv_{\mbt}^{\mbk}(\mbr)$. Thus
Theorem \ref{thecalc1} and Proposition \ref{nakcoh} imply that
\begin{displaymath}
  H^{n-m} \Na_{\mbt}^{\mbk}(S/I)_\mbr \cong \widetilde
    H^{-1}(\Delta_{\mbu}^{\mbv}(S/I)).
\end{displaymath}
Since $\mbu$ is a peak of relative dimension $m$ below $\mbt +\one- \mbk$,
the simplicial complex $\Delta_{\mbu}^{\mbv}(S/I)$ contains only the empty
subset of $\{1,\dots,n\}$, and therefore the right side above is isomorphic to $K$,
and so $H^{n-m} \Na_{\mbt}^{\mbk}(S/I)$ is nonvanishing.

\medskip
b. Let $\mby$ be the indent and set $\mbv = \mby - \one$. 
Define $\mbu \in [0,\mbt]$ by letting
\begin{displaymath}
  u_j =
  \begin{cases}
    0 & \text{if $v_j = t_j - k_j$} \\
    t_j - k_j + 1 & \text{if $v_j > t_j - k_j$}. 
  \end{cases}
\end{displaymath}
Moreover define $\mbr \in [0,\mbt]$ by letting
\begin{displaymath}
  r_j =
  \begin{cases}
    k_j & \text{if $v_j = t_j - k_j$} \\
    v_j - t_j + k_j - 1 & \text{if $v_j > t_j - k_j$}. 
  \end{cases}
\end{displaymath}
Then
$\gamma_{\mbt}^{\mbk}(\mbr) = m$, $\mbu =
\mbu_{\mbt}^{\mbk}(\mbr)$  and $\mbv = \mbv_{\mbt}^{\mbk}(\mbr)$. Thus
Theorem \ref{thecalc1} and Proposition \ref{nakcoh} imply that
\begin{displaymath}
  H^{n-1+m} \Na_{\mbt}^{\mbk}(S/I)_\mbr \cong \widetilde
    H^{n-2}(\Delta_{\mbu}^{\mbv}(S/I)).
\end{displaymath}
Since $\mbv$ is an indent  of relative dimension $m$ above  
$\mbt + \one - \mbk$, the
simplicial complex $\Delta_{\mbu}^{\mbv}(S/I)$ is the
simplicial $(n-2)$-sphere consisting of all proper subsets of $\{1,\dots,n\}$.
Then the right side above is isomorphic to $K$ and so $H^{n-1+m} \Na_{\mbt}^{\mbk}(S/I)$
is nonvanishing.
\end{proof}

\begin{cor}
\label{loknonvanish} 
   If $I$ is a positively $\mbt$-determined ideal in $S$ and $S/I$ has a peak of 
relative dimension $m$ below $\mbt$, then $H^{n-m}_{\mm}(S/I) \ne 0$. 
\end{cor}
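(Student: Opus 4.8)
The plan is to obtain this as a direct specialization of Lemma \ref{nonvanish}a, combined with the standard comparison between the once-iterated Nakayama functor and local cohomology.

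First I would set $\mbk = \one$ in Lemma \ref{nonvanish}a. The hypothesis $\one \le \mbk \le \mbt + \one$ then holds trivially, and $\mbt + \one - \mbk = \mbt$, so the assumption that $S/I$ has a peak of relative dimension $m$ below $\mbt$ is precisely the hypothesis of Lemma \ref{nonvanish}a in this case. The lemma therefore yields $H^{n-m}\Na^{\one}_{\mbt}(S/I) \neq 0$.

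Second, I would translate this nonvanishing back to local cohomology. By Proposition \ref{extstab} together with the design property of the Nakayama functor recorded in Subsection \ref{sec:nakcomes} (equivalently, the first isomorphism of the Takayama corollary above), the $\Z^n$-graded module $H^{n-m}\Na^{\one}_{\mbt}(S/I)$ is the $\mbt$-truncation $p_{\mbt}^*$ of $H^{n-m}_{\mm}(S/I)(-\one)$. Since $(p_{\mbt}^*N)_{\mba} = N_{p_{\mbt}(\mba)}$, the functor $p_{\mbt}^*$ can only produce a nonzero module from a nonzero module; hence $H^{n-m}_{\mm}(S/I)(-\one) \neq 0$, and therefore $H^{n-m}_{\mm}(S/I) \neq 0$.

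I do not expect any real obstacle: the corollary is essentially a restatement of Lemma \ref{nonvanish}a under the substitution $\mbk = \one$, and the only items to verify are the trivial bookkeeping $\mbt + \one - \mbk = \mbt$ and the (immediate) fact that the truncation functor $p_{\mbt}^*$ detects nonvanishing.
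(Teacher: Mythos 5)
Your proof is correct and is essentially the intended one: the paper states Corollary \ref{loknonvanish} immediately after Lemma \ref{nonvanish} without a separate proof, precisely because it follows by setting $\mbk = \one$ so that $\mbt + \one - \mbk = \mbt$, and then translating the nonvanishing of $H^{n-m}\Na^{\one}_{\mbt}(S/I)$ to nonvanishing of local cohomology via Proposition \ref{extstab} and the defining property of $\Na^{\one}_{\mbt}$. Your bookkeeping and the observation that $p_{\mbt}^*$ reflects nonvanishing are both sound.
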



In the extremal cases $m = n$
we can characterize exactly when the Nakayama cohomology vanishes.
\begin{prop}
\label{sharpzero}
Let $I$ be a positively $\mbt$-determined ideal and $\one \leq \mbk \leq \mbt + \one$.

a.  The zero'th Nakayama cohomology group $H^0 \Na^{\mbk}_{\mbt} (S/I)$ 
vanishes if and only if no peak of $S/I$ is contained in 
$[{\bf 0}, \mbt - \mbk]$.

b.  The $(2n-1)$-th Nakayama cohomology group 
$H^{2n-1} \Na^{\mbk}_{\mbt} (S/I)$ vanishes 
if and only if no indent of $S/I$ is greater or equal to 
$\mbt + 2 \cdot \one  - \mbk$, save an exception when $n=1$ and
$I = 0$.
\end{prop}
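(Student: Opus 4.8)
The strategy is to reduce both statements to computations of reduced cohomology of the simplicial complexes $\Delta^{\mbv}_{\mbu}(S/I;\mbt)$ via Theorem \ref{thecalc1} and Proposition \ref{nakcoh}, exactly as in the proof of Lemma \ref{nonvanish}, but now pushing the bookkeeping far enough to get an \emph{if and only if}. The key point in both (a) and (b) is that the extremal reduced cohomology groups $\widetilde H^{-1}$ and $\widetilde H^{n-2}$ of a simplicial complex $\Delta$ on $n$ vertices are easy to control: $\widetilde H^{-1}(\Delta)\neq 0$ iff $\Delta=\{\emptyset\}$, and $\widetilde H^{n-2}(\Delta)\neq 0$ iff $\Delta$ is the full boundary of the simplex (all proper subsets of $\{1,\dots,n\}$); in the intermediate cases these groups vanish. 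So the whole problem becomes: for which $\mbr\in[0,\mbt]$ does the relevant $\Delta^{\mbv}_{\mbu}$ degenerate to one of these two extreme complexes?

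\textbf{Part (a).} First I would use Lemma \ref{boundsonnacoh} (equivalently Theorem \ref{thecalc1}) to write $H^0\Na^{\mbk}_{\mbt}(S/I)_{\mbr}\cong \widetilde H^{-1-\gamma}(\Delta^{\mbv}_{\mbu})$ with $\gamma=\gamma^{\mbk}_{\mbt}(\mbr)$, and note that since $\gamma\ge 0$ and $\widetilde H^j$ vanishes for $j<-1$, the only way to get a nonzero contribution is $\gamma=0$ (so $k_j\le r_j$ for all $j$) together with $\Delta^{\mbv}_{\mbu}=\{\emptyset\}$. When $\gamma=0$ we have $\mbu=\mbu^{\mbk}_{\mbt}(\mbr)=\mbr-\mbk$ and $\mbv=\mbv^{\mbk}_{\mbt}(\mbr)=\mbt-\mbk$ by Definition \ref{gammauv}. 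Unwinding the definition of $\Delta^{\mbv}_{\mbu}(S/I;\mbt)$: the empty set is always a face (since $\mbu\le\mbt$ and $(S/I)_{\mbu}$ is the question, but $\emptyset\in\Delta$ forces $(S/I)_{\mbu}\neq 0$), and $\Delta^{\mbv}_{\mbu}=\{\emptyset\}$ means no singleton $\{i\}$ is a face, i.e.\ for each $i$ the degree $\mbu+(v_i-u_i+1)\varepsilon_i=\mbu+(t_j-r_j+1)\varepsilon_i$ either exceeds $\mbt$ (impossible here since it equals $\mbt-\mbk+\varepsilon_i\cdot(\text{something})\le\mbt$... let me be careful) or has $(S/I)=0$. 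The upshot: $H^0\Na^{\mbk}_{\mbt}(S/I)\neq 0$ iff there exists $\mbr$ with $\mbk\le\mbr\le\mbt$ such that $\mbu=\mbr-\mbk\in[{\bf 0},\mbt-\mbk]$ is a peak of $S/I$, which is precisely the condition ``some peak of $S/I$ lies in $[{\bf 0},\mbt-\mbk]$''. One direction reuses the construction in Lemma \ref{nonvanish}(a); the converse is the observation just made that a nonzero $H^0$ forces such an $\mbr$ and hence such a peak.

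\textbf{Part (b).} Dually, $H^{2n-1}\Na^{\mbk}_{\mbt}(S/I)_{\mbr}\cong\widetilde H^{2n-2-\gamma-1}(\Delta^{\mbv}_{\mbu})=\widetilde H^{n-2+(n-\gamma)}(\Delta^{\mbv}_{\mbu})$; since $\gamma\le n$ and $\widetilde H^j(\Delta^{\mbv}_{\mbu})$ vanishes for $j>n-2$, nonvanishing forces $\gamma=n$ (so $r_j+1\le k_j$ for all $j$) and $\Delta^{\mbv}_{\mbu}$ equal to the $(n-2)$-sphere of all proper subsets. With $\gamma=n$ the formulas give $\mbu=\mbt-\mbk+\one$ and $\mbv=\mbt-\mbk+\mbr+\one$; I then translate ``$\Delta^{\mbv}_{\mbu}$ is the full boundary'' into: every proper subset $F$ is a face, equivalently the codegree-one degrees all land in the support of $S/I$ and stay $\le\mbt$, while the top degree $\mbv+\one=\mbt-\mbk+\mbr+{\bf 2}$... gives $(S/I)=0$ there. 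Running this through, nonvanishing is equivalent to the existence of an indent $\mby$ of $S/I$ with $\mby\ge\mbt+2\cdot\one-\mbk$. The one subtlety is the exceptional clause: when $n=1$ and $I=0$, the ``$(n-2)$-sphere'' is the void complex (all proper subsets of a one-element set is just $\{\emptyset\}$, whose $\widetilde H^{-1}$ is $K$), so the naive argument would wrongly detect nonvanishing; I would handle $n=1$ separately using Example \ref{firstnakone} directly to see that $H^1\Na^k_t(S)=0$ (since $p_t^*S=S$ is projective of the form $K_t\{0,t\}$ and $\Na^{k}_t$ of it computes to concentrated degree with no $\widetilde H^{-1}$ surviving once $k\le t+1$), which is exactly the stated exception.

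\textbf{Main obstacle.} The genuine difficulty is purely combinatorial bookkeeping: translating the condition ``$\Delta^{\mbv}_{\mbu}(S/I;\mbt)$ equals $\{\emptyset\}$'' (resp.\ ``equals the boundary of the simplex'') into a condition on peaks (resp.\ indents) \emph{and} checking that as $\mbr$ ranges over all of $[0,\mbt]$ with $\gamma=0$ (resp.\ $\gamma=n$) the pairs $(\mbu,\mbv)$ produced by Definition \ref{gammauv} range over exactly the pairs needed to detect \emph{all} peaks in $[{\bf 0},\mbt-\mbk]$ (resp.\ all indents $\ge\mbt+2\cdot\one-\mbk$), with the $n=1$, $I=0$ degeneracy being the only place the correspondence fails. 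I expect the forward direction (constructing $\mbr$ from a peak/indent) to be routine once the formulas are in hand, and the converse to require only the two elementary facts about extremal reduced cohomology stated above together with a careful reading of Definition \ref{gammauv}.
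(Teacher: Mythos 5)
Your overall strategy coincides with the paper's own proof: reduce to Theorem \ref{thecalc1} and Proposition \ref{nakcoh}, observe that $\widetilde H^{-1}$ and $\widetilde H^{n-2}$ of a complex on $n$ vertices are nonzero only for $\{\emptyset\}$ and the boundary of the simplex respectively, and use Lemma \ref{nonvanish} for the converse. However, there are two genuine problems. In part (a), your ``upshot'' that nonvanishing of $H^0$ in degree $\mbr$ forces $\mbu=\mbr-\mbk$ itself to be a peak is false: the singleton $\{i\}$ failing to be a face of $\Delta^{\mbv}_{\mbu}$ only tells you that $S/I$ vanishes at the degree whose $i$-th coordinate is $t_i-k_i+1$ (the others being $u_j$), not at $\mbu+\varepsilon_i$. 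For instance with $n=1$, $t=3$, $k=1$, $I=(x^3)$ and $r=1$ one gets $u=0$, $\Delta^{2}_{0}=\{\emptyset\}$, yet $0$ is not a peak. What actually follows, and what the paper argues, is that the support of $S/I$ (being an order ideal) must contain a peak somewhere in the interval $[\mbu,\mbv]\subseteq[{\bf 0},\mbt-\mbk]$; this extra localization step is needed to close the converse direction.

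The more serious error is in your treatment of the exceptional case in (b). You claim to verify that $H^1\Na^k_t(S)=0$ for $n=1$, $I=0$, and present this as ``exactly the stated exception.'' You have the exception backwards: by Example \ref{firstnakone} (or Corollary \ref{intervaluv} with $y=0<k$), $\Na^k_t K_t\{0,t\}$ has cohomology $K_t\{k-1,k-1\}\neq 0$ concentrated in cohomological degree $1=2n-1$, so $H^{2n-1}\Na^k_t(S)$ is \emph{nonzero} even though $S$ has no indent whatsoever. That is precisely why the ``vanishes iff no indent'' equivalence fails there and must be excepted. Relatedly, your sphere-to-indent translation in (b) silently assumes $\mbv+\one\le\mbt$ so that the degree $\mbv+\one=\mbt+2\cdot\one-\mbk+\mbr$ is available to start the search for a minimal non-support element; for $n>1$ this follows from all codimension-one faces lying in $[{\mathbf 0},\mbt]$, but for $n=1$ with $r=k-1$ it fails, and that is exactly where the exceptional analysis must be carried out rather than dismissed.
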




\begin{proof}
a. Suppose the zero'th Nakayama cohomology group does not vanish
in degree $\mbr$.
Since Theorem \ref{thecalc1} gives
$$
    H^0 \Na^{\mbk}_{\mbt} (S/I)_{\mbr} \cong \widetilde
    H^{-\gamma-1}(\Delta^{\mbv}_{\mbu}(S/I)),
$$
we must have $\gamma = 0$ and the simplicial complex on the right side must be 
$\{\emptyset\}$.
Moreover Proposition \ref{nakcoh}
implies that $\mbk \le \mbr$, that $\mbu = \mbr - \mbk$ and
that $\mbv = \mbt - \mbk$. Now the fact that $\Delta^{\mbv}_{\mbu}(S/I)$
is $\{\emptyset\}$ implies that $S/I$ has a peak in the interval $[\mbu, \mbv]$ 
which is of relative dimension $n$ below $\mbt + \one- \mbk$.

Conversely, if $S/I$ has a peak of relative dimension $n$
below $\mbt + \one - \mbk$, then Proposition \ref{nonvanish}a. implies 
the non-vanishing of the zeroth Nakayama cohomology group.

\medskip
b. Suppose the $2n-1$'th Nakayama cohomology group is non-vanishing in
degree $\mbr$.
Since Theorem \ref{thecalc1} gives
$$
    H^{2n-1} \Na^{\mbk}_{\mbt} (S/I)_{\mbr} \cong \widetilde
    H^{2n-1-\gamma-1}(\Delta^{\mbv}_{\mbu}(S/I)),
$$
and since $\Delta^{\mbv}_{\mbu}(S/I)$ is a simplicial complex on the vertex
set $\{1,\dots, n\}$ we have $2n-1-\gamma-1 \le n-2$, that is, $n \le
\gamma$. By Proposition \ref{nakcoh}, $\gamma$ can be at most $n$ and so $\gamma = n$. 
By the same Proposition \ref{nakcoh} we get 
$\mbu = \mbt - \mbk + \one$ and
that $\mbv = \mbt - \mbk + \mbr + \one$. 
Therefore $\widetilde H^{n-2}(\Delta^{\mbv}_{\mbu}(S/I))$ is nonvanishing, and consequently
$\Delta^{\mbv}_{\mbu}(S/I)$ is the simplicial sphere consisting of all
proper subsets of $\{1,\dots,n\}$. 
If $n > 1$ we conclude that $\mbv + \one \leq \mbt$. If $n=1$ we can also conclude
that $v + 1 \leq t$ provided $r \leq k-2$. Assume then that $\mbv + \one \leq \mbt$.
Consider the elements $\mbx$ in $[\mbu, \mbv + \one]$ such that 
$(S/I)_\mbx$ is nonzero. Note that i) $(S/I)_{\mbv + \one}$ is zero, and 
ii) if $x_i = u_i$ for some $i$, then $(S/I)_\mbx$ is nonzero. Therefore a 
minimal
element in  $[\mbu, \mbv + \one]$ such that $(S/I)_\mbx$ is zero must exist and
be in the interval $[\mbu + \one, \mbv + \one]$. As such it is an indent of 
relative dimension $n$ above $\mbu$. 

The only case left is when $n = 1$ and we must have $r \geq k-1$. 
Then we see that  $(S/I)_t$ must be nonzero, and so $I = 0$. 
In this case we see that the above cohomology group is
always nonzero in degree $r = k-1$. 

Conversely, if $S/I$ has indent, 
Proposition \ref{nonvanish}b. implies the non-vanishing of the $2n-1$'th 
Nakayama cohomology group.
\end{proof}

\subsection{The case of two variables}
We now assume that $S = K[x, y]$, the polynomial ring in two variables. In this case
there can only be four possible nonvanishing cohomology modules of 
$\Na^{\mbk}_{\mbt}(S/I)$ and these
are in degrees $0,1,2,$ and $3$. We assume in this subsection that 
$\one \le \mbk \le \mbt + \one$.

\begin{lem} Let $I$ be a positively $\mbt$-determined ideal.

a. The zero'th cohomology group $H^0 \Na^{\mbk}_{\mbt}(S/I)$ vanishes if and only if the 
support of $S/I$ in 
$[\mathbf 0, \mbt + \one - \mbk]$ is the complement of a (possibly empty) interval.

b. Its third cohomology group $H^3 \Na^{\mbk}_{\mbt}(S/I)$ vanishes if and only if the support of $S/I$ in 
$[\mbt + \one - \mbk, \mbt]$
is a (possibly empty) interval.
\end{lem}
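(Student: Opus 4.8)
The plan is to deduce the Lemma from Proposition~\ref{sharpzero} by rewriting the peak/indent conditions appearing there in terms of the shape of the support of $S/I$, and then to prove a short combinatorial fact about order ideals in a product of two finite chains. Write $\mbc = \mbt + \one - \mbk$, so that $\mathbf 0 \le \mbc \le \mbt$, $\mbt - \mbk = \mbc - \one$ and $\mbt + 2\cdot\one - \mbk = \mbc + \one$. For part~a set $\Sigma = \{\mbx \in [\mathbf 0,\mbc] : (S/I)_\mbx \ne 0\}$ and $J = [\mathbf 0,\mbc]\setminus\Sigma$; since $I$ is monomial, $\Sigma$ is downward closed in the box $[\mathbf 0,\mbc]$, and saying that the support of $S/I$ in $[\mathbf 0,\mbc]$ is the complement of a (possibly empty) interval is the same as saying that $J$ is an interval. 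By Proposition~\ref{sharpzero}a it therefore suffices to show that $S/I$ has a peak in $[\mathbf 0,\mbc-\one]$ if and only if $J$ is not an interval.

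First I would isolate the relevant peaks. Using $\mbk \ge \one$, a point $\mby \le \mbc - \one$ satisfies $\mby + \varepsilon_i \le \mbt$ for $i = 1,2$, and since the support is downward closed the condition ``$(S/I)_\mbx = 0$ for all $\mby < \mbx \le \mbt$'' is equivalent to ``$(S/I)_{\mby+\varepsilon_1} = (S/I)_{\mby+\varepsilon_2} = 0$''. As moreover $\mby + \varepsilon_i \le \mbc$, this shows that the peaks of $S/I$ lying in $[\mathbf 0,\mbc-\one]$ are precisely the $\mby\in\Sigma$ with $\mby + \varepsilon_1 \in J$ and $\mby + \varepsilon_2 \in J$, the constraint $\mby \le \mbc - \one$ being automatic once $\mby+\varepsilon_i \in J \subseteq [\mathbf 0,\mbc]$. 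So I am reduced to the combinatorial claim: for a downward closed subset $\Sigma$ of a product $[\mathbf 0,\mbc]$ of two finite chains, with complement $J$, one has that $J$ is an interval if and only if there is no $\mby \in \Sigma$ with both $\mby + \varepsilon_1 \in J$ and $\mby + \varepsilon_2 \in J$.

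For this claim the ``only if'' direction is immediate: if $J = [\mba,\mbb]$ and $\mby+\varepsilon_1,\mby+\varepsilon_2 \in J$ then $\mba \le \mby \le \mby+\varepsilon_1 \le \mbb$, so $\mby \in J$, contradicting $\mby \in \Sigma$. For the converse I would examine the maximal elements $\mathbf q^{(1)},\dots,\mathbf q^{(m)}$ of $\Sigma$: being an antichain in $\N^2$, they can be listed so that their first coordinates strictly increase and their second coordinates strictly decrease. If some $\mathbf q^{(\ell)}$ had $q^{(\ell)}_1 < c_1$ and $q^{(\ell)}_2 < c_2$, then $\mathbf q^{(\ell)} + \varepsilon_i$ would lie in $[\mathbf 0,\mbc]$ and, by maximality, not in $\Sigma$, i.e.\ in $J$, for both $i$, against the hypothesis; hence each $\mathbf q^{(\ell)}$ has $q^{(\ell)}_1 = c_1$ or $q^{(\ell)}_2 = c_2$, which, by the monotonicity of the coordinates along the list, forces $m \le 2$. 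A direct inspection of the cases $m = 0,1,2$ then exhibits $J$ as an interval ($[\mathbf 0,\mbc]$; one of $[(q^{(1)}_1 + 1,0),\mbc]$ and $[(0,q^{(1)}_2+1),\mbc]$; and $[(q^{(1)}_1+1,q^{(2)}_2+1),\mbc]$, respectively). Part~b is proved by the order-reversing analogue of this argument: with $\Sigma' = \{\mbx \in [\mbc,\mbt] : (S/I)_\mbx \ne 0\}$ --- which is the support of $S/I$ in $[\mbc,\mbt]$, a downward closed subset of $[\mbc,\mbt]$, hence an interval exactly when it has at most one maximal element --- one checks dually that $S/I$ has an indent $\ge \mbc + \one$ if and only if some $\mby \in [\mbc+\one,\mbt]$ has $\mby\notin\Sigma'$ and $\mby - \varepsilon_1,\mby-\varepsilon_2\in\Sigma'$, and that such a $\mby$ exists if and only if $\Sigma'$ is not an interval; Proposition~\ref{sharpzero}b (whose stated exception does not occur since $n = 2$) then completes the proof.

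I expect the combinatorial claim to be the real content, the delicate point being the behaviour at the boundary of the box: a maximal element of $\Sigma$ with a coordinate equal to $c_i$ does \emph{not} obstruct $J$ from being an interval, which is exactly why Proposition~\ref{sharpzero} restricts attention to peaks lying in $[\mathbf 0,\mbt-\mbk]$ and to indents lying above $\mbt+2\cdot\one-\mbk$. The remaining care is bookkeeping --- the identifications $\mbt-\mbk = \mbc-\one$ and $\mbt+2\cdot\one-\mbk = \mbc+\one$, together with inequalities such as $\mby+\varepsilon_i \le \mbt$ --- which is what lets the peak and indent conditions, phrased over the full box $[\mathbf 0,\mbt]$, be read off from the truncated boxes $[\mathbf 0,\mbc]$ and $[\mbc,\mbt]$.
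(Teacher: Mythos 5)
Your proof is correct and takes the same route as the paper: the paper's own proof is just the one-line remark that the lemma is a translation of Proposition \ref{sharpzero} once one works in the plane, and you carry out exactly that translation, supplying the combinatorial details (identifying peaks in $[\mathbf 0,\mbt-\mbk]$ and indents above $\mbt+2\cdot\one-\mbk$ with the staircase corners of the downward-closed support) that the paper leaves to the reader.
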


\begin{proof} This is just a translation of Proposition \ref{sharpzero}
when we take into consideration that the intervals are in the plane.
\end{proof}

\begin{lem} The second cohomology group $H^2 \Na^{\mbk}_{\mbt}(S/I)$ 
vanishes if and only if $S/I$ is zero in degree $\mbt + \one - \mbk$.
\end{lem}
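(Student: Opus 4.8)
The statement concerns the vanishing of the single cohomology group $H^2\Na^{\mbk}_{\mbt}(S/I)$ in the two-variable case $S = K[x,y]$. The plan is to reduce everything to Theorem \ref{thecalc1}: for each $\mbr \in [0,\mbt]$ we have
\begin{displaymath}
  H^2\Na^{\mbk}_{\mbt}(S/I)_{\mbr} \cong \widetilde H^{1-\gamma}(\Delta^{\mbv}_{\mbu}(S/I;\mbt)),
\end{displaymath}
where $\gamma = \gamma^{\mbk}_{\mbt}(\mbr)$, $\mbu = \mbu^{\mbk}_{\mbt}(\mbr)$, $\mbv = \mbv^{\mbk}_{\mbt}(\mbr)$ are given by Definition \ref{gammauv} and Proposition \ref{nakcoh}. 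Since $n = 2$, the simplicial complex $\Delta^{\mbv}_{\mbu}$ lives on the vertex set $\{1,2\}$, so its reduced cohomology is concentrated in degrees $-1$ and $0$, with $\widetilde H^{-1} = K$ exactly when the complex is $\{\emptyset\}$, and $\widetilde H^0 = K^{c-1}$ where $c$ is the number of maximal faces when the complex is a disjoint union of the two vertices (and $0$ otherwise). So $\widetilde H^{1-\gamma}$ can be nonzero only when $\gamma = 1$ (giving $\widetilde H^0$) or $\gamma = 2$ (giving $\widetilde H^{-1}$). The first task is to enumerate, for $n=2$, which $\mbr$ yield $\gamma = 1$ and which yield $\gamma = 2$, using that $\gamma^{k}_{t}(r) \in \{0,1\}$ is $1$ precisely when $r+1 \le k$ in each coordinate.

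First I would dispose of the case $\gamma = 2$: this forces $r_j + 1 \le k_j$ for $j = 1,2$, and then Proposition \ref{nakcoh} gives $\mbu = \mbt + \one - \mbk + \text{(something)}$ — more precisely $u_j = t_j - k_j + 1$ and $v_j = t_j - k_j + r_j + 1$. The complex $\Delta^{\mbv}_{\mbu}$ is $\{\emptyset\}$ iff $S/I$ vanishes at all of $\mbu + \varepsilon_1$, $\mbu + \varepsilon_2$ (when those degrees are $\le \mbt$). One checks that among such $\mbr$ the smallest value is $\mbr = \mathbf 0$, giving $\mbu = \mbv = \mbt + \one - \mbk$, and $\Delta^{\mbv}_{\mbu} = \{\emptyset\}$ iff $(S/I)$ vanishes at $\mbt + \one - \mbk + \varepsilon_1$ and at $\mbt + \one - \mbk + \varepsilon_2$ (within $[0,\mbt]$). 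The key observation is a \emph{monotonicity}: if $(S/I)_{\mbt + \one - \mbk}$ is nonzero then, among the $\mbr$ with $\gamma = 2$, the degree $\mbr = \mathbf 0$ already produces a nonvanishing $H^2$ exactly when the support just above $\mbt+\one-\mbk$ has ``dropped to nothing in both directions,'' whereas if $(S/I)_{\mbt+\one-\mbk}$ is already zero, one must look instead at the $\gamma = 1$ degrees.

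Next, the case $\gamma = 1$ (say $r_1 + 1 \le k_1$ but $r_2 \ge k_2$, and symmetrically): here $\mbu, \mbv$ have one ``link-type'' coordinate and one ``restriction-type'' coordinate, and $\Delta^{\mbv}_{\mbu}$ on two vertices has $\widetilde H^0 \ne 0$ iff it equals the two disjoint vertices $\{\{1\},\{2\}\}$, i.e. iff $S/I$ is nonzero at the two ``one-element'' degrees $\mbu + \varepsilon_1$, $\mbu + \varepsilon_2$ but zero at the ``two-element'' degree $\mbu + \varepsilon_1 + \varepsilon_2$ and zero at $\mbu$ itself. Working out $\mbu^{\mbk}_{\mbt}(\mbr)$ in these coordinates, and letting $\mbr$ range, one sees that such a configuration occurs somewhere if and only if the support of $S/I$ ``pinches'' at some point, and the extremal instance is again governed by the value of $S/I$ at $\mbt + \one - \mbk$: if $(S/I)_{\mbt+\one-\mbk} = 0$, pick $\mbr$ with exactly one coordinate from the $\gamma=0$ regime so that $\mbu + \varepsilon_j = \mbt + \one - \mbk$ for the relevant $j$ — then the vanishing there forces the complex to be $\{\emptyset\}$ or a single vertex, never the disjoint pair, so $H^2$ vanishes; conversely if $(S/I)_{\mbt+\one-\mbk} \ne 0$, I would exhibit an explicit $\mbr$ making $\Delta^{\mbv}_{\mbu}$ either $\{\emptyset\}$ (in the $\gamma=2$ regime, using a peak above $\mbt+\one-\mbk$) or the disjoint two vertices (in the $\gamma=1$ regime), so $H^2 \ne 0$.

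The main obstacle, and the part deserving real care rather than routine bookkeeping, is the \emph{converse direction}: showing that $(S/I)_{\mbt+\one-\mbk} \ne 0$ \emph{forces} $H^2$ to be nonzero. The issue is that $(S/I)_{\mbt+\one-\mbk} \ne 0$ does not by itself pin down the support near that degree, so one must argue that \emph{whatever} the support does, there is always \emph{some} $\mbr$ realizing a nontrivial $\widetilde H^{1-\gamma}$. The clean way is: look at the restriction of $\supp(S/I)$ to the box $[\mbt+\one-\mbk, \mbt]$ (nonempty, since it contains $\mbt+\one-\mbk$); if that restricted support has a peak of relative dimension $2$ we get $\gamma = 2$ nonvanishing by Lemma \ref{nonvanish}(a) with the appropriate shift, if it has a peak of relative dimension exactly $1$ we land in the $\gamma = 1$ disjoint-vertices case, and a nonempty finite poset always has a maximal element, i.e.\ a peak of some relative dimension $m \in \{0,1,2\}$ — the case $m=0$ meaning $\mbt$ itself is in the support, which pushes the argument into the ``third cohomology'' territory but still, via the indent analysis, yields an $H^2$ class unless everything is an interval, contradicting that we are \emph{not} in the $H^0$ or $H^3$ vanishing regimes. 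I would organize this as a short case analysis on the relative dimension of a peak (and dually an indent) of $\supp(S/I)$ restricted to the relevant box, invoking Lemma \ref{nonvanish} and the two preceding lemmata of this subsection to mop up the boundary cases. Once that dichotomy is in hand, the ``only if'' direction is immediate from the fact that $(S/I)_{\mbt+\one-\mbk}=0$ kills every candidate degree $\mbr$ as sketched above.
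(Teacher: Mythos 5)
Your overall framework is the right one — use Theorem \ref{thecalc1} to reduce to reduced cohomology of a simplicial complex on two vertices and split by $\gamma\in\{1,2\}$ — and your sketch of the direction ``$H^2\ne 0 \Rightarrow (S/I)_{\mbt+\one-\mbk}\ne 0$'' is essentially the paper's case analysis. But you have misidentified which direction is hard, and your treatment of the direction ``$(S/I)_{\mbt+\one-\mbk}\ne 0 \Rightarrow H^2\ne 0$'' has a genuine gap.

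You start from $\mbr=\mathbf 0$ in the $\gamma=2$ regime, which gives $\mbu=\mbv=\mbt+\one-\mbk$, and then correctly observe that whether $\Delta_\mbu^\mbv(S/I)$ is $\{\emptyset\}$ depends on the behaviour of $S/I$ at $\mbu+\varepsilon_1$ and $\mbu+\varepsilon_2$ — i.e.\ this degree does \emph{not} automatically exhibit a nonzero class. You then try to repair this by a case analysis on peaks/indents of the support in $[\mbt+\one-\mbk,\mbt]$, invoking Lemma \ref{nonvanish}, and in the ``relative dimension $0$'' case you appeal to ``contradicting that we are not in the $H^0$ or $H^3$ vanishing regimes.'' That step is unfounded: the lemma is a self-contained iff about $H^2$ alone, with no hypothesis on $H^0$ or $H^3$, so there is nothing to contradict. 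As written, your forward direction does not close.

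The paper's choice is $\mbr=\mbk-\one$ (legal since $\one\le\mbk\le\mbt+\one$), not $\mbr=\mathbf 0$. Then $r_j+1=k_j$ for both $j$, so $\gamma=2$, and Definition \ref{gammauv} gives $\mbu=\mbt+\one-\mbk$ but, crucially, $\mbv=\mbt-\mbk+\mbr+\one=\mbt$. Because $v_j+1=t_j+1>t_j$ for each $j$, every nonempty $F\subseteq\{1,2\}$ produces a degree with some coordinate exceeding $t_j$, so no nonempty face can lie in $\Delta_\mbu^\mbv(S/I;\mbt)$ regardless of $I$. Hence $\Delta_\mbu^\mbv$ is $\{\emptyset\}$ precisely when $\emptyset$ is a face, i.e.\ precisely when $(S/I)_{\mbu}=(S/I)_{\mbt+\one-\mbk}\ne 0$. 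Under your hypothesis this gives $\widetilde H^{-1}(\Delta_\mbu^\mbv)=K$ and $H^2\Na^{\mbk}_{\mbt}(S/I)_{\mbk-\one}\ne 0$ with no auxiliary conditions. This one-line choice of $\mbr$ replaces your entire peak/indent excursion; the ``hard'' direction is in fact the easy one once the right $\mbr$ is chosen.
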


\begin{proof} Suppose $(S/I)_{\mbt + \one -\mbk}$ is nonzero. Letting $\mbr = \mbk - \one$, 
Proposition \ref{nakcoh}
gives $\gamma = 2$, $\mbu = \mbt - \mbk + \one$, and $\mbv = \mbt$. Then 
$\Delta_\mbu^\mbv(S/I)$ is $\{\emptyset \}$ and so its $\widetilde H^{-1}$ cohomology is 
$K$. By Theorem \ref{thecalc1}, the second cohomology group in the lemma statement does not
 vanish.

Conversely, suppose $H^2 \Na^{\mbk}_{\mbt}(S/I)$ does not vanish in degree $\mbr$. 
By Theorem \ref{thecalc1},
$\widetilde H^{1- \gamma}(\Delta_\mbu^\mbv (S/I))$ is nonvanishing. This is the reduced 
cohomology 
group of a simplicial complex on two vertices, and so either i) $1-\gamma = -1$, 
giving $\gamma = 2$,
or ii) $1-\gamma = 0$ giving $\gamma = 1$. 

\noindent i) Then $\mbu$ is $\mbt + \one - \mbk$
and $\Delta_\mbu^\mbv (S/I)$ is $\{\emptyset \}$, implying $S/I$ nonzero in degree $\mbu$.

\noindent ii) Then $\mbu$ is $(r_1 - k_1, t_2 + 1 - k_2)$ (or the analog with coordinates 
switched), and $\mbv$ is $(t_1- k_1, t_2 + 1 - k_2 + r_2)$. Since $\Delta_\mbu^\mbv (S/I)$ 
in this case must consist of two points, 
the definition of this simplicial complex implies that $S/I$ is 
nonzero in degree $\mbt + \one - \mbk$.
\end{proof}

\begin{lem}  The first cohomology group $H^1 \Na^{\mbk}_{\mbt}(S/I)$ vanishes 
if and only if either i) $S/I$ is nonzero in degree
$\mbt + \one - \mbk$ or ii) the degrees for which $S/I$ is nonzero is contained in
$[\mathbf 0, \mbt - \mbk]$.
\end{lem}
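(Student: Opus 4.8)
The plan is to read everything off Theorem~\ref{thecalc1} together with Proposition~\ref{nakcoh}: for each $\mbr\in[0,\mbt]$ there is an isomorphism
\[
  H^1\Na^{\mbk}_{\mbt}(S/I)_{\mbr}\cong\widetilde H^{-\gamma}\bigl(\Delta^{\mbv}_{\mbu}(S/I;\mbt)\bigr),
\]
where $\gamma=\gamma^{\mbk}_{\mbt}(\mbr)$, $\mbu=\mbu^{\mbk}_{\mbt}(\mbr)$, $\mbv=\mbv^{\mbk}_{\mbt}(\mbr)$ are given by the explicit piecewise formulas of Definition~\ref{gammauv}. Since $\Delta^{\mbv}_{\mbu}$ is a simplicial complex on the two vertices $\{1,2\}$, its reduced cohomology vanishes outside degrees $-1$ and $0$, and moreover $\widetilde H^{-1}\ne 0$ only when $\Delta^{\mbv}_{\mbu}=\{\emptyset\}$ and $\widetilde H^{0}\ne 0$ only when $\Delta^{\mbv}_{\mbu}=\{\emptyset,\{1\},\{2\}\}$ (two disjoint vertices). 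As $\gamma\ge 0$, this reduces the lemma to: $H^1\Na^{\mbk}_{\mbt}(S/I)\ne 0$ if and only if some $\mbr$ realizes either (a) $\gamma=0$ and $\Delta^{\mbv}_{\mbu}=\{\emptyset,\{1\},\{2\}\}$, or (b) $\gamma=1$ and $\Delta^{\mbv}_{\mbu}=\{\emptyset\}$. (Note that, by the preceding lemma, condition (i) is exactly the assertion $H^2\Na^{\mbk}_{\mbt}(S/I)\ne 0$.) Throughout I use that, $I$ being monomial, the support $\{\mbx\in[0,\mbt]:(S/I)_{\mbx}\ne 0\}$ is downward closed, and that a subset $F\subseteq\{1,2\}$ fails to lie in $\Delta^{\mbv}_{\mbu}$ for one of two reasons only: its associated degree $\mbx_F$ violates $\mbx_F\le\mbt$, or $(S/I)_{\mbx_F}=0$.

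For the direction ``(i) or (ii) implies $H^1=0$'' I would argue by contraposition, showing that each of the configurations (a), (b) forces both (i) and (ii) to fail. In case (b), Definition~\ref{gammauv} gives that $\mbu$ has exactly one coordinate equal to $t_j-k_j+1$; hence $\emptyset\in\Delta^{\mbv}_{\mbu}$ exhibits a support element outside $[\mathbf 0,\mbt-\mbk]$, so (ii) fails, and a direct computation identifies the degree of the unique vertex in the $\gamma$-vanishing direction as $\mbt+\one-\mbk$ (which satisfies $\le\mbt$ since $\mbk\ge\one$), so its absence gives $(S/I)_{\mbt+\one-\mbk}=0$ and (i) fails. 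In case (a) one has $\mbu=\mbr-\mbk$, $\mbv=\mbt-\mbk$; the degrees of the two present vertices $\{1\}$ and $\{2\}$ each have a coordinate equal to $t_j-k_j+1$, so again (ii) fails, while the absent vertex $\{1,2\}$ has degree $\mbt+\one-\mbk$, so (i) fails.

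For the converse I would argue directly. Assume (i) and (ii) both fail, so $(S/I)_{\mbt+\one-\mbk}=0$ and, by downward-closedness, there is a support element $\mbx$ with $x_j>t_j-k_j$ for some $j$. A short case-check (ruling out the simultaneous possibility $k_1=t_1+1$ and $k_2=t_2+1$, in which case $\gamma\equiv 2$ so $H^1\equiv 0$ and one sees directly that (i) or (ii) always holds) lets one choose $j$ so that in addition the other coordinate $j'$ satisfies $k_{j'}\le t_{j'}$. Put $r_j=k_j-1$ and $r_{j'}=k_{j'}+\min\{x_{j'},\,t_{j'}-k_{j'}\}$. Then $\gamma^{\mbk}_{\mbt}(\mbr)=1$, and $\mbu^{\mbk}_{\mbt}(\mbr)$ has $j$-coordinate $t_j-k_j+1\le x_j$ and $j'$-coordinate $\min\{x_{j'},t_{j'}-k_{j'}\}\le x_{j'}$, so $\mbu\le\mbx$ lies in the support and $\emptyset\in\Delta^{\mbv}_{\mbu}$; computing $\mbv^{\mbk}_{\mbt}(\mbr)$ one checks that the vertex $\{j\}$ has a coordinate exceeding $t_j$ (excluded by the bound) and the vertex $\{j'\}$ has degree $\mbt+\one-\mbk$ (excluded since $(S/I)_{\mbt+\one-\mbk}=0$), while $\{1,2\}$ is excluded by the bound. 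Hence $\Delta^{\mbv}_{\mbu}=\{\emptyset\}$ and $H^1\Na^{\mbk}_{\mbt}(S/I)_{\mbr}\cong\widetilde H^{-1}(\{\emptyset\})\cong K\ne 0$. The case $I=S$ is trivial: then $S/I=0$, so $H^1=0$, and (ii) holds vacuously.

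The main obstacle is bookkeeping rather than depth: under the piecewise formulas of Definition~\ref{gammauv} one must track, for each of the two configurations and for the witness $\mbr$ constructed above, precisely which of $\emptyset,\{1\},\{2\},\{1,2\}$ lie in $\Delta^{\mbv}_{\mbu}(S/I;\mbt)$, carefully distinguishing the exclusions coming from the constraint $\mbx_F\le\mbt$ from those coming from the vanishing of $(S/I)_{\mbx_F}$, and one must dispose of the boundary cases ($k_j=t_j+1$ for one or both $j$, the element $\mbx$ large in one coordinate versus both, $I=S$). Each of these is routine; the conceptual content is entirely the translation via Theorem~\ref{thecalc1} and the elementary computation of the reduced cohomology of a simplicial complex on two vertices.
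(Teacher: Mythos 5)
Your proof is correct and takes essentially the same route as the paper: both translate the question via Theorem \ref{thecalc1} and Proposition \ref{nakcoh} into the vanishing of $\widetilde H^{-\gamma}(\Delta^{\mbv}_{\mbu})$ for a complex on two vertices and split into the cases $\gamma=0$ (two disjoint points) and $\gamma=1$ (the complex $\{\emptyset\}$). Your converse witness $\mbr$ is a single construction covering the paper's two sub-cases and is, if anything, more careful about the boundary cases $k_j=t_j+1$.
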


\begin{proof}
Suppose this first cohomology group is nonvanishing in degree $\mbr$.
By Theorem \ref{thecalc1},
$\widetilde H^{- \gamma}(\Delta_\mbu^\mbv (S/I))$ is nonvanishing. This is the reduced cohomology 
group of a simplicial complex in two variables, and so either i) $-\gamma = -1$,
or ii) $-\gamma = 0$.

i) In this case $\mbu$ is $(r_1 - k_1, t_2 + 1 - k_2)$ (or the analog with the coordinates 
switched), $\mbv$ is $(t_1 - k_1, t_2 +1 +r_2 - k_2)$ and 
$\Delta_\mbu^\mbv(S/I)$ is $\{\emptyset \}$. By the definition of this
simplicial complex,  $S/I$ is nonzero in degree
$\mbu$ and zero in degree $\mbt + \one - \mbk$. 

ii) In this case $\mbu$ is $(r_1 - k_1, r_2 - k_2)$ and $\mbv$ is $(t_1 - k_1, t_2- k_2)$
and $\Delta_\mbu^\mbv(S/I)$ consists of two points. Hence by its definition, $S/I$ 
is nonzero in degree, say $(t_1 - k_1 + 1, r_2 - k_2)$, and zero in degree $\mbt + \one - \mbk$.

Conversely, suppose $S/I$ is zero in degree $\mbt + \one - \mbk$ and the support of $S/I$ is not
contained in $[\mathbf 0 , \mbt - \mbk]$. 
If $(t_1 + 1-k_1, 0)$ and $(0, t_2 + 1-k_2)$ are both in the support of $S/I$ we can 
realize case ii) above for $\mbr = \mbk$. If, say only the second is in the support, 
we can realize case i) above with $\mbr = \mbk + \epsilon_2$.
\end{proof}
 
We can now summarize these results as follows.

\begin{prop} Suppose $S$ is a polynomial ring in two variables and $\mbk \geq \one$. Then
$\Na^{\mbk}_{\mbt} (S/I)$ has at most  one nonvanishing cohomology group if and only if either

a. The support of $S/I$ is contained in $[\mathbf 0, \mbt - \mbk]$.

b. The support of $S/I$ in $[\mbt + \one - \mbk, \mbt]$ is a nonempty interval.

c. The support of $S/I$ in $[\mathbf 0 , \mbt + \one - \mbk]$, is the complement of a
nonempty interval.

\end{prop}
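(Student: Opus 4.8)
The plan is to derive the proposition directly from the three preceding lemmas, which pin down exactly when each of $H^0\Na^{\mbk}_{\mbt}(S/I)$, $H^1\Na^{\mbk}_{\mbt}(S/I)$, $H^2\Na^{\mbk}_{\mbt}(S/I)$ and $H^3\Na^{\mbk}_{\mbt}(S/I)$ vanishes. By Lemma~\ref{boundsonnacoh}, for $n=2$ these four are the only cohomology modules that can be nonzero (we stay within the standing range $\one\le\mbk\le\mbt+\one$ of this subsection). Thus ``$\Na^{\mbk}_{\mbt}(S/I)$ has at most one nonvanishing cohomology group'' means precisely that at least three of $H^0,\dots,H^3$ vanish, and the task is to show that this is equivalent to (a) or (b) or (c).

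The one structural input I would isolate first is that $\supp(S/I)=\{\mbx:(S/I)_{\mbx}\ne 0\}$ is a down-set in $\N^2$: since $I$ is a monomial ideal, $\{\mbx:x^{\mbx}\in I\}$ is an up-set, hence its complement is downward closed. From this I extract two facts. First, the single degree $\mbt+\one-\mbk$ controls a whole region: if $(S/I)_{\mbt+\one-\mbk}\ne 0$ then $[\mathbf 0,\mbt+\one-\mbk]\subseteq\supp(S/I)$, whereas if $(S/I)_{\mbt+\one-\mbk}=0$ then $\supp(S/I)\cap[\mbt+\one-\mbk,\mbt]=\emptyset$. Second, an upward closed interval inside a box $[\mathbf 0,\mbc]$ is either empty or has top vertex $\mbc$; this is what reconciles the ``(possibly empty) interval'' appearing in the $H^0$- and $H^3$-lemmas with the ``nonempty'' clauses of (b) and (c) --- in particular the support in $[\mathbf 0,\mbt+\one-\mbk]$ is the complement of a \emph{nonempty} interval if and only if $(S/I)_{\mbt+\one-\mbk}=0$ and $\supp(S/I)$ does meet that box properly.

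The proof is then a dichotomy on whether $(S/I)_{\mbt+\one-\mbk}$ vanishes. If it is nonzero, the $H^2$-lemma gives $H^2\ne 0$; the first fact above forces $[\mathbf 0,\mbt+\one-\mbk]\subseteq\supp(S/I)$, so the support in that box is the complement of the empty interval and $H^0=0$; and clause (i) of the $H^1$-lemma gives $H^1=0$. Hence ``at most one nonvanishing'' is equivalent to $H^3=0$, which by the $H^3$-lemma says the support in $[\mbt+\one-\mbk,\mbt]$ is an interval --- necessarily nonempty, as it contains $\mbt+\one-\mbk$ --- i.e.\ exactly case (b). If $(S/I)_{\mbt+\one-\mbk}=0$, then $H^2=0$ and, since $\supp(S/I)$ misses $[\mbt+\one-\mbk,\mbt]$, also $H^3=0$; so ``at most one nonvanishing'' becomes ``$H^0=0$ or $H^1=0$''. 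With clause (i) of the $H^1$-lemma failing, $H^1=0$ is equivalent by clause (ii) to $\supp(S/I)\subseteq[\mathbf 0,\mbt-\mbk]$, i.e.\ case (a); and $H^0=0$ says by the $H^0$-lemma that the support in $[\mathbf 0,\mbt+\one-\mbk]$ is the complement of an interval which cannot be empty here (otherwise $\mbt+\one-\mbk\in\supp(S/I)$), i.e.\ case (c). Conversely, reading each implication backwards, case (a) forces $H^1=H^2=H^3=0$, case (b) forces $H^0=H^1=H^3=0$, and case (c) forces $H^0=H^2=H^3=0$, which closes the equivalence.

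I do not expect a genuine obstacle: each step is a mechanical combination of the three preceding lemmas with the down-set property of $\supp(S/I)$. The only point I would write out carefully is the matching of the ``(possibly empty) interval'' descriptions in the $H^0$- and $H^3$-characterizations with the ``nonempty'' requirements of (b) and (c), which is settled by the elementary remark that an upward closed interval in a box is empty or reaches the top vertex.
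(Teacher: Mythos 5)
Your proof is correct and follows essentially the same route as the paper's: a dichotomy on whether $(S/I)_{\mbt+\one-\mbk}$ vanishes, combined with the three preceding vanishing lemmas for $H^0,\dots,H^3$. The only difference is that you spell out the down-set property of $\supp(S/I)$ and the reconciliation of ``possibly empty'' with ``nonempty'' intervals, which the paper leaves implicit.
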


\begin{proof} 

Suppose $\mbt + \one - \mbk$ is in the support of $S/I$, so cases a. and c. do not apply. 
Then $H^0$ and 
$H^1$ of  $\Na^{\mbk}_{\mbt}(S/I)$ vanishes, while $H^3$ vanishes if and only if we
are in case b.

Suppose $\mbt + \one - \mbk$ is not in the support of $S/I$, so case b. does not apply.
Then $H^2$ and $H^3$ of the Nakayama functor vanishes. That $H^1$
vanishes is equivalent to case a., and that $H^0$ vanishes is equivalent to case c.
\end{proof}

\section{The Nakayama functor on interval modules}
\label{sec:intmod}

In this section we consider the Nakayama functor applied to interval modules.
We investigate quite explicitly and in detail the one variable case $n=1$,
Subsection \ref{sec:onena}.
In particular we develop lemmata which will be pivotal in the final proofs.
In Subsection \ref{sec:genint} we infer immediate 
consequences to the general setting $n \geq 1$. 
Subsection \ref{sec:boundhom} concerns the explicit description of a connecting
homomorphism needed for the proof of Theorem \ref{nakcohS}.


\subsection{One dimensional intervals}
\label{sec:onena}

In order to keep track of naturality properties of the Nakayama
functor we need to be quite explicit in our calculation of
one-dimensional Nakayama cohomology. 
\begin{defn}
\label{nakcx}
  Let $k,t \in \N$ with $k \le t+1$. Given $y \in [0,t]$ we define 
  the chain
  complex $\mycx^k_{t}(t-y)$
  of $t$-determined $S$-modules concentrated in degrees $0$ and $-1$,
  where it is given by the formula 
  \begin{displaymath}
    \mycx^k_{t}(t-y) =
    \begin{cases}
      K_{t}\{k,t-y+k\} \to 0 & \text{if $k \le y$} \\
      \qquad \quad \,\,\, K_{t}\{k,t\} \xrightarrow i K_{t}\{k-y-1,t\}     & \text{if $y+1 \le k $}.
    \end{cases}      
    \end{displaymath}
  Here $i$ is 
  the canonical inclusion introduced in Example \ref{canex}. 
For $0 \le y
\le y' \le t$ there is a naturally defined chain homomorphism 
\[ \mycx^k_{t}(y,y') \colon \mycx^k_{t}(t-y) \to
\mycx^k_{t}(t-y') \]
as follows: 
\begin{enumerate}
\item If $k \le y$, then $\mycx^k_{t}(y,y')$ is the
canonical projection $p \colon K_{t}\{k,t-y+k\} \to
K_{t}\{k,t-y'+k\}$ introduced in Example \ref{canex}.
\item If $y < k \le 
y'$, then $\mycx^k_{t}(y,y')$ in degree zero is the canonical projection $p
\colon K_{t}\{k,t\} \to
K_{t}\{k,t-y'+k\}$.
\item If $y' < k $, then $\mycx^k_{t}(y,y')$ is the identity
in degree zero and the canonical inclusion $i \colon K_{t}\{k-y-1,t\} \to
K_{t}\{k-y'-1,t\}$ in degree $-1$.
\end{enumerate}
\end{defn}

The following lemma gives an explicit description of the Nakayama
functor applied to $S/I$ for a graded ideal $I$ in $S = K[x]$. 
\begin{lem}
\label{nathelper1}
  For $0 \le k \le t+1$ and $0 \le y \le t$ there is a
  quasi-isomorphism 
  \begin{displaymath}
    \Na^k_{t} K_{t}\{0,t-y\} \xrightarrow {f^k_t(y)} \mycx^k_{t}(t-y)
  \end{displaymath}
  of chain-complexes of $S$-modules. Further, if $y \le y'$, then
  $$\mycx^k_{t}(y,y') \circ f^k_t(y) = f^k_t(y') \circ \Na^k_{t}(p),$$
 where $p \colon
  K_{t}\{0,t-y\} \to K_{t}\{0,t-y'\}$ is the canonical projection.
\end{lem}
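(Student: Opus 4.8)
The plan is to compute $\Na^k_t K_t\{0,t-y\}$ directly from the definition by iterating the one-variable Nakayama functor $\Na^1_t$, and to match the result with the explicit complex $\mycx^k_t(t-y)$. I would proceed by induction on $k$. The base case $k=0$ is trivial since $\Na^0_t$ is the identity (the empty tensor product is $S$) and $\mycx^0_t(t-y) = K_t\{0,t-y\}$ by inspection of Definition \ref{nakcx}. For the inductive step I would apply Example \ref{firstnakone}, which computes $\Na^1_t K_t\{a,b\}$ on cohomology: if $b<t$ one gets $K_t\{a+1,b+1\}e^0$, and if $b=t$ one gets $K_t\{0,a\}e^1$. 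The key bookkeeping is to track how the pair $(a,b) = (k,t-y+k)$ in the first case, and $(a,b)=(k,t)$ paired with the cokernel-type term $K_t\{k-y-1,t\}$ in the second case, evolves under one more application of $\Na^1_t$. Concretely, applying $\Na^1_t$ to $\mycx^k_t(t-y)$ and taking the quasi-isomorphism type: in the regime $k+1\le y$ the module $K_t\{k,t-y+k\}$ has top degree $t-y+k<t$, so it maps to $K_t\{k+1,t-y+k+1\}$, matching $\mycx^{k+1}_t(t-y)$; in the regime $k+1 > y$ (i.e.\ $k\ge y$, so we are crossing the threshold) the term $K_t\{k,t\}$ reaches the top degree $t$, producing a degree-shifted term, and I would check the arithmetic gives exactly $K_t\{k+1,t\}\xrightarrow{i}K_t\{k+1-y-1,t\} = K_t\{k-y,t\}$ as in Definition \ref{nakcx}. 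For the case $y+1\le k$, applying $\Na^1_t$ to the two-term complex $K_t\{k,t\}\xrightarrow{i}K_t\{k-y-1,t\}$ and using that $\Na^1_t$ of a $b=t$ interval module lands in cohomological degree $1$ (Example \ref{firstnakone}), I would get a complex whose cohomology is $K_t\{0,k\}\xrightarrow{} K_t\{0,k-y-1\}$; rewriting $K_t\{k+1,t\}\xrightarrow{i}K_t\{k-y,t\}$ via the duality-type shift these two descriptions agree up to quasi-isomorphism, and this is where I would need to be careful with the canonical inclusions and projections of Example \ref{canex}.

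For the naturality statement, I would define $f^k_t(y)$ by the same induction: $f^0_t(y)=\id$, and $f^{k+1}_t(y)$ is obtained from $f^k_t(y)$ by applying $\Na^1_t$ and composing with the fixed quasi-isomorphism of Example \ref{firstnakone} (which is natural in the interval module, as can be read off from the explicit chain-level formulas $\HOM_S(x^{t+1}S,K_t\{a,b\}(-1))\cong K_t\{a,b\}(t)$ and $\HOM_S(S,K_t\{a,b\}(-1))\cong K_t\{a,b\}(-1)$). The compatibility $\mycx^k_t(y,y')\circ f^k_t(y) = f^k_t(y')\circ\Na^k_t(p)$ then reduces, by the inductive construction, to checking the corresponding square one level down and verifying that $\Na^1_t$ sends the canonical projection $p\colon K_t\{0,t-y\}\to K_t\{0,t-y'\}$ to the chain map $\mycx^1_t(y,y')$ described in cases (1)--(3) of Definition \ref{nakcx}. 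This last verification is a direct unwinding of how $\HOM_S(P_{t+1},-)$ acts on the map $p$ in each of the three degree regimes, using that $\HOM_S(-, K_t\{a,b\}(t))$ turns a surjection of interval modules into an inclusion and vice versa.

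The main obstacle I anticipate is not any single deep idea but rather the case analysis at the threshold: the formula for $\mycx^k_t(t-y)$ is genuinely piecewise (the cases $k\le y$ versus $y+1\le k$), and each application of $\Na^1_t$ either stays within a regime or crosses the boundary, so the induction has to carefully handle the step where $k = y$ passes to $k+1 > y$ — verifying that the two branches of the definition glue correctly there. A secondary subtlety is keeping the canonical inclusions $i$ and projections $p$ straight: $\Na^1_t$ (being built from a $\HOM$) contravariantly swaps them, and combined with the twist $(-\mbk)$ and the truncation $p_\mbt^*$ one must confirm no sign or direction is reversed. Once the threshold case and the $i/p$ bookkeeping are pinned down, the rest is routine substitution into the formulas of Examples \ref{canex} and \ref{firstnakone}.
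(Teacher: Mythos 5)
Your plan is essentially the paper's proof: induction on $k$, peeling off one application of $\Na^1_t$ at a time and matching against Definition \ref{nakcx} via Example \ref{firstnakone}, with the case split $k\le y$ versus $y+1\le k$ and the threshold crossing handled separately. The one step where your sketch as written would not quite suffice is the regime $y+1\le k$: you propose to apply $\Na^1_t$ to the two-term complex $K_t\{k,t\}\xrightarrow{i}K_t\{k-y-1,t\}$ and then reason with the termwise cohomologies $K_t\{0,k\}\to K_t\{0,k-y-1\}$, matching ``up to quasi-isomorphism.'' That identifies the cohomology but does not by itself produce the chain map $f^k_t(y)$, and the second assertion of the lemma is an \emph{equality} of chain maps, so an explicit chain-level quasi-isomorphism is required. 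The paper's fix is direct: $\Na^1_t\mycx^{k-1}_t(t-y)$ is the total complex of the bicomplex whose top row is $K_t\{k,t\}\to K_t\{k-y-1,t\}$ and whose bottom row is the identity on $K_t\{0,t\}$; since the bottom row is acyclic, the projection onto the top row, which is exactly $\mycx^k_t(t-y)$, is the desired quasi-isomorphism, and its compatibility with the maps $\mycx^k_t(y,y')$ is then checked on the nose. With that replacement your induction goes through as planned.
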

\begin{proof}
  We prove the result by induction on $k$. In Example \ref{firstnakone} we saw that
  \begin{displaymath}
    \Na^1_{t} K_{t}\{a,b\} \cong
    \begin{cases}
      K_{t}\{a+1,b+1\} & \text{if $b < {t}$}\\
      K_{t}\{a+1,{t}\} \to K_{t}\{0,{t}\} & \text{if $b = {t}$}
    \end{cases}
  \end{displaymath}
  In particular $\Na^1_{t} K_{t}\{0,{t}-y\} \cong \mycx^1_{t}(t-y)$, so the lemma holds for
  $k = 1$. Next suppose that the lemma holds for $k-1$. In order to
  see that it holds for $k$ it suffices to provide a quasi-isomorphism
  $g(y) \colon \Na^1_t \mycx^{k-1}_t(t-y) \to \mycx_t^k(t-y)$ such that
  $$\mycx^k_t(y,y') \circ g(y) = g(y') \circ \Na^1_t \mycx_t^{k-1}(y,y')$$
  when $y
  \le y'$.
  In the case $k
  \le y+1$ this is very easy because the two chain complexes are
  isomorphic. Otherwise, if $k \ge y+2$, then $\Na^1_{t} \mycx^{k-1}_t(t-y)$ is
  the total complex of the bicomplex
  \begin{displaymath}
    \begin{CD}
      K_{t}\{k,{t}\} @>>> K_{t}\{k-y-1,{t}\} \\
      @VVV @VVV \\
      K_{t}\{0,{t}\} @>{=}>> K_{t}\{0,{t}\}.
    \end{CD}
  \end{displaymath}
  Since $\mycx^k_t(t-y)$ is equal to the top row of this bicomplex there is
  a quasi-isomorphism $g(y) \colon \Na^1_t \mycx^{k-1}_t(t-y) \to \mycx_t^k(t-y)$
  projecting onto the top row.
\end{proof}
The above lemma enables us to verify Proposition \ref{nakcoh} in the case $n=1$. 
\begin{cor}
\label{intervaluv}
  For $0 \le k \le t+1$ and $0 \le y \le t$ the cohomology of the chain complex
  $\Na^k_t K_t \{0,t-y\}$ is isomorphic to  
  $K_t
    \{t-v^k_t(y),t-u^k_t(y)\}$
  concentrated in cohomological degree ${\gamma^k_t(y)}$.
\end{cor}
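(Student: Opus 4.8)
The plan is to read the cohomology directly off the explicit chain complex $\mycx^k_t(t-y)$ provided by Lemma \ref{nathelper1}, using the quasi-isomorphism $f^k_t(y)\colon \Na^k_t K_t\{0,t-y\}\to \mycx^k_t(t-y)$. Since cohomology is invariant under quasi-isomorphism, it suffices to compute $H^*\mycx^k_t(t-y)$, and the complex has only two possible nonzero terms, in degrees $0$ and $-1$, so this is immediate once we split into the two cases of Definition \ref{nakcx}.

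In the case $k\le y$, the complex $\mycx^k_t(t-y)$ is concentrated in degree $0$, where it equals $K_t\{k, t-y+k\}$. Hence its only nonzero cohomology is in cohomological degree $0$, and it equals $K_t\{k, t-y+k\}$. Comparing with Definition \ref{gammauv}, we have $\gamma^k_t(y)=0$, $u^k_t(y)=y-k$ and $v^k_t(y)=t-k$, so that $K_t\{t-v^k_t(y), t-u^k_t(y)\}=K_t\{k, t-y+k\}$, as required.

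In the case $y+1\le k$, the complex is $K_t\{k,t\}\xrightarrow{\,i\,} K_t\{k-y-1,t\}$ with $K_t\{k,t\}$ in cohomological degree $0$ and $K_t\{k-y-1,t\}$ in cohomological degree $-1$. Wait: in chain-complex conventions the differential lowers homological degree, so $K_t\{k-y-1,t\}$ sits in homological degree $-1$, i.e.\ cohomological degree $1$, and $i$ is the map from degree $0$ to degree $1$ after rewriting $H^i=H_{-i}$. The map $i$ is the canonical inclusion of Example \ref{canex}, which is injective, so $H^0$ of the two-term complex vanishes. The single nonzero cohomology group is therefore $H^1=\operatorname{coker}(i)$. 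Since $K_t\{k-y-1,t\}$ has $K$ in each degree of $[k-y-1,t]$ and $K_t\{k,t\}$ has $K$ in each degree of $[k,t]$, with $i$ the identity in the overlapping degrees, the cokernel is $K$ in degrees $[k-y-1, k-1]$ and zero elsewhere; as a $t$-determined $S$-module this is $K_t\{k-y-1, k-1\}$. Matching with Definition \ref{gammauv}: here $\gamma^k_t(y)=1$, $u^k_t(y)=t-k+1$ and $v^k_t(y)=t-k+y+1$, so $K_t\{t-v^k_t(y), t-u^k_t(y)\}=K_t\{k-y-1, k-1\}$, again as required.

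The only point requiring a little care is the bookkeeping of homological versus cohomological degree and the verification that the canonical inclusion $i$ is indeed a degreewise-injective map of $S$-modules whose cokernel is the claimed interval module; both follow directly from the description of interval modules in Example \ref{canex} and the convention $H^iC=H_{-i}C$ fixed in Section \ref{sec:nakfuny}. There is no real obstacle; the content has already been extracted in Example \ref{firstnakone} and Lemma \ref{nathelper1}, and this corollary merely repackages it in the notation of Definition \ref{gammauv}, thereby establishing Proposition \ref{nakcoh} in the case $n=1$.
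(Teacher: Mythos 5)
Your proposal is correct and is exactly the argument the paper intends (the corollary is stated without proof, as an immediate consequence of Lemma \ref{nathelper1}): transfer along the quasi-isomorphism $f^k_t(y)$, compute the cohomology of the two-term complex $\mycx^k_t(t-y)$ in the two cases of Definition \ref{nakcx}, and match the answer with Definition \ref{gammauv}. Your case analysis, the identification of the cokernel of the canonical inclusion with $K_t\{k-y-1,k-1\}$ (which also covers the boundary case $k=t+1$, where the degree-zero term vanishes by convention), and the degree bookkeeping under $H^iC=H_{-i}C$ are all accurate.
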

The following somewhat surprising result is the main input in our
proof in Section \ref{sec:nakmon} of Theorem \ref{thecalc1},
which computes the cohomology of the Nakayama functor.
\begin{lem}
\label{nathelper2}
  For every $u$ with $0 \le u \le {t}$ and every $y$ with $0 \le y \le
  t$ there is a quasi-isomorphism
  $$\mycx^k_{t}(t-y)_u \xrightarrow {g^k_t(y,u)} \mycx^k_{t}(t-u)_y.$$
  Moreover   for every chain $d \in
  \mycx^k_{t}(t-y)_u$ and all $y,y'$ with $0 \le y \le y' \le t$ we have
  $$x^{y'-y}
  \cdot g^k_t(y,u)(d) =  g^k_t(y',u)(\mycx^k_{t}(y,y')(d))$$ 
\end{lem}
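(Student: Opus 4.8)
The statement to prove, Lemma~\ref{nathelper2}, asserts that the degree-$u$ graded piece of the one-variable complex $\mycx^k_t(t-y)$ agrees, after a quasi-isomorphism $g^k_t(y,u)$, with the degree-$y$ graded piece of $\mycx^k_t(t-u)$, and moreover that multiplication by $x^{y'-y}$ intertwines the $g$'s with the comparison maps $\mycx^k_t(y,y')$. The plan is to proceed case-by-case according to the mutual position of $k$, $y$ and $u$, reading off both complexes $\mycx^k_t(t-y)$ and $\mycx^k_t(t-u)$ from Definition~\ref{nakcx} and then extracting the graded pieces using the rule recorded after Definition~\ref{intervelmodule}: the piece $K_t\{a,b\}_r$ is $K$ for $r\in[a,b]$ and $0$ otherwise. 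So first I would enumerate the four regimes: (i) $k\le y$ and $k\le u$; (ii) $k\le y$ and $y+1\le \text{(position of $u$)}$, i.e. the two complexes are of different shapes; (iii) the symmetric case; (iv) $y+1\le k$ and $u+1\le k$. In each regime both $\mycx^k_t(t-y)_u$ and $\mycx^k_t(t-u)_y$ are short complexes of one-dimensional $K$-vector spaces placed in degrees $0$ and $-1$, and I would simply check that the nonzero pieces occur in the same degrees and that the differentials (which are either $0$, an identity, or a canonical inclusion between $K_t\{\cdot,\cdot\}$'s that becomes the identity $K\to K$ in the relevant degree) match up, so the obvious map $g^k_t(y,u)$ is a quasi-isomorphism — in fact very often an honest isomorphism of two-term complexes.

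Concretely, in regime (i) $\mycx^k_t(t-y)$ is the single module $K_t\{k,t-y+k\}$ in degree $0$, so $\mycx^k_t(t-y)_u$ is $K$ if $k\le u\le t-y+k$ and $0$ otherwise; the condition $k\le u\le t-y+k$ is symmetric in $y$ and $u$ (it rearranges to $0\le y+u-k\le$ something controlled by $t$, or more simply to $u\ge k$ and $y\le t-u+k$), so $\mycx^k_t(t-u)_y$ has the same value, and $g^k_t(y,u)$ is the identity of $K$ (or of $0$). In regime (iv) $\mycx^k_t(t-y)$ is $K_t\{k,t\}\xrightarrow{i}K_t\{k-y-1,t\}$; taking the degree-$u$ piece (with $u<k$, so the top module contributes $0$) gives $0\to K$ in degrees $0,-1$ exactly when $k-y-1\le u$, i.e. $y\ge k-u-1$, which again is symmetric in $y,u$; and the differential on the nose is the canonical inclusion which in degree $u$ is $\mathrm{id}_K$, matching the symmetric side. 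The mixed regimes (ii)–(iii) require the same bookkeeping but with the top module $K_t\{k,t\}$ of one complex contributing in the range $k\le \bullet\le t$; here one must be slightly careful that the map $\mycx^k_t(t-y)\to$ its pieces is compatible with the canonical projection appearing in case (2) of Definition~\ref{nakcx}, but the verification remains the comparison of two-term complexes of copies of $K$.

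For the naturality identity $x^{y'-y}\cdot g^k_t(y,u)(d)=g^k_t(y',u)(\mycx^k_t(y,y')(d))$, the point is that on each graded piece $K$ the map $\mycx^k_t(y,y')$ is, by construction (cases (1)–(3) of Definition~\ref{nakcx}: canonical projections and inclusions, or the identity), either the identity $K\to K$ or the zero map, according to whether the relevant index $y'$ has or has not crossed the threshold $k$; and multiplication by $x^{y'-y}$ on the graded pieces of a $t$-determined interval module $K_t\{a,b\}$ is likewise the identity $K\to K$ when both source and target degrees lie in $[a,b]$ and zero otherwise. So the identity reduces to checking, in each of the four regimes, that these two "crossing thresholds" agree, which they do because the defining inequalities are symmetric in the roles of $y$ and $u$. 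I expect the main obstacle to be purely organizational rather than conceptual: keeping the case analysis honest, getting the signs and the placement (degree $0$ versus degree $-1$) right in the mixed regimes (ii)–(iii), and making sure the chosen $g^k_t(y,u)$ is genuinely natural in $y$ and not merely a quasi-isomorphism for each fixed $y$. Once the bookkeeping is set up, each individual case is a one-line check.
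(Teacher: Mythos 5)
Your proposal is correct and follows essentially the same path as the paper: define $g^k_t(y,u)$ to be the identity on $K$ wherever both graded pieces $\mycx^k_t(t-y)_u$ and $\mycx^k_t(t-u)_y$ equal $K$ (and zero otherwise), and then verify both assertions by a short case analysis on the positions of $k$, $u$, $y$, $y'$. The paper compresses the whole verification into six cases ($k\le y\le y'$, $y<k\le y'$, $y\le y'<k$, crossed with $k\le u$ or $u<k$), whereas you organize it as four regimes for the quasi-isomorphism claim plus a separate discussion of where $y'$ sits relative to $k$ for the naturality identity, but the content is the same; the only slip is the phrasing of your regime (ii), which should read ``$k\le y$ and $u+1\le k$'' rather than ``$y+1\le\text{(position of $u$)}$''.
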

\begin{proof}
  We define $g^k_t(y,u)$ to be the identity on $K$ whenever
  $$\mycx^k_{t}(t-y)_u = \mycx^k_{t}(t-u)_y = K$$
  and to be zero otherwise. When $k
  = t+1$ the assertions of the lemma are obvious. When $k \le t$ the
  assertions can be verified by splitting up into the six cases
  obtained from the three cases $k \le y \le y'$ or $y < k \le y'$ or $y
  \le y' < k$ times the two cases $k \le u$ or $u<k$.
\end{proof}
\begin{cor}
\label{natswapuy}
  For all $u$ and all $y,y'$ with $0 \le y \le y' \le t$ the following
  diagram commutes.
  \begin{displaymath}
    \begin{CD}
      (\Na_t^k K_t \{0,t-y\})_u @>{f^k_t(y)_u}>> \mycx^k_t(t-y)_u
      @>{g^k_t(y,u)}>> \mycx^k_t(t-u)_y 
      @<{f^k_t(u)_y}<<    (\Na^k_t K_t \{0,t-u\})_y \\
      @V{(\Na^k_t p)_u}VV @V{\mycx^k_t(y,y')_u}VV @V{x^{y'-y}}VV @V{x^{y'-y}}VV \\
      (\Na_t^k K_t \{0,t-y'\})_u @>{f^k_t(y')_u}>> \mycx^k_t(t-y')_u @>{g^k_t(y',u)}>> 
      \mycx^k_t(t-u)_{y'} @<{f^k_t(u)_{y'}}<<
      (\Na^k_t K_t \{0,t-u\})_{y'} 
    \end{CD}
  \end{displaymath}
\end{cor}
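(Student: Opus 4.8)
The plan is to observe that the rectangle splits as a side-by-side composite of three commuting squares, and to match each square to a statement already established; I would therefore proceed column by column from the left.

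First, the leftmost square has horizontal maps $f^k_t(y)_u$ and $f^k_t(y')_u$ and vertical maps $(\Na^k_t p)_u$ and $\mycx^k_t(y,y')_u$. This square is precisely the evaluation in degree $u$ of the naturality identity $\mycx^k_t(y,y') \circ f^k_t(y) = f^k_t(y') \circ \Na^k_t(p)$ from the second part of Lemma~\ref{nathelper1}, so it commutes. Next, the middle square has horizontal maps $g^k_t(y,u)$ and $g^k_t(y',u)$ and vertical maps $\mycx^k_t(y,y')_u$ and $x^{y'-y}$; applied to an arbitrary chain $d \in \mycx^k_t(t-y)_u$, commutativity of this square is exactly the displayed formula $x^{y'-y}\cdot g^k_t(y,u)(d) = g^k_t(y',u)(\mycx^k_t(y,y')(d))$ of Lemma~\ref{nathelper2}.

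Finally, the rightmost square has its horizontal arrows pointing leftward, namely $f^k_t(u)_y$ and $f^k_t(u)_{y'}$, with both vertical arrows equal to multiplication by $x^{y'-y}$. Here I would argue not by naturality in $y$ but simply by $S$-linearity: $f^k_t(u)\colon \Na^k_t K_t\{0,t-u\} \to \mycx^k_t(t-u)$ is a homomorphism of chain complexes of $S$-modules, hence commutes with multiplication by $x^{y'-y}$, and reading off the degree-$y$ and degree-$y'$ components of this compatibility gives commutativity of the square. Pasting the three commuting squares together yields commutativity of the whole rectangle. I do not expect a genuine obstacle; the only point that needs care is the bookkeeping in the rightmost square, where the horizontal maps run against the left-to-right flow of the diagram and must be read as an $S$-linearity statement for $f^k_t(u)$ rather than as a naturality statement of the kind used for the other two squares.
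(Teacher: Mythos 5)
Your proof is correct and is exactly the argument the paper intends (the corollary is stated without proof as an immediate consequence of Lemmas \ref{nathelper1} and \ref{nathelper2}): the left square is the naturality identity of Lemma \ref{nathelper1} in degree $u$, the middle square is the displayed formula of Lemma \ref{nathelper2}, and the right square is just $S$-linearity of the chain map $f^k_t(u)$. Nothing is missing.
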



\begin{cor}
\label{firststepbetti}
 Let $F$ denote the projective resolution $S(-1) \to
  S$ of $K = S/\mm$ over $S$.
  Given $t,k \in \N$ with $0 \le k \le t +1$ and $u,y \in [ 0,t]$, 
  there is a chain of quasi-isomorphisms of chain complexes of $K$-vector spaces
  between $(T\Na^{k+1}_{t} K_{t}\{t-u,t-u\})_{y}$ and $(F \otimes_S
  \Na^{k}_{t} K_{t}\{ 0,t-y\})_{u}$. 
  For each fixed $u$ these quasi-isomorphisms are natural in $y$.
\end{cor}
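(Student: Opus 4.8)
The goal is to connect two chain complexes of $K$-vector spaces: on one side $(T\Na^{k+1}_t K_t\{t-u,t-u\})_y$, built from the $(k{+}1)$-iterated Nakayama functor applied to a \emph{point} module shifted to degree $t-u$; on the other side $(F \otimes_S \Na^k_t K_t\{0,t-y\})_u$, the Koszul/Betti complex of the $k$-iterated Nakayama functor applied to an \emph{interval} module. The bridge is Corollary \ref{firststepbetti}'s predecessors, \ref{nathelper1} and \ref{nathelper2}, together with Corollary \ref{natswapuy}, which is designed precisely to swap the roles of the free index and the evaluation index. First I would reduce both sides to the explicit interval-module complexes $\mycx$: apply Lemma \ref{nathelper1} (via $f^k_t(y)$) to replace $\Na^k_t K_t\{0,t-y\}$ by $\mycx^k_t(t-y)$ up to quasi-isomorphism; since $F \otimes_S (-)$ is exact on the relevant complexes (both terms of $F$ are free), this induces a quasi-isomorphism $(F\otimes_S \Na^k_t K_t\{0,t-y\})_u \simeq (F\otimes_S \mycx^k_t(t-y))_u$.

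**Identifying the two sides.** The key computational observation is that the Koszul complex $F = (S(-1)\to S)$ tensored with a module $M$ and evaluated in degree $u$ is just the two-term complex $M_{u-1}\xrightarrow{x} M_u$, which is exactly $(\Na^1_t M)$-shaped data: indeed $\Na^1_t$ is built from $\HOM_S(P_{t+1},-)$ and, in one variable, $(F\otimes_S M)_u$ and $(\Na^1_t M)$ carry the same information up to the truncation $p_t$ and a degree relabeling. More precisely, I would show $(F\otimes_S \mycx^k_t(t-y))_u$ is quasi-isomorphic to $(T\,\Na^1_t \mycx^k_t(t-y))$ evaluated appropriately, and then invoke the composition law $\Na^1_t \circ \Na^k_t \cong \Na^{k+1}_t$ (the adjunction isomorphism for $\{0,1\}^n$-iterates stated just before Definition \ref{DefineNak2}, extended along Definition \ref{DefineNak2}) together with the point-versus-interval duality: $K_t\{t-u,t-u\}$ is a point module, and $(\Na^{k+1}_t K_t\{t-u,t-u\})_y$ should, after applying $g^k_t(y,u)$-type swap maps from Lemma \ref{nathelper2}, match $(\Na^{k+1}_t K_t\{0,t-y\})_u$ — this is the content of Corollary \ref{natswapuy}, which already establishes a commuting diagram linking $(\Na^k_t K_t\{0,t-y\})_u$ and $(\Na^k_t K_t\{0,t-u\})_y$ compatibly with the $x$-multiplication maps. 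Assembling these, the chain of quasi-isomorphisms reads, schematically,
\begin{displaymath}
(F\otimes_S \Na^k_t K_t\{0,t-y\})_u \simeq (F\otimes_S \mycx^k_t(t-y))_u \simeq (T\,\Na^1_t\mycx^k_t(t-y))_?\, \simeq (T\,\Na^{k+1}_t K_t\{t-u,t-u\})_y,
\end{displaymath}
where the last step uses \ref{natswapuy} and the composition law to reinterpret the Nakayama iterate of the interval as the Nakayama iterate of the point module with the indices $u$ and $y$ exchanged, and the $T$-shift accounts for the homological shift introduced when one models $F\otimes_S(-)$ by $\Na^1_t$.

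**Naturality in $y$.** For fixed $u$, naturality in $y$ is forced by the fact that every arrow in the chain is one of the already-natural transformations: $f^k_t(y)$ is natural in $y$ by the second assertion of Lemma \ref{nathelper1} (it intertwines $\mycx^k_t(y,y')$ and $\Na^k_t(p)$); the swap maps $g^k_t(y,u)$ intertwine $\mycx^k_t(y,y')$ with multiplication $x^{y'-y}$ by Lemma \ref{nathelper2}; and $F\otimes_S(-)$, being additive, preserves all of this. Concretely the compatibility is exactly the commuting diagram of Corollary \ref{natswapuy}, so stringing those diagrams together for varying $y$ yields the asserted naturality. I would simply cite \ref{natswapuy} here rather than redraw the diagram.

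**The main obstacle.** The delicate point is bookkeeping the homological shift $T$ and the precise degree-relabeling that converts ``$F\otimes_S(-)$ evaluated at $u$'' into ``$\Na^1_t$ applied and evaluated at $y$.'' The functor $F\otimes_S(-)$ and $\Na^1_t = p_t^*\HOM_S(P_{t+1},-(-1))$ are not literally the same — one is a tensor (homology in degree $0,1$), the other a $\HOM$ against a length-one resolution with a twist and a truncation — so one must check that on the \emph{truncated} interval modules $\mycx^k_t$ the two agree up to the stated single shift $T$, and that the shift is consistent with the appearance of $k+1$ (not $k$) on the point-module side. I expect this to reduce, as the proofs of \ref{nathelper1} and \ref{nathelper2} did, to a finite case check: split on $k\le y$ vs.\ $y<k$, and on $k\le u$ vs.\ $u<k$, and in each of the (at most) four cases verify both complexes are the same two-term complex of $K$'s (or $K\to K$, or $K\to 0$) with matching differential. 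Because every ingredient complex here has all graded pieces equal to $K$ or $0$, no genuine homological subtlety arises beyond getting the indices and signs right; the argument is therefore a careful assembly of \ref{nathelper1}, \ref{nathelper2}, \ref{natswapuy}, and the composition law $\Na^1_t\Na^k_t\cong\Na^{k+1}_t$, with the shift $T$ tracked throughout.
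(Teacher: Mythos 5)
You have assembled the right supporting results (\ref{nathelper1}, \ref{nathelper2}, \ref{natswapuy}, the composition law $\Na^1_t\circ\Na^k_t\cong\Na^{k+1}_t$), and your observation that $(F\otimes_S N)_u$ is just the two-term complex $N_{u-1}\xrightarrow{x}N_u$ is exactly the right thing to aim for on the Betti side. But the central mechanism of your chain is wrong. The claimed middle quasi-isomorphism $(F\otimes_S \mycx^k_t(t-y))_u\simeq (T\Na^1_t\mycx^k_t(t-y))_{?}$ rests on the assertion that $F\otimes_S(-)$ and $\Na^1_t$ ``carry the same information up to truncation and relabeling,'' and this is false: $(F\otimes_S N)_u=(N_{u-1}\xrightarrow{x}N_u)$ computes Betti spaces, whereas $(\Na^1_tN)_a=(N_{a-1}\xrightarrow{x^{t+1}}N_{a+t})$ is (on $t$-determined modules) essentially an equivalence. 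For example, with $N=K_t\{a,b\}$ and $b<t$, the complex $F\otimes_S N$ has homology $K$ in homological degree $0$ at $u=a$ and $K$ in degree $1$ at $u=b+1$, while $\Na^1_tN\simeq K_t\{a+1,b+1\}$ is concentrated in homological degree $0$; no choice of evaluation degree makes these agree. Relatedly, your chain never explains how the \emph{point} module $K_t\{t-u,t-u\}$ appears on the other side: Corollary \ref{natswapuy} only interchanges the interval modules $K_t\{0,t-y\}$ and $K_t\{0,t-u\}$ at swapped evaluation degrees, so it cannot by itself convert an interval module into a point module.

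The missing idea, which is how the paper proceeds, is to start from the other end and resolve the point module: $K_t\{t-u,t-u\}\simeq F\otimes_S K_t\{t-u,t\}$ (for $u>0$; the case $u=0$ is handled separately). Applying $T\Na^{k+1}_t=\Na^k_t\circ T\Na^1_t$ and using Example \ref{firstnakone} to compute $T\Na^1_t$ on the free interval modules $K_t\{\cdot,t\}$ turns this into $\Na^k_t$ of the two-term complex $K_t\{0,t-u+1\}\xrightarrow{p}K_t\{0,t-u\}$; evaluating at $y$ gives the mapping cone of $(\Na^k_t(p))_y$. Only now does Corollary \ref{natswapuy} enter: it identifies $(\Na^k_t(p))_y$ with the multiplication $x\colon(\Na^k_tK_t\{0,t-y\})_{u-1}\to(\Na^k_tK_t\{0,t-y\})_u$, whose mapping cone is precisely $(F\otimes_S\Na^k_tK_t\{0,t-y\})_u$. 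Your fallback of verifying the statement by a finite case check of the homology of both ends would establish abstract isomorphy but not the chain of quasi-isomorphisms natural in $y$, and that naturality is the whole point of the corollary (it is what the proof of Theorem \ref{bettithm} consumes); so the argument cannot be repaired that way and the resolution step above is genuinely needed.
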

\begin{proof}
We split up into the cases $u=0$ and
  $u>0$. First the case $u=0$.
  The computation in Example \ref{firstnakone} gives
  a quasi-isomorphism
  \begin{displaymath}
    (T\Na^{k+1}_tK_t\{t,t\})_y \simeq (\Na^{k}_tK_t\{0,t\})_y.
  \end{displaymath}
  Lemma \ref{nathelper2} implies that the latter chain complex is quasi-isomorphic to
  \begin{displaymath}
    (\Na^k_t K_t\{0,t-y\})_{0} = (F \otimes_S \Na^k_t K_t\{0,t-y\})_0. 
  \end{displaymath}

  Next we treat the case $u>0$.
  By exactness of the Nakayama functor
  and the computation in Example \ref{firstnakone}
  we obtain quasi-isomorphisms
  \begin{eqnarray*}
    (T\Na^{k+1}_tK_t\{t-u,t-u\})_y &\simeq& (T \Na_t^{k+1} (F \otimes_S
    K_t\{t-u,t\}))_y \\
    &\simeq& (\Na_t^k(K_t\{0,t-u+1\} \xrightarrow p  K_t\{0,t-u\}))_y.
  \end{eqnarray*}
  By exactness of the Nakayama functor again the latter of these chain
  complexes is isomorphic to the mapping cone of the chain complex
  \begin{displaymath}
    (\Na_t^kK_t\{0,t-u+1\})_y \xrightarrow {(\Na_t^k(p))_y} (\Na_t^kK_t\{0,t-u\})_y.
  \end{displaymath}
Starting from the left side of the diagram of 
Corollary \ref{natswapuy} 
we see that this map (notation is a bit switced, so $y^\prime = u$) 
is quasi-isomorphic to the mapping cone of the chain complex
  to the chain complex
  \begin{displaymath}
(\Na^k_t K_t\{0,t-y\})_{u-1}
    \xrightarrow x (\Na^k_t K_t\{0,t-y\})_{u}  = 
    (F \otimes_S \Na^k_t (K_t\{0,t-y\})_u. 
  \end{displaymath}
  The naturality statement follows from the fact that
  the $K$-vector spaces
  $T\Na^{k+1}_{t} K_{t}\{t-u,t-u\}_y$ have homology concentrated in the
  same homological degree for all $y$. 
\end{proof}
\subsection{General intervals}
\label{sec:genint}

We shall now transfer the results for one dimensional intervals to
general intervals. This is very easy because if $0 \le \mbr \le \mbt$ in
$\N^n$ then there is an isomorphism 
  \begin{displaymath}
    \Na^{\mbk}_{\mbt} K_{\mbt}\{ 0,\mbr\} \cong \bigotimes_{j=1}^n \Na^{k_j}_{t_j}K_{t_j}\{0,r_j\}
  \end{displaymath}
where the outer tensor product is over $K$, making it a module over
$S = \bigotimes_{j=1}^n \kr[x_j]$. 

With the notation $\mycx^{\mbk}_{\mbt}(\mbr) = \bigotimes_{j=1}^n 
\mycx^{k_j}_{t_j}(r_j)$ there are obvious
$\Z^n$-graded versions of all the results in Section
\ref{sec:onena}. 
In particular Proposition \ref{nakcoh} is the $\Z^n$-graded version of
Corollary \ref{intervaluv}.

\begin{proof}[Proof of Proposition \ref{trans}.]
First assume $C$ is a module $M$. Choose a resolution $M \to E$ of $M$ where each $E_i$ is a direct
  sum of $\Z^n$-graded $S$-modules of the form $K_{t}\{0,\mbr\}$. Such a
  resolution clearly exists, it is an injective resolution of $M$; see e.g. \cite{Miller2000} for further
  details. By the $\Z^n$-graded version of Lemma \ref{nathelper1}, for
  each $i$, we have a quasi-isomorphism 
  $\Na^{(t_j+2)\varepsilon_j}_{\mbt}E_i \xrightarrow {\simeq} T^{-2}E_i$, and therefore by
  naturality we obtain a quasi-isomorphism $\Na^{(t_j+2)\varepsilon_j}_{\mbt} E
  \xrightarrow \simeq T^{-2}E$. Since the Nakayama functor
  is exact, we are done.

Now assume $C$ is any chain complex. It is then a colimit of bounded
below chain complexes. Since the Nakayama functor commutes with 
colimits, is exact, and colimits is exact on chain complexes, we reduce
to the case when $C$ is bounded below. A bounded below complex 
is a colimit of truncations $\tau_{\leq p} C$ where 
\[ (\tau_{\leq p} C)_i = \begin{cases} 0  & i > p \\
   \ker (C_p \rightarrow C_{p-1}) & i = p \\
    C_i & i < p
\   \end{cases}
\]
Hence we may reduce to the case when $C$ is bounded, and again by exactness
of the Nakayama functor, we reduce to the case when $C$ is a module $M$.
\end{proof}

\subsection{Connecting homomorphisms}
\label{sec:boundhom}

Let $C' \xrightarrow g C$ be a morphism
of chain complexes of $S$-modules. The
{\em mapping cone} of $g$ is the chain complex $C(g)$ with $C(g)_i =
C'_{i-1} \oplus C_i$ and with differential sending $(x,y)$ to $(-dx,gx+dy)$. 
If $\iota$ denotes the inclusion $\iota \colon C
\to C(g)$ with $\iota(y) = (0,y)$, there
is a short exact sequence
$  C \xrightarrow \iota C(g) \xrightarrow \delta TC'$
where $\delta(x,y) = x$. We call $\delta$ the {\em connecting
  homomorphism} associated to $g$.

Let $M$ and $N$ be $S$-modules.
Considering $M$ and $N$  as chain complexes concentrated in degree
zero there is for every pair of  integers $\gamma$ and $\rho$ an isomorphism 
\begin{eqnarray*}
 \varphi
\colon (T^{\gamma} M) \otimes_S 
C(g) \otimes_S (T^{\rho} N) &\to& C((T^{\gamma}M)\otimes g \otimes
(T^{\rho} N)) \\
  m \otimes (x,y) \otimes n & \mapsto & 
((-1)^{\gamma}m \otimes x \otimes n ,m \otimes y \otimes n) 
\end{eqnarray*}
of chain complexes. Note that according to the Koszul Sign Convention
or the conventions of \cite{We}, p.8 and 9, the differentials of
the left and right complexes are given respectively by :
\begin{eqnarray*}
m  \te (x,y) \te n & \mapsto &  (-1)^{\gamma} m \te (-dx, gx + dy) \te n \\
(m \te x \te n, m \te y \te n) & \mapsto & 
((-1)^{\gamma} m \te dx \te n, m \te gx \te n + (-1)^{\gamma} m \te dy \te n)
\end{eqnarray*}

We then have a commutative diagram :
\begin{displaymath}
  \begin{CD}
    (T^{\gamma}M)
    \otimes_S C(g) \otimes_S (T^{\rho} N) 
    @>{\varphi}>>
    C((T^{\gamma}M) \otimes g \otimes (T^{\rho} N)) \\
    @V{(T^{\gamma}M) \otimes
      \delta \otimes (T^{\rho} N) }VV 
    @V{\delta}VV
    \\
 (T^{\gamma}M)
    \otimes_S T C' \otimes_S (T^{\rho} N) 
     @>{(-1)^{\gamma}}>>
    T((T^{\gamma}M)
    \otimes_S C' \otimes_S (T^{\rho} N)). 
  \end{CD}
\end{displaymath}

\medskip
Now consider the inclusion
\[ K_t\{k,t \} \mto{i} K_t \{ 0, t \}. \]
Note that the domain of $i$ is $N_t^k(t-k)$ and that $N_t^k(t-k+1)$
is the mapping cone of this morphism. Furthermore the map 
$N_t^k(k-1,k)$ is the connecting map $\delta$ corresponding to the 
morphism $i$. 

For arbitrary $n$, the map $N_\mbt^{\mbk}(\mby - \epsilon_i, \mby)$ 
where $y_j = k_j$ may be identified as
\[ \left( \bigotimes_{i<j} \mycx_{t_i}^{k_i}(t_i-y_i) \right)
  \otimes \delta \otimes
  \left( \bigotimes_{i>j} \mycx_{t_i}^{k_i}(t_i-y_i) \right)  \]
which is then $(-1)^{\sum_{i=1}^{j-1} \gamma_i}$
times the connecting map of the injection
\begin{eqnarray*}  
& &  \left( \bigotimes_{i<j} \mycx_{t_i}^{k_i}(t_i-y_i) \right)
  \otimes K_t\{ k_j, t_j \} \otimes
  \left( \bigotimes_{i>j} \mycx_{t_i}^{k_i}(t_i-y_i) \right) \\
& \rightarrow & 
\left( \bigotimes_{i<j} \mycx_{t_i}^{k_i}(t_i-y_i) \right)
\otimes K_t\{ 0, t_j \} \otimes
\left( \bigotimes_{i>j} \mycx_{t_i}^{k_i}(t_i-y_i) \right).
\end{eqnarray*}

\section{Homological algebra over posets}
\label{sec:hoalposets}


In this section we give some relations between reduced 
cohomology groups of order complexes of various subposets of a poset $P$
and Ext-groups of $KP$-modules. 

\subsection{Modules over posets}
\label{sec:postemod}

Let $P$ be a partially ordered set. 
A $KP$-module is a functor from $P$, considered as a category with a
morphism $p \to p'$ if and 
only if $p \le p'$, to the category of $K$-vector spaces. Given two
$KP$-modules $M$ and $N$ we denote the $K$-vector space of $K$-linear
natural transformations from $M$ to $N$ by $\Hom_{KP}(M,N)$.
If $A$ is a (commutative) $K$-algebra we may also consider $AP$-modules, that
is, functors from
$P$ to the category of $A$-modules. If $M$ is a $KP$-modules, and $N$ an $AP$-module, 
then $\Hom_{KP}(M,N)$ is naturally an $A$-module. 

On occasion we allow $M$ and $N$ to be chain complexes of $K$-vector spaces. Then
$\Hom_{KP}(M,N)$ also becomes a chain complex, by taking the total complex of a
double complex.

\begin{defn} \label{DefKP}
  Let $X$ be a convex subset of a poset $P$ i.e. with the property that 
if $x,y \in X$ then all elements $z$ with $x \le z \le y$ are in 
$X$. We define the $KP$-module $K\{X\}$ by letting $K\{X\}(p)$ be $K$ if $p \in X$
and letting it be $0$ otherwise. If $p \le p'$ are both in $X$, then the structure homomorphism
$K\{X\}(p) \to K\{X\}(p')$ is the identity on $K$. Otherwise this
structure homomorphism is zero.
\end{defn}
\begin{remark} When $P$ is finite
  the category of $KP$-modules is isomorphic to the category of (left)
  modules over the incidence algebra $I(P)$ of $P$ over $K$. Given an
  order preserving map, that is, functor $f \colon P \to Q$ of
  partially ordered sets 
  there is an induced functor $f^*$ from the category of $KQ$-modules
  to the category of $KP$-modules given by precomposition with
  $f$. However $f$ does not in general induce a ring homomorphism from
  $I(P)$ to $I(Q)$. (Not even when $P = \{0 < 1\}$ and $Q
  = \{0\}$.) This is the main reason why we prefer to work with
  $KP$-modules instead of $I(P)$-modules.
\end{remark}
A
subset $J$ of $P$ is an {\em order ideal} if $x \le y$ in $P$ and $y
\in J$ implies $x \in J$. Dually an {\em order filter} in $P$ is a subset $F$
such that $y \le x$ in $P$ and $y
\in F$ implies $x \in F$.

\begin{lem}
\label{uniquemaxinj}
  If an order ideal $J$ in a poset $P$ has a unique maximal element $x$,
  then $K\{J\}$ is an injective $KP$-module.
\end{lem}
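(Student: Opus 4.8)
If an order ideal $J$ in a poset $P$ has a unique maximal element $x$, then $K\{J\}$ is an injective $KP$-module.

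**Plan of proof.** The plan is to show that $K\{J\}$ is a direct summand of an evidently injective $KP$-module, or better, that it is itself of the shape $KP(x,-)$ twisted appropriately. First I would recall what the injective $KP$-modules look like: for a finite poset $P$, the injective envelope of the simple module $S_x$ at a point $x$ is the functor $I_x$ with $I_x(p) = K$ when $p \le x$ and $0$ otherwise, with identity structure maps on the nonzero part; equivalently $I_x = \Hom_K(KP(-,x)^{\op}\text{-stuff},K)$, i.e. the representable-in-$P^{\op}$ functor. Concretely $I_x(p) = K$ iff $p \le x$. The key observation is then that if $J$ has a unique maximal element $x$, every $p \in J$ satisfies $p \le x$ (since $p$ lies below some maximal element of $J$, and $x$ is the only one), and conversely every $p \le x$ lies in $J$ because $J$ is an order ideal and $x \in J$. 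Hence $J = \{p : p \le x\}$ as sets, and comparing structure maps, $K\{J\} \cong I_x$. Since $I_x$ is injective, we are done.

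The cleanest way to carry this out in a self-contained manner is to verify the injectivity of $I_x$ directly via the adjunction rather than citing it. I would note that $I_x \cong \Hom_K(R_x, K)$ where $R_x$ is the projective $KP^{\op}$-module corepresented by $x$, i.e. $R_x(p) = KP(p,x)$ (which is $K$ if $p \le x$, zero otherwise). Then for any $KP$-module $M$ one has a natural isomorphism
\[
  \Hom_{KP}(M, I_x) \;\cong\; \Hom_K\big(M(x), K\big),
\]
because a natural transformation $M \to \Hom_K(R_x,K)$ is, by the tensor-hom adjunction over $K$ together with the (co)Yoneda lemma applied at $x$, the same as a $K$-linear map out of $M(x)$. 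Since $M \mapsto M(x)$ (evaluation at $x$) is exact and $\Hom_K(-,K)$ is exact, the composite $\Hom_{KP}(-,I_x)$ is exact, so $I_x$ is injective. Combined with the identification $K\{J\} \cong I_x$ from the previous paragraph, this proves the lemma.

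The main obstacle, such as it is, is purely bookkeeping: making sure the identification $K\{J\} \cong I_x$ is an isomorphism of $KP$-modules and not merely a pointwise isomorphism, i.e. checking that the structure maps match. This is immediate once one observes that both modules have all structure maps between nonzero vector spaces equal to $\id_K$ and all other structure maps zero, which holds for $I_x$ by definition and for $K\{J\}$ by Definition \ref{DefKP} (using that $\{p : p\le x\}$ is convex). I should also remark briefly on the finiteness hypothesis: the argument via the adjunction above works without assuming $P$ finite, but if one prefers to invoke the standard classification of injective modules over the incidence algebra $I(P)$ then $P$ finite is the relevant setting; either way no difficulty arises for the posets $[0,\mbt]$ used in this paper.
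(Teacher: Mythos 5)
Your proposal is correct and is essentially the paper's own proof: the paper's entire argument is the natural isomorphism $\Hom_{KP}(N,K\{J\}) \cong \Hom_K(N(x),K)$, $f \mapsto f(x)$, which is exactly your adjunction computation once $K\{J\}$ is identified with the functor supported on $\{p : p \le x\}$. The only caveat --- shared implicitly by the paper --- is that your step ``every $p\in J$ lies below some maximal element'' needs $J$ finite (or a chain condition); this holds in all the applications, where the relevant order ideals are principal.
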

\begin{proof}
  This is immediate from the isomorphism
  \begin{displaymath}
    \Hom_{KP}(N,K\{J\}) \cong \Hom_K(N(x),K), \qquad f \mapsto f(x).
  \end{displaymath}
\end{proof}
Recall that the {\em order complex} of a partially
ordered set $P$ is the simplicial complex $\Delta(P)$ with the
underlying set of $P$ as vertex set, and a subset $F$ of $P$ is in $\Delta(P)$ if and
only if it is a chain, that is, a totally ordered subset of $P$. 
\begin{prop}
\label{homiscoh}
  Let $P$ be a poset. If $E$ is a projective resolution of the
  $KP$-module $K\{P\}$ then there is an isomorphism
  \begin{displaymath}
    H^{*} \Hom_{KP}(E,K\{P\}) \cong H^* (\Delta(P)).
  \end{displaymath}
\end{prop}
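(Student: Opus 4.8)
The plan is to compute both sides by choosing a convenient projective resolution of $K\{P\}$ and identifying the resulting cochain complex with the simplicial cochain complex of $\Delta(P)$. Since any two projective resolutions are chain homotopy equivalent and $\Hom_{KP}(-,K\{P\})$ is additive, the cohomology $H^*\Hom_{KP}(E,K\{P\})$ is independent of the choice of $E$; so it suffices to exhibit one resolution for which the identification is transparent. I would first reduce to the case that $P$ is finite, or at least work formally with the standard bar-type resolution.

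First I would introduce, for each chain $\sigma = (p_0 < p_1 < \dots < p_m)$ in $P$, the projective $KP$-module $KP(p_m,-)$ (the representable functor with $KP(p_m,-)(q) = K$ if $p_m \le q$ and $0$ otherwise — this is exactly $K\{X\}$ for the order filter $X = \{q : q \ge p_m\}$, and it is projective since it represents evaluation at $p_m$). Set $E_m = \bigoplus_{\sigma} KP(\max\sigma,-)$, the sum over chains of length $m$, with the usual simplicial (alternating face) differential $d = \sum_i (-1)^i d_i$, where $d_i$ deletes $p_i$; the face maps $d_i$ for $i < m$ are the identity on the representable summand, and $d_m$ is the structure map $KP(p_m,-) \to KP(p_{m-1},-)$ induced by $p_{m-1} \le p_m$. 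One checks $d^2 = 0$ in the standard way, and that the augmentation $E_0 = \bigoplus_{p} KP(p,-) \to K\{P\}$ gives a resolution: this is the "nerve" or "order complex" resolution of $K\{P\}$, and exactness can be verified pointwise — evaluating at a fixed $q \in P$ turns $E_\bullet(q)$ into the simplicial chain complex of the order complex of the subposet $P_{\le q} = \{p : p \le q\}$, which is contractible (it has a maximum $q$), hence acyclic in positive degrees with $H_0 = K$.

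Next I would compute $\Hom_{KP}(E_m, K\{P\})$. By Yoneda, $\Hom_{KP}(KP(p,-), N) \cong N(p)$ naturally; with $N = K\{P\}$ this is $K$ for every $p$. Hence $\Hom_{KP}(E_m, K\{P\}) \cong \prod_{\sigma : |\sigma| = m} K$, the group of simplicial $m$-cochains of $\Delta(P)$ with coefficients in $K$, and under this identification the differential becomes precisely the simplicial coboundary (the contravariant face maps: the identity face maps dualize to identities, and the structure map $KP(p_m,-) \to KP(p_{m-1},-)$ dualizes, under $\Hom(-,K\{P\}) \cong \text{ev}$, to the identity $K \to K$ since both $p_{m-1}, p_m$ lie in $P$). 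Therefore $H^*\Hom_{KP}(E,K\{P\})$ is the simplicial cohomology of $\Delta(P)$; strictly this is the unreduced complex indexed by nonempty chains, but $E_{-1} = KP$-module generated by the empty chain does not appear, so one should be slightly careful about whether the augmentation term $\widetilde H$ vs $H$ conventions match — the statement uses $H^*(\Delta(P))$ (unreduced), consistent with $E_\bullet$ starting in degree $0$ with chains of one element.

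The main obstacle I anticipate is the bookkeeping of signs and the precise dictionary between the simplicial face/coface maps and the chain-complex differential on $E_\bullet$, together with verifying that $\Hom_{KP}(KP(p,-), K\{P\})$-naturality intertwines the two differentials correctly — this is routine but must be done carefully. A secondary point is handling the infinite case (if $P$ is not assumed finite, $E_m$ is an infinite direct sum and $\Hom_{KP}(E_m,-)$ becomes a product, so one should note the comparison still works degreewise and cohomology is computed as claimed). Everything else — projectivity of representables, pointwise acyclicity via the cone point — is immediate.
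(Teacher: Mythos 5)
Your proposal is correct and follows essentially the same route as the paper: the same resolution $E_m=\bigoplus_{\sigma} KP(\max\sigma,-)$ indexed by chains with the alternating face differential, the same Yoneda identification $\Hom_{KP}(KP(p,-),K\{P\})\cong K$ turning the Hom-complex into the simplicial cochain complex of $\Delta(P)$. The only cosmetic difference is that you verify exactness pointwise via contractibility of $\Delta(P_{\le q})$ (cone on $q$), whereas the paper writes down the corresponding explicit contracting homotopy (append $q$ to a chain when $p_k<q$); these are the same argument.
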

\begin{proof}
Note first of all that the cohomology of $\Hom_{KP}(E,K\{P\})$ is
independent of the choice of $E$. Also for $p$ in $P$, the projective
cover of $K\{p\}$ is the module $E_{p}$ with $E_{p}(q) = K$
for $q \geq p$ and zero otherwise, and for $p \leq q \leq q^\prime$
the morpshism $E_{p}(q) \rightarrow E_{p}(q^\prime)$ is the identity.

For each $k$-chain in the poset $P$
\[c \, : \, p_0 < p_1 < \cdots < p_k\] 
we get a projective $E_c = E_{p_k}$. 
Let $E_k$ be the projective which is the direct sum of the $E_c$ where
$c$ ranges over the $k$-chains. Then $E_k(q)$ has a basis consisisting
of the all $(k+1)$-chains $p_0 < \cdots < p_k \leq q$. The $E_k$ give a projective
resolution of $K\{P \}$ with differential
\[ (p_0 < p_1 < \cdots < p_k \leq q) \mapsto
\sum_{i = 0}^ k (-1)^i (p_0 < \cdots \hat{p_i} \cdots < p_k \leq q). \] 
This complex augmented with $K\{P\}$ 
is exact as $K$-vector spaces, since the map sending
a $(k+1)$-chain $c$ to the $(k+2)$-chain with $q$ repeated if $p_k < q$
and to zero if $p_k = q$, gives a homotopy of the augmentation.
Since $\Hom_{KP}(E_c,K\{P\})$ may be identified with the one-dimensional
vector space with basis the dual of the chain $c$, 
we see that $\Hom_{KP}(E_k, K\{P\})$
is the vector space with basis the dual of  all $(k+1)$-chains, 
and so $\Hom_{KP}(E,K\{P\})$
is the cochain complex of the order complex $\Delta(P)$.

\end{proof}
\begin{lem}
\label{flasquelem}
Let $J$ be an order ideal in $\N^n$
and let $R$ denote the functor
from $J^{\op}$ to $\Z^n$-graded $S$-modules with 
\begin{displaymath}
  R(\mbu) = K\{0,\mbu\} := S/\mm^{\mbu+\one}.
\end{displaymath}
For every projective resolution $E \to K\{J^{\op}\}$ of the
$J^{\op}$-module $K\{J^{\op}\}$
the homomorphism 
$\Hom_{KJ^{\op}}(K\{J^{\op}\},R) \to \Hom_{KJ^{\op}}(E,R)$ is a 
quasi-isomorphism of chain complexes of $\Z^n$-graded $S$-modules.   
\end{lem}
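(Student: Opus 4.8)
The plan is to reduce the statement to a degreewise computation in $\Z^n$ and there invoke Lemma \ref{uniquemaxinj}. Since the comparison map $\Hom_{KJ^{\op}}(K\{J^{\op}\},R)\to\Hom_{KJ^{\op}}(E,R)$ is a morphism of complexes of $\Z^n$-graded $S$-modules, and cohomology of $\Z^n$-graded complexes is computed degreewise, it suffices to fix $\mbr\in\Z^n$ and show that $\Hom_{KJ^{\op}}(K\{J^{\op}\},R_{\mbr})\to\Hom_{KJ^{\op}}(E,R_{\mbr})$ is a quasi-isomorphism of complexes of $K$-vector spaces, where $R_{\mbr}$ is the $KJ^{\op}$-module $\mbu\mapsto R(\mbu)_{\mbr}=(S/\mm^{\mbu+\one})_{\mbr}$. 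First I would record the elementary identification $\Hom_{KJ^{\op}}(M,R)_{\mbr}=\Hom_{KJ^{\op}}(M,R_{\mbr})$ for a $KJ^{\op}$-module $M$ of $K$-vector spaces, and then compute $R_{\mbr}$ explicitly: if some coordinate of $\mbr$ is negative then $R_{\mbr}=0$ and there is nothing to prove, while for $\mbr\ge\mathbf 0$ one has $(S/\mm^{\mbu+\one})_{\mbr}=K$ exactly when $\mbr\le\mbu$ and $0$ otherwise, with structure maps equal to $\mathrm{id}_K$ whenever source and target are nonzero. Thus $R_{\mbr}=K\{F_{\mbr}\}$ for the order filter $F_{\mbr}=\{\mbu\in J:\mbr\le\mbu\}$ of $J$, equivalently the order ideal $F_{\mbr}$ of $J^{\op}$, which is convex and so makes $K\{F_{\mbr}\}$ a legitimate $KJ^{\op}$-module in the sense of Definition \ref{DefKP}.

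The key point --- and the only place the hypothesis that $J$ is an order ideal of $\N^n$ enters --- is that $R_{\mbr}$ is an injective $KJ^{\op}$-module. Indeed, if $F_{\mbr}$ is empty this is trivial, and otherwise one chooses any $\mbu\in J$ with $\mbr\le\mbu$; since $J$ is downward closed in $\N^n$ and $\mathbf 0\le\mbr\le\mbu$, we get $\mbr\in J$, so $\mbr\in F_{\mbr}$, and by construction $\mbr$ is $\le$ every element of $F_{\mbr}$ in $\N^n$, hence $\ge$ every element of $F_{\mbr}$ in $J^{\op}$. So $F_{\mbr}$ is an order ideal of $J^{\op}$ with a unique maximal element, and Lemma \ref{uniquemaxinj} yields that $K\{F_{\mbr}\}=R_{\mbr}$ is injective as a $KJ^{\op}$-module.

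From here the conclusion is formal: $\Hom_{KJ^{\op}}(-,R_{\mbr})$ is an exact functor, so applying it to the exact augmented complex $\cdots\to E_1\to E_0\to K\{J^{\op}\}\to 0$ produces an exact cochain complex, which is exactly the assertion that $\Hom_{KJ^{\op}}(K\{J^{\op}\},R_{\mbr})\to\Hom_{KJ^{\op}}(E,R_{\mbr})$ is a quasi-isomorphism; reassembling over all $\mbr$ finishes the proof. I do not anticipate a genuine obstacle here, as the argument is short once the degreewise reduction is in place. The only points needing a little care are the bookkeeping that $\Hom_{KJ^{\op}}(-,R)$ decomposes degreewise as claimed (straightforward from the definition of the grading, but worth stating since $J$ may be infinite), the verification that the structure maps of $R_{\mbr}$ really are identities so that $R_{\mbr}=K\{F_{\mbr}\}$ on the nose, and the observation that we may use an arbitrary projective resolution $E$ since the comparison map only measures whether $R_{\mbr}$ is acyclic for $\Hom_{KJ^{\op}}(-,-)$.
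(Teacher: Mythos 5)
Your proof is correct and follows essentially the same route as the paper: reduce to a fixed multidegree $\mbr$, identify the $KJ^{\op}$-module $\mbu\mapsto R(\mbu)_{\mbr}$ with $K\{((\mbr+\N^n)\cap J)^{\op}\}$, and conclude via Lemma \ref{uniquemaxinj}. You have simply spelled out the details (that the filter is nonempty only when $\mbr\in J$, and that $\mbr$ is then the unique maximal element in $J^{\op}$) that the paper leaves implicit.
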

\begin{proof}
  It suffices to show that the above homomorphism is a quasi-isomorphism 
  in every degree $\mbr$. Note that the $KJ^{\op}$-module $\mbu
  \mapsto R(\mbu)_{\mbr}$ is equal to the $KJ^{\op}$-module
  $K\{((\mbr+\N^n) \cap J)^{\op}\}$. 
  Thus we have to show that the 
  homomorphism
  \begin{displaymath}
    \Hom_{KJ^{\op}}(K\{J^{\op}\},K\{((\mbr+\N^n) \cap J)^{\op}\}) \to
    \Hom_{KJ^{\op}}(E,K\{((\mbr+\N^n) \cap J)^{\op}\}) 
  \end{displaymath}
  is a quasi-isomorphism of chain complexes of $K$-vector spaces.
  This a consequence of Lemma \ref{uniquemaxinj}.
\end{proof}

\begin{remark}
If $F$ is an order filter in a poset $P$, then $\Hom_{KP}(K\{F\}, N)$ is the 
(inverse) limit $\varprojlim_{p \in F} N(p)$. 
The order ideal $J$ above corresponds to the monomial ideal $I$ containing the monomials
$x^{\mba}$ such that $\mba$ is not in $J$. Then $\Hom_{KJ^{\op}}(K\{J^{\op}\}, R)$ is
simply equal to $S/I$. 
\end{remark}

\subsection{Posets and cohomology of simplicial complexes}
\label{sec:possimp}


We shall need the following standard fact relating partially ordered
sets and the geometric realization of their order complexes.

\begin{prop}
If $P$ and $Q$ are partially ordered sets and $\varphi \colon P \to
Q$ and $\psi \colon Q \to P$ are order preserving maps satisfying 
$\psi(\varphi(x)) \le x$ and $\varphi(\psi(y)) \le y$, then the induced maps
$|\Delta(\varphi)|$ and $|\Delta(\psi)|$ of geometric realizations of
order complexes are inverse 
homotopy equivalences.  
\end{prop}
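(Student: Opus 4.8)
The plan is to reduce the statement to the following poset homotopy lemma: \emph{if $f,g\colon P\to Q$ are order preserving maps with $f(x)\le g(x)$ for all $x\in P$, then $|\Delta(f)|$ and $|\Delta(g)|$ are homotopic}. Granting this, one applies it twice. The hypothesis $\psi(\varphi(x))\le x$ says that $\psi\circ\varphi\le \id_P$ as order preserving self-maps of $P$, so $|\Delta(\psi)|\circ|\Delta(\varphi)| = |\Delta(\psi\circ\varphi)|$ is homotopic to $|\Delta(\id_P)| = \id_{|\Delta(P)|}$; symmetrically $\varphi(\psi(y))\le y$ gives $|\Delta(\varphi)|\circ|\Delta(\psi)|\simeq \id_{|\Delta(Q)|}$. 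Hence $|\Delta(\varphi)|$ and $|\Delta(\psi)|$ are inverse homotopy equivalences, which is the assertion.

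To prove the lemma, let $[1]$ denote the two element chain $\{0<1\}$ and consider the map $h\colon P\times[1]\to Q$ with $h(x,0)=f(x)$ and $h(x,1)=g(x)$. It is order preserving: if $(x,\epsilon)\le(x',\epsilon')$ the only nontrivial case is $\epsilon=0$, $\epsilon'=1$, where $h(x,0)=f(x)\le f(x')\le g(x')=h(x',1)$, using $x\le x'$ and $f\le g$. Next one identifies the order complex of $P\times[1]$ with the standard prism triangulation of $\Delta(P)\times[0,1]$: a chain of $P\times[1]$ projecting onto a chain $x_0<\dots<x_k$ of $P$ is exactly one in which the second coordinate increases from $0$ to $1$ once along the chain, and such chains are precisely the simplices of the usual triangulation of the prism over the simplex on $\{x_0,\dots,x_k\}$. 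This yields a homeomorphism $|\Delta(P\times[1])|\cong |\Delta(P)|\times[0,1]$ which restricts to the identity of $|\Delta(P)|$ on each of $|\Delta(P)|\times\{0\}$ and $|\Delta(P)|\times\{1\}$, these being the images of the inclusions $x\mapsto(x,0)$ and $x\mapsto(x,1)$. Composing this homeomorphism with $|\Delta(h)|$ gives a homotopy $|\Delta(P)|\times[0,1]\to|\Delta(Q)|$ from $|\Delta(f)|$ to $|\Delta(g)|$, proving the lemma.

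The only genuinely technical point is the identification of $|\Delta(P\times[1])|$ with $|\Delta(P)|\times[0,1]$ compatibly with the two endpoint inclusions; this is the standard prism subdivision, and while it requires some care with simplicial coordinates it involves no idea beyond bookkeeping. Alternatively one may phrase the entire argument categorically: $|\Delta(P)|$ is the classifying space of the poset $P$ viewed as a small category, an inequality $f\le g$ of functors is the same datum as a natural transformation $f\Rightarrow g$, and a natural transformation induces a homotopy between the induced maps of classifying spaces; this again gives the lemma and hence the proposition.
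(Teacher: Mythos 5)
Your proof is correct and is essentially the paper's own argument: the paper also constructs the homotopy from the order preserving map $h\colon\{0<1\}\times P\to P$ with $h(0,x)=\psi(\varphi(x))$, $h(1,x)=x$, together with the homeomorphism $|\Delta(\{0<1\}\times P)|\cong|\Delta(\{0<1\})|\times|\Delta(P)|$. You merely isolate the underlying lemma ($f\le g$ implies $|\Delta(f)|\simeq|\Delta(g)|$) and spell out the prism identification that the paper leaves implicit.
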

\begin{proof}
  Consider the order preserving map $h \colon \{0<1\} \times P
  \to P$ with $h(0,x) = \psi(\varphi(x))$ and $h(1,x) = x$ and use the
  homeomorphism 
  $$|\Delta(\{0<1\} \times P)| \cong 
  |\Delta(\{0<1\})| \times |\Delta(P)|.$$
\end{proof}


Given an order ideal $J$ in $\N^n$ and elements $\mba$ and $\mbb$ in $\N^n$. 
Define the simplicial complex $\Delta_\mba^\mbb(J)$ to consist of the subsets 
$F$ of $\{1, 2, \ldots, n \}$ such that the degree 
\[\sum_{i \notin F} a_i
\varepsilon_i + \sum _{i \in F} (b_{i} +1) \varepsilon_i \]
is in $J$.

\begin{lem}
\label{heqlemma}
  Given $\mba \le \mbb$ in $\N^n$, the geometric realization of the simplicial complex
  $\Delta_{\mba}^{\mbb}(J)$ is homotopy equivalent to the
  geometric realization of the order
  complex of the poset $J 
  \cap (\mba + (\N^n \setminus [0,\mbb-\mba]))$.
\end{lem}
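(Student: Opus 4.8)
The plan is to reduce the statement to one about order complexes of posets so that the preceding proposition applies. First I would replace $|\Delta_\mba^\mbb(J)|$ by the realization of the order complex of its face poset: writing $\Sigma = \Delta_\mba^\mbb(J)$ and letting $\mathcal F$ denote the poset of nonempty faces of $\Sigma$ ordered by inclusion, the order complex $\Delta(\mathcal F)$ is the barycentric subdivision of $\Sigma$, so $|\Delta(\mathcal F)|$ is canonically homeomorphic to $|\Sigma|$. Put $Q = J \cap (\mba + (\N^n \setminus [0,\mbb-\mba]))$, which is the set of $\mbx \in J$ with $\mbx \ge \mba$ and $x_i \ge b_i + 1$ for at least one $i$. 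It then suffices to construct order-preserving maps $\varphi \colon \mathcal F \to Q$ and $\psi \colon Q \to \mathcal F$ fitting the hypotheses of the preceding proposition, whence $|\Delta(\mathcal F)|$ and $|\Delta(Q)|$ are homotopy equivalent.

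For $\varphi$ I would send a face $F$ to the degree $\mbx_F := \sum_{i \notin F} a_i \varepsilon_i + \sum_{i \in F}(b_i + 1)\varepsilon_i$. Since $\mbx_F \ge \mba$ and $(\mbx_F)_i = b_i + 1 > b_i$ for $i \in F \ne \emptyset$, while $F \in \Sigma$ says exactly that $\mbx_F \in J$, the element $\mbx_F$ indeed lies in $Q$; and $\varphi$ is manifestly monotone. For $\psi$ I would send $\mbx \in Q$ to the nonempty set $\psi(\mbx) = \{ i : x_i \ge b_i + 1 \}$. Here one checks $\mbx_{\psi(\mbx)} \le \mbx$ coordinatewise, using $\mbx \ge \mba$ for the coordinates with $x_i \le b_i$, so that $\mbx_{\psi(\mbx)} \in J$ because $J$ is an order ideal and $\mbx \in J$; this is the one place the order-ideal hypothesis is essential, and it is the main (rather mild) point of the argument. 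Again $\psi$ is monotone.

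Finally I would verify the two composites: $\psi(\varphi(F)) = \{ i : (\mbx_F)_i \ge b_i + 1 \} = F$ since $a_i \le b_i < b_i + 1$, so $\psi\varphi = \mathrm{id}_{\mathcal F}$, and $\varphi(\psi(\mbx)) = \mbx_{\psi(\mbx)} \le \mbx$ by the previous paragraph. The preceding proposition then yields $|\Delta(\mathcal F)| \simeq |\Delta(Q)|$, and composing with $|\Sigma| \cong |\Delta(\mathcal F)|$ completes the proof. I would also note that the degenerate cases require no separate treatment: if $\mba \notin J$ then $\Sigma$ is the void complex and $Q = \emptyset$ (any element of $Q$ would be $\ge \mba$ and in $J$, forcing $\mba \in J$), and if $\Sigma$ has no vertices then $\mathcal F = \emptyset = Q$ by the same argument applied to $\psi$, so both realizations are empty. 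I do not expect any genuine obstacle here; apart from the well-definedness of $\psi$ the checks are routine bookkeeping with the partial order on $\N^n$.
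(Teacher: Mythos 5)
Your argument is correct and is essentially the paper's own proof: both identify $|\Delta_\mba^\mbb(J)|$ with the realization of the order complex of its face poset (barycentric subdivision), realize that face poset inside $J$ via $F \mapsto \mbx_F$, and use the same coordinatewise retraction $\psi$ together with the preceding proposition on order-preserving maps satisfying $\psi\varphi \le \mathrm{id}$ and $\varphi\psi \le \mathrm{id}$. Your explicit check that $\mbx_{\psi(\mbx)} \in J$ via the order-ideal property, and your remark on the degenerate case $\mba \notin J$, are details the paper leaves implicit.
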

\begin{proof}
  Firstly the order poset of $\Delta_{\mba}^{\mbb}(J)$ is
  isomorphic to the source of the inclusion 
  \begin{displaymath}
    \varphi \colon J \cap (\mba + \prod_{i=1}^n \{0,b_i-a_i+1\}) \setminus \{\mba\}
    \xrightarrow \subseteq J 
  \cap (\mba + (\N^n \setminus [0,\mbb-\mba])).
  \end{displaymath}
  It suffices to show that there is an order
  preserving map $\psi$ in the opposite direction such that $\psi
  (\varphi (\mbx)) = \mbx$ and 
  $\varphi (\psi(\mby)) \le \mby$. In order to see this we note that the order
  preserving map 
  \begin{displaymath}
    \psi \colon \mba + (\N^n \setminus [0,\mbb -\mba]) \to 
    (\mba + \prod_{i=1}^n \{0,b_i-a_i+1\} )\setminus \{\mba\}
  \end{displaymath}
  with
  \begin{displaymath}
   \psi(\mby)_i =
   \begin{cases}
     a_i & \text{if $a_i \le y_i \le b_i$} \\
     b_i +1 & \text{if $b_i +1 \le y_i$}
   \end{cases}
  \end{displaymath}
is such a map.
\end{proof}

\begin{prop}
\label{modscx}
\label{natrem2}
  Given an order ideal $J$ in $\N^n$ and $\mba \le \mbb$ in $\N^n$, 
and let $J_\mba^\mbb = (J \cap [\mba,\mbb])^{\op}$. 

a. For every 
projective resolution $E$ of the $KJ^{\op}$-module
$K\{J^{\op}\}$ there is an isomorphism of the form
  \begin{displaymath}
    H^{i + 1} \Hom_{KJ^{\op}}(E,K\{J_\mba^\mbb\}) \cong \widetilde H^{i}(\Delta_{\mba}^{\mbb}(J)).
  \end{displaymath}

b. If $\mba' \le \mbb'$ in $\N^n$ satisfy $\mba \le \mba'$ and $\mbb \le \mbb'$, then 
$\Delta_{\mba}^{\mbb'}(J)$ and
$\Delta_{\mba'}^{\mbb}(J)$ are subcomplexes of
$\Delta_{\mba}^{\mbb}(J)$, there is an injection 
$K\{J_\mba^{\mbb}\} \to K\{J_\mba^{\mbb'}\}$ and a projection 
$K\{J_{\mba}^{\mbb}\} \to K\{J_{\mba'}^\mbb\}$ of
$KJ^{\op}$-modules, 
and the diagrams
\begin{displaymath}
  \begin{CD}
        H^{i + 1} \Hom_{KJ^{\op}}(E,K\{J_\mba^\mbb\}) @>\cong>> \widetilde
        H^{i}(\Delta_{\mba}^{\mbb}(J)) \\
        @VVV @VVV \\
        H^{i + 1} \Hom_{KJ^{\op}}(E,K\{J_\mba^{\mbb'}\}) @>\cong>> \widetilde
        H^{i}(\Delta_{\mba}^{\mbb'}(J)) 
  \end{CD}
\end{displaymath}
and
\begin{displaymath}
  \begin{CD}
        H^{i + 1} \Hom_{KJ^{\op}}(E,K\{J_\mba^\mbb\}) @>\cong>> \widetilde
        H^{i}(\Delta_{\mba}^{\mbb}(J)) \\
        @VVV @VVV \\
        H^{i + 1} \Hom_{KJ^{\op}}(E,K\{J_{\mba'}^{\mbb}\}) @>\cong>> \widetilde
        H^{i}(\Delta_{\mba'}^{\mbb}(J)) 
  \end{CD}
\end{displaymath}
commute.
\end{prop}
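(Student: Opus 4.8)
The plan is to reduce everything to a direct computation of $\Hom_{KJ^{\op}}(E,K\{J_{\mba}^{\mbb}\})$ using the explicit projective resolution $E$ constructed in the proof of Proposition \ref{homiscoh}: there $E_k$ is the direct sum of the representable projectives $E_{p_k}$ over all chains $p_0<\dots<p_k$ in $J^{\op}$, and $\Hom_{KJ^{\op}}(E_{p},N)\cong N(p)$ by Yoneda, so for any $KJ^{\op}$-module $N$ the complex $\Hom_{KJ^{\op}}(E,N)$ has $k$-th term $\bigoplus_{c}N(p_k)$ with the simplicial coboundary. First I would set $Q_{\mba}=J\cap(\mba+\N^n)$, an order filter of $J$ and hence an order ideal of $J^{\op}$; set $B=J\cap[\mba,\mbb]$, which is the support of $K\{J_{\mba}^{\mbb}\}$ (and has unique $\le_{J^{\op}}$-maximal element $\mba$); and set $G_{\mba}^{\mbb}=Q_{\mba}\setminus B=J\cap(\mba+(\N^n\setminus[0,\mbb-\mba]))$. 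A short check shows $B$ is a down-set of $Q_{\mba}$ and $G_{\mba}^{\mbb}$ the complementary up-set, so the summand $K\{J_{\mba}^{\mbb}\}(p_k)$ is nonzero exactly when $p_k\in B$, equivalently (reading $c$ in the $J$-order) when the minimum of $c$ lies in $B$, equivalently when $c$, viewed as a simplex of $\Delta(Q_{\mba})$, is not a simplex of $\Delta(G_{\mba}^{\mbb})$. Hence $\Hom_{KJ^{\op}}(E,K\{J_{\mba}^{\mbb}\})$ is precisely the relative cochain complex $C^{\bullet}(\Delta(Q_{\mba}),\Delta(G_{\mba}^{\mbb}))$, so that $H^{i+1}\Hom_{KJ^{\op}}(E,K\{J_{\mba}^{\mbb}\})\cong H^{i+1}(\Delta(Q_{\mba}),\Delta(G_{\mba}^{\mbb}))$.

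Next I would observe that $\Delta(Q_{\mba})$ is contractible. If $\mba\notin J$ then $Q_{\mba}=\emptyset$, $K\{J_{\mba}^{\mbb}\}=0$, and $\Delta_{\mba}^{\mbb}(J)$ is the void complex, so both sides of (a) vanish; otherwise $\mba$ is the least element of $Q_{\mba}$, and comparing $Q_{\mba}$ with the one-point poset $\{\mba\}$ via the constant map and the inclusion, the standard fact that order-preserving maps $\varphi,\psi$ with $\psi\varphi\le\id$ and $\varphi\psi\le\id$ induce inverse homotopy equivalences of order complexes gives $|\Delta(Q_{\mba})|\simeq *$. Feeding this into the long exact cohomology sequence of the pair $(\Delta(Q_{\mba}),\Delta(G_{\mba}^{\mbb}))$, and using that $H^{\ge 1}(\Delta(Q_{\mba}))=0$ while $K=H^0(\Delta(Q_{\mba}))$ maps isomorphically onto the constants in $H^0(\Delta(G_{\mba}^{\mbb}))$, the connecting homomorphism furnishes an isomorphism $H^{i+1}(\Delta(Q_{\mba}),\Delta(G_{\mba}^{\mbb}))\cong\widetilde H^{i}(\Delta(G_{\mba}^{\mbb}))$ for every $i$. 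Finally Lemma \ref{heqlemma} identifies $\widetilde H^{i}(\Delta(G_{\mba}^{\mbb}))$ with $\widetilde H^{i}(\Delta_{\mba}^{\mbb}(J))$. Composing the three isomorphisms proves (a); independence of the choice of $E$ is the usual uniqueness of $\Ext$.

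For (b), the simplicial inclusions $\Delta_{\mba}^{\mbb'}(J),\Delta_{\mba'}^{\mbb}(J)\subseteq\Delta_{\mba}^{\mbb}(J)$ are immediate, since raising either index only raises the degree $\sum_{i\notin F}a_i\varepsilon_i+\sum_{i\in F}(b_i+1)\varepsilon_i$ attached to a face $F$ and $J$ is an order ideal. Because $J\cap[\mba,\mbb]$ is $\le_{J^{\op}}$-upward closed inside $J\cap[\mba,\mbb']$, and $J\cap[\mba',\mbb]$ is $\le_{J^{\op}}$-downward closed inside $J\cap[\mba,\mbb]$, one gets from Definition \ref{DefKP} the asserted injection $K\{J_{\mba}^{\mbb}\}\hookrightarrow K\{J_{\mba}^{\mbb'}\}$ and projection $K\{J_{\mba}^{\mbb}\}\twoheadrightarrow K\{J_{\mba'}^{\mbb}\}$ of $KJ^{\op}$-modules. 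Under the identification of the first paragraph these become the maps on relative cochain complexes induced by the pair inclusions $(\Delta(Q_{\mba}),\Delta(G_{\mba}^{\mbb'}))\hookrightarrow(\Delta(Q_{\mba}),\Delta(G_{\mba}^{\mbb}))$ and $(\Delta(Q_{\mba'}),\Delta(G_{\mba'}^{\mbb}))\hookrightarrow(\Delta(Q_{\mba}),\Delta(G_{\mba}^{\mbb}))$ respectively — note that $G_{\mba}^{\mbb}$ shrinks as $\mbb$ grows, and that $Q_{\mba}$ and $G_{\mba}^{\mbb}$ both shrink as $\mba$ grows. By naturality of the long exact sequence, both commutative diagrams then reduce to checking that the homotopy equivalences of Lemma \ref{heqlemma} fit into homotopy-commutative squares over the simplicial inclusions $\Delta_{\mba}^{\mbb'}(J)\hookrightarrow\Delta_{\mba}^{\mbb}(J)$, resp.\ $\Delta_{\mba'}^{\mbb}(J)\hookrightarrow\Delta_{\mba}^{\mbb}(J)$.

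I expect this last compatibility to be the main obstacle. From the proof of Lemma \ref{heqlemma}, the equivalence $|\Delta_{\mba}^{\mbb}(J)|\simeq|\Delta(G_{\mba}^{\mbb})|$ is induced by the inclusion $\varphi$ of the face poset $R_{\mba}^{\mbb}=J\cap(\mba+\prod_i\{0,b_i-a_i+1\})\setminus\{\mba\}$ of $\Delta_{\mba}^{\mbb}(J)$ into $G_{\mba}^{\mbb}$, a nonempty face $F$ being sent to the degree $\sum_{i\notin F}a_i\varepsilon_i+\sum_{i\in F}(b_i+1)\varepsilon_i$. Passing from $\mbb$ to $\mbb'$, the simplicial inclusion is, on face posets, the order embedding $R_{\mba}^{\mbb'}\to R_{\mba}^{\mbb}$ given by $F\mapsto F$, and the two composites $R_{\mba}^{\mbb'}\to G_{\mba}^{\mbb}$ obtained by going through $R_{\mba}^{\mbb}$ and through $G_{\mba}^{\mbb'}$ differ only by the pointwise inequality $\le$ (coming from $b_i\le b_i'$). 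Since pointwise comparable order-preserving maps induce homotopic maps of order complexes — by the cylinder argument $\{0<1\}\times R_{\mba}^{\mbb'}\to G_{\mba}^{\mbb}$ used for the order-preserving-map fact above — the square commutes up to homotopy; applying $\widetilde H^{*}$ gives the commutative square of cohomology groups, and the variation in $\mba$ is handled in exactly the same way. Assembling these pieces proves (b).
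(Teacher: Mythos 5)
Your proof is correct and is essentially the paper's own argument in topological dress: your relative cochain complex $C^{\bullet}(\Delta(Q_{\mba}),\Delta(G_{\mba}^{\mbb}))$ and its long exact sequence are exactly what the paper's short exact sequence $K\{F\}\to K\{P\}\to K\{A\}$ of $KJ^{\op}$-modules becomes after applying $\Hom_{KJ^{\op}}(E,-)$ to the explicit chain resolution from Proposition \ref{homiscoh}, with your contractibility of $\Delta(Q_{\mba})$ (cone on the minimum $\mba$) playing the role of the paper's injectivity of $K\{P\}$ from Lemma \ref{uniquemaxinj}, and both arguments concluding via the connecting homomorphism and Lemma \ref{heqlemma}. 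The only substantive difference is that you actually carry out the naturality checks for part (b) — the compatibility of the module injection/projection with the inclusions of pairs and the homotopy-commutativity of the squares built from Lemma \ref{heqlemma} via pointwise-comparable order-preserving maps — which the paper explicitly leaves to the reader.
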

\begin{proof} a. 
  If $\mba \notin J$, then $\Delta_{\mba}^{\mbb}(J)$ is the empty simplicial complex
  $\emptyset$ and both sides are zero. 
  Otherwise we note that the partially ordered set $P = (J \cap (\mba
  + \N^n))^{\op}$ is the disjoint union of the order filter
  $F = (J \cap ([\mba,\mbb]))^{\op}$ and the order ideal $A = (J \cap (\mba + (\N^n
  \setminus [0,\mbb-\mba])))^{\op}$. 
  From the short exact sequence $K\{F\} \to K\{P\} \to K\{A\}$ we deduce that
  there is a short exact sequence
  \begin{displaymath}
    \Hom_{KP}(E|_P,K\{F\}) \to \Hom_{KP}(E|_P,K\{P\}) \to \Hom_{KP}(E|_P,K\{A\}).
  \end{displaymath}
  Since $P$ has a unique maximal element we can use Lemma
  \ref{uniquemaxinj} to see that the cohomology of the middle term is
  a copy of $K$ concentrated in cohomological degree zero. 
  On the other hand, applying Proposition \ref{homiscoh} we obtain 
  isomorphisms
  \begin{displaymath}
    H^{i} \Hom_{KP}(E|_P,K\{A\}) \cong H^i \Hom_{KA}(E|_A,K\{A\}) \cong H^{i}(\Delta(A)).
  \end{displaymath}
  Thus there is an isomorphism
  \begin{displaymath}
     \widetilde H^i(\Delta(A)) \xrightarrow \cong  H^{i+1} \Hom_{KP}(E|_P,K\{F\})
  \end{displaymath}
  for all $i$.
  Since $P$ is an order ideal in $J^{\op}$ there is an isomorphism 
  \begin{displaymath}
    H^{i+1}\Hom_{KP} (E|_P,K\{F\}) \cong H^{i+1} {\Hom_{KJ^{\op}} (E,K\{F\})}.
  \end{displaymath}
  Moreover there are
  isomorphisms 
  \begin{displaymath}
    \widetilde H^i(\Delta(A)) \cong
    \widetilde H^i(\Delta(A^{\op})) \cong
    \widetilde H^i(\Delta(J \cap (\mba+(\N^n \setminus [0,\mbb-\mba])))). 
  \end{displaymath}
  The result now follows by applying Lemma \ref{heqlemma}. We leave
  the proof of the naturality statements in b. for the reader.
\end{proof}

\section{Independence of $\mbt$}
\label{sec:stabt}

For a monomial ideal $I$ there will be many $\mbt$ such that $I$ is
positively $\mbt$-determined. This raises the question as to how our calculations
depend on $\mbt$. In particular, what structural properties of the
complexes $\Na_{\mbt}^{\mbk}(S/I)$ do not depend on $\mbt$ and
hence give intrinsic properties of the monomial ideal? In this section
we consider such properties as vanishing of cohomology modules of these
complexes and also linearity conditions on these complexes.


\subsection{How complexes vary when $\mbt$ vary.}
Since $\Na^{\one}_{\mbt}(M)$ essentially computes the local cohomology modules
of $M$, it will have one non-vanishing cohomology module if and only if $M$ is a 
Cohen-Macaulay module over $S$. Hence this is a property of the complex which
is independent of which $\mbt$ we chose. 

We now present a different  stability result for the Nakayama functor. 
Given $k$ with $0 \le k \le t$ and $r \ge 0$ we
let $q_k^r$ denote the order preserving endomorphism of $\N$ defined
by the formula
\begin{displaymath}
  q_k^r(i) =
  \begin{cases}
    i & \text{if $0 \le i \le k$} \\
    k & \text{if $k \le i \le k+r$} \\
    i-r & \text{if $k+r \le i$}.
  \end{cases}
\end{displaymath}
With this notation it is readily seen that for  $0 \le k \le t$ and
$0 \le y \le t$ that 
\begin{displaymath}
  \mycx_{t+r}^{k+1+r}(t-y) = (q_{k}^r)^* \mycx_t^{k+1}(t-y).
\end{displaymath}
By Lemma \ref{nathelper1} we can conclude that if $n=1$ and $I$ is a
$t$-determined 
ideal in $S = K[x]$, then the chain complexes $\Na^{k+1+r}_{t+r}(S/I)$
and $(q_k^r)^*\Na^{k+1}_t(S/I)$ are quasi-isomorphic.

Let us pass to the situation where $n > 1$. For $\mbk,\mbr \in \N^n$
with ${\mathbf 0}\le \mbk \le \mbt $ we define $q_{\mbk}^{\mbr} \colon
\N^n \to \N^n$ by letting $q_{\mbk}^{\mbr}(\mbx) =
(q_{k_1}^{r_1}(x_1),\dots,q_{k_n}^{r_n}(x_n))$.
\begin{prop}
\label{realstability}
  Let $M$ be a positively $\mbt$-determined $S$-module and let $\mbk,\mbr \in \N^n$
  with ${\mathbf 0} \le \mbk \le \mbt$. There is a chain of
  quasi-isomorphisms between the chain complexes
  $\Na_{\mbt+\mbr}^{\mbk+\one + \mbr}(M)$ and $(q_{\mbk}^{\mbr})^* 
\Na_{\mbt}^{\mbk+ \one}(M)$.
\end{prop}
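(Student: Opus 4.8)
The plan is to reduce to the one-variable statement already established just before the proposition and then tensor. First I would recall the $n=1$ fact: by Lemma \ref{nathelper1} together with the identity $\mycx_{t+r}^{k+1+r}(t-y) = (q_{k}^r)^* \mycx_t^{k+1}(t-y)$, for a $t$-determined ideal the complexes $\Na^{k+1+r}_{t+r}(S/I)$ and $(q_k^r)^*\Na^{k+1}_t(S/I)$ are quasi-isomorphic. I would first upgrade this from quotient rings $S/I$ to arbitrary positively $t$-determined modules $M$: choose an injective resolution $M \to E$ where each $E_i$ is a direct sum of interval modules $K_t\{0,t-y\}$ (such resolutions exist, as used in the proof of Proposition \ref{trans}, citing \cite{Miller2000}). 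The one-variable Lemma \ref{nathelper1} gives, naturally in $y$, a quasi-isomorphism between $\Na^{k+1}_t$ and $\mycx_t^{k+1}$ on each interval module, and hence on $E$; applying $(q_k^r)^*$, which is exact by Lemma \ref{truncateexact}, and using exactness of the Nakayama functor (Remark \ref{RemDefineNak2}), one obtains a chain of quasi-isomorphisms between $\Na^{k+1+r}_{t+r}(M)$ and $(q_k^r)^*\Na^{k+1}_t(M)$ for any positively $t$-determined $M$, in the case $n=1$.

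Next I would pass to $n > 1$ by the tensor-product decompositions. The key inputs are: the factorization $\Na^{\mbk+\one}_{\mbt} = (\Na^{(k_1+1)\varepsilon_1}_{t_1\varepsilon_1})\circ\cdots\circ(\Na^{(k_n+1)\varepsilon_n}_{t_n\varepsilon_n})$ from Definition \ref{DefineNak2}; the fact that $\Na^{(k_j+1)\varepsilon_j}_{t_j\varepsilon_j}$ operates only in the $j$-th graded direction and is computed by $\HOM$ against $P^{\varepsilon_j}_{(t_j+1)\varepsilon_j}$, so it commutes with the functors in the other directions; and that $q_{\mbk}^{\mbr}$ and $p_{\mbt+\mbr}$ factor coordinatewise, so $(q_{\mbk}^{\mbr})^*$ is a composite of the single-variable maps $(q_{k_j}^{r_j})^*$ acting in distinct directions. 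Concretely, I would apply the $n=1$ result one coordinate at a time: at stage $j$ the module obtained so far is positively $\mbt$-determined (truncation keeps this), and applying the $j$-th one-variable quasi-isomorphism transforms $\Na^{(k_j+1)\varepsilon_j}_{t_j\varepsilon_j}$ into $(q_{k_j}^{r_j})^*\Na^{(k_j+1)\varepsilon_j}_{t_j\varepsilon_j}$ up to quasi-isomorphism, naturally enough that it commutes with the already-applied and yet-to-be-applied functors in the other coordinates. Composing all $n$ stages and collecting the truncation operators $p$ at the end yields the desired chain of quasi-isomorphisms between $\Na_{\mbt+\mbr}^{\mbk+\one+\mbr}(M)$ and $(q_{\mbk}^{\mbr})^*\Na_{\mbt}^{\mbk+\one}(M)$.

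The main obstacle I anticipate is bookkeeping the commutations carefully: one must check that the one-variable quasi-isomorphism in the $j$-th direction is natural with respect to the $\HOM_S(P^{\varepsilon_i}_{(t_i+1)\varepsilon_i},-)$ and $(q_{k_i}^{r_i})^*$ operators for $i \ne j$, so that it genuinely extends to a quasi-isomorphism of the iterated composite, and that the truncations $p_{t_i}$ in different directions commute past each other and past the $(q_{k_i}^{r_i})^*$ (which they do, since all these operations are coordinatewise). Exactness of both the Nakayama functor and of $(q_{\mbk}^{\mbr})^*$ (Lemma \ref{truncateexact}) is what lets each intermediate quasi-isomorphism propagate through the remaining functors. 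Once the $n=1$ case is in hand for arbitrary positively $t$-determined modules, the general case is essentially formal, so I do not expect a serious new difficulty beyond this indexing.
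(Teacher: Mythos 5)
Your one-variable step is sound and is essentially what the paper does when it extends from $S/I$ to general modules: resolve by the indecomposable injectives $K_t\{0,t-y\}$, apply Lemma \ref{nathelper1} termwise (the differentials in such a resolution are matrices of scalar multiples of canonical projections, and $f^k_t$ is natural exactly for those), and use $\mycx_{t+r}^{k+1+r}(t-y)=(q_k^r)^*\mycx_t^{k+1}(t-y)$. The gap is in your passage to $n>1$. The one-variable statement you have just proved for a \emph{general} $t$-determined module is a zigzag of quasi-isomorphisms passing through a chosen injective resolution; it is not a natural transformation between the functors $\Na^{k+1+r}_{t+r}$ and $(q_k^r)^*\Na^{k+1}_t$. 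It therefore cannot be ``applied in the $j$-th direction'' to the intermediate complex produced by the other coordinates' Nakayama functors: that intermediate object is a general complex whose slices in direction $j$ are linked by the $x_i$-multiplications for $i\neq j$, and transporting the quasi-isomorphism across those linking maps requires precisely the functoriality that a resolution-dependent construction does not supply. This is not the bookkeeping issue you flag about commuting the various $\HOM_S(P^{\varepsilon_i}_{(t_i+1)\varepsilon_i},-)$ and $(q^{r_i}_{k_i})^*$ past one another (those commutations are indeed formal); it is that the morphism you want to insert slotwise into the composite does not exist as a morphism of functors.

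The repair is to resolve only once, at the level of all $n$ variables: take an injective resolution $M\to E$ with each $E_i$ a direct sum of modules $K_\mbt\{0,\mba\}\cong\bigotimes_j K_{t_j}\{0,a_j\}$. For these, the tensor-product ($\Z^n$-graded) version of Lemma \ref{nathelper1} gives quasi-isomorphisms $\Na^{\mbk+\one+\mbr}_{\mbt+\mbr}K_{\mbt+\mbr}\{0,\mba\}\to\mycx^{\mbk+\one+\mbr}_{\mbt+\mbr}(\mba)$ and $\Na^{\mbk+\one}_{\mbt}K_{\mbt}\{0,\mba\}\to\mycx^{\mbk+\one}_{\mbt}(\mba)$, natural in the canonical projections, and the coordinatewise identity $\mycx^{\mbk+\one+\mbr}_{\mbt+\mbr}=(q_\mbk^\mbr)^*\mycx^{\mbk+\one}_\mbt$ then handles all coordinates simultaneously; no induction over coordinates is needed. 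This is in substance the paper's proof: it first treats $M=S/I$ by writing $S/I\simeq\Hom_{KJ^{\op}}(E,R)$ over the order ideal $J$ of standard monomials and commuting $\Na$ past $\Hom_{KJ^{\op}}(E,-)$ (machinery which also produces the explicit model $\Hom_{KJ^{\op}}(E,\mycx^{\mbk+\one}_\mbt)$ reused in the proofs of the main theorems), and then passes to arbitrary $M$ by exactly the injective-resolution argument above, noting that the $K_\mbt\{0,\mba\}$ are the indecomposable injectives and are themselves of the form $S/I$.
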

\begin{proof}
  First let us consider the case where $M = S/I$ for a
  positively $\mbt$-determined ideal $I$ in $S$.
  Let $J$ denote the
  order ideal in $\N^n$ consisting of those $\mba \in [0,\mbt]$ with
  $(S/I)_{\mba} =
  K$. 
  Let $R$ denote the functor from $J^{\op}$ to the category of
  positively $\mbt$-determined $S$-modules with 
  $R(\mbw) = K_{\mbt}\{0,\mbw\}$. 
  Clearly, as an $S$-module
  \begin{displaymath}
    S/I \cong \lim_{\mbw \in J^{\op}} R(\mbw) = \Hom_{KJ^{\op}} (K\{J\},R).
  \end{displaymath}
  We have seen in Lemma \ref{flasquelem} that 
  for $E \to K\{J\}$ a projective resolution of the $KJ^{\op}$-module $K\{J\}$
  there is a quasi-isomorphism
  \begin{displaymath}
    S/I \cong \Hom_{KJ^{\op}} (K\{J\},R) \xrightarrow \simeq \Hom_{KJ^{\op}} (E,R).
  \end{displaymath}
  Since the Nakayama functor is exact
  it commutes
  with $\Hom_{KJ^{\op}}(E,-)$ in the sense that
  \begin{eqnarray*}
    \Na^{\mbk + \one}_{\mbt}(\Hom_{KJ^{\op}}(E,R)) \cong
    \Hom_{KJ^{\op}}(E,\Na^{\mbk + \one}_{\mbt} \circ R). 
  \end{eqnarray*}
  One way to see this is by noting that
  $\Hom_{KJ^{\op}}(E,R)$ is the 
  kernel of the homomorphism
  \begin{displaymath}
    \prod_{\mbx \in J} \Hom_K(E(\mbx),R(\mbx)) \to
    \prod_{(\mbx \ge \mby)} \Hom_K(E(\mbx),R(\mby)),
  \end{displaymath}
  taking the collection of maps $(f(\mbx))_{\mbx \in J}$ to the collection of maps
$(g(\mbx \ge \mby))_{(\mbx \ge
    \mby)}$, where $g(\mbx \ge \mby) \colon E(\mbx) \to R(\mby)$ is the
  map 
  $$g(\mbx \ge \mby) = R(\mbx \ge \mby) \circ f(\mbx) - f(\mby) \circ
  E(\mbx \ge \mby).$$ 
Since the Nakayama functor commutes with arbitrary direct products, this
shows our claim.
  By Lemma \ref{nathelper1} there is a quasi-isomorphism
  \begin{displaymath}
    \Na^{\mbk + \one}_{\mbt} R(\mbw) = \Na^{\mbk + \one}_{\mbt}
    K\{0,\mbw\} \xrightarrow \simeq
    \mycx_{\mbt}^{\mbk+ \one}(\mbw).
  \end{displaymath}
  Thus we have provided a quasi-isomorphism
  \begin{displaymath}
    \Na_{\mbt}^{\mbk+\one}(S/I) \xrightarrow {\simeq} 
\Hom_{KJ^{\op}}(E, \mycx_{\mbt}^{\mbk+\one}). 
  \end{displaymath}
(Note that $\mycx_{\mbt}^{\mbk+\one}$ is a functor from $J^{\op}$ to the category 
of chain complexes of $S$-modules.) 
Similarly let $J^\prime$ 
be the $\mba \in [\mathbf 0, \mbt + \mbr]$ such that
$(S/I)_{\mba} = K$. With $E^\prime$ the analog of $E$,
there is a quasi-isomorphism
  \begin{displaymath}
    \Na_{\mbt+\mbr}^{\mbk+\mbr+\one}(S/I) \xrightarrow {\simeq} 
\Hom_{KJ^{\prime \op}}(E^\prime, \mycx_{\mbt+\mbr+\one}^{\mbk+\mbr + \one}). 
  \end{displaymath}
  The result now follows from the fact that
  $\mycx_{\mbt+\mbr}^{\mbk+\one +\mbr}= (q_{\mbk}^{\mbr})^*
  \mycx_{\mbt}^{\mbk+\one}$ for $\mathbf 0 \le \mbk \le \mbt$,
  and that the structure maps in the above
  limits correspond to each other under this identification.

In the category of positively $\mbt$-determined $S$-modules, the
indecomposable injectives are the $K_\mbt \{0 , \mba\}$
where $\mba \in [0, \mbt]$. Note that these are of the form $S/I$.
Any module in this category has a finite injective resolution. Since
the Nakayama functor is exact and by Remark \ref{RemDefineNak2} it commutes
with colimits and hence with direct sums, we get our result for any $M$.

\end{proof}

The above proposition may be stated in a different way, perhaps making the independence 
of  $\mbt$ more transparent. Recall from Proposition \ref{trans} 
that $\Na_{\mbt}^{\mbt + \mathbf 2}$ 
is naturally quasi-isomorphic to the translation $T^{-2n}$. Since the Nakayama
functors give auto-equivalences on derived categories, the following is natural.
\begin{defn} For a positively $\mbt$-determined $S$-module $M$ and 
$\mathbf 0 \le \mbk \le \mbt + \mathbf 2$ let
\[ \Na_{\mbt}^{-\mbk}(M) = T^{2n} \Na_{\mbt}^{\mbt + \mathbf 2 -\mbk}(M). \]
\end{defn}

\noindent The following is then a reformulation of the previous proposition.

\begin{prop}
  Let $M$ be a finitely generated positively $\mbt$-determined $S$-module and let 
$\mathbf 0 \le \mbk \le \mbt \le \mbt^\prime$ be multidegrees in $\N^n$.
There is a chain of
quasi-isomorphisms between the chain complexes
$\Na_{\mbt^\prime}^{-\mbk-\one}(M)$ and 
$(q_{\mbt - \mbk}^{\mbt^\prime - \mbt})^* 
\Na_{\mbt}^{- \mbk - \one}(M)$.
\end{prop}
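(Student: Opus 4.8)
The plan is to derive this directly from Proposition~\ref{realstability} together with the definition of $\Na_\mbt^{-\mbk}$; the only substance is bookkeeping of exponents and the observation that the homological shift $T^{2n}$ commutes with the regrading functors $q^*$. Put $\mbr = \mbt' - \mbt$ and $\mbk' = \mbt - \mbk$, both of which lie in $\N^n$ because $\mathbf 0 \le \mbk \le \mbt \le \mbt'$; moreover $\mathbf 0 \le \mbk' \le \mbt$, so the hypotheses of Proposition~\ref{realstability} are met with $\mbk'$ in place of $\mbk$. Note the identity $\mbt' - \mbk = \mbk' + \mbr$.

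First I would unwind the definition of $\Na^{-(-)}$ at the exponent $\mbk + \one$. Since $\mathbf 0 \le \mbk + \one \le \mbt + \mathbf 2$ and $\mathbf 0 \le \mbk + \one \le \mbt' + \mathbf 2$, the definition applies over both $\mbt$ and $\mbt'$; using the arithmetic $\mbt + \mathbf 2 - (\mbk + \one) = \mbk' + \one$ and $\mbt' + \mathbf 2 - (\mbk + \one) = \mbk' + \mbr + \one$ we get
\[
  \Na_\mbt^{-\mbk-\one}(M) = T^{2n}\,\Na_\mbt^{\mbk'+\one}(M),
  \qquad
  \Na_{\mbt'}^{-\mbk-\one}(M) = T^{2n}\,\Na_{\mbt+\mbr}^{\mbk'+\mbr+\one}(M).
\]
Now Proposition~\ref{realstability}, applied to $M$ with $\mbk'$ and $\mbr$, provides a chain of quasi-isomorphisms between $\Na_{\mbt+\mbr}^{\mbk'+\one+\mbr}(M)$ and $(q_{\mbk'}^{\mbr})^*\Na_\mbt^{\mbk'+\one}(M)$. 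Applying the (trivially exact) functor $T^{2n}$ to this chain, and using that $T^{2n}$ commutes with $(q_{\mbk'}^{\mbr})^*$ — since $q^*$ affects only the internal $\Z^n$-grading, leaving homological degrees, and hence the sign $(-1)^{2n}=1$ on differentials, untouched — yields a chain of quasi-isomorphisms between $T^{2n}\Na_{\mbt+\mbr}^{\mbk'+\mbr+\one}(M)$ and $(q_{\mbk'}^{\mbr})^*T^{2n}\Na_\mbt^{\mbk'+\one}(M)$. By the two displayed identities, and since $\mbk' = \mbt - \mbk$, $\mbr = \mbt' - \mbt$, this is exactly a chain of quasi-isomorphisms between $\Na_{\mbt'}^{-\mbk-\one}(M)$ and $(q_{\mbt-\mbk}^{\mbt'-\mbt})^*\Na_\mbt^{-\mbk-\one}(M)$.

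I do not expect a genuine obstacle: all the content is already in Proposition~\ref{realstability}, and everything else is formal. The two points one should write out carefully are (i) that applying $(q_{\mbk'}^{\mbr})^*$ and $T^{2n}$ to a chain of quasi-isomorphisms again produces a chain of quasi-isomorphisms — for $T^{2n}$ this is immediate, and for $q^*$ it follows from Lemma~\ref{truncateexact}, since $q^*$ commutes with the formation of mapping cones and carries acyclic complexes to acyclic complexes — and (ii) the elementary exponent arithmetic displayed above, together with the verification that $\mbk+\one$ is in the admissible range $[\mathbf 0,\mbt+\mathbf 2]$ and $[\mathbf 0,\mbt'+\mathbf 2]$ required for the definition of $\Na^{-(-)}$. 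With these in hand the proposition follows.
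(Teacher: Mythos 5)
Your argument is correct and is exactly the substitution the paper has in mind: the paper states this proposition as a direct "reformulation" of Proposition~\ref{realstability} via the definition $\Na_{\mbt}^{-\mbk}(M)=T^{2n}\Na_{\mbt}^{\mbt+\mathbf 2-\mbk}(M)$, with no further proof given, and your exponent bookkeeping ($\mbk'=\mbt-\mbk$, $\mbr=\mbt'-\mbt$) together with the exactness of $q^*$ (Lemma~\ref{truncateexact}) and the commutation of $T^{2n}$ with $q^*$ fills in precisely the omitted details.
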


In particular for a monomial ideal $I$, the vanishing of a cohomology group
of $\Na_{\mbt}^{- \mbk-\one}(S/I)$ is independent of which $\mbt$ is chosen such
that $I$ is positively $\mbt$-determined, and is thus an intrinsic property of the monomial ideal.

\begin{remark} In particular we would like to draw attention to the case when $\mbk = \one$.
Then the vanishing of a cohomology group of $\Na_{\mbt}^{-2}(S/I)$ is independent
of $\mbt$ for any $\mbt \geq \one$ such that $S/I$ is positively $\mbt$-determined. In the case
when $S/I$ is square free, i.e. $\one$-determined (it is then a Stanly-Reisner ring), the condition that 
$\Na_{\mbt}^{-2}(S/I)$ has only one non-vanishing cohomology group is equivalent
to $S/I$ being Cohen-Macaulay. Thus this condition may provide a good(?) generalization to
positively $\mbt$-determined monomial quotient rings, of 
the concept of a Stanley-Reisner ring being Cohen-Macaulay.
\end{remark}


\subsection{Linearity}
\label{sec:lini}


The celebrated result of Eagon and Reiner stating that a simplicial complex
is Cohen-Macaulay if and only if the square free monomial ideal
of its Alexander dual complex has linear resolution, was generalized by Miller
\cite[Thm.4.20]{Miller2000}.
Let us recall his result. For a degree $\mba$ in $\N^n$ let the support of $\mba$,
$\text{supp}(\mba)$,
be the subset of $\{1,\ldots, n\}$ consisting of those $i$ for which $a_i > 0$.
A $\N^n$-graded module $M$ is said to have {\it support-linear} 
resolution if there is an integer $d$ such that degree of every minimal generator
of $M$ has support of cardinality $d$, and $d \geq |\text{supp}(\mbb)| - i$
for every multidegree $\mbb$ and homological index $i$ for which the Betti space
$B_{i, \mbb}$ is nonzero. Miller shows the following.

\begin{thm} Let $M$ be a positively $\mbt$-determined module. The Alexander dual 
$A_{\mbt}(M)$ has
support-linear resolution if and only if $M$ is Cohen-Macaulay, i.e. 
$\Na^{\one}_{\mbt}(M)$ has only one non-vanishing cohomology module.
\end{thm}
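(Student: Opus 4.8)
## Proof proposal

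The plan is to reduce Miller's theorem to a translation of what the Nakayama functor computes, using the identification $\Na^{\one}_{\mbt} \cong A_{\mbt} \circ D_{\mbt}$ from Corollary \ref{AusCorAD} together with the description of Betti spaces in Theorem \ref{bettithm} and the cohomology calculation in Theorem \ref{thecalc1}. The key point is that both ``$A_{\mbt}(M)$ has support-linear resolution'' and ``$M$ is Cohen--Macaulay'' can be rephrased purely in terms of vanishing of the same family of reduced cohomology groups $\widetilde H^*(\Delta^{\mbb}_{\mba}(S/I;\mbt))$ — which is exactly the family that appears, with indices shifted in complementary ways, in Theorems \ref{thecalc1} and \ref{bettithm}.

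First I would set $M = S/I$ for a positively $\mbt$-determined ideal $I$ (the general positively $\mbt$-determined case follows since, by Proposition \ref{realstability}'s proof, such modules have finite injective resolutions by modules of the form $K_{\mbt}\{0,\mba\} = S/I_{\mba}$, the Nakayama functor and Betti spaces behave well under the resulting exact sequences). I would unwind ``Cohen--Macaulay'' via the remark in Section \ref{sec:dufu} and Proposition \ref{DualProEkvdu}: $M$ is Cohen--Macaulay exactly when $H^i\Na^{\one}_{\mbt}(M)$ is concentrated in a single cohomological degree, and by Theorem \ref{thecalc1} this is governed by when $\widetilde H^{i-\gamma-1}(\Delta^{\mbv^{\one}_{\mbt}(\mbr)}_{\mbu^{\one}_{\mbt}(\mbr)})$ is nonzero, as $\mbr$ and $i$ range. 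Then I would unwind ``support-linear resolution of $A_{\mbt}(M)$'': since $A_{\mbt}(S/I) = S/I^{[\mbt]}$, the minimal generators of the ideal $A_{\mbt}(I)$ sit in degrees with a common support size $d$ (this is where the Alexander-dual combinatorics of the example after \ref{sec:alexdu} enters), and the linearity bound $d \ge |\supp(\mbb)| - i$ on the Betti numbers $B_{i,\mbb}(A_{\mbt}(I))$ translates, via Theorem \ref{bettithm} applied to $A_{\mbt}$ (equivalently, via Alexander duality swapping $A_{\mbt}$-cohomology of $M$ with $D_{\mbt}$-Betti spaces), into the vanishing of precisely the complementary range of the same simplicial cohomology groups.

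The heart of the argument is then the bookkeeping lemma matching the two index ranges: one shows that the explicit formulas for $\gamma^k_t, u^k_t, v^k_t$ in Definition \ref{gammauv}, together with the involution $\tau(\mba) = \mbt - \mba$ underlying Alexander duality, send the ``Cohen--Macaulay concentration'' constraints on $H^*\Na^{\one}_{\mbt}(M)$ bijectively onto the ``support-linearity'' constraints on $B^*(A_{\mbt}(M))$. Concretely, I expect to use the duality between Theorems \ref{thecalc1} and \ref{bettithm} (they have, as the introduction stresses, the same form with $\Delta^{\mbv}_{\mbu}$ versus $\Delta^{\mbb}_{\mba}$), plus the fact that for $\mbk = \one$ one has $\mbu^{\one}_{\mbt}(\mbr)$ and $\mbv^{\one}_{\mbt}(\mbr)$ in the simple closed form appearing in the Gr\"abe discussion, to see that a given $\widetilde H^j(\Delta^{\mbb}_{\mba})$ controls a cohomology module of $\Na^{\one}_{\mbt}(M)$ in degree $n - (\text{something})$ and simultaneously a Betti space of $A_{\mbt}(M)$ in the homological degree forced by the support-linearity inequality.

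The main obstacle will be precisely this index-matching step: one must verify carefully that ``all of $H^*\Na^{\one}_{\mbt}(M)$ outside one degree vanishes'' is \emph{equivalent} to, not merely implied by, ``all Betti spaces of $A_{\mbt}(M)$ outside the linear strand vanish'', which requires that no cancellation or coincidence of indices hides a nonzero group on one side while the other side is forced zero. I would handle this by exhibiting, for each potentially-offending pair $(i,\mbr)$, the specific $(\mba,\mbb)$ with $\mba \le \mbb$ realizing $\widetilde H^{j}(\Delta^{\mbb}_{\mba}(S/I;\mbt))$ as both the relevant $H^*\Na$-piece and the relevant Betti-piece — using Lemma \ref{pbsituation} / Corollary \ref{mayervirtoriscor} to move between the complexes $\Delta^{\mbb}_{\mba}$ when the naive indices don't line up exactly — and checking the ranges $0 \le \gamma \le n$ from Lemma \ref{boundsonnacoh} pin down the unique possible non-vanishing degree. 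Once this dictionary is in place, the theorem is just the statement that ``the only non-vanishing group sits in one spot'' reads the same on both sides.
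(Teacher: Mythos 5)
First, a point of reference: the paper does not prove this statement at all --- it is quoted verbatim as Miller's result \cite[Thm.~4.20]{Miller2000} in the subsection on linearity, so there is no in-paper proof to compare against. Judged on its own, your proposal has two genuine gaps. The first is the opening reduction to $M = S/I$: Theorems \ref{thecalc1} and \ref{bettithm} are stated and proved only for monomial quotient rings, and neither Cohen--Macaulayness of $M$ nor support-linearity of the minimal resolution of $A_{\mbt}(M)$ is inherited from the terms of a finite injective resolution by modules $K_{\mbt}\{0,\mba\}$ in the way you assert; depth and Betti numbers do not pass through such resolutions termwise, so for a general positively $\mbt$-determined $M$ your main tools simply do not apply. (Miller's actual argument works for arbitrary $M$ precisely because it stays at the level of the duality functors, using that $A_{\mbt}$ is an exact duality exchanging minimal free and minimal injective resolutions, and that $D_{\mbt}$ converts the free resolution into the modules $\EXT^{\ast}_S(M,S(-\mbt))$ whose vanishing pattern is Cohen--Macaulayness, as in Proposition \ref{DualProEkvdu}.)

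The second gap is that the ``heart of the argument'' is announced but not carried out, and it hides a missing ingredient. Theorem \ref{bettithm} applied to $A_{\mbt}(S/I) = S/I^{[\mbt]}$ expresses the Betti spaces in terms of $\widetilde H^{\ast}(\Delta^{\mbb}_{\mba}(S/I^{[\mbt]};\mbt))$ --- simplicial complexes built from the \emph{dual} ideal --- whereas Theorem \ref{thecalc1} applied to $S/I$ with $\mbk=\one$ produces $\widetilde H^{\ast}(\Delta^{\mbv}_{\mbu}(S/I;\mbt))$, built from $I$ itself. To match these you need a combinatorial Alexander duality isomorphism between reduced cohomology groups of complexes attached to $I$ and to $I^{[\mbt]}$ (the analogue of $\widetilde H_i(\Delta)\cong\widetilde H^{n-i-3}(\Delta^{\ast})$), which is established nowhere in the paper and is not identified in your proposal; the phrase ``Alexander duality swapping $A_{\mbt}$-cohomology with $D_{\mbt}$-Betti spaces'' conflates the module-level functor $A_{\mbt}$ with this topological duality. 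Until that isomorphism and the explicit index bijection (with both implications, as you yourself flag) are written down, the proposal is a plausible plan for the cyclic case rather than a proof of the theorem as stated.
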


We do not know of any other relationship between the vanishing of cohomology of
some $\Na^{\mbk}_{\mbt}(M)$, and linearity conditions of some other 
$\Na^{\mbk^\prime}_{\mbt}(M)$ or $\Na^{\mbk^\prime}_{\mbt}A_{\mbt}(M)$.
But there are notions of linearity of complexes which in the situation of 
$\Na^{- \mbk - \one}_{\mbt}(M)$ are independent of $\mbt$.

\begin{defn}
  Let $C$ be a chain complex of $\Z^n$-graded $S$-module and let $\mbc \in \N^n$. We
  say that that $C$ is {\em $\mbc$-linear} if there exists a number
  $p_0$ such that $B_p(C)_{\mbb} \ne 0$ implies that
  the cardinality of $\{j \, | \, b_j \geq c_j \}$ is 
\begin{displaymath}
  \begin{cases}
    n & \text{if $p \geq p_0 + n$} \\
    p-p_0 & \text{if $p_0 \leq p \leq p_0 + n$} \\
    0 & \text{if $p \leq p_0$}.
  \end{cases}
\end{displaymath}
\end{defn}

Note that Millers notion of support-linear resolution is not a special case of $\mbc$-linearity.



\begin{prop} \label{IndPropLin} Let $\mathbf 0 \le \mbk \le \mbt$. Suppose 
$\Na^{ \mbk + \one}_{\mbt}(M)$ is $\mathbf c$-linear.

a. If $\mbc \le \mbk $, then $\Na^{ \mbk + \one + \mathbf r}_{\mbt + \mathbf r}(M)$
is $\mbc$-linear for any $\mbr$ in $\N^n$.

b. If $\mbc \ge \mbk $, then  $\Na^{\mbk + \one + \mathbf r}_{\mbt + \mathbf r}(M)$
is $\mathbf {c+r}$-linear for any $\mbr$ in $\N^n$.
\end{prop}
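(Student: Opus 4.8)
The plan is to deduce this directly from Proposition \ref{realstability}, which supplies a chain of quasi-isomorphisms between $\Na_{\mbt+\mbr}^{\mbk+\one+\mbr}(M)$ and $(q_{\mbk}^{\mbr})^*\Na_{\mbt}^{\mbk+\one}(M)$. Since the notion of $\mbc$-linearity is phrased in terms of the multigraded Betti spaces $B_p(C)_\mbb = H_p(F\otimes_S C)_\mbb$ with $F$ a projective $\N^n$-graded resolution of $K$, and quasi-isomorphisms induce isomorphisms on these spaces (tensoring with the projective complex $F$ preserves quasi-isomorphisms), it suffices to track how the operation $C\mapsto (q_{\mbk}^{\mbr})^*C$ affects the set of multidegrees $\mbb$ in which the Betti spaces are supported. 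So the first step is to reduce the whole statement to a purely combinatorial claim about the order-preserving map $q_{\mbk}^{\mbr}\colon\N^n\to\N^n$ and its effect on the support of a $\Z^n$-graded module in each homological degree.

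The second step is to analyze $q_{\mbk}^{\mbr}$ coordinatewise. Recall $q_k^r$ is the identity on $[0,k]$, crushes $[k,k+r]$ to the point $k$, and is the shift $i\mapsto i-r$ on $[k+r,\infty)$. Pulling back along $q_k^r$ therefore sends a module supported in degree $b_j$ to one supported in degree $b_j'$ where $b_j'=b_j$ if $b_j\le k$ and $b_j'=b_j+r$ if $b_j\ge k$ (degrees in the "crushed" range $(k,k+r)$ do not occur in the image). The key observation is then: for a threshold $c_j$ with $c_j\le k$, the condition "$b_j'\ge c_j$" for the pulled-back module is equivalent to "$b_j\ge c_j$" for the original module — because both $b_j\le k$ and $b_j\ge k$ cases preserve the comparison with $c_j$ once $c_j\le k$. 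This gives part a: the set $\{j\mid b_j'\ge c_j\}$ has the same cardinality before and after pullback, so $\mbc$-linearity with $\mbc\le\mbk$ is preserved verbatim, with the same $p_0$. For part b, when $c_j\ge k$, the comparison "$b_j'\ge c_j+r_j$" for the image is equivalent to "$b_j\ge c_j$" for the original: in the range $b_j\le k\le c_j$ both fail (note $b_j'=b_j$), and in the range $b_j\ge k$ we have $b_j'=b_j+r_j$, so $b_j'\ge c_j+r_j\iff b_j\ge c_j$. Hence $\{j\mid b_j'\ge c_j+r_j\}$ for the image equals $\{j\mid b_j\ge c_j\}$ for $\Na_{\mbt}^{\mbk+\one}(M)$, and $(\mbc+\mbr)$-linearity of the former follows from $\mbc$-linearity of the latter, again with the same $p_0$.

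The third step is bookkeeping: combine the coordinatewise statements into the $n$-variable statement, i.e. verify that if the hypothesis $\mbc\le\mbk$ (resp. $\mbc\ge\mbk$) holds coordinate by coordinate, then the cardinality condition defining $\mbc$-linearity (resp. $(\mbc+\mbr)$-linearity) transports through, matching the three cases $p\ge p_0+n$, $p_0\le p\le p_0+n$, $p\le p_0$ unchanged. This is immediate once the coordinatewise bijections on the index sets $\{j\mid \cdots\}$ are in place.

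I expect the only genuine subtlety — the "main obstacle" — to be making sure that no Betti degree of $\Na_{\mbt}^{\mbk+\one}(M)$ has a coordinate lying strictly inside the crushed interval $(k_j,k_j+r_j)$ in a way that would make the pullback description ambiguous; but this is automatic, since $(q_{\mbk}^{\mbr})^*$ of any module is, by construction, zero in all degrees not of the form $q_{\mbk}^{\mbr}(\mbx)$, so the support of the pulled-back complex only meets degrees $b_j'\notin(k_j,k_j+r_j)$, and the case split above is exhaustive. A secondary point worth a sentence is that $(q_{\mbk}^{\mbr})^*$ is exact (Lemma \ref{truncateexact}), so it commutes with taking homology and the Betti spaces of $(q_{\mbk}^{\mbr})^*\Na_{\mbt}^{\mbk+\one}(M)$ are computed as claimed; I would invoke this to justify the reduction in the first step.
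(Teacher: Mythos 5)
Your overall route is the same as the paper's: invoke Proposition \ref{realstability} and then track, coordinate by coordinate, how $q_{\mbk}^{\mbr}$ moves the multidegrees supporting the Betti spaces. Two points in your write-up do not hold as stated. First, your claim that ``the support of the pulled-back complex only meets degrees $b_j'\notin(k_j,k_j+r_j)$'' is false for the complex itself: by definition $((q_{\mbk}^{\mbr})^*C)_{\mbb}=C_{q_{\mbk}^{\mbr}(\mbb)}$, which is in general nonzero for $b_j$ strictly inside the crushed interval (in one variable $(q_0^2)^*K\cong S/x^3S$, supported in degrees $0,1,2$). What you actually need, and what is true, is that the \emph{Betti} spaces of $(q_{\mbk}^{\mbr})^*C$ vanish in any degree $\mbb$ with some $b_j\in[k_j+1,k_j+r_j]$: there the multiplication $x_j\colon((q_{\mbk}^{\mbr})^*C)_{\mbb-\varepsilon_j}\to((q_{\mbk}^{\mbr})^*C)_{\mbb}$ is multiplication by $x_j^{q(b_j)-q(b_j-1)}=x_j^{0}$, an isomorphism, so $F\otimes_S(q_{\mbk}^{\mbr})^*C$ is acyclic in that degree. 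With this correction the Betti support transforms by $b_j\mapsto b_j$ for $b_j\le k_j$ and $b_j\mapsto b_j+r_j$ for $b_j>k_j$ (strictly: a Betti degree with $b_j=k_j$ stays at $k_j$, it does not move to $k_j+r_j$), which is what makes your case split exhaustive.

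Second, in part b your statement that for $b_j\le k_j\le c_j$ ``both fail'' is wrong precisely at the boundary $b_j=k_j=c_j$: there $b_j\ge c_j$ \emph{holds}, while $b_j'=k_j<c_j+r_j$ fails whenever $r_j>0$, so the index $j$ drops out of the counting set after pullback and the two cardinalities need not agree. This is a genuine gap in the argument for part b; to close it you must either show that no Betti degree of a $\mbc$-linear $\Na^{\mbk+\one}_{\mbt}(M)$ with $\mbc\ge\mbk$ has $b_j=k_j=c_j$, or treat that case separately. For what it is worth, the paper's own proof is the same two-line coordinate check and shares exactly this blind spot (its biconditional ``$d_i\ge c_i+r_i$ iff $q(d_i)\ge c_i$'' also fails at $d_i=k_i=c_i$), so you have reproduced the published argument, warts included; but you should not present the boundary case as settled.
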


\begin{proof}
By Proposition  \ref{realstability} the Betti spaces
\[ B_p(\Na^{\mbk + \one + \mathbf r}_{\mbt + \mathbf r}(M))_{\mathbf d} =
B_p(\Na^{\mbk + \one}_{\mbt}(M))_{q^\mbr_{\mbk}(\mathbf d)}. \]
Then a. follows because coordinate $i$ in $\mathbf d$ is $< c_i$ if and only if the 
same holds in $q^\mbr_{\mbk}(\mathbf d)$.
Part b. follows because coordinate $i$ in $\mathbf d$ is $\geq c_i + r_i$ if and only if 
coordinate $i$ in $q^\mbr_{\mbk}(\mathbf d)$ is $\geq c_i$.
\end{proof}

Alternatively this may be stated as.
\begin{prop} \label{IndPropLin2}Let $\mathbf 0 \le \mbk \le \mbt \le \mbt^\prime$. Suppose 
$\Na^{- \mbk- \one}_{\mbt}(M)$ is $\mbt \!- \!\mbc$-linear.

a. If $\mbc \ge \mbk$, then $\Na^{- \mbk - \one}_{\mbt^\prime}(M)$
is $\mbt \!-\! \mbc$-linear.

b. If $\mbc \le \mbk$, then $\Na^{- \mbk - \one}_{\mbt^\prime}(M)$
is $\mbt^\prime \!-\! \mbc $-linear.
\end{prop}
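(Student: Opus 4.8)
The plan is to read Proposition \ref{IndPropLin2} off from Proposition \ref{IndPropLin} by unwinding the definition $\Na_{\mbt}^{-\mbk}(M) = T^{2n}\Na_{\mbt}^{\mbt+\mathbf 2-\mbk}(M)$ and using that $\mbc$-linearity is preserved under homological shift. Throughout I note that the hypothesis ``$\Na_{\mbt}^{-\mbk-\one}(M)$ is $(\mbt-\mbc)$-linear'' already presupposes $\mbc\le\mbt$, since the notion of $\mbc$-linearity is only defined for $\mbc\in\N^n$.

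First I would record the bookkeeping. Since $\mathbf 0 \le \mbk \le \mbt$ we have $\mathbf 0 \le \mbt-\mbk \le \mbt$, and by definition
\begin{displaymath}
  \Na_{\mbt}^{-\mbk-\one}(M) = T^{2n}\Na_{\mbt}^{(\mbt-\mbk)+\one}(M),
\end{displaymath}
while, writing $\mbr = \mbt'-\mbt \in \N^n$ and using $(\mbt+\mbr)+\one-\mbk = (\mbt-\mbk)+\one+\mbr$,
\begin{displaymath}
  \Na_{\mbt'}^{-\mbk-\one}(M) = T^{2n}\Na_{\mbt+\mbr}^{(\mbt-\mbk)+\one+\mbr}(M).
\end{displaymath}
Next I would observe that for any chain complex $C$ of $\Z^n$-graded $S$-modules one has $B_p(T^{2n}C)_{\mbb} = B_{p+2n}(C)_{\mbb}$, so $C$ is $\mbc$-linear with offset $p_0$ if and only if $T^{2n}C$ is $\mbc$-linear with offset $p_0-2n$: the translation affects only the offset, never the vector $\mbc$ nor the cardinalities $\#\{j : b_j \ge c_j\}$. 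Hence the hypothesis that $\Na_{\mbt}^{-\mbk-\one}(M)$ is $(\mbt-\mbc)$-linear is equivalent to $\Na_{\mbt}^{(\mbt-\mbk)+\one}(M)$ being $(\mbt-\mbc)$-linear, and the two target conclusions are equivalent to the statements that $\Na_{\mbt+\mbr}^{(\mbt-\mbk)+\one+\mbr}(M)$ is $(\mbt-\mbc)$-linear in case a, resp. $((\mbt-\mbc)+\mbr)$-linear in case b; note for the latter that $(\mbt-\mbc)+\mbr = (\mbt+\mbr)-\mbc = \mbt'-\mbc$.

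Finally I would invoke Proposition \ref{IndPropLin} with its parameter $\mbk$ replaced by $\mbt-\mbk$ and its parameter $\mbc$ replaced by $\mbt-\mbc$, both of which lie in $\N^n$. In case a the hypothesis $\mbc\ge\mbk$ is the same as $\mbt-\mbc\le\mbt-\mbk$, which is precisely the case a hypothesis of Proposition \ref{IndPropLin}; its conclusion gives the $(\mbt-\mbc)$-linearity of $\Na_{\mbt+\mbr}^{(\mbt-\mbk)+\one+\mbr}(M)$. In case b the hypothesis $\mbc\le\mbk$ is the same as $\mbt-\mbc\ge\mbt-\mbk$, the case b hypothesis of Proposition \ref{IndPropLin}; its conclusion gives the $((\mbt-\mbc)+\mbr)$-linearity. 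Transporting these back across the identifications from the previous paragraph (stripping off the $T^{2n}$) completes the proof.

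I do not expect a genuine obstacle here: the argument is purely a matter of keeping the substitutions $\mbk \leftrightarrow \mbt-\mbk$, $\mbc\leftrightarrow \mbt-\mbc$, $\mbr = \mbt'-\mbt$ straight and checking the shift-invariance of $\mbc$-linearity. All the real content, namely the identity $B_p(\Na^{\mbk+\one+\mbr}_{\mbt+\mbr}(M))_{\mathbf d} = B_p(\Na^{\mbk+\one}_{\mbt}(M))_{q^{\mbr}_{\mbk}(\mathbf d)}$ coming from Proposition \ref{realstability} and the behaviour of $q^{\mbr}_{\mbk}$ on coordinatewise comparisons, is already packaged inside Proposition \ref{IndPropLin}.
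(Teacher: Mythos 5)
Your proposal is correct and is exactly the route the paper intends: Proposition \ref{IndPropLin2} is stated there as a direct reformulation of Proposition \ref{IndPropLin} (no separate proof is given), and your substitutions $\mbk\mapsto\mbt-\mbk$, $\mbc\mapsto\mbt-\mbc$, $\mbr=\mbt'-\mbt$ together with the observation that a homological shift only changes the offset $p_0$ carry out that reformulation faithfully. (Minor quibble: with $(T^{2n}C)_i=C_{i-2n}$ one gets $B_p(T^{2n}C)=B_{p-2n}(C)$, so the offset becomes $p_0+2n$ rather than $p_0-2n$, but since $p_0$ is existentially quantified this is immaterial.)
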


In the pivotal case of Proposition \ref{IndPropLin2}, when $\mbc = \mbk$, 
and the polynomial ring
$S$ is in two variables, we have worked out explicitly what it means for 
$\Na^{- \mbk - \one}(S/I)$ to be $\mbt - \mbk$-linear. The reader may wish to compare this with the results
in Section \ref{sec:van} concerning the vanishing of cohomology modules of these complexes, when we have
two variables. We state the following without proof.

\begin{prop} Suppose $S$ is the polynomial ring in two variables, and let  
$\one \leq \mbk \leq \mbt$. Then $\Na^{- \mbk - \one}_{\mbt} (S/I)$ is 
$\mbt - \mbk$-linear if and only if either

a. $I$ is $\mbk$-determined.

b. $I$ is $(x^a y^b)$ where $(a,b) \geq \mbk$.

c. $I$ is $(x^{a_0}, x^{a_1}y^{b_1}, \ldots, x^{a_{p-1}}y^{b_{p-1}}, y^{b_p})$ where all
$t_1 \geq a_i \geq k_1$ and $t_2 \geq b_i \geq k_2$.

\end{prop}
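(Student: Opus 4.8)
The plan is to reduce the assertion to a finite combinatorial check on the staircase of $I$ and then to run through the possible staircase shapes. First I would normalize $\mbt$. Applying Proposition \ref{IndPropLin2} with $\mbc=\mbk$ (so that the hypothesis $\mbc\le\mbk$ of part b holds), $\mbt-\mbk$-linearity of $\Na^{-\mbk-\one}_\mbt(S/I)$ propagates from $\mbt$ to every $\mbt'\ge\mbt$; hence for the ``if'' direction it suffices to verify linearity at $\mbt=\mbt_I$, the coordinatewise maximum of $\mbk$ and the least degree bounding every minimal generator of $I$, and every valid $\mbt$ dominates $\mbt_I$. This choice makes the top of the staircase of $I$ tight in $[\mathbf 0,\mbt_I]$ and minimizes the number of sub-cases. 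For the ``only if'' direction I would argue directly at the given $\mbt$ from the explicit Betti description below. In either case, since $\Na^{-\mbk-\one}_\mbt=T^{2n}\Na^{\mbt+\one-\mbk}_\mbt$ and a homological shift only moves the constant $p_0$ in the definition of linearity, it is enough to test $\mbt-\mbk$-linearity on $\Na^{\mbt+\one-\mbk}_\mbt(S/I)$, whose iterate $\mbt+\one-\mbk$ lies in $[\one,\mbt]$.

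Next I would make the Betti table explicit. Feeding Theorem \ref{bettithm} with iterate $\mbt+\one-\mbk$, the tensor decomposition over $K$ together with Corollary \ref{firststepbetti} (equivalently, the one-variable computations of Section \ref{sec:intmod}) produce coordinatewise formulas, depending only on $r_j,k_j,t_j$, for the parameters $\mba=\mba(\mbr)$, $\mbb=\mbb(\mbr)$ and $\gamma=\gamma(\mbr)\in\{0,1,2\}$ with
\[ B^i(\Na^{\mbt+\one-\mbk}_\mbt(S/I))_\mbr \cong \widetilde H^{i-\gamma-1}(\Delta^\mbb_\mba(S/I;\mbt)). \]
Since $n=2$, the complex $\Delta^\mbb_\mba(S/I;\mbt)$ is a complex on two vertices; by downward closedness of $\supp(S/I)$ it is determined by which of the degrees $\mba$, $(b_1+1,a_2)$, $(a_1,b_2+1)$, $(b_1+1,b_2+1)$ lie in $[\mathbf 0,\mbt]\cap\supp(S/I)$, and its reduced cohomology is nonzero exactly when it is $\{\emptyset\}$ (giving $\widetilde H^{-1}=K$, contributing in homological degree $\gamma$) or a pair of points (giving $\widetilde H^0=K$, contributing in degree $\gamma+1$). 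Translating these two cases into inequalities on $\supp(S/I)$ reduces the problem to: for which staircases of $I$ does the resulting set of pairs (multidegree $\mbr$, homological degree $p$) admit a $p_0$ such that the number of coordinates of $\mbr$ with $r_j\ge t_j-k_j$ equals $0$ when $p\le p_0$, equals $p-p_0$ when $p_0\le p\le p_0+2$, and equals $2$ when $p\ge p_0+2$?

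Finally I would carry out the case analysis on the shape of the staircase of $I$ in $[\mathbf 0,\mbt_I]$, classifying each minimal generator by whether its two exponents are $0$, lie in $[1,k_j-1]$, or lie in $[k_j,t_j]$. If every exponent is $0$ or in $[1,k_j-1]$, then $I$ is $\mbk$-determined and all ``boundary'' and ``corner'' Betti classes either vanish or collapse into the interior box, which is case (a). A single generator $x^ay^b$ with $(a,b)\ge\mbk$ produces exactly one peak and one indent of $S/I$, placed so that the surviving Betti classes fill exactly the three consecutive homological degrees, which is case (b). A staircase all of whose mixed generators have both exponents in $[k_j,t_j]$ and whose pure powers satisfy only $a_0\ge k_1$, $b_p\ge k_2$ gives precisely the admissible configurations, which is case (c); conversely, any other feature --- a mixed generator with an exponent strictly between $0$ and $k_j$, or a pure power below $k_j$ coexisting with a genuinely two-dimensional support --- forces some Betti class into a multidegree whose number of large coordinates is wrong for its homological degree. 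The main obstacle is exactly this last bookkeeping: there are many sub-cases according to the position of each $r_j$ relative to $k_j$, $t_j+1-k_j$, $t_j$ and the local shape of the staircase, and the delicate point is to check in each that a deviation from (a)--(c) produces a forbidden Betti class. The reduction to $\mbt=\mbt_I$ and the fact that $\Delta^\mbb_\mba$ can only be $\{\emptyset\}$, a point, two points, or the full edge keep the analysis finite and, with patience, tractable.
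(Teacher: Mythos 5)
The paper states this proposition explicitly without proof, so there is no argument of the authors to compare yours against; your strategy (normalize $\mbt$, translate $\Na^{-\mbk-\one}_{\mbt}$ into the positive iterate $T^{4}\Na^{\mbt+\one-\mbk}_{\mbt}$, read off the Betti spaces from Theorem \ref{bettithm} via the coordinatewise formulas of Section \ref{sec:intmod}, and observe that for $n=2$ the complexes $\Delta^{\mbb}_{\mba}$ contribute only via $\widetilde H^{-1}$ of $\{\emptyset\}$ or $\widetilde H^{0}$ of two points) is the evident and correct framework, and your use of Proposition \ref{IndPropLin2} with $\mbc=\mbk$ to propagate linearity upward for the ``if'' direction while arguing the ``only if'' direction at the given $\mbt$ is sound.

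However, there is a genuine gap: the proposal ends precisely where the content of the proposition begins. The entire assertion is the equivalence between $\mbt-\mbk$-linearity and the three staircase shapes (a)--(c), and that equivalence lives entirely in the case analysis you defer (``Finally I would carry out the case analysis\dots the main obstacle is exactly this last bookkeeping''). Neither direction is actually established. For the ``if'' direction you never compute, for a staircase of type (a), (b) or (c), the full list of pairs $(\mbr,p)$ with $B^{p}(\Na^{\mbt+\one-\mbk}_{\mbt}(S/I))_{\mbr}\ne 0$ and verify that a single $p_{0}$ makes the count of coordinates with $r_{j}\ge t_{j}-k_{j}$ come out right; for the ``only if'' direction you never exhibit, for a staircase violating (a)--(c) (say a mixed generator $x^{a}y^{b}$ with $0<a<k_{1}$ coexisting with further generators), a concrete multidegree $\mbr$ and homological degree $p$ whose nonvanishing Betti class has the wrong number of large coordinates. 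Since the translation from staircase data to the simplicial complexes $\Delta^{\mbb}_{\mba}$ and then to the linearity bookkeeping is exactly where errors in the boundary cases ($r_{j}$ near $k_{j}$, $t_{j}+1-k_{j}$, or $t_{j}$; pure powers versus mixed generators) would hide, the proposal as written is a plausible plan rather than a proof, and it cannot be accepted without that verification being carried out.
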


\section{Proofs of the main theorems}
\label{sec:nakmon}
In this section we prove Theorem \ref{thecalc1}, Theorem \ref{bettithm} and Theorem
\ref{nakcohS}. Originally our approach was to use a homotopy limit spectral sequence argument,
see for instance \cite{Ziegler1999}.
However the spectral sequences degenerated so nicely that we are able to 
do without them.
Let us fix $\mbk,\mbt \in \N^n$ with $\mbk \le \mbt +
\one$.
\begin{proof}[Proof of Theorem \ref{thecalc1}]
  Let $J$ denote the
  order ideal in $\N^n$ consisting of those $\mba \in [0,\mbt]$ with
  $(S/I)_{\mba} =
  K$. 
  Let $R$ denote the functor from $J^{\op}$ to the category of
  positively $\mbt$-determined $S$-modules with 
  $R(\mbw) = K_{\mbt}\{0,\mbw\}$. 
  Clearly, as an $S$-module
  \begin{displaymath}
    S/I \cong \lim_{\mbw \in J^{\op}} R(\mbw) = \Hom_{KJ^{\op}} (K\{J^{\op}\},R).
  \end{displaymath}
  We have seen in Lemma \ref{flasquelem} that 
  for $E \to K\{J^{\op}\}$ a projective resolution of the $KJ^{\op}$-module $K\{J^{\op}\}$
  there is a quasi-isomorphism
  \begin{displaymath}
    S/I \cong \Hom_{KJ^{\op}} (K\{J^{\op}\},R) \xrightarrow \simeq \Hom_{KJ^{\op}} (E,R).
  \end{displaymath}
  Since the Nakayama functor is exact
  it commutes
  with $\Hom_{KJ^{\op}}(E,-)$ in the sense that
  \begin{eqnarray*}
    \Na^{\mbk}_{\mbt}(\Hom_{KJ^{\op}}(E,R)) \cong
    \Hom_{KJ^{\op}}(E,\Na^{\mbk}_{\mbt} \circ R), 
  \end{eqnarray*}
as we explained in the first part of the proof of 
Proposition \ref{realstability}.
  For $\mbw$ in $J$ there is by Lemma \ref{nathelper1} a quasi-isomorphism
  \begin{displaymath}
    (\Na^{\mbk}_{\mbt} R(\mbw))_{\mbr} = (\Na^{\mbk}_{\mbt}
    K_{\mbt}\{0,\mbw\})_{\mbr} \xrightarrow \simeq
    (\mycx_{\mbt}^{\mbk}(\mbw))_{\mbr}.
  \end{displaymath}
  Moreover by Lemma \ref{nathelper2} there is a quasi-isomorphism
  \begin{displaymath}
    (\mycx_{\mbt}^{\mbk}(\mbw))_{\mbr} \xrightarrow \simeq 
    (\mycx_{\mbt}^{\mbk}(\mbt-\mbr))_{\mbt-\mbw},
  \end{displaymath}
  and this quasi-isomorphism is natural in $\mbw$, by Corollary \ref{natswapuy}. 
  Let $\gamma = \gamma_{\mbt}^{\mbk}(\mbr)$, let $\mbu = \mbu_{\mbt}^{\mbk}(\mbr)$
  and let $\mbv = \mbv_{\mbt}^{\mbk}(\mbr)$.
By Proposition  \ref{nakcoh} and Remark \ref{RemMainPro1}, there is a quasi-isomorphism
  \begin{displaymath}
    (\mycx_{\mbt}^{\mbk}(\mbt-\mbr))_{\mbt-\mbw} \xrightarrow \simeq
    T^{-\gamma} K_{\mbt}\{\mbt-\mbv,\mbt-\mbu\}_{\mbt-\mbw}
    =
    T^{-\gamma} K\{[\mbu,\mbv]^{\op}\}({\mbw}),
  \end{displaymath}
where $K\{[\mbu,\mbv]^{\op}\}$ is a module as in 
Definition \ref{DefKP}. 
  Since $\mbw \in J^{\op}$ the latter is 
  $ T^{-\gamma} K\{(J \cap [\mbu,\mbv])^{\op}\}({\mbw})$.   
Summing up we have a quasi-isomorphism
  \begin{displaymath}
    (\Na^{\mbk}_{\mbt} R(\mbw))_{\mbr} \xrightarrow \simeq T^{-\gamma}
    K\{(J \cap [\mbu,\mbv])^{\op}\}(\mbw).
  \end{displaymath}
  which is natural in $\mbw$ in the sense that if $\mbw \le \mbw'$
  then the corresponding diagram commutes.
  Thus we have a quasi-isomorphism
  \begin{displaymath}
    \Na^{\mbk}_{\mbt} (S/I))_{\mbr} \xrightarrow \simeq \Hom_{KJ^{\op}} (E,
    T^{-\gamma}
    K\{(J \cap [\mbu,\mbv])^{\op}\}).
  \end{displaymath}
  Finally applying Proposition \ref{modscx} we obtain isomorphisms
  \begin{displaymath}
    H^i \Na^{\mbk}_{\mbt}(S/I)_{\mbr} \cong H^{i-\gamma} (\Hom_{KJ^{\op}} (E,
    K\{(J \cap [\mbu,\mbv])^{\op}\})) \cong \widetilde 
H^{i-\gamma-1}(\Delta^{\mbv}_{\mbu}(J)).
  \end{displaymath}
\end{proof}
\begin{proof}[Proof of Theorem \ref{bettithm}]
  The functor $F \otimes_S \Na_{\mbt}^{\mbk}$ is exact, and therefore
  it commutes with $\Hom_{KJ^{\op}}(E,-)$.
  By Corollary \ref{firststepbetti} there is a quasi-isomorphism
  \begin{displaymath}
    (F \otimes_S
    \Na^{\mbk}_{\mbt} R(\mbw))_{\mbr} =
    (F \otimes_S
    \Na^{\mbk}_{\mbt} K_{t}\{ 0,\mbw\})_{\mbr}
    \simeq 
    T^n \Na^{\mbk+\one}_{\mbt} (K_{\mbt}\{\mbt-\mbr,\mbt-\mbr\})_{\mbt - \mbw},
  \end{displaymath}
  and this quasi-isomorphism is natural in $\mbw$.
  Let $\gamma$, $\mba$ and $\mbb$ be such that there is a quasi-isomorphism
  \begin{displaymath}
    \Na^{\mbk+\one}_{\mbt} (K_{\mbt}\{\mbt-\mbr,\mbt-\mbr\})
    \simeq
    T^{-\gamma - n} K_{\mbt}\{\mbt-\mbb,\mbt-\mba\}.
  \end{displaymath}
  Note that since $\mbw \in J$ we have
  \begin{displaymath}
    K_{\mbt}\{\mbt-\mbb,\mbt-\mba\}_{\mbt-\mbw} =
   K\{(J \cap [\mba,\mbb])^{\op}\}({\mbw}).
  \end{displaymath}
  Summing up we have a quasi-isomorphism
  \begin{displaymath}
    (F \otimes_S
    \Na^{\mbk}_{\mbt} R(\mbw))_{\mbr} 
    \simeq T^{-\gamma}
    K\{(J \cap [\mba,\mbb])^{\op}\}(\mbw),
  \end{displaymath}
  which is natural in $\mbw$ in the sense that if $\mbw \le \mbw'$
  then the corresponding diagram commutes.
  Thus we have a quasi-isomorphism
  \begin{displaymath}
    F \otimes_S \Na^{\mbk}_{\mbt} (S/I))_{\mbr} \xrightarrow \simeq \Hom_{KJ^{\op}} (E,
    T^{-\gamma} K\{(J \cap [\mba,\mbb])^{\op}\}
    ).
  \end{displaymath}
  Finally applying Proposition \ref{modscx} we obtain isomorphisms
  \begin{displaymath}
    H^i (F \otimes_S \Na^{\mbk}_{\mbt}(S/I))_{\mbr} \cong H^{i-\gamma} (\Hom_{KJ^{\op}} (E,
    K\{(J \cap [\mba,\mbb])^{\op}\})) \cong \widetilde 
H^{i-\gamma-1}(\Delta^{\mbb}_{\mba}(J)).
  \end{displaymath}
\end{proof}

\begin{proof}[Proof of Theorem \ref{nakcohS}]
We shall describe 
the homomorphism
$$H^i(\Na^{\mbk}_{\mbt} S/I)_{\mbr-\varepsilon_j} \to H^i(\Na^{\mbk}_{\mbt}
S/I)_{\mbr}$$
for every $i$ and every $\mbr$ with $\varepsilon_j \le \mbr \le \mbt$. 
In order to describe this homomorphism we need to take a closer look at
the maps in the proof of Theorem \ref{thecalc1}. Let us continue with
the notation introduced in that proof.
We split up into the three cases where $k_j < r_j$, where $k_j =
r_j$ and where $k_j > r_j$.

Firstly if $k_j < r_j$, 
the natural homomorphism $\Na^{\mbk}_{\mbt} R(\mbw)_{\mbr
  - \varepsilon_j} \to \Na^{\mbk}_{\mbt}
R(\mbw)_{\mbr}$ of functors in $\mbw \in J^{\op}$ corresponds to
the homomorphism of $KJ^{\op}$-modules
\begin{displaymath}
  T^{-\gamma} K\{(J \cap [\mbu - \varepsilon_j,\mbv])^{\op}\} \to T^{-\gamma} 
  K\{(J \cap[\mbu,\mbv])^{\op}\}.
\end{displaymath}
Applying $T\Hom_{KJ^{\op}}(E,-)$ to this homomorphism we obtain 
the
homomorphism 
\begin{displaymath}
  (\Na^{\mbk}_{\mbt} (S/I))_{\mbr-\varepsilon_j} \to (\Na^{\mbk}_{\mbt} (S/I))_{\mbr}.
\end{displaymath}
The second naturality statement of Proposition \ref{natrem2} 
gives that the associated homomorphism of $i$'th cohomology groups
is isomorphic to the homomorphism of $i-\gamma-1$'th reduced cohomology 
groups coming from the inclusion of spaces 
$ \Delta_{\mbu}^{\mbv} \subseteq \Delta_{\mbu -\varepsilon_j}^{\mbv}$.

Next we consider the case $k_j = r_j$.
Consider the map 
\[(\Na^{\mbk}_{\mbt} \circ R)(\mbw)_{\mbr 
  - \varepsilon_j} \to (\Na^{\mbk}_{\mbt} \circ 
R)(\mbw)_{\mbr}.\] 
As explained in the proof of Theorem \ref{thecalc1}, it corresponds
to the natural map
\[ N_\mbt^{\mbk}(\mbr - \epsilon_j, \mbr)_{\mbt - \mbw} \, \, :
\, \, N_{\mbt}^\mbk(\mbt - \mbr + \epsilon_j)_{\mbt - \mbw} \rightarrow 
N^{\mbk}_{\mbt}(\mbt - \mbr)_{\mbt - \mbw}. \]
The right side of the map above is 
$T^{-\gamma} K\{(J \cap [\mbu, \mbv]^{\op} \}(\mbw)$, the identification
being natural in $\mbw$. 
For slim notation let
\begin{eqnarray*}
\alpha = u_{t_j}^{k_j}(r_j-1)- u_{t_j}^{k_j}(r_j) = t_j + 1 - k_j, 
\quad \beta = v_{t_j}^{k_j}(r_j-1)- v_{t_j}^{k_j}(r_j) = k_j 
\end{eqnarray*}
By Subsection 6.3 the above map, considered as depending functorially on
$\mbw$, is 
$(-1)^{\sum_{i=1}^{j-1} \gamma_{t_i}^{k_i}(r_i)}$
times the connecting homomorphism
of the first map $\hat{i}$ in the short exact sequence
\begin{eqnarray*}
0 & \rightarrow &  T^{-\gamma-1} K\{(J \cap[\mbu,\mbv])^{\op}\} \mto{\hat{i}}
    T^{-\gamma-1} K\{(J \cap[\mbu,\mbv+\beta\varepsilon_j])^{\op}\} \\ 
&\rightarrow & 
    T^{-\gamma-1} K\{(J \cap[\mbu + \alpha \varepsilon_j ,\mbv+
    \beta\varepsilon_j])^{\op}\} \rightarrow 0. 
\end{eqnarray*}

We now apply $T\Hom_{KJ^{\op}}(E,-)$
to this short exact sequence, and obtain a new short exact sequence
of complexes. Note the following three things.

\medskip
\noindent 1. The new short exact sequence is by Proposition \ref{natrem2} 
quasi-isomorphic to
the (non-exact) sequence of homologically shifted reduced cochain
complexes associated to the inclusions
\begin{displaymath}
  \Delta^{\mbv + \beta \varepsilon_j}_{\mbu + \alpha
    \varepsilon_j} \subseteq \Delta_{\mbu}^{\mbv + \beta
    \varepsilon_j} \subseteq \Delta_{\mbu}^{\mbv}.
\end{displaymath}
The simplicial complex $\Delta^{\mbv }_{\mbu + \alpha
  \varepsilon_j}$ is contractible because it is a cone on the vertex
$j$, that is, it contains $F$ if and only it contains $F \cup \{j\}$.
Since we are in the situation of Lemma \ref{pbsituation}
the above short exact sequence is quasi-isomorphic to the short exact
sequence
\begin{displaymath}
  \widetilde C^* (\Delta_{\mbu}^{\mbv})
\to
\widetilde C^* (\Delta_{\mbu}^{\mbv + \beta
    \varepsilon_j}) \oplus \widetilde C^*(\Delta^{\mbv}_{\mbu + \alpha
  \varepsilon_j}) \to
  \widetilde C^*(\Delta^{\mbv + \beta \varepsilon_j}_{\mbu + \alpha
    \varepsilon_j})  
\end{displaymath}
of reduced cochain complexes underlying the Mayer--Vietoris cohomology
exact sequence.  

\noindent 2. The connection homomorphism for this short exact sequence
\begin{displaymath}
  \delta^{i-\gamma-2} \colon 
   \widetilde H^{i-\gamma-2}(\Delta^{\mbv + \beta \varepsilon_j}_{\mbu + \alpha
    \varepsilon_j})
\to \widetilde H^{i-\gamma-1}(\Delta_{\mbu}^{\mbv}).
\end{displaymath}
identifies as the morphism on homology we get by applying 
$T\Hom_{KJ^{\op}}(E,-)$ to the connecting homomorphism of $\hat{i}$. 

\noindent 3. If we apply $T\Hom_{KJ^{\op}}(E,-)$ 
to the morphism of $KJ^{\op}$-modules 
\[(\Na^{\mbk}_{\mbt} \circ R)(\cdot)_{\mbr 
  - \varepsilon_j} \to (\Na^{\mbk}_{\mbt} \circ 
R)(\cdot)_{\mbr}\] 
and take homology this gives the map 
\begin{displaymath}
  H^i (\Na^{\mbk}_\mbt S/I)_{\mbr-\varepsilon_j} \to H^i (\Na^{\mbk}_\mbt S/I)_{\mbr}. 
\end{displaymath}

Hence this map is isomorphic to
$(-1)^{\sum_{i=1}^{j-1} 
  \gamma_{t_i}^{k_i}(r_i)}$ times
the Mayer-Vietoris connecting homomorphism.

\medskip
Finally we consider the case $k_j > r_j$.
In this case the natural homomorphism $\Na^{\mbk}_\mbt R(\mbw)_{\mbr
  - \varepsilon_j} \to \Na^{\mbk}_{\mbt} 
R(\mbw)_{\mbr}$ of functors in $\mbw \in J^{\op}$ corresponds to
the homomorphism
\begin{displaymath}
  T^{-\gamma} K\{(J \cap [\mbu,\mbv-\varepsilon_j])^{\op}\} \to T^{-\gamma} 
  K\{(J \cap [\mbu,\mbv])^{\op}\}.
\end{displaymath}
Applying $T\Hom_{KJ^{\op}}(E,-)$ to this homomorphism we obtain the
homomorphism 
\begin{displaymath}
  (\Na^{\mbk}_\mbt S/I)_{\mbr-\varepsilon_j} \to (\Na^{\mbk}_\mbt S/I)_{\mbr}.
\end{displaymath}
By the first naturality part of Proposition \ref{natrem2} this is
quasi-isomorphic to the
homomorphism of reduced cochain complexes induced by the inclusion
$\Delta_{\mbu}^{\mbv} \subseteq \Delta_{\mbu}^{\mbv-\varepsilon_j}$.
\end{proof}

\end{document}